\newtheorem{thm}{Theorem}[section]\theoremstyle{plain}
\newtheorem{theorem}[thm]{Theorem}\theoremstyle{plain}
\theoremstyle{plain}
\newtheorem{lemma}[thm]{Lemma}\theoremstyle{plain}
\theoremstyle{plain}
\theoremstyle{plain}
\newtheorem{claim}[thm]{Claim}\theoremstyle{plain}
\theoremstyle{plain}
\theoremstyle{plain}
\theoremstyle{plain}
\theoremstyle{plain}
\theoremstyle{plain}
\theoremstyle{plain}
\newtheorem{conjecture}[thm]{Conjecture}\theoremstyle{plain}
\theoremstyle{plain}
\DeclareMathOperator{\rank}{rank}
\DeclareMathOperator{\cl}{cl}
\newcommand{\bp}{{\bm p}}
\newcommand{\bq}{{\bm q}}
\newcommand{\bt}{{\bm t}}
\newcommand{\bQ}{{\bm Q}}
\newcommand{\bv}{{\bm v}}
\newcommand{\be}{{\bm e}}
\newcommand{\MM}{{\mathcal M}}
\newcommand{\R}{{\mathbb R}}
\newcommand{\F}{{\mathbb F}}
\newcommand{\rat}{{\mathbb Q}}
\newcommand{\bP}{{\mathbb P}}
\newcommand{\clo}{{\rm cl}}
\begin{document}

\begin{frontmatter}[classification=text]


\author[katie]{Katie Clinch
}
\author[bill]{Bill Jackson
}
\author[shin]{Shin-ichi Tanigawa
}

\begin{abstract}
A conjecture of Graver from 1991 states that the generic 3-dimensional rigidity matroid is the unique maximal abstract 3-rigidity matroid with respect to the weak order on matroids. Based on a close similarity between the generic $d$-dimensional rigidity matroid and the generic $C_{d-2}^{d-1}$-cofactor matroid from approximation theory, Whiteley made an analogous conjecture in 1996 that the generic $C_{d-2}^{d-1}$-cofactor matroid is the unique maximal abstract $d$-rigidity matroid for all $d\geq 2$. We verify the case $d=3$ of Whiteley's conjecture in this paper. A key step in our proof is to verify a second conjecture of Whiteley that the `double V-replacement operation' 
preserves independence in the  generic $C_2^1$-cofactor matroid.
\end{abstract}
\end{frontmatter}


\section{Introduction}\label{sec:cofactor}
The statics of skeletal structures strongly depends on their underlying graphs. This connection has been studied since the seminal work of James Clerk Maxwell \cite{Max} in 1864, and combinatorial rigidity theory is now a fundamental topic in structural rigidity~\cite{GSS93,Wsurvey}.

We will consider a {\em $d$-dimensional (bar-joint) framework},  which is a pair $(G,\bp)$ consisting of a finite graph $G=(V,E)$ and a map $\bp:V\rightarrow \mathbb{R}^d$. 
Asimow and Roth~\cite{AR78} and Gluck~\cite{Glu} observed that the properties of rigidity and infinitesimal rigidity   coincide when $\bp$ is  generic (i.e., the set of coordinates in $\bp$ is algebraically independent over the rational field),  
and  are completely determined by $G$ and $d$. This fact enables us to define the generic $d$-dimensional rigidity matroid ${\cal R}_d(G)$ on the edge set of $G$, whose rank characterizes the rigidity of any generic framework $(G,\bp)$. 
(See, e.g.,~\cite{GSS93,SW,Wsurvey} for more details.)

The generic $1$-dimensional rigidity matroid ${\cal R}_1(G)$ is equal to the graphic matroid of $G$, and a celebrated theorem of Pollaczek-Geiringer \cite{P-G} and Laman~\cite{Lam} can be used to obtain a concise combinatorial formula for the rank function of the generic $2$-dimensional rigidity matroid ${\cal R}_2(G)$, see \cite{LY82}. Finding a combinatorial description for ${\cal R}_3(G)$ remains a fundamental open problem in the field which has inspired a significant amount of research.

In 1991, Graver~\cite{G91} suggested the approach of  abstracting representative properties of the $d$-dimensional rigidity matroid and recasting rigidity as a matroid property.
Suppose that $(G,\bp)$, $(G_1, \bp_1)$ and $(G_2, \bp_2)$ are generic $d$-dimensional frameworks such that $G=G_1\cup G_2$
and $\bp_i$ is the restriction of $\bp$ to  $V(G_i)$ for $i=1,2$. 
It is intuitively clear that: 
\begin{itemize}
\item if $|V(G_1)\cap V(G_2)|\leq d-1$ then adding any edge between $V(G_1)\setminus V(G_2)$ and $V(G_2)\setminus V(G_1)$ will restrict the `flexibility' of $(G,\bp)$, see  Figure~\ref{fig:glueing}(a); 
\item if $|V(G_1)\cap V(G_2)|\geq d$ and $(G_1,\bp_1)$ and $(G_2,\bp_2)$ are both rigid then $(G,\bp)$ will be rigid, see Figure~\ref{fig:glueing}(b).
\end{itemize}
Graver~\cite{G91} observed that these physical properties can be 
described in terms of matroid closure,  and proposed to investigate all matroids which satisfy these properties.  More precisely, 
he defined a matroid $M$ on the edge set $K(V)$ of the complete graph with vertex set $V$ 
to be an {\em abstract $d$-rigidity matroid}  if the following two properties hold:
\begin{description}
\item[(R1)] If $E_1, E_2\subseteq K(V)$ with $|V(E_1)\cap V(E_2)|\leq d-1$, then ${\rm cl}_M(E_1\cup E_2)\subseteq K(V(E_1))\cup K(V(E_2))$;
\item[(R2)] If $E_1, E_2\subseteq K(V)$ with ${\rm cl}_M(E_1)=K(V(E_1)), {\rm cl}_M(E_2)=K(V(E_2))$, and $|V(E_1)\cap V(E_2)|\geq d$, then ${\rm cl}_M(E_1\cup E_2)=K(V(E_1\cup E_2))$,
\end{description}
where 
${\rm cl}_M$ denotes the closure operator of $M$.
The generic $d$-dimensional rigidity matroid for $K(V)$, ${\cal R}_d(V)$,  is an example of an abstract $d$-rigidity matroid.
Graver~\cite{G91} conjectured that, for all $d\geq 1$, there is a unique maximal abstract $d$-rigidity matroid on $K(V)$ with respect to the weak order of matroids, and further that ${\cal R}_d(V)$ is this maximal matroid. 
He verified his conjecture for $d=1,2$ but N.~J.~Thurston (see, \cite[page 150]{GSS93}) subsequently showed that ${\cal R}_d(V)$ is not the unique maximal abstract $d$-rigidity matroid when $d\geq 4$.
We will verify the first part of Graver's conjecture when $d=3$.\footnote{The first part of Graver's conjecture, that there exists a unique maximal abstract $d$-rigidity matroid on $K(V)$, would follow for all $d$ as a special case of the main result in a preprint of Sitharam and Vince~\cite{SV}. Unfortunately this more general result  is false, see \cite{PAP, JT}.}
The second part 
remains as a long-standing open problem.

\begin{conjecture}[Maximality conjecture]\label{conj:max}
The generic $3$-dimensional rigidity matroid  on $K(V)$ is the unique maximal abstract $3$-rigidity matroid on $K(V)$.
\end{conjecture}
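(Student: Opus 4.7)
The natural strategy is to factor Conjecture \ref{conj:max} through the generic $C_2^1$-cofactor matroid ${\cal C}_2^1(V)$, which is known to satisfy axioms (R1) and (R2) and is therefore itself an abstract $3$-rigidity matroid. Together with the fact that ${\cal R}_3(V)$ is also abstract $3$-rigid, the conjecture then reduces to two dominations in the weak order on matroids: Domination A, which says every abstract $3$-rigidity matroid $M$ on $K(V)$ satisfies $M\leq {\cal C}_2^1(V)$, and Domination B, which says ${\cal C}_2^1(V)\leq {\cal R}_3(V)$. Chained together these force ${\cal R}_3(V)={\cal C}_2^1(V)$ and yield that ${\cal R}_3(V)$ is the unique maximum.

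For Domination A the plan is to establish an inductive combinatorial characterisation of ${\cal C}_2^1$-independence in terms of a list of graph operations, and then to show that the same operations are safe in any abstract $3$-rigidity matroid. Natural candidates are the Henneberg-type $0$- and $1$-extensions, X-replacement, and the double V-replacement promoted by Whiteley. For each operation I would prove two things: that it preserves independence in ${\cal C}_2^1(V)$, and that it preserves independence in every matroid satisfying (R1) and (R2). The $0$- and $1$-extension cases are essentially already in the literature; X-replacement reduces to a manageable closure manipulation using (R1) and (R2). The heart of the argument, and the main technical obstacle, is the double V-replacement: on the cofactor side its preservation of independence is exactly the companion conjecture of Whiteley flagged in the abstract, and verifying it seems to require a detailed calculation in the module of bivariate $C_2^1$-splines; on the abstract side one must show that the corresponding modification of the graph does not create a new circuit in any $M$, a conclusion for which no purely closure-theoretic shortcut is visible and which will have to be extracted from the specific combinatorial pattern being altered.

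Domination B is where Conjecture \ref{conj:max} becomes genuinely open, and it is the principal obstacle. The proposed route is to lift the same constructive sequence used for Domination A to generic $\R^3$-realisations: given a ${\cal C}_2^1$-independent set built by our list of operations, inductively show that the corresponding generic bar-joint framework in $\R^3$ remains infinitesimally independent. The $0$- and $1$-extension steps are classical, but for X-replacement and double V-replacement one must prove, by direct analysis of a maximal minor of the $3$-dimensional rigidity matrix, that some generic realisation preserves rank. Because ${\cal R}_3(V)$ admits no combinatorial description, each of these two steps is not mechanical and will likely require a symbolic computation on a bounded-size gadget or a specialisation argument from higher-dimensional configurations. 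Should this second domination resist, the first already settles Whiteley's maximality conjecture for $d=3$ and the existence half of Graver's conjecture, leaving Conjecture \ref{conj:max} conditional on the equality ${\cal R}_3(V)={\cal C}_2^1(V)$.
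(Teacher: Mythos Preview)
Conjecture~\ref{conj:max} is stated in the paper as an \emph{open} conjecture; the paper does not prove it. What the paper proves is your Domination~A, namely Theorem~\ref{thm:cofactor}: ${\cal C}_2^1(V)$ is the unique maximal abstract $3$-rigidity matroid. Your Domination~B, ${\cal C}_2^1(V)\preceq {\cal R}_3(V)$, is equivalent to ${\cal R}_3(V)={\cal C}_2^1(V)$ and remains open; the two ingredients you propose for it (X-replacement and double V-replacement preserve ${\cal R}_3$-independence) are themselves long-standing open problems, as the table in Section~\ref{sec:cofactor} records.

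Your plan for Domination~A has the direction of the induction reversed, and this matters. You propose to characterise ${\cal C}_2^1$-independent sets constructively and then show that the same operations preserve independence in every abstract $3$-rigidity matroid $M$. That would yield ${\cal C}_2^1(V)\preceq M$, i.e.\ minimality of ${\cal C}_2^1$, not maximality. The paper's proof of Theorem~\ref{thm:K_d+2matroid} runs the induction \emph{in $M$}: one takes an $M$-independent set $F$, uses the $K_5$-circuit property of $M$ (via Claims~\ref{claim:spline1} and~\ref{claim:spline2-0}) to find a low-degree vertex and a reduction $F\mapsto F'$ with $F'$ still $M$-independent, applies the inductive hypothesis to get $F'$ ${\cal C}_2^1$-independent, and then applies the forward operation in ${\cal C}_2^1$. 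So the two facts actually needed are (i) that the reductions are available in any $K_5$-matroid, which is elementary, and (ii) that the forward operations (crucially, double V-replacement) preserve ${\cal C}_2^1$-independence, which is Theorem~\ref{thm:doubleVconj} and consumes the bulk of the paper. In particular one does \emph{not} need double V-replacement to preserve independence in an arbitrary abstract $M$; indeed single V-replacement already fails to do so, so your proposed ``abstract side'' verification for that operation would not go through.
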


\begin{figure}[t]
\centering
\begin{minipage}{0.45\textwidth}
\centering
\includegraphics[scale=0.65]{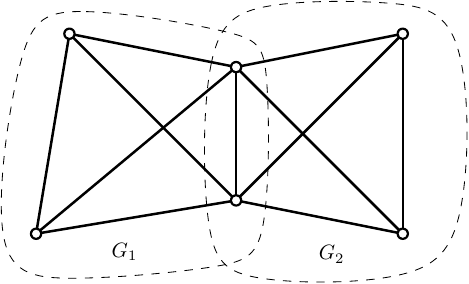}
\par
(a)
\end{minipage}
\begin{minipage}{0.45\textwidth}
\centering
\includegraphics[scale=0.65]{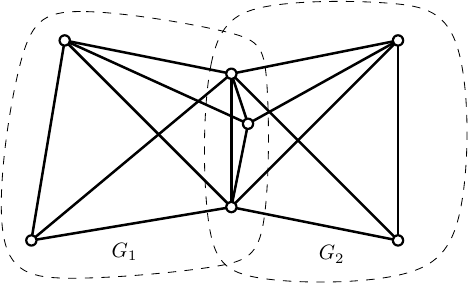}
\par
(b)
\end{minipage}
\caption{Examples in $d=3$: (a) When $|V(G_1)\cap V(G_2)|=2$, 
$(G_1,\bp_1)$ and $(G_2,\bp_2)$ can rotate about the line through the points of $V(G_1)\cap V(G_2)$, and adding any edge between $V(G_1)\setminus V(G_2)$ and $V(G_2)\setminus V(G_1)$ will restrict this motion.
(b)  Rigid frameworks $(G_1,\bp_1)$ and $(G_2,\bp_2)$ are glued together along three vertices, and the resulting framework is rigid.}
\label{fig:glueing}
\end{figure}


Whiteley~\cite{Wsurvey} found another candidate for a maximal abstract $d$-rigidity matroid 
from approximation theory.
Consider a plane polygonal domain $D$ which has been subdivided into  a set of polygonal faces $\Delta$. 
A {\em bivariate $C_s^r$-spline} over $\Delta$ is a function on $D$ which is continuously differentiable $r$ times and is given by a polynomial of degree $s$ over each face of $\Delta$. The set of all $C_s^r$-splines over $\Delta$ forms a vector space $S_s^r(\Delta)$, 
and 
determining the dimension of $S_s^r(\Delta)$ is a major question in this area.
When $r=0$ and $d=1$, $S_1^0(\Delta)$ is the space of piecewise linear functions, 
and {\em Maxwell's reciprocal diagram } gives a concrete correspondence between $S_1^0(\Delta)$ and  the self-stresses of the $1$-skeleton of $\Delta$, regarded  as a 2-dimensional framework $(G,\bp)$.
Whiteley~\cite{W90,W91,Wsurvey} extended this connection further and demonstrated  how the dimension of $S_s^r(\Delta)$ can be computed from the rank of the  {\em $C_s^r$-cofactor matrix $C_s^r(G,\bp)$},  which is a variant of the incidence matrix of $G$ in which the entries are replaced by block matrices whose entries are polynomials in $\bp$.
Billera~\cite{B88} used Whiteley's approach to solve a conjecture of Strang on the generic dimension of $S_s^1(\Delta)$.

The definition of  $C_s^r(G,\bp)$ is not dependent on the fact that  $(G,\bp)$ is a realisation of $G$ in the plane without crossing edges and is equally valid for  any 2-dimensional framework $(G,\bp)$. Moreover,
in the case when $r=s-1$, each row of $C_s^{s-1}(G,\bp)$ is associated with a distinct edge of $G$,
and hence the row matroid of $C_s^{s-1}(G,\bp)$ is a matroid defined on $E(G)$. We refer to this matroid  as the {\em $C_s^{s-1}$-cofactor  matroid}
for $(G,\bp)$ and denote it by 
${\cal C}_s^{s-1}(G,\bp)$.
Since the matroids ${\cal C}_s^{s-1}(G,\bp)$ are the same for all generic $\bp$, the {\em generic $C_s^{s-1}$-cofactor  matroid} of $G$, denoted  by ${\cal C}_s^{s-1}(G)$, is defined to be ${\cal C}_s^{s-1}(G,\bp)$ for any generic $\bp$. (See Section \ref{subsec:generic_cofactor} for a formal definition).
Let ${\cal C}_{d-1}^{d-2}(V)={\cal C}_{d-1}^{d-2}(K(V))$.
Whiteley~\cite{Wsurvey} showed that  ${\cal C}_{d-1}^{d-2} (V)$ is an example of an abstract $d$-rigidity matroid and pointed out that  ${\cal C}_{d-1}^{d-2} (V)={\cal R}_{d} (V)$ when $d=2$. He also showed that ${\cal C}_{d-1}^{d-2} (V)$ is a counterexample to Graver's original maximality conjecture for all $d\geq 4$ and conjectured further that  ${\cal C}_{d-1}^{d-2} (V)$  is the maximal abstract $d$-rigidity matroid for all $d\geq 2$.

In this paper we verify Whiteley's conjecture for $d=3$, and hence prove the cofactor counterpart of Conjecture~\ref{conj:max}:
\begin{theorem}\label{thm:cofactor}
The generic $C_2^1$-cofactor matroid ${\cal C}_2^1(V)$
is the unique maximal abstract 3-rigidity matroid on $K(V)$.  
\end{theorem}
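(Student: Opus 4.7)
My plan is to fix an arbitrary abstract $3$-rigidity matroid $M$ on $K(V)$ and prove $M \leq {\cal C}_2^1(V)$ in the weak order of matroids, i.e.~that every circuit of ${\cal C}_2^1(V)$ is $M$-dependent. Since ${\cal C}_2^1(V)$ is itself an abstract $3$-rigidity matroid by Whiteley's result recalled above, this establishes it as a maximum, and uniqueness of a maximum is automatic. I would proceed by induction on $|V|$. A direct check from (R1) and (R2) shows that the restriction $M|_U$ of $M$ to $K(U)$ is again an abstract $3$-rigidity matroid for any $U \subseteq V$, so any ${\cal C}_2^1(V)$-circuit $C$ with $V(C) \subsetneq V$ is handled by the inductive hypothesis applied to $M|_{V(C)}$. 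It therefore suffices to prove the statement for spanning ${\cal C}_2^1(V)$-circuits.

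For the inductive step on spanning circuits, the central tool is Whiteley's \emph{double V-replacement}, a local graph operation centered at a vertex of prescribed degree and neighborhood pattern, whose inverse removes such a vertex while adjusting the surrounding edge set. The first main technical component is the verification of Whiteley's second conjecture, advertised in the abstract, that double V-replacement preserves independence in the generic $C_2^1$-cofactor matroid. Equivalently, applying its inverse to a spanning ${\cal C}_2^1$-circuit produces a ${\cal C}_2^1$-dependent edge set on one fewer vertex. Combined with a structural lemma guaranteeing that every sufficiently large spanning ${\cal C}_2^1$-circuit admits a suitable reduction vertex, this delivers the inductive reduction on the ${\cal C}_2^1$ side.

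The second technical component is to show that the inverse double V-replacement also transfers $M$-dependence: if the reduced edge set is $M$-dependent, then the original spanning edge set is $M$-dependent in every abstract $3$-rigidity matroid. This transfer should follow from a short sequence of closure manipulations around the split vertex using (R1) and (R2) alone, essentially because the double V-replacement is designed to interact cleanly with these axioms. The base of the induction handles those spanning ${\cal C}_2^1$-circuits that are irreducible under the inverse operation, notably $K_5$; these are dispatched using an upper bound of the form ${\rm rank}_M(K(U)) \leq 3|U|-6$, which can be derived from (R2) by repeatedly splitting $K(U)$ along shared triangles and applying submodularity.

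The main obstacle, and the heart of the paper, is the preservation of ${\cal C}_2^1$-independence under double V-replacement. Unlike classical vertex splits in generic rigidity, this operation alters several incident edges simultaneously, so the corresponding identity for the $C_2^1$-cofactor matrix is far from routine and will require a careful polynomial or determinantal analysis tailored to the cofactor setting. A secondary but still non-trivial obstacle is the structural lemma producing a valid reduction vertex in every sufficiently large spanning ${\cal C}_2^1$-circuit; I expect this to require an extremal analysis exploiting the rank formula $3|V|-6$ for ${\cal C}_2^1(V)$ together with low-degree vertex arguments on circuits.
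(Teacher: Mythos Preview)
Your overall strategy identifies the right ingredients---an inductive argument, the extension operations (with double V-replacement as the hard case), and the $K_5$ base case---but the direction of the induction is inverted in a way that leaves a genuine gap. The paper does not work with ${\cal C}_2^1$-circuits. Instead it fixes an arbitrary $K_5$-matroid $M$ (every abstract $3$-rigidity matroid is one, by Nguyen's characterization), takes an $M$-independent set $F$, and shows $F$ is ${\cal C}_2^1$-independent by induction on $|F|$. The reduction edges are chosen using the structure of $M$, not of ${\cal C}_2^1$: from the $K_5$-circuit property alone one shows that $\cl_M(F-v_0)\cap K(N_F(v_0))$ contains no $K_4$ and that $K(N_F(v_0))\not\subseteq \cl_M((F-v_0)+e)$ for any single $e$, and this yields edges $e_1,e_2$ (and, when they turn out to be adjacent, a second pair $e_1',e_2'$ with a different centre) such that the reductions $F-v_0+e_1+e_2$ and $F-v_0+e_1'+e_2'$ are $M$-independent. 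Induction makes them ${\cal C}_2^1$-independent, and then $0$-extension, $1$-extension, X-replacement, or double V-replacement---all four are used, not only the last---lifts ${\cal C}_2^1$-independence back to $F$.

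Your second component asserts: if $C-v_0+e_1+e_2$ is $M$-dependent then $C$ is $M$-dependent, for the \emph{specific} $e_1,e_2$ produced on the ${\cal C}_2^1$ side. This does not follow from (R1)--(R2). Its contrapositive would say that whenever $C$ is $M$-independent the reduction $C-v_0+e_1+e_2$ is $M$-independent for those particular edges; but nothing rules out $e_1\in\cl_M(C-v_0)$, since your $e_1,e_2$ were selected by a ${\cal C}_2^1$-criterion that knows nothing about $M$. Producing reduction edges that are provably outside $\cl_M(C-v_0)$ is exactly the work that must be done relative to $M$, and this is why the paper inducts on $M$-independent sets rather than on ${\cal C}_2^1$-circuits.
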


The most difficult part of the proof of Theorem~\ref{thm:cofactor} is to show that a certain graph operation (called the double V-replacement operation) preserves independence in the generic $C_2^1$-cofactor  matroid. This is related to another long-standing conjecture on the generic 3-dimensional rigidity matroid, known as the {\em Henneberg construction conjecture},  which asks if every base of the generic 3-dimensional rigidity matroid ${\cal R}_3(V)$ can be constructed from $K_4$  by a sequence of simple graph operations (see~\cite{TW85,C14} for more details). 
Based on a strong similarity between rigidity matroids and cofactor matroids, Whiteley~\cite[page 55]{W90} posed a corresponding Henneberg construction conjecture for the generic $C_2^1$-cofactor  matroid. 
We will verify this conjecture.

In his survey on generic rigidity and cofactor matroids, Whiteley gave a table of properties and conjectures for the generic $3$-dimensional rigidity and $C_2^1$-cofactor matroids
~\cite[page 65]{Wsurvey}. The table below updates Whiteley's table with results from  \cite{JJ05} and the current paper. 

\begin{center}
\begin{tabular}{|l||c|c|} \hline
 & Generic $3$-dim rigidity &Generic $C_2^1$-cofactor \\ \hline\hline
 rank $K_n$, $n \geq 3$ & $3n-6$ & $3n-6$ \\\hline
0-extension (vertex addition) & YES & YES \\\hline
1-extension (edge split) & YES & YES \\\hline
abstract 3-rigidity & YES & YES  \\\hline
vertex splitting & YES & YES  \\\hline
simplicial 2-surfaces & Rigid
~\cite{Glu} 
& Rigid
~\cite{B88} 
\\\hline
coning & YES & YES  \\\hline
X-replacement & Conjectured & YES 
\cite{Wsurvey} 
\\\hline
double V-replacement & Conjectured & YES (Theorem~\ref{thm:doubleVconj}) \\\hline
Dress conjecture & NO \cite{JJ05} & NO \cite{JJ05} \\\hline
$K_{4,6}$ & Base & Base \\\hline
$K_{5,5}$ & Circuit & Circuit \\\hline
maximal abstract 3-rigidity matroid & Conjectured & YES  (Theorem~\ref{thm:cofactor}) \\\hline
\end{tabular}
\end{center}

We will give a combinatorial characterization of the rank function of the maximal abstract 3-rigidity matroid ${\cal C}_2^1(V)$ in a companion paper~\cite{CJT1} to this one, and hence solve the cofactor counterpart to  the combinatorial characterization problem for $3$-dimensional rigidity. 
Theorem~\ref{thm:cofactor} is a key ingredient of the proof in this companion paper.

This paper focuses on proving Theorem~\ref{thm:cofactor}, and hence we restrict our attention to $C_2^1$-cofactor matroids. 
In Section~\ref{sec:preliminaries}, we first provide a brief introduction to $C_2^1$-cofactor matroids, and then give a detailed roadmap for the proof of Theorem~\ref{thm:cofactor}. 

\section{Preliminaries}\label{sec:preliminaries}
In this section we give a formal definition of $C_2^1$-cofactor matroids and describe their fundamental properties. 
We then give a roadmap for the proof of Theorem~\ref{thm:cofactor}.

 We use the following notation throughout this paper.
For a finite set $V=\{v_1,v_2,\ldots,v_n\}$, let $K(V)$, or $K(v_1,v_2,\ldots,v_n)$, denote the edge set of the complete graph with vertex set  $V$.
For a graph $G=(V,E)$ and $v\in V$, let $N_G(v)$ be the set of neighbors of $v$ in $G$ and $\hat{N}_G(v)=N_G(v)\cup \{v\}$ be the closed neighborhood of $v$ in $G$.
For an edge set $F$ and a vertex $v$, let $d_F(v)$ denote the number of edges of $F$ incident to $v$. 
A {\em star on $n$ vertices} is a tree with $n$ vertices in which one vertex is adjacent to all other vertices.

We often regard a map $\bp:V\to \R^k$ as a $k|V|$-dimensional vector. 
The inner product $\bp\cdot \bq$ of two maps $\bp, \bq:V\to \R^k$ is given by this identification.

For vectors $\bv_1,\dots, \bv_k$ in a Euclidean space, let $\langle \bv_1,\dots, \bv_k\rangle$ be their linear span.

For a set $Z$ of real numbers, let $\mathbb{Q}(Z)$ denote the smallest subfield of $\mathbb{R}$ that contains the rationals and $Z$.

The closure operator and the rank function of a matroid ${\cal M}$ are 
denoted by ${\rm cl}_{{\cal M}}$ and $r_{{\cal M}}$, respectively.
The {\em weak order for matroids} is a partial order over all matroids with the same groundset $E$, where
for two matroids ${\cal M}_i=(E, r_{{\cal M}_i})\ (i=1,2)$ 
we have 
${\cal M}_1\preceq {\cal M}_2$ if $r_{{\cal M}_1}(X)\leq r_{{\cal M}_2}(X)$ holds
for all $X\subseteq E$. 

\subsection{\boldmath The $C_2^1$-cofactor Matrix}\label{subsec:generic_cofactor}

Given two points $p_i=(x_i,y_i)$ and $p_j=(x_j,y_j)$ in $\R^2$, we define $D(p_i,p_j)\in \mathbb{R}^{3}$ by
\[D(p_i,p_j)=((x_i-x_j)^2, (x_i-x_j)(y_i-y_j), (y_i-y_j)^2).
\] 
For a 2-dimensional framework $(G,\bp)$ with vertex set $V=\{v_1,v_2,\ldots,v_n\}$, we simply write $D(v_i,v_j)$ for  $D(\bp(v_i),\bp(v_j))$ when $\bp$ is clear from the context.

We define the {\em $C^{1}_2$-cofactor matrix} of a 2-dimensional framework $(G,\bp)$  to be the matrix $C_2^{1}(G,\bp)$ of size 
$|E|\times 3|V|$ in which each vertex is associated with a set of three consecutive  columns,
each edge is associated with a row,
and the row associated with the  edge $e=v_iv_j$ with $i<j$ is 
\[
\kbordermatrix{
 & & v_i & & v_j & \\
 e=v_iv_j & 0\cdots 0 & D(v_i,v_j) & 0 \cdots 0 & -D(v_i,v_j) & 0\cdots 0 
}.
\]
For example, for $(K_4,\bp)$ with $\bp(v_1)=(0,0)$, $\bp(v_2)=(1,0)$, $\bp(v_3)=(0,1)$ and $\bp(v_4)=(-1,-1)$, 
\[
C_2^1(K_4,\bp)=
\kbordermatrix{
 & & v_1 &  & & & v_2 & & & & v_3 & & & & v_4 & \\
v_1v_2 & 1 & 0 & 0 & & -1 & 0 & 0 & & 0 & 0 & 0 & & 0 & 0 & 0 \\ 
v_1v_3 & 0 & 0 & 1 & & 0 & 0 & 0 & & 0 & 0 & -1 & & 0 & 0 & 0 \\ 
v_1v_4 & 1 & 1 & 1 & & 0 & 0 & 0 & & 0 & 0 & 0 & & -1 & -1 & -1 \\
v_2v_3 & 0 & 0 & 0 & & 1 & -1 & 1 & & -1 & 1 & -1 & & 0 & 0 & 0 \\
v_2v_4 & 0 & 0 & 0 & & 4 & 2 & 1 & & 0 & 0 & 0 & & -4 & -2 & -1 \\
v_3v_4 & 0 & 0 & 0 & & 0 & 0 & 0 & & 1 & 2 & 4 & & -1 & -2 & -4 
}.
\]
(Our definition is slightly different to that given by Whiteley \cite{Wsurvey}, but the two definitions are equivalent up to elementary column operations.)
It is known that 
the space $S_2^{1}(\Delta)$ of bivariate $C_2^{1}$-splines over a subdivision $\Delta$ of a polygonal domain in the plane is linearly isomorphic to the left kernel of  $C_2^{1}(G,\bp)$ if $(G,\bp)$ is the 1-skeleton of $\Delta$.
See, \cite{Wsurvey} for more details.

\subsection{\boldmath $C_2^1$-motions} \label{subsec:C_2^1-motions}
As mentioned above, the left kernel of the $C_2^1$-cofactor matrix $C_2^1(G,\bp)$ has an important role in the analysis of $C_2^1$-splines. An analogous situation occurs in 
rigidity theory
where the left kernel of the rigidity matrix is the space of {\em self-stresses} of the framework. 
The right kernel of the rigidity matrix plays an equally important role in 
rigidity theory since it is the space of {\em infinitesimal motions} of the framework. 
In order to apply techniques from rigidity theory to cofactor matrices, 
we will also consider the right kernel of the $C_2^1$-cofactor matrix.

Let $G=(V,E)$ be a graph and $\bp:V\to \R^2$ such that $\bp(v_i)=(x_i,y_i)\in \mathbb{R}^2$ for all $v_i\in V$.
A {\em $C_2^1$-motion} (or simply, a {\em motion}) of $(G,\bp)$ is a map $\bq:V\rightarrow \mathbb{R}^3$ satisfying
\begin{equation}\label{eq:cofactor_inf}
D(v_i,v_j)\cdot ( \bq(v_i)-\bq(v_j))=0\qquad (v_iv_j\in E).
\end{equation}
The $C_2^1$-cofactor matrix
$C_2^1(G,\bp)$ defined in Section~\ref{subsec:generic_cofactor} is the matrix of coefficients of this system 
of linear equations in the variable $\bq$,
and hence each $C_2^1$-motion $\bq$ is a vector in the right kernel $Z(G,\bp)$ of $C_2^1(G,\bp)$.

Whiteley \cite{Wsurvey} showed that $Z(G,\bp)$  
has dimension at least six  when $\bp(V)$ affinely spans $\R^2$ by showing that  the $C_2^1$-motions $\bq_i^*:V\rightarrow \mathbb{R}^3$, $1\leq i\leq 6$, defined by
\begin{align}
\bq_1^*(v_i)&=(1,0,0), &  \bq_2^*(v_i)&=(0,1,0), &  \bq_3^*(v_i)&=(0,0,1),\label{eq:rigid_motions1} \\ 
\bq_4^*(v_i)&=(y_i,-x_i,0), & \bq_5^*(v_i)&=(0,-y_i,x_i) &  \bq_6^*(v_i)&=(y_i^2,-2x_iy_i,x_i^2) \label{eq:rigid_motions2}
\end{align}  
for each $v_i\in V$,
are linearly independent  vectors in $Z(G,\bp)$ for all $G$. (We have adjusted the values of the $\bq_i^*$ given in \cite{Wsurvey} to fit our modified definition of $C_2^1(G,\bp)$.)
We will refer to a $C_2^1$-motion that can be described as a linear combination of the $\bq_i^*, 1\leq i\leq 6$ as a {\em trivial} $C_2^1$-motion,
and let $Z_0(G,\bp)$ be the space of all trivial $C_2^1$-motions.
 
 The following terminologies are $C_2^1$-cofactor analogues of standard terminologies in rigidity theory.
 We say that a framework $(G,\bp)$ is: {\em $C_2^1$-rigid} if $Z(G,\bp)= Z_0(G,\bp)$;  {\em $C_2^1$-independent} if
 the rows of $C_2^1(G,\bp)$ are linearly independent;  {\em minimally $C_2^1$-rigid} if $(G,\bp)$ is both $C_2^1$-rigid and $C_2^1$-independent; a {\em $k$-degree of freedom framework}, or {\em $k$-dof framework} for short, if $\dim Z(G,\bp)=6+k$.
We will use the same terms for the graph $G$ if  $(G,\bp)$ has the corresponding properties for some (or equivalently, every) generic $\bp$.
 
%

\subsection{\boldmath Generic ${C}_2^1$-cofactor Matroids}
The generic {\em $C_2^{1}$-cofactor matroid}, ${\cal C}_{2,n}^{1}$, is the matroid on $E(K_n)$ in which independence is given by the linear independence of the  rows  of $C_2^{1}(K_n,\bp)$ for any generic $\bp$. 
We will sometimes simplify ${\cal C}_{2,n}^1$ to ${\cal C}_n$ or even ${\cal C}$ when it is clear from the context.
Note that a graph $G$ with $n$ vertices is minimally $C_2^1$-rigid (resp., $C_2^1$-independent) if and only if $E(G)$ is a base (resp., an independent set) of 
${\cal C}_{2,n}^1$. 

Since $\dim Z(K_n,\bp)\geq \dim Z_0(K_n,\bp)=6$ holds when $n\geq 3$ and  $\bp$ is generic, the rank of ${\cal C}_{2,n}^1$ is at most $3n-6$ for all $n\geq 3$.
Whiteley~\cite[Corollary 11.3.15]{Wsurvey} showed that the rank of ${\cal C}_{2,n}^1$ is equal to $3n-6$ when $n\geq 3$, and, more significantly,  that ${\cal C}_{2,n}^{1}$ is an abstract $3$-rigidity matroid.
Our main result, Theorem~\ref{thm:cofactor},  verifies that ${\cal C}_{2,n}^{1}$ is the unique maximal abstract $3$-rigidity matroid on $E(K_n)$.

\subsection{Pinning}\label{subsec:pinning}
 
The technique of {\em pinning}, i.e.\ specifying the value of a motion at one or more vertices, is frequently used when dealing with a $k$-dof framework $(G,\bp)$ in rigidity theory,
and we will also use this technique in our analysis. 
The following technical lemmas will be used only in Sections~\ref{sec:bad} and~\ref{sec:well-behaved}, so the reader may wish to skip this subsection and refer back to it later.

For $v_s\in V(G)$ and $t\in \{1,2,3\}$, let ${\bf e}_{s,t}: V\rightarrow \R^3$ be such that ${\bf e}_{s,t}(v_s)$ is the unit vector with $1$ at the $t$-coordinate and zeros elsewhere, and ${\bf e}_{s,t}(v_i)=0$ for  $i\neq s$.

\begin{lemma}\label{lem:canonical}
Let $(G,\bp)$ be a 
$k$-dof framework such that $\bp(v_i)=(x_i,y_i)\in \mathbb{R}^2$ for all $v_i\in V(G)$; $v_a, v_b, v_c$ be three distinct vertices such that $(y_a-y_b)(y_a-y_c)(y_b-y_c)\neq 0$;  $\bq_0, \bq_1, \dots, \bq_k$ be motions of $(G,\bp)$ such that 
$\bq_1, \dots, \bq_k$ are linearly independent and 
\begin{equation}
\label{eq:canonical1}
\bq_i\cdot {\bf e}_{s,t}=0 \text{ for all } (s,t)\in \{(a,1), (a, 2), (a,3), (b,1), (b,2), (c,1)\}
\end{equation}
for every $i\in \{0,1,\ldots,k\}$.
Then  $Z(G,\bp)=Z_0(G,\bp)\oplus \langle \bq_1, \dots, \bq_k\rangle$ and $\bq_0\in \langle \bq_1, \dots, \bq_k\rangle$.
\end{lemma}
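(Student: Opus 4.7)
The plan is to introduce the evaluation map $\varphi:Z(G,\bp)\to \R^{6}$ that reads off the six pinned coordinates of a motion, namely
\[
\varphi(\bq) \;=\; \bigl(\bq\cdot{\bf e}_{a,1},\,\bq\cdot{\bf e}_{a,2},\,\bq\cdot{\bf e}_{a,3},\,\bq\cdot{\bf e}_{b,1},\,\bq\cdot{\bf e}_{b,2},\,\bq\cdot{\bf e}_{c,1}\bigr),
\]
and to show that its restriction to $Z_0(G,\bp)$ is an isomorphism onto $\R^6$. Given this, the kernel $W:=\ker\varphi$ satisfies $\dim W=\dim Z(G,\bp)-6=k$ and $Z(G,\bp)=Z_0(G,\bp)\oplus W$. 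Since $\bq_1,\dots,\bq_k$ are linearly independent elements of $W$ by hypothesis \eqref{eq:canonical1}, they form a basis of $W$, and since $\bq_0\in W$ as well, the containment $\bq_0\in\langle\bq_1,\dots,\bq_k\rangle$ follows.

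The main step, and the only one requiring computation, is to prove that $\varphi|_{Z_0(G,\bp)}$ is injective. Write a general trivial motion as $\bq=\sum_{i=1}^6\alpha_i\bq_i^*$; then using \eqref{eq:rigid_motions1}--\eqref{eq:rigid_motions2}, the first coordinate of $\bq(v_i)$ equals $\alpha_1+\alpha_4 y_i+\alpha_6 y_i^2$. The three equations $\varphi(\bq)\cdot{\bf e}_{a,1}=\varphi(\bq)\cdot{\bf e}_{b,1}=\varphi(\bq)\cdot{\bf e}_{c,1}=0$ therefore amount to the linear system
\[
\begin{pmatrix} 1 & y_a & y_a^2 \\ 1 & y_b & y_b^2 \\ 1 & y_c & y_c^2 \end{pmatrix}
\begin{pmatrix}\alpha_1\\ \alpha_4\\ \alpha_6\end{pmatrix}=\begin{pmatrix}0\\0\\0\end{pmatrix},
\]
whose Vandermonde determinant $(y_b-y_a)(y_c-y_a)(y_c-y_b)$ is nonzero by the standing hypothesis. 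Hence $\alpha_1=\alpha_4=\alpha_6=0$. With these vanishing, the conditions $\varphi(\bq)\cdot{\bf e}_{a,2}=0$ and $\varphi(\bq)\cdot{\bf e}_{b,2}=0$ reduce to $\alpha_2=\alpha_5 y_a$ and $\alpha_2=\alpha_5 y_b$, forcing $\alpha_5=0$ (again by $y_a\neq y_b$) and then $\alpha_2=0$. Finally $\varphi(\bq)\cdot{\bf e}_{a,3}=\alpha_3=0$. Thus $\bq=0$.

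Injectivity on the six-dimensional space $Z_0(G,\bp)$ together with $\dim\R^6=6$ forces $\varphi|_{Z_0(G,\bp)}$ to be an isomorphism; equivalently, $Z_0(G,\bp)\cap W=0$ and $\varphi$ itself is surjective on $Z(G,\bp)$. A dimension count then gives $\dim W=k$, and since $\bq_1,\dots,\bq_k\in W$ are linearly independent they span $W$, yielding the claimed decomposition $Z(G,\bp)=Z_0(G,\bp)\oplus\langle\bq_1,\dots,\bq_k\rangle$ and the membership $\bq_0\in\langle\bq_1,\dots,\bq_k\rangle$. The only conceptual obstacle is identifying the right six pinning directions so that the resulting matrix in the trivial motions is precisely the Vandermonde in $y_a,y_b,y_c$; once this is seen, the proof is pure linear algebra.
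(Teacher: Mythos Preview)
Your proof is correct and follows essentially the same approach as the paper: both reduce to showing that the six pinning constraints annihilate only the zero trivial motion, whence $Z_0(G,\bp)\cap\ker\varphi=0$ and a dimension count finishes. The only difference is cosmetic---the paper writes out the full $6\times 6$ coefficient matrix and asserts its determinant equals $(y_a-y_b)^2(y_c-y_a)(y_b-y_c)$, whereas you block-triangularise the system (first the $3\times 3$ Vandermonde in $\alpha_1,\alpha_4,\alpha_6$, then back-substitute for $\alpha_2,\alpha_5,\alpha_3$), which is arguably cleaner and makes the role of the hypothesis on the $y$-coordinates more transparent.
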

\begin{proof}
Suppose $\sum_{i=1}^k \mu_i \bq_i =\sum_{j=1}^6 \lambda_j\bq^*_j$ for some  $\mu_i, \lambda_j\in \mathbb{R}$. Then (\ref{eq:canonical1}) gives
\[
\sum_{j=1}^6 \lambda_j(\bq^*_j \cdot {\bf e}_{s,t})=0 \text{ for every } (s,t)\in \{(a,1), (a, 2), (a,3), (b,1), (b,2), (c,1)\}.
\] By the definition of $q_i^*$ in (\ref{eq:rigid_motions1}) and (\ref{eq:rigid_motions2}), this system of equations can be written as 
\[
\begin{pmatrix}
1 & 0 & 0 & y_a & 0 & y_a^2 \\ 
0 & 1 & 0 & -x_a & -y_a & -2x_ay_a \\ 
0 & 0 & 1 & 0 & x_a & x_a^2 \\
1 & 0 & 0 & y_b & 0 & y_b^2 \\ 
0 & 1 & 0 & -x_b & -y_b & -2x_by_b \\
1 & 0 & 0 & y_c & 0 & y_c^2 
\end{pmatrix}
\begin{pmatrix}
\lambda_1 \\
\lambda_2 \\
\lambda_3 \\
\lambda_4 \\
\lambda_5 \\
\lambda_6 
\end{pmatrix}
=
\begin{pmatrix}
0 \\
0 \\
0 \\
0 \\
0 \\
0
\end{pmatrix}.
\]
Let $Q$ denote the matrix of coefficients of this system. 
Then $\det Q=(y_a-y_b)^2(y_c-y_a)(y_b-y_c)\neq 0$.
Hence $Q$ is non-singular, and   $\lambda_j=0$ for all $1\leq j\leq 6$. Thus $Z_0(G,\bp)\cap \langle \bq_1, \dots, \bq_k\rangle=\{0\}$. The fact that $(G,\bp)$ is a $k$-dof framework now gives $Z(G,\bp)=Z_0(G,\bp)\oplus \langle \bq_1, \dots, \bq_k\rangle$.

Since $\bq_0$ is a motion, we have
$\bq_0=\sum_{i=1}^k \mu_i' \bq_i +\sum_{j=1}^6 \lambda_j'\bq^*_j$ for some  $\mu_i', \lambda_j'\in \mathbb{R}$.
We can  use (\ref{eq:canonical1}) to obtain 
\[
\sum_{j=1}^6 \lambda_j'(\bq^*_j \cdot {\bf e}_{s,t})=0 \text{ for every } (s,t)\in \{(a,1), (a, 2), (a,3), (b,1), (b,2), (c,1)\}.
\] The same argument as in the previous paragraph now gives 
$\lambda_j'=0$ for all $1\leq j\leq 6$. 
In other words, $\bq_0$ is a linear combination of $\bq_1, \dots, \bq_k$.
\end{proof}

Suppose $v_a, v_b, v_c$ are distinct vertices in a framework $(G,\bp)$. 
We define the {\em extended $C_2^1$-cofactor matrix} $\tilde{C}(G,\bp)$ (with respect to  $(v_a, v_b, v_c)$) 
to be the matrix of size $3|V|\times (|E|+6)$ obtained from $C(G,\bp)$ by adding the six rows,
${\bf e}_{a,1}, {\bf e}_{a,2}, {\bf e}_{a,3}, {\bf e}_{b,1}, {\bf e}_{b,2}, {\bf e}_{c,1}$.

For example, for $(K_4,\bp)$ with $\bp(v_1)=(0,0)$, $\bp(v_2)=(1,0)$, $\bp(v_3)=(0,1)$ and $\bp(v_4)=(-1,-1)$, 
the extended $C_2^1$-cofactor matrix with respect to $(v_1,v_2,v_3)$ is
\[
\kbordermatrix{
 & & v_1 &  & & & v_2 & & & & v_3 & & & & v_4 & \\
 & 1 & 0 & 0 & & 0 & 0 & 0 && 0 & 0 & 0 && 0 & 0 & 0 \\ 
 & 0 & 1 & 0 & & 0 & 0 & 0 && 0 & 0 & 0 && 0 & 0 & 0 \\ 
 & 0 & 0 & 1 & & 0 & 0 & 0 && 0 & 0 & 0 && 0 & 0 & 0 \\ 
 & 0 & 0 & 0 & & 1 & 0 & 0 && 0 & 0 & 0 && 0 & 0 & 0 \\ 
 & 0 & 0 & 0 & & 0 & 1 & 0 && 0 & 0 & 0 && 0 & 0 & 0 \\ 
 & 0 & 0 & 0 & & 0 & 0 & 0 && 1 & 0 & 0 && 0 & 0 & 0 \\ 
v_1v_2 & 1 & 0 & 0 & & -1 & 0 & 0 & & 0 & 0 & 0 & & 0 & 0 & 0 \\ 
v_1v_3 & 0 & 0 & 1 & & 0 & 0 & 0 & & 0 & 0 & -1 & & 0 & 0 & 0 \\ 
v_1v_4 & 1 & 1 & 1 & & 0 & 0 & 0 & & 0 & 0 & 0 & & -1 & -1 & -1 \\
v_2v_3 & 0 & 0 & 0 & & 1 & -1 & 1 & & -1 & 1 & -1 & & 0 & 0 & 0 \\
v_2v_4 & 0 & 0 & 0 & & 4 & 2 & 1 & & 0 & 0 & 0 & & -4 & -2 & -1 \\
v_3v_4 & 0 & 0 & 0 & & 0 & 0 & 0 & & 1 & 2 & 4 & & -1 & -2 & -4 
}.
\]

\medskip

\begin{lemma}
\label{lem:1dof_motion1}
Let $(G,\bp)$ be a framework with $|E(G)|=3|V(G)|-(6+k)$, and denote $\bp(v_i)=(x_i,y_i)$ for each $v_i\in V$. Let $\tilde{C}(G,\bp)$ be the
extended $C_2^1$-cofactor matrix  with respect to three vertices $(v_a, v_b, v_c)$ and suppose that 
$(y_a-y_b)(y_a-y_c)(y_b-y_c)\neq 0$.
Then $\tilde{C}(G,\bp)$ is row independent if and only if $\dim Z(G,\bp)=6+k$.
\end{lemma}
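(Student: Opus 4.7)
The plan is to reduce row independence of $\tilde{C}(G,\bp)$ to a dimension statement about its right kernel, and then to compare that kernel with $Z(G,\bp)$ via the restriction-to-pinnings isomorphism already implicit in Lemma~\ref{lem:canonical}.

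First I would note that $\tilde{C}(G,\bp)$ is a matrix with $|E(G)|+6=3|V(G)|-k$ rows and $3|V(G)|$ columns. By rank-nullity, its rows are linearly independent if and only if its right kernel $\tilde{Z}\subseteq\R^{3|V(G)|}$ has dimension exactly $k$. Reading off the row equations, an element $\bq\in\tilde{Z}$ (viewed as a map $V\to\R^3$) is precisely a vector satisfying both the $C_2^1$-motion equations~(\ref{eq:cofactor_inf}) contributed by the rows of $C_2^1(G,\bp)$ and the six pinning equations~(\ref{eq:canonical1}) contributed by the rows ${\bf e}_{s,t}$. Hence $\tilde{Z}$ is exactly the subspace of $C_2^1$-motions of $(G,\bp)$ satisfying~(\ref{eq:canonical1}).

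The key step is to establish the direct sum decomposition $Z(G,\bp)=Z_0(G,\bp)\oplus\tilde{Z}$. Consider the evaluation map $\pi:Z(G,\bp)\to\R^6$ sending $\bq$ to the six-tuple $(\bq\cdot{\bf e}_{s,t})$ as $(s,t)$ ranges over the six pinning indices, so that $\tilde{Z}=\ker\pi$. When $\pi$ is restricted to $Z_0(G,\bp)$ and written in the basis $\bq_1^*,\dots,\bq_6^*$, it is given by the $6\times 6$ matrix $Q$ appearing in the proof of Lemma~\ref{lem:canonical}; its determinant $(y_a-y_b)^2(y_c-y_a)(y_b-y_c)$ is nonzero by hypothesis. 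Therefore $\pi|_{Z_0(G,\bp)}$ is a linear isomorphism onto $\R^6$, which forces $Z_0(G,\bp)\cap\tilde{Z}=\{0\}$ and, by correcting any $\bq\in Z(G,\bp)$ by the unique $\bq^0\in Z_0(G,\bp)$ with $\pi(\bq^0)=\pi(\bq)$, also $Z(G,\bp)=Z_0(G,\bp)+\tilde{Z}$.

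Taking dimensions gives $\dim Z(G,\bp)=6+\dim\tilde{Z}$, which combined with the first step yields the desired equivalence. I do not anticipate a substantive obstacle: the only nontrivial input is the nonvanishing of the $6\times 6$ determinant, which has already been handled in the proof of Lemma~\ref{lem:canonical}. The remaining care is simply to identify the right kernel of $\tilde{C}(G,\bp)$ with the space of pinned motions, and this is immediate from the definition of the extended matrix.
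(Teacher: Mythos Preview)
Your proof is correct and rests on the same key input as the paper's, namely the nonsingularity of the $6\times 6$ matrix $Q$ from the proof of Lemma~\ref{lem:canonical}. The only difference is organizational: the paper handles the two implications separately (the forward one via rank--nullity on $C(G,\bp)$, the reverse by invoking Lemma~\ref{lem:canonical} on a putative $(k{+}1)$-st independent pinned motion), whereas you establish the unconditional direct sum $Z(G,\bp)=Z_0(G,\bp)\oplus\tilde Z$ once and read off the equivalence symmetrically.
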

\begin{proof}
If $\tilde{C}(G,\bp)$ is row independent, then $C(G,\bp)$ is row independent and hence $$\dim Z(G,\bp)=3|V(G)|-|E(G)|=6+k.$$
To see the converse, suppose $\dim Z(G,\bp)=6+k$ but $\tilde{C}(G,\bp)$ is row dependent.
We can choose $k+1$ linearly independent $C_2^1$-motions $\bq_0, \bq_1,\dots, \bq_k$ of $(G,\bp)$ from the kernel of $\tilde{C}(G,\bp)$. 
Then each $\bq_i$ 
satisfies (\ref{eq:canonical1}). Hence, by Lemma~\ref{lem:canonical}, $\bq_0\in\langle \bq_1,\dots, \bq_k\rangle$, which is a contradiction.
\end{proof}

\subsection{Roadmap for  the proof of Theorem~\ref{thm:cofactor}}
The proof of Theorem~\ref{thm:cofactor} is structured as follows:
{\footnotesize
\[
\text{Theorem~\ref{thm:cofactor} $\overset{\mathrm{Sec~\ref{subsec:K_d matroid}}}\Longleftarrow$
Theorem~\ref{thm:K_d+2matroid} $\overset{\mathrm{Sec~\ref{subsec:proof_of_thm12}}}\Longleftarrow$
Theorem~\ref{thm:doubleVconj} $\overset{\mathrm{Sec~\ref{sec:doubleV}}}\Longleftarrow$
Theorem~\ref{thm:doubleV} $\overset{\mathrm{Sec~\ref{sec:bad}}}\Longleftarrow$
Theorem~\ref{thm:well-behaved0}$\overset{\mathrm{Sec~\ref{sec:well-behaved}}}\Longleftarrow$
Theorem~\ref{thm:well-behaved}}.
\]
}
Below we briefly explain each theorem and step.

\paragraph{Section~\ref{subsec:K_d matroid}:} 
It is known that the edge set of $K_5$ is a circuit  in any abstract 3-rigidity matroid (see, e.g., \cite{GSS93} or Theorem~\ref{thm:hang}).
A matroid satisfying this circuit property is called a $K_5$-matroid.
The class of $K_5$-matroids is slightly larger  than the set of abstract 3-rigidity matroids.
Theorem~\ref{thm:K_d+2matroid} states that the generic $C_2^1$-cofactor matroid ${\cal C}_2^1(K_n)$ is the unique maximal matroid in the poset of all  $K_5$-matroids on $E(K_n)$.
Thus Theorem~\ref{thm:K_d+2matroid} is a stronger version of Theorem~\ref{thm:cofactor}.

\paragraph{Section~\ref{subsec:proof_of_thm12}:}
 Graver~\cite{G91} proved that the generic 2-dimensional rigidity matroid is the unique maximal abstract 2-rigidity matroid based on that fact that 
 any base of the generic 2-dimensional rigidity matroid  can be constructed from $K_3$ by a sequence of two simple graph operations, called the Henneberg constructions. 
 Following the same approach as that of Graver~\cite{G91}, 
 we show that Theorem~\ref{thm:K_d+2matroid} will follow from an inductive construction of the bases of the  ${\cal C}_2^1$-cofactor matroid. 
For this purpose, Whiteley~\cite{Wsurvey} already verified that  various graph operations preserve $C_2^1$-independence. 
However, there remained one operation, double V-replacement, that Whiteley could not show preserved independence. (See Figure~\ref{fig:doubleV} for an example of double V-replacement.).
We confirm that it does preserve independence in Theorem~\ref{thm:doubleVconj}. The remainder of the paper is devoted to proving this result.

\paragraph{Section~\ref{sec:doubleV}:}
By combining existing results from~\cite{Wsurvey} and a simple combinatorial argument in  Section~\ref{sec:doubleV}, we prove that Theorem~\ref{thm:doubleVconj} holds in all but  one special case.
This remaining special case is when ${\rm cl}(E(G-v_0))\cap K(N_G(v_0))$ forms a star on five vertices as shown in Figure~\ref{fig:type_star},
where $v_0$ denotes the vertex created by the double $V$-replacement.
We formulate a new result, Theorem~\ref{thm:doubleV}, to  deal with  this remaining special case.

\paragraph{Section~\ref{sec:bad}:}

 In Section~\ref{sec:bad}, we first show that, if the statement of  Theorem~\ref{thm:doubleV} fails, then the framework created by  the double V-replacement operation has a very special $C_2^1$-motion.
 Theorem~\ref{thm:well-behaved0} states that such a motion cannot exist in any generic $1$-dof framework.

 \paragraph{Section~\ref{sec:well-behaved}:}
 We prove Theorem~\ref{thm:well-behaved0}. Our inductive proof requires a more general inductive statement, Theorem~\ref{thm:well-behaved}, which  concerns frameworks with at most two degrees of freedom. The proof is rather long so we first outline the main ideas in Section~\ref{subsec:outline}.

\section{Inductive Construction and Proof of Theorem~\ref{thm:cofactor}}
\label{sec:inductive}
\subsection{Inductive Construction}\label{subsec:inductive}
Our proof of Theorem~\ref{thm:cofactor} 
uses the following  3-dimensional versions of standard graph operations from rigidity theory. 
Given a graph $H$:
\begin{itemize}
\item the \emph{$0$-extension} operation adds a new vertex $v$ and three new edges from $v$ to vertices in $H$; 
\item the {\em $1$-extension} operation chooses an edge $e$ of $H$ and adds a new vertex $v$ and four new edges from $v$ to vertices in $H-e$ with the proviso that two of the new edges join $v$ to the end-vertices of $e$;
\item the {\em X-replacement} operation chooses two non-adjacent edges $e$ and $f$ of $H$ and adds a new vertex $v$ and five new edges from $v$ to vertices in $H-\{e,f\}$ with the proviso that four of the new edges join $v$ to the end-vertices of $e$ and $f$;
\item the {\em  V-replacement} operation chooses two adjacent edges $e$ and $f$ of $H$ and adds a new vertex $v$ and five new edges from $v$ to vertices in $H-\{e,f\}$ with the proviso that three of the new edges join $v$ to the end-vertices of $e$ and $f$;
\item the {\em  vertex splitting} operation chooses a vertex $u$ of $H$ and pairwise disjoint sets  $U_1,U_2,U_3$ with $U_1\cup U_2\cup U_3=N_H(u)$ and $|U_2|=2$, deletes all edges from $u$ to $U_3$, and then adds a new vertex $v$ and $|U_3|+3$ new edges from $v$ to each vertex in $U_2\cup U_3\cup \{u\}$.
\end{itemize}
Whiteley~\cite[Lemmas 10.1.5 and 10.2.1, Theorems 10.2.7 and 10.3.1]{Wsurvey} showed that all but one of these operations preserve  $C_2^1$-independence:
\begin{lemma}
\label{lem:cofactor_ind}
The  $0$-extension, $1$-extension, X-replacement, and vertex splitting operations  preserve $C_{2}^{1}$-independence.
\end{lemma}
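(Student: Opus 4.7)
The plan is to treat each of the four operations by a common template: specialize the coordinate of the new vertex $v$ to a degenerate position, verify that even in this specialization the relevant rows of the $C_2^1$-cofactor matrix remain linearly independent, and then invoke upper semi-continuity of matrix rank to transfer independence to a generic $\bp$. The algebraic key throughout is the identity $D((1-t)p_1+tp_2,\, p_1)=t^2 D(p_1,p_2)$ and $D((1-t)p_1+tp_2,\, p_2)=(1-t)^2 D(p_1,p_2)$, which follows directly from the definition of $D$.

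The $0$-extension is essentially immediate. The three new rows have non-zero entries only in the $v$- and $u_i$-column blocks, and no row indexed by an edge of $H$ has any entry in the $v$-block. Hence it suffices to check that $D(u_1,v), D(u_2,v), D(u_3,v)$ are linearly independent in $\R^3$ for generic $\bp(v)$, which is verified by a single explicit placement. For the $1$-extension at an edge $e=u_1u_2$, specialize $\bp(v)$ onto the line through $\bp(u_1)$ and $\bp(u_2)$, so that $\bp(v)=(1-t)\bp(u_1)+t\bp(u_2)$. Using the identity above, the combination $-t^{-2}(\text{row } vu_1)+(1-t)^{-2}(\text{row } vu_2)$ reproduces the row for $e$ while contributing zero to the $v$-block. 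Thus the row span restricted to $E(H)\setminus\{e\}$ recovers that of $C_2^1(H,\bp)$, and the remaining three dimensions in the $v$-block are filled out by the rows for $vu_3$ and $vu_4$ (contributing $D(v,u_3)$ and $D(v,u_4)$, generically independent of $D(u_1,u_2)$).

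The X-replacement case is analogous: given non-adjacent split edges $e=u_1u_2$ and $f=u_3u_4$, specialize $\bp(v)$ to the intersection of the two lines determined by these edges, so that the $1$-extension calculation applies simultaneously to each pair. The residual one-dimensional contributions to the $v$-block are along $D(u_1,u_2)$ and $D(u_3,u_4)$, which are generically independent, and together with the row of the fifth new edge they span $\R^3$. For vertex splitting, the natural specialization is to send $\bp(v)$ to $\bp(u)$. In this coincident limit the row for the new edge $vu$ vanishes, and for every $w\in U_2\cup U_3$ the row for $vw$ agrees with the row for $uw$ after swapping the $v$- and $u$-column blocks. An elementary block change of basis on these two column blocks then identifies the row space with that of $C_2^1(H,\bp)$ plus a complementary block supported on the $v$-columns, whose rank can be checked by perturbing $\bp(v)$ along a generic direction away from $\bp(u)$.

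The main technical obstacle is the vertex-splitting case, because one has to handle all $|U_3|+3$ new edges at $v$ simultaneously and justify that the rank drop produced by the coincident limit is exactly $|U_3|+3$, matching the added rows; the $3$-dimensional transversality ensuring this is precisely what the perturbation along a generic direction provides. The other three operations reduce to short, essentially elementary calculations once the correct degenerate placement of $\bp(v)$ has been chosen, and together with Whiteley's general semi-continuity framework yield the lemma.
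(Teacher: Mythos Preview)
The paper does not prove this lemma itself; it cites Whiteley~\cite{Wsurvey}. Your degeneration-plus-semicontinuity template is exactly Whiteley's strategy, and your treatments of $0$-extension, $1$-extension, and X-replacement are correct and match his (the paper even invokes the same specializations when it later appeals to the proofs of \cite[Theorems 10.2.1 and 10.3.1]{Wsurvey}).

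The vertex-splitting case, however, has a real gap. At the coincident specialization $\bp(v)=\bp(u)$ the row of $C_2^1(G,\bp)$ indexed by the new edge $vu$ is \emph{identically zero}, so $\rank C_2^1(G,\bp_0)\le |E(G)|-1$ regardless of any column operations; you therefore cannot conclude row independence from this position, and ``perturbing $\bp(v)$ along a generic direction'' does not rescue the argument, since any perturbation destroys the coincidence that made the identification with $C_2^1(H,\bp)$ work in the first place. Whiteley's actual fix (which the paper describes in the proof of Claim~\ref{claim:2mechanism4} and in the proof of statement~(\ref{eq:special}) in Appendix~\ref{sec:bad_pinning}) is to replace the vanishing $vu$-row by the row carrying $D(\bp(v),\bp(u))$ for a \emph{generic} $\bp(v)$, giving a modified matrix $C^*(G,\bp_0)$. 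Because the true $vu$-row at $\bp_t(v)=(1-t)\bp(u)+t\bp(v)$ equals $t^2$ times this replacement row, $C^*$ and $C_2^1$ have identical row spaces for every $t\neq 0$; one then verifies directly that $C^*(G,\bp_0)$ is row independent (the three vectors needed in the $v$-block are $D(u,w_1),\,D(u,w_2),\,D(\bp(v),\bp(u))$ with $U_2=\{w_1,w_2\}$, generically independent in $\R^3$), and semicontinuity applied to $C^*$ finishes the proof. Your sentence about ``the rank drop produced by the coincident limit being exactly $|U_3|+3$'' does not capture this mechanism.
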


In general V-replacement may not preserve $C_2^1$-independence, but there is an important special case when it does.

\begin{lemma}
\label{lem:V} Suppose that $v_1,v_2,v_3,v_4,v_5$ are vertices of a $C_2^1$-independent graph $H=(V,E)$, $e=v_1v_2,f=v_1v_3$ are edges of $H$ and  $v_1v_4,v_1v_5$ belong to the closure of $E-e-f$ in the $C^1_{2}$-cofactor matroid on $K(V)$. Then the graph $G$ obtained from $H-e-f$ by adding a new vertex $v$ and new edges $vv_1,vv_2,vv_3,vv_4,vv_5$ is $C_2^1$-independent.
\end{lemma}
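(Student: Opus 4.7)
The plan is to first handle a clean subcase by vertex splitting, and then to reduce the general case to a linear-algebraic verification on the $C_2^1$-cofactor matrix.

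First, consider the subcase in which $v_1v_4, v_1v_5 \in E(H)$. In this case $G$ arises from $H$ by applying the vertex splitting operation (Lemma~\ref{lem:cofactor_ind}) at the vertex $v_1$ with $U_2 = \{v_4,v_5\}$, $U_3 = \{v_2,v_3\}$, and $U_1 = N_H(v_1)\setminus\{v_2,v_3,v_4,v_5\}$: the splitting deletes the edges from $v_1$ to $U_3$ (namely $e=v_1v_2$ and $f=v_1v_3$) and adds a new vertex $v$ adjacent to $U_2\cup U_3\cup\{v_1\}=\{v_1,v_2,v_3,v_4,v_5\}$, which is exactly $G$. So $G$ is $C_2^1$-independent by Lemma~\ref{lem:cofactor_ind} in this subcase, and the closure hypothesis is not needed.

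For the general case, when $v_1v_4$ or $v_1v_5$ need not be an edge of $H$, I would argue directly via the restriction map $\pi: Z(G,\bp)\to Z(H-e-f,\bp)$ sending $\bq\mapsto \bq|_V$ for generic $\bp$. This map is injective: a motion of $G$ that vanishes on $V$ must satisfy $D(v,v_i)\cdot \bq(v)=0$ for $i=1,\dots,5$, and for generic $\bp$ the vectors $D(v,v_1),\dots,D(v,v_5)$ span $\R^3$, forcing $\bq(v)=0$. Hence $\dim Z(G,\bp)=\dim\,\mathrm{image}(\pi)$. Since $\dim Z(H-e-f,\bp)=3|V|-|E|+2$ (because $H-e-f$ is a subset of the independent set $E$), showing $G$ is $C_2^1$-independent is equivalent to showing that $\mathrm{image}(\pi)$ has codimension exactly $2$ in $Z(H-e-f,\bp)$. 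The image consists of those $\bq_0\in Z(H-e-f,\bp)$ for which the linear system $D(v,v_i)\cdot(\bq(v)-\bq_0(v_i))=0$, $i=1,\dots,5$, is solvable in $\bq(v)\in\R^3$. By elementary linear algebra this is equivalent to $\sum_{i=1}^5 \alpha_i\, D(v,v_i)\cdot \bq_0(v_i)=0$ for every $\alpha$ in the $2$-dimensional left kernel of the matrix $M=[D(v,v_i)]_{i=1}^5\in\R^{5\times 3}$, so the image is cut out by two linear functionals $\phi_\alpha,\phi_\beta$ on $Z(H-e-f,\bp)$. The whole problem reduces to showing that $\phi_\alpha$ and $\phi_\beta$ are linearly independent as functionals on the motion space.

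The closure hypothesis enters exactly here, and this is where I expect the main obstacle to lie. The identities $v_1v_4,v_1v_5\in {\rm cl}(E-e-f)$ assert that $D(v_1,v_j)\cdot(\bq_0(v_1)-\bq_0(v_j))=0$ for $j=4,5$ holds identically on $Z(H-e-f,\bp)$. Using $\sum_i\alpha_i D(v,v_i)=0$, I would rewrite $\phi_\alpha(\bq_0)=\sum_{i=2}^5\alpha_i D(v,v_i)\cdot(\bq_0(v_i)-\bq_0(v_1))$ and then substitute the two closure identities at $i=4,5$; this expresses $\phi_\alpha$ and $\phi_\beta$ as linear combinations of edge-type functionals attached to $v_1v_2,v_1v_3,v_1v_4,v_1v_5$, whose linear independence can be checked directly from the explicit rank-one symmetric-tensor form of $D(p,q)$ together with the genericity of $\bp(v)$ (algebraically independent from $\bp|_V$). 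An attractive alternative, which I would pursue first because it avoids the computation, is to look for a matroid-exchange argument: since $v_1v_4,v_1v_5\in{\rm cl}(E-e-f)$, one can try to replace two edges of $E$ by $v_1v_4,v_1v_5$ within the matroid ${\cal C}_2^1$ so as to transport the problem to the adjacent case handled in the first paragraph. Whether such an exchange can be realized as a graph modification that preserves independence of $H$ and the structure of the V-replacement is the essential combinatorial content; if it can, the general case follows from the subcase above with no further calculation.
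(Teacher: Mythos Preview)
Your ``attractive alternative'' at the very end is precisely the paper's proof, and it is much simpler than you fear. The point is that you do not need to realise the exchange as a modification of $H$ itself. Instead, since $v_1v_4,v_1v_5\in\cl(E-e-f)$, choose any base $B$ of $\cl(E-e-f)$ containing $v_1v_4$ and $v_1v_5$. Because $E$ is independent, $e,f\notin\cl(E-e-f)=\cl(B)$, so $B+e+f$ is independent; call this graph $H'$. Now $H'$ falls under your clean subcase (all four of $v_1v_2,v_1v_3,v_1v_4,v_1v_5$ are genuine edges of $H'$), so the vertex-split graph $G'$ with edge set $B\cup\{vv_1,\dots,vv_5\}$ is $C_2^1$-independent. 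Finally, since $\cl(B)=\cl(E-e-f)$ and $|B|=|E-e-f|$, we have $E(G)\subseteq\cl(E(G'))$ and $|E(G)|=|E(G')|$, hence $G$ is independent. That is the entire argument.

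Your linear-algebra route via the restriction $\pi$ is a genuinely different approach. The setup is correct and the reduction to the independence of $\phi_\alpha,\phi_\beta$ on $Z(H-e-f,\bp)$ is sound, but you have not closed that step, and it is not obvious how to do so cleanly: the closure hypothesis gives you two \emph{constraints} that hold on all of $Z(H-e-f,\bp)$, whereas what you need is that two \emph{other} functionals (depending also on the generic point $\bp(v)$) do not collapse on that space. Carrying this through seems to require either an explicit calculation with the rank-one tensor structure of $D$ or, in effect, reproving the vertex-splitting lemma in coordinates. The matroid argument bypasses all of this in two lines.
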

\begin{proof}
 Let $\cl(.)$ denote the closure operator in the $C^1_{2}$-cofactor matroid on $K(V)$. Since  $v_1v_4,v_1v_5\in \cl(E-e-f)$,  we may choose a base $B$ of $\cl(E-e-f)$ with $v_1v_4,v_1v_5\in B$. Then the graph $H'=(B,B+e+f)$ is $C_2^1$-independent and we can now apply the vertex splitting operation at $v_1$ to deduce that the graph $G'$ obtained from $H'-e-f$ by adding a new vertex $v$ and new edges $vv_1,vv_2,vv_3,vv_4,vv_5$ is $C_2^1$-independent, see Figure~\ref{fig:vertexsplitting}. This and the fact that $\cl(B)=\cl(E-e-f)$ imply that $G$ is $C_2^1$-independent.
 \end{proof}
 
 \noindent
Lemma \ref{lem:V} will be used several times in our proofs 
 
Whiteley~\cite{Wsurvey} conjectured that another special case of V-replacement preserves $C_2^1$-independence: if $H\cup \{e_1,e_2\}$ and $H\cup \{e_1',e_2'\}$ are both $C_2^1$-independent for two pairs of adjacent edges $e_1,e_2$ and $e_1',e_2'$ with the property that the common endvertex of $e_1,e_2$ is distinct from that of $e_1',e_2'$, then the graph $G$ obtained by adding a new vertex $v_0$ of degree five to $H$ in such a way that the endvertices of $e_1,e_2,e_1',e_2'$ are all neighbours of $v_0$, is $C_2^1$-independent. See Figure~\ref{fig:doubleV}. This operation is referred to as  {\em double V-replacement} since $G$ can be constructed from both $G-v_0+e_1+e_2$ and $G-v_0+e_1'+e_2'$ by a V-replacement.
%
We will verify Whiteley's conjecture:

\begin{figure}[t]
\centering
\includegraphics[scale=0.8]{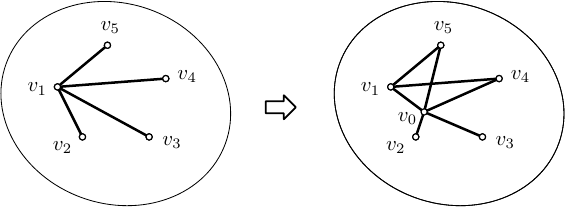}
\caption{An extension operation which can be considered as a special case of both V-replacement and vertex splitting. }
\label{fig:vertexsplitting}
\end{figure}

\begin{figure}[t]
\centering
\includegraphics[scale=0.8]{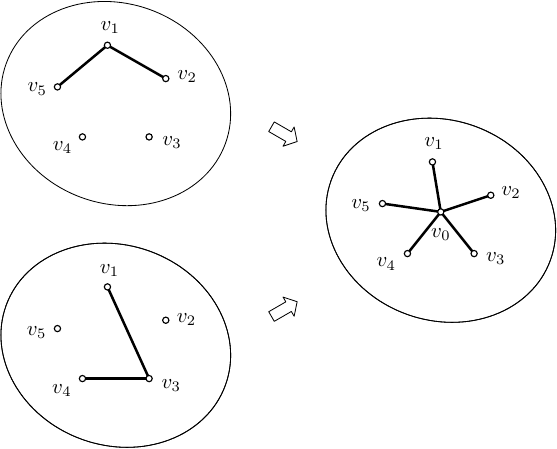}
\caption{An example of double V-replacement.}
\label{fig:doubleV}
\end{figure}

\begin{theorem}\label{thm:doubleVconj}
The  double V-replacement operation  preserves $C_{2}^{1}$-independence.

\end{theorem}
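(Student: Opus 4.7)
The plan is to reduce the claim to a single combinatorially-constrained configuration and then resolve that case by analysing $C_2^1$-motions. Let $v_0$ be the new degree-five vertex, set $H_0 = G - v_0$, write $N_G(v_0)=\{v_1,\dots,v_5\}$, and let $\cl(\cdot)$ denote the closure operator of the $C_2^1$-cofactor matroid on $K(V(H_0))$. By hypothesis both $E(H_0)\cup\{e_1,e_2\}$ and $E(H_0)\cup\{e_1',e_2'\}$ are $C_2^1$-independent, and the common endpoint of $e_1,e_2$ differs from that of $e_1',e_2'$.

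My first move would be to apply Lemma~\ref{lem:V} directly. Label so that $e_1=v_1v_2$, $e_2=v_1v_3$; then if $v_1v_4, v_1v_5\in\cl(E(H_0))$, Lemma~\ref{lem:V} applied to $H_0+e_1+e_2$ immediately produces $G$ as a $C_2^1$-independent graph. The symmetric reduction works from the $(e_1',e_2')$ side. A short combinatorial argument, analysing which edges of $K(N_G(v_0))$ can lie in $\cl(E(H_0))$ without already forcing one of these two reductions, should show that the only surviving case is when $\cl(E(H_0))\cap K(N_G(v_0))$ is precisely a star on the five neighbours of $v_0$. This isolates the residual configuration handled by Theorem~\ref{thm:doubleV}.

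For this star case I would argue by contradiction: suppose $G$ is $C_2^1$-dependent. Since $H_0$ is $C_2^1$-independent, any nonzero self-stress on $G$ must be supported on the five edges incident to $v_0$. Passing this dependency to the right kernel via the pinning apparatus of Section~\ref{subsec:pinning} (notably Lemmas~\ref{lem:canonical} and~\ref{lem:1dof_motion1}) yields a $C_2^1$-motion of $H_0$ whose values at $v_1,\dots,v_5$ are heavily restricted by the coupling equations involving the vectors $D(\bp(v_0),\bp(v_i))$. The star structure of $\cl(E(H_0))\cap K(N_G(v_0))$ should then collapse these equations into the very special form of ``bad'' motion formalised in Section~\ref{sec:bad}, and Theorem~\ref{thm:well-behaved0} asserts that no such motion can occur in a generic $1$-dof framework, delivering the contradiction.

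The principal obstacle is proving Theorem~\ref{thm:well-behaved0}. The proof will be by induction on $|V|$ but needs the stronger inductive hypothesis Theorem~\ref{thm:well-behaved}, which simultaneously controls frameworks of up to two degrees of freedom so that inverse extension steps (removing a small-degree vertex or performing an inverse vertex-splitting) produce smaller instances still within the induction's scope. I expect the technical heart to be verifying that in each of the several low-degree reduction cases a putative bad motion on the large framework descends to a bad motion on the smaller one, with genericity of the coordinates used to rule out the accidental vanishings that would otherwise undermine the descent.
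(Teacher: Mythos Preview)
Your high-level architecture matches the paper's: reduce to a single residual configuration on $N_G(v_0)$, show that a failure there forces a ``bad'' motion in the sense of Section~\ref{sec:bad}, and rule out bad motions via the strengthened induction of Theorem~\ref{thm:well-behaved}. There are, however, two concrete gaps in the reduction and one false step in the motion argument.

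First, your combinatorial reduction is incomplete: two applications of Lemma~\ref{lem:V} (one from each V-pair) are not enough to force the star alternative. The paper's Lemma~\ref{lem:combinatorial} is a substantial five-case analysis, and the case of a \emph{non-adjacent} independent pair in $K(N_G(v_0))$ is disposed of by X-replacement (Lemma~\ref{lem:cofactor_ind}), which you do not invoke. Even once $\cl(E(H_0))\cap K(N_G(v_0))$ is known to be a star, further work is required---possibly another X-replacement, then a closure-exchange argument---to arrive at the precise type-$(\star)$ hypothesis of Theorem~\ref{thm:doubleV}, in which two \emph{specified} augmentations $H_0+v_1v_2+v_1v_3$ and $H_0+v_1v_3+v_3v_4$ are $C_2^1$-rigid. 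You also omit the induction on $k=3|V|-6-|E|$ that the paper uses to pass from arbitrary $C_2^1$-independence to the minimally-rigid case $|E|=3|V|-6$ needed for Theorem~\ref{thm:doubleV} and Theorem~\ref{thm:well-behaved0}.

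Second, your self-stress claim is false as stated: independence of $H_0$ does \emph{not} force a self-stress of $G$ to be supported only on the five $v_0$-edges; a circuit of $G$ through $v_0$ will typically be nonzero on many edges of $H_0$. The paper does not use self-stresses here at all. Lemma~\ref{lem:motion} works directly in the right kernel: if $G$ is not $C_2^1$-rigid then $(G,\bp)$ is 1-dof, its non-trivial motion restricts to a non-trivial motion of the 2-dof framework $(H_0,\bp|_{H_0})$, and an explicit basis $\bq_1,\bq_2$ of that non-trivial motion space (pinned at the triangle through the star centre) lets one solve the five equations $D_{0,i}\cdot(\bq(v_0)-\bq(v_i))=0$ for the combination coefficients and for $\bq(v_0)$ as explicit polynomials in $\bp(\hat N_G(v_0))$. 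It is this explicit polynomial form, together with the verification of condition~(\ref{eq:bad}), that makes the motion ``bad'' and feeds into Theorem~\ref{thm:well-behaved0}.
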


Lemma \ref{lem:cofactor_ind} and Theorem~\ref{thm:doubleVconj} imply that we can construct all minimally $C_2^1$-rigid graphs from copies of $K_4$ with a `Henneberg tree construction' using the 0-, 1-, X- and double V-extension operations. We refer the reader to \cite{TW85,Wsurvey} for more details.

The proof of Theorem \ref{thm:doubleVconj} will be spread over  Sections \ref{sec:doubleV}, \ref{sec:bad} and \ref{sec:well-behaved} of this paper. To motivate this rather long and technical proof we show in the remainder of this section how (a generalisation of) our main result, Theorem ~\ref{thm:cofactor}, can be deduced easily from Lemma \ref{lem:cofactor_ind} and Theorem \ref{thm:doubleVconj}. 

\subsection{\boldmath $K_t$-matroids
}
\label{subsec:K_d matroid}
Let $n$ and $t$ be positive integers with $t\leq n$.
A matroid ${\cal M}$ on the edge set of a complete graph $K_n$ is said to be a {\em $K_t$-matroid} if  the edge set of every copy of $K_t$ in $K_n$ is a circuit in ${\cal M}$.  The following simple characterization of abstract $d$-rigidity matroids due to Nguyen{\cite[Theorem 2.2]{N10}} 
implies that every abstract $d$-rigidity matroid is a $K_{d+2}$-matroid.
\begin{theorem}
\label{thm:hang}
Let $n,d$ be positive integers with $n\geq d+2$ and ${\cal M}$ be a matroid on 
$E(K_n)$. 
Then ${\cal M}$ is an abstract $d$-rigidity matroid if and only if 
${\cal M}$ is a $K_{d+2}$-matroid with rank $dn-{d+1\choose 2}$.
\end{theorem}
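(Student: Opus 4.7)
The plan is to prove each direction of the equivalence separately.

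For the forward direction, assume $\mathcal{M}$ is an abstract $d$-rigidity matroid. First, applying R1 with $E_2=\emptyset$ (so that $|V(E_1)\cap V(E_2)|=0\leq d-1$) gives the useful structural consequence $\mathrm{cl}_{\mathcal{M}}(E)\subseteq K(V(E))$ for every $E\subseteq E(K_n)$; in particular, every $K(W)$ is closed in $\mathcal{M}$. I would then prove the rank formula $r_{\mathcal{M}}(K(W))=d|W|-\binom{d+1}{2}$ for all $W$ with $|W|\geq d+1$ by induction on $|W|$. The inductive step writes $K(W)$ as $K(W-v)\cup K(W'\cup\{v\})$ for a chosen $d$-subset $W'\subseteq W-v$, i.e.\ as the union of two closed complete subgraphs with vertex-overlap exactly $d$; R2 then forces the closure of this union to equal $K(W)$, and combined with submodularity this pins down the rank. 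For the $K_{d+2}$-circuit property, fix $V_0$ with $|V_0|=d+2$ and any edge $e=v_iv_j\in K(V_0)$. The sets $K(V_0\setminus\{v_i\})$ and $K(V_0\setminus\{v_j\})$ are closed $K_{d+1}$'s with vertex-overlap $d$, so R2 yields $e\in \mathrm{cl}(E(K(V_0))-e)$. The rank formula forces $r(E(K(V_0)))=\binom{d+2}{2}-1$, so $E(K(V_0))$ has nullity one and hence admits a unique circuit; since every edge of $K(V_0)$ lies in some circuit by the preceding step, this unique circuit must be exactly $E(K(V_0))$.

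For the backward direction, assume $\mathcal{M}$ is a $K_{d+2}$-matroid with rank $dn-\binom{d+1}{2}$. I first show $K(W)$ is independent for every $W$ with $|W|\leq d+1$ (by embedding it as a subgraph of $K_{d+2}-e$, which is independent as a proper subset of the $K_{d+2}$-circuit), so $r(K(W))=\binom{|W|}{2}$ in this range. A submodular induction analogous to the forward one then extends the rank formula to every sub-complete graph $K(W)$ with $|W|\geq d+1$. To verify R2, given $E_1,E_2$ with $\mathrm{cl}(E_i)=K(V(E_i))$ and $|V(E_1)\cap V(E_2)|\geq d$, take any edge $e=uv$ of $K(V(E_1\cup E_2))$ and $d$ common vertices $w_1,\dots,w_d\in V(E_1)\cap V(E_2)$; the $K_{d+2}$ on $\{u,v,w_1,\dots,w_d\}$ is a circuit whose edges other than $e$ all lie in $\mathrm{cl}(E_1)\cup \mathrm{cl}(E_2)\subseteq \mathrm{cl}(E_1\cup E_2)$, so the circuit axiom forces $e\in \mathrm{cl}(E_1\cup E_2)$. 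For R1, given $|V(E_1)\cap V(E_2)|=k\leq d-1$, it suffices to show that $K(V(E_1))\cup K(V(E_2))$ is closed; using the sub-complete-graph rank formula together with the sparsity bound $|F|\leq d|V(F)|-\binom{d+1}{2}$ for independent $F$ (a consequence of the extended rank formula), one derives a contradiction from the hypothetical existence of a circuit $C$ containing a cross-edge $e$ with $C-e\subseteq K(V(E_1))\cup K(V(E_2))$.

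The main obstacle is the R1 verification in the backward direction. Extending the rank formula from the global one on $K_n$ to every sub-complete graph requires a careful inductive submodular argument that leverages the $K_{d+2}$-circuit property, and once that is done the sparsity-based analysis ruling out cross-edges in the closure is itself quite delicate, being essentially a combinatorial analogue of the Laman-type counting that pins down the structure of circuits in generic rigidity matroids. The corresponding forward-direction steps, by contrast, follow relatively directly from the closure properties R1 and R2.
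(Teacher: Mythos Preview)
The paper does not supply its own proof of this theorem: it is quoted as Nguyen's characterization \cite[Theorem~2.2]{N10} and used as a black box. So there is no in-paper argument to compare against; I can only assess your outline on its own terms.

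Your plan is the standard one and is essentially correct. Two points deserve comment. First, in the forward direction you start the induction at $|W|=d+1$ but never justify the base case $r_{\mathcal M}(K_{d+1})=\binom{d+1}{2}$, i.e.\ that $K_{d+1}$ is independent. This does follow from R1 (roughly: for any $e=v_iv_j\in K(W)$ with $|W|=d+1$, write $K(W)-e$ as $K(W\setminus\{v_i\})\cup K(W\setminus\{v_j\})$, sets whose vertex sets meet in $d-1$ points, and use R1 together with induction on $|W|$ to conclude $e\notin\mathrm{cl}(K(W)-e)$), but it needs to be said.

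Second, your R1 sketch in the backward direction is on the right track but is missing one structural ingredient that makes it go through cleanly: once the rank formula $r(K(W))=d|W|-\binom{d+1}{2}$ is established for all $|W|\geq d+1$ (the downward induction you describe, mirroring the paper's Lemma~\ref{lem:rank_upper_bound}, does give both inequalities), it follows that every $K(W)$ is closed in $\mathcal M$. From this one deduces that every circuit $C$ has minimum degree at least $d+1$ (a degree-$\leq d$ vertex $u$ would give $C\setminus\{\text{edges at }u\}\subseteq K(V(C)\setminus\{u\})$, and the $K_{d+2}$-circuits then force all edges at $u$ into $\mathrm{cl}(K(V(C)\setminus\{u\}))$, contradicting closedness). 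With that degree bound in hand, the counting argument you allude to---splitting a hypothetical circuit $C$ with $C-e\subseteq K(V_1)\cup K(V_2)$ into its $K(V_1)$- and $K(V_2)$-parts and comparing $|C|$ against the sparsity bound on each part and on the overlap of size $\leq d-1$---goes through without difficulty. Without the minimum-degree fact, the sparsity count alone can fail to give a contradiction, so you should make that step explicit.
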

We will prove a stronger version of Theorem~\ref{thm:cofactor} by showing that the $C^1_2$-cofactor matroid is the unique maximal matroid in the family of  $K_5$-matroids. (This strengthening will be needed in our companion paper \cite{CJT1}.)

Our next lemma shows that $dn-{d+1\choose 2}$ is an upper bound on the rank of any $K_{d+2}$-matroid on $E(K_n)$ when $n$ is sufficiently large. 

\begin{lemma}\label{lem:rank_upper_bound} Suppose that ${\cal M}$ is a $K_{d+2}$-matroid on $E(K_n)$ for $n\geq d+2$. Then ${\cal M}$ has rank at most  $dn-{{d+1}\choose{2}}$.
\end{lemma}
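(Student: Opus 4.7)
My plan is to prove the rank bound by induction on $n\geq d+2$, exploiting the $K_{d+2}$-circuit hypothesis to bound the contribution of each vertex-star to the rank. In the base case $n=d+2$, the entire edge set $E(K_{d+2})$ is a circuit of ${\cal M}$ by assumption, so $r_{{\cal M}}(E(K_{d+2})) = \binom{d+2}{2}-1$, and a direct computation shows $\binom{d+2}{2}-1 = d(d+2)-\binom{d+1}{2}$, matching the claimed bound.

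For the inductive step I would fix an arbitrary vertex $v$, let $S_v$ denote the star of the $n-1$ edges incident to $v$, and set $H = E(K_n)\setminus S_v$, which is a copy of $E(K_{n-1})$. By induction $r_{{\cal M}}(H)\leq d(n-1)-\binom{d+1}{2}$, so the lemma will follow from the ``star-increment'' bound $r_{{\cal M}}(E(K_n)) - r_{{\cal M}}(H) \leq d$.

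To establish the increment bound, I would fix any $d$ edges $vu_1,\dots,vu_d\in S_v$ together with any further edge $vu_{d+1}\in S_v$, and consider the complete graph on $\{v,u_1,\dots,u_{d+1}\}$. Its edge set $C$ is a copy of $K_{d+2}$, hence a circuit of ${\cal M}$. Since $C\setminus\{vu_{d+1}\}$ is an independent subset of $C$ of size $|C|-1$, it has the same rank as $C$, so $vu_{d+1}\in \cl_{{\cal M}}(C\setminus\{vu_{d+1}\})$. Because $C\setminus\{vu_{d+1}\}\subseteq H\cup\{vu_1,\dots,vu_d\}$, this gives $vu_{d+1}\in \cl_{{\cal M}}(H\cup\{vu_1,\dots,vu_d\})$. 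Hence, once $H$ together with any $d$ edges of $S_v$ has been adjoined, every remaining edge of $S_v$ already lies in the closure, proving the increment bound. Combining this with the inductive hypothesis gives $r_{{\cal M}}(E(K_n))\leq dn-\binom{d+1}{2}$, closing the induction.

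I do not foresee a substantive obstacle: the proof reduces to a clean application of the circuit-elimination property implicit in the $K_{d+2}$-matroid definition, together with the base-case arithmetic.
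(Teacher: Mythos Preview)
Your proof is correct and follows essentially the same approach as the paper: induct on $n$, delete a vertex $v$ to reduce to $K_{n-1}$, then use the $K_{d+2}$-circuit on $\{v,u_1,\dots,u_{d+1}\}$ to show that adding any $d$ edges of the star $S_v$ to $H$ already spans every remaining edge of $S_v$, giving the increment bound $r_{\cal M}(E(K_n))\leq r_{\cal M}(H)+d$. The paper's argument is identical up to notation (it fixes $v=v_n$ and $u_i=v_i$ for $1\leq i\leq d$).
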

\begin{proof}
We proceed  by induction on $n$.
The claim is trivial when $n=d+2$.

Suppose $n>d+2$, and denote   the vertex set of $K_n$ by $\{v_1,\dots, v_n\}$. 
Let $K$ be the edge set of the complete graph on $\{v_1,\dots, v_{n-1}\}$.
 Since ${\cal M}|_{K}$ is a $K_{d+2}$-matroid, the rank of $K$ in ${\cal M}$ is at most $d(n-1)-{d+1\choose 2}$ by induction.
 Let $K'=K+\{v_1v_n,v_2v_n,\dots, v_dv_n\}$. We show $K'$ spans $E(K_n)$.
 
For each $i=d+1,\dots, n-1$, let $C_i$ be the edge set of the complete graph on $\{v_1, v_2,\dots, v_{d-1}, v_d, v_n,v_i\}$.
Note that $K'$ contains all edges of $C_i$ except $v_iv_n$. Since ${\cal M}$ is a $K_{d+2}$-matroid, $v_iv_n$ is spanned by $K'$. 
Hence $K'$ spans $E(K_n)$, and the rank of ${\cal M}$ is at most $|K'| =|K|+d=dn-{d+1\choose 2}$.
\end{proof}

Theorem~\ref{thm:hang} and Lemma~\ref{lem:rank_upper_bound} imply that an abstract $d$-rigidity matroid is a $K_{d+2}$-matroid which attains the maximum possible rank. 

\subsection{\boldmath Maximality of the generic $C_2^1$-cofactor matroid} \label{subsec:proof_of_thm12}
We show that 
${\cal C}_{2,n}^1$ is the unique maximal element, with respect to the weak order, in the set of all $K_5$-matroids on $E(K_n)$. Since every abstract $3$-rigidity matroid is a $K_{5}$-matroid, this immediately implies 
%
%
Theorem~\ref{thm:cofactor}.

 \begin{theorem}\label{thm:K_d+2matroid}
  The generic $C_2^1$-cofactor matroid ${\cal C}_{2,n}^{1}$ is the unique maximal $K_5$-matroid on $E(K_n)$ for all $n\geq 1$.
  \end{theorem}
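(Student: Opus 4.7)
The plan is to mimic Graver's strategy for the $d=2$ case and show that every ${\cal M}$-independent subset of $E(K_n)$ is ${\cal C}_{2,n}^1$-independent for any $K_5$-matroid ${\cal M}$ on $E(K_n)$; this is equivalent to $r_{\cal M}(X)\leq r_{{\cal C}_{2,n}^1}(X)$ for all $X\subseteq E(K_n)$, i.e. to ${\cal M}\preceq {\cal C}_{2,n}^1$. First note that ${\cal C}_{2,n}^1$ is itself a $K_5$-matroid: it is an abstract $3$-rigidity matroid and hence a $K_5$-matroid by Theorem~\ref{thm:hang}, so it lies in the family we are maximising over, and uniqueness of the maximal element then follows automatically from the partial-order structure of the weak order.

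For the key inequality I would proceed by induction on $k=|V(I)|$, where $I$ is an ${\cal M}$-independent set. The base case $k\leq 5$ is immediate: since both ${\cal M}$ and ${\cal C}_{2,n}^1$ are $K_5$-matroids, $E(K_5)$ is a circuit in both, so any proper subset of $E(K_5)$ is independent in both. For the inductive step $k\geq 6$, Lemma~\ref{lem:rank_upper_bound} applied to the restriction of ${\cal M}$ to the complete graph on $V(I)$ yields $|I|\leq 3k-6$, so the average degree in $(V(I),I)$ is strictly less than six and one can pick a vertex $v\in V(I)$ with $d_I(v)\leq 5$.

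Depending on $d_I(v)$ I would perform an inverse graph operation to produce a smaller set $I'$ on $k-1$ vertices: when $d_I(v)\leq 3$ simply delete $v$ (inverse $0$-extension); when $d_I(v)=4$ perform an inverse $1$-extension, i.e.\ delete $v$ and add a single new edge between two former neighbours of $v$; and when $d_I(v)=5$ perform an inverse X-replacement or an inverse double V-replacement, depending on the local structure at $v$. The crucial admissibility claim in each case is that the inverse operation can be arranged so that $I'$ remains ${\cal M}$-independent; this is the step where the $K_5$-circuit property of ${\cal M}$ combines with matroid circuit elimination to force the existence of a suitable choice. By the inductive hypothesis $I'$ is ${\cal C}_{2,n}^1$-independent, and the corresponding forward operation preserves ${\cal C}_{2,n}^1$-independence by Lemma~\ref{lem:cofactor_ind} together with Theorem~\ref{thm:doubleVconj}, so $I$ is ${\cal C}_{2,n}^1$-independent and the induction closes.

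The main obstacle is establishing admissibility of an inverse operation when $d_I(v)\in\{4,5\}$. The case $d_I(v)=4$ should go through by a direct circuit-elimination argument analogous to Graver's treatment of inverse $1$-extensions in the $d=2$ setting. The case $d_I(v)=5$ is the most delicate: one needs to rule out local configurations at $v$ in which no inverse X-replacement is admissible and then fall back on an inverse double V-replacement, and it is precisely here that Theorem~\ref{thm:doubleVconj}---guaranteeing that the forward double V-replacement preserves ${\cal C}_{2,n}^1$-independence---is indispensable for the induction to close. Because of this, the bulk of the paper is rightly devoted to proving Theorem~\ref{thm:doubleVconj}, after which the present theorem should follow from the short inductive argument above.
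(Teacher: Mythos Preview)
Your proposal is correct and follows essentially the same approach as the paper: induction via a low-degree vertex, with inverse $0$-extension, inverse $1$-extension, and inverse X-/double V-replacement according to the degree, closing the induction with Lemma~\ref{lem:cofactor_ind} and Theorem~\ref{thm:doubleVconj}. The admissibility steps you leave as ``should go through'' are handled in the paper by two short claims---that $\cl_{\cal M}(F-v_0)\cap K(N_F(v_0))$ contains no $K_4$, and that adding any single edge of $K(N_F(v_0))$ to $F-v_0$ never makes $N_F(v_0)$ span $K(N_F(v_0))$---both proved directly from the $K_5$-circuit hypothesis, exactly as you anticipated.
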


\begin{proof}
The theorem is trivially true when $n\leq 5$ so we may assume that $n\geq 6$.
   Let  ${\cal M}$ be an arbitrary $K_5$-matroid on $E(K_n)$ and $F\subseteq E(K_n)$ be an independent set in $\cal M$. We will show that $F$ is independent in ${\cal C}_{2,n}^{1}$ by induction on $|F|$.  
 We will abuse notation throughout this proof and use the same letter for both a subgraph of $K_n$ and its edge set. 

 Since ${\cal M}$ is a $K_5$-matroid,  Lemma \ref{lem:rank_upper_bound} implies that $|F|=r(F)\leq 3|V(F)|-6$, and hence  
 %
 there exists a vertex $v_0$ of degree at most five in $F$. 
 Let $N_F(v_0)=\{v_1,\dots, v_k\}$ (where $k$ is the degree of $v_0$) and
  let $K$ be the complete graph on $N_F(v_0)$.
  
  Suppose $d_F(v_0)\leq 3$. Since $F-v_0$ is independent in ${\cal M}$, $F-v_0$ is independent in ${\cal C}^1_{2,n}$ by induction.  Then $F$ is independent in ${\cal C}^1_{2,n}$ by the $0$-extension property in Lemma~\ref{lem:cofactor_ind}.
  Hence we may suppose $d_F(v_0)\in\{4,5\}$.
  
 \begin{claim}\label{claim:spline1} 
 ${\rm cl}_{{\cal M}}(F-v_0)\cap K$
does not contain a copy of $K_4$.
  \end{claim}
  \begin{proof}
  Suppose, for a contradiction, that  ${\rm cl}_{{\cal M}}(F-v_0)$ contains a copy of $K_4$.
  We may assume that  ${\rm cl}_{{\cal M}}(F-v_0)$ contains $K(v_1, v_2, v_3, v_4)$. 
  Let $e_i =v_0v_i$ for $1\leq i\leq 4$.
  Since ${\cal M}$ is a $K_5$-matroid, $e_4\in \cl_{\cal M}(F-v_0+e_1+e_2+e_3)$. This contradicts the fact that $F$ is independent in ${\cal M}$.
%
  \end{proof}
 
%
%

 Suppose that  $d_F(v_0)=4$.  Claim \ref{claim:spline1} 
 implies that $F-v_0+v_1v_2$ is independent in ${\cal M}$ for two non-adjacent neighbors $v_1v_2$ of $v_0$ in $F$.
 We can now apply induction to deduce that $F-v_0+v_1v_2$ is independent in ${\cal C}^1_{2,n}$ and then use the 1-extension property in Lemma~\ref{lem:cofactor_ind} to deduce that $F$ is independent in ${\cal C}^1_{2,n}$.
Hence we may assume that $d_F(v_0)=5$.

\begin{claim}\label{claim:spline2-0}
$K\not\subseteq {\rm cl}_{\MM}(F-v_0+e)$ for all $e\in K$.
  \end{claim}
  \begin{proof} Suppose, for a contradiction, that $K\subseteq {\rm cl}_{\MM}(F-v_0+e)$ for some $e\in K$. 
  Then  $F-v_0+e$ is independent in ${\cal M}$ by Claim \ref{claim:spline1}.
  Combining this with the fact  ${\cal M}$ is a $K_5$-matroid, we obtain
  $r_{\cal M}(F)\leq r_{\cal M}(F-v_0+e)+3$. 
  On the other hand, the fact that $F$ is independent in ${\cal M}$ tells us that $r_{\cal M}(F)= r_{\cal M}(F-v_0)+5= r_{\cal M}(F-v_0+e)+4$.
  This is a contradiction.
  \end{proof}

Claims \ref{claim:spline1} and \ref{claim:spline2-0} imply that we can choose $e_1,e_2\in K\setminus F$ such that $(F-v_0)+e_1+e_2$ is independent in $\MM$. The induction hypothesis now tells us that $(F-v_0)+e_1+e_2$ is independent in ${\cal C}^1_{2,n}$.
If $e_1,e_2$ have no common endvertices, then  $F$ can be obtained from $F-v_0+e_1+e_2$ by X-replacement and $F$ would be independent in ${\cal C}^1_{2,n}$ by Lemma~\ref{lem:cofactor_ind}, a contradiction. Hence we may suppose that $v_1$ is a common endvertex of $e_1,e_2$.  Claim  \ref{claim:spline1} now tells us that $F-v_0+e_1'$ is independent in ${\cal M}$ for some $e_1'\in K(v_2,v_3,v_4,v_5)\setminus F$, and Claim  \ref{claim:spline2-0} in turn gives an edge $e_2'\in K\setminus F$ such that $F-v_0+e_1'+e_2'$ is independent in ${\cal M}$. Then $F-v_0+e_1'+e_2'$ is independent in ${\cal C}^1_{2,n}$ by induction. 
Since $F$ can be obtained from $F-v_0+e_1+e_2$ and $F-v_0+e_1'+e_2'$ by a double V-replacement,   Theorem \ref{thm:doubleVconj} tells us that  $F$ is independent in ${\cal C}^1_{2,n}$.
This contradiction completes the proof.
\end{proof}

\section{Double V-replacement}\label{sec:doubleV}
The remainder of the paper is dedicated to proving Theorem~\ref{thm:doubleVconj}. Since we will only be concerned with the $C_2^1$-cofactor matroid we will often suppress specific mention of this matroid. In particular we  will 
say that a set of edges $F\subseteq E(K_n)$  is {\em independent} if it is independent in ${\cal C}^1_{2,n}$  and 
use $\cl(F)$ to denote the closure of $F$ in ${\cal C}^1_{2,n}$.
We will continue to use the terms $C^1_2$-independent and $C^1_2$-rigid for graphs.

In this section, we will formulate a special case of Theorem \ref{thm:doubleVconj},  Theorem~\ref{thm:doubleV} below, 
and then show that the general result will follow from this special case. The proof of Theorem~\ref{thm:doubleV} is delayed until Sections~\ref{sec:bad} and~\ref{sec:well-behaved}. 

We first need to establish a structural result for $C^1_2$-independent graphs.

\begin{lemma}\label{lem:combinatorial}
Let $H=(V,E)$ be a $C^1_2$-independent graph, $U=\{v_1,\dots, v_5\}$ be a set of five vertices in $G$, and $K=K(U)$. 
Then at least one of the following holds:
\begin{itemize}
\item[(i)] ${\rm cl}(E)\cap K$ contains a copy of $K_4$. 
\item[(ii)] $K\subseteq {\rm cl}(E+e)$  for some edge $e\in K$.
\item[(iii)] there are two non-adjacent edges $e_1$ and $e_2$ in $K$ such that 
$H+e_1+e_2$ is $C^1_2$-independent.
\item[(iv)] there are two adjacent edges $e_1$ and $e_2$ in $K$ such that 
$H+e_1+e_2$ is $C^1_2$-independent and 
the common end-vertex of $e_1$ and $e_2$ is incident to two edges in ${\rm cl}(E)\cap K$. 
\item[(v)] ${\cl}(E)\cap K$ forms a star on five vertices, and ${\rm cl}(E+e)\cap K$ contains no copy of $K_4$ for all $e\in K$.
\end{itemize}
\end{lemma}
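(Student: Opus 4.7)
My plan is to assume (i), (ii), (iii), and (iv) all fail and deduce (v). Let $L := \cl(E) \cap K$, and let $\mathcal{N} := \mathcal{C}_{2,n}^{1}/E$ denote the contraction of the cofactor matroid by $E$. The failure of (i) says $L$ is $K_4$-free, and the failure of (ii) is equivalent to $r_{\mathcal{N}}(K) \geq 2$, since $K = K(U)$ is a circuit of size $10$ and rank $9$ in $\mathcal{C}_{2,n}^{1}$. Observing that $H+e_1+e_2$ is $C_2^1$-independent exactly when $e_1, e_2 \in K \setminus L$ and $\{e_1, e_2\}$ is independent in $\mathcal{N}$ (up to the trivial case where $e_i$ already lies in $E$), the failure of (iii) forces every pair of non-adjacent edges in $K \setminus L$ to be parallel in $\mathcal{N}$, while the failure of (iv) forces every pair of edges of $K \setminus L$ sharing a vertex $v$ with $d_L(v)\ge 2$ to be parallel in $\mathcal{N}$.

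The first clause of (v) is established by a case analysis on the structure of $L$. The key combinatorial tool is that the non-adjacency graph on $E(K_5)$ is the Petersen graph, which is $3$-vertex-connected. For $|L|\leq 2$ the subgraph of the Petersen graph induced on $K\setminus L$ is connected, so (iii)-failure alone forces all of $K\setminus L$ into a single parallel class and yields $r_{\mathcal{N}}(K)\leq 1$, contradicting the failure of (ii). For $|L|\geq 3$ I combine the non-adjacency parallel relations with the heavy-vertex parallel relations to upper bound the number of parallel classes of $\mathcal{N}$ on $K\setminus L$, and compare this with $r_{\mathcal{N}}(K)$ using submodularity and the fact that $K$ is a circuit, which gives $r_{\mathcal{N}}(K) \leq r_{\mathcal{M}}(K) - r_{\mathcal{M}}(L) = 9 - r(L)$; a careful case-by-case check of the $K_4$-free shapes of $L$ on five vertices shows that, except when $L$ is a star $K_{1,4}$ at some vertex $v^*$, the parallel-class count falls below what the matroid rank allows, forcing a contradiction.

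For the second clause of (v), suppose $L = K_{1,4}$ at $v^*$ and $\cl(E+e)\cap K$ contains a copy of $K_4$ for some $e \in K$. As $L$ itself has no $K_4$ we must have $e \in K \setminus L = K(U\setminus\{v^*\}) \cong K_4$. If the $K_4$ in $\cl(E+e)\cap K$ sits on the four non-center vertices, then $K\setminus L \subseteq \cl(E+e)$ and hence $K \subseteq \cl(E+e)$, contradicting (ii)-failure. Otherwise the $K_4$ uses $v^*$ and three non-center vertices $a, b, c$, so the triangle $\{ab, ac, bc\}$ lies in $\cl(E+e)\cap K$, forcing $e \parallel ab \parallel ac \parallel bc$ in $\mathcal{N}$. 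Combined with the three matching parallel classes $\{ab, cd\}, \{ac, bd\}, \{ad, bc\}$ (where $d$ is the fourth non-center vertex) induced by (iii)-failure on $K\setminus L$, this collapses all six edges of $K\setminus L$ into a single parallel class, again contradicting $r_{\mathcal{N}}(K)\geq 2$.

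The main obstacle is the combinatorial case analysis in the first clause: enumerating the $K_4$-free shapes of $L$ on five vertices and, for each non-star shape, combining the Petersen connectivity, the heavy-vertex parallel constraints, and the rank bound $r_{\mathcal{N}}(K) \leq 9 - r(L)$ uniformly enough to derive the required contradiction.
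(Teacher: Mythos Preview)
Your matroidal reformulation via the contraction $\mathcal{N}=\mathcal{C}_{2,n}^1/E$ is elegant and correctly handles many cases, but there is a genuine gap in the first clause when $L=\cl(E)\cap K$ contains a triangle. Take the simplest instance $L=K(v_1,v_2,v_3)$. The Petersen vertex corresponding to $v_4v_5$ has as its three Petersen-neighbours exactly $v_1v_2,v_1v_3,v_2v_3$, all of which lie in $L$, so $v_4v_5$ is isolated in the non-adjacency graph on $K\setminus L$; and since $d_L(v_4)=d_L(v_5)=0$, the heavy-vertex constraint from (iv)-failure contributes nothing for $v_4v_5$. Your relations therefore collapse the six edges $v_iv_j$ with $i\le 3<j$ into one parallel class but leave $v_4v_5$ potentially in a class of its own, giving two parallel classes. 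This is perfectly compatible with $r_{\mathcal N}(K)\ge 2$, so no contradiction arises. The bound $r_{\mathcal N}(K)\le 9-r(L)$ points the wrong way and cannot help. The same obstruction persists whenever $L$ contains a triangle on three vertices while the opposite edge is absent from $L$ and neither of its endpoints is heavy (e.g.\ the triangle together with one or two pendants).

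The paper circumvents this by proving the second clause of (v) \emph{before} determining the shape of $L$: under the failure of (i)--(iv) it shows directly that $\cl(E+e)\cap K$ is $K_4$-free for every $e\in K$, via a two-case argument that exchanges edges in and out of $\cl(E+e)$ to exhibit an explicit independent pair violating (iii) or (iv). With this in hand the triangle cases dissolve immediately: in the example above, your own parallelisms already give $K(v_1,v_2,v_3,v_4)\subseteq\cl(E+v_1v_4)\cap K$, contradicting the $K_4$-freeness. The missing ingredient is thus a proof that $\cl(E+e)\cap K$ is $K_4$-free which does not presuppose the star structure; your second-clause argument establishes this only once $L$ is already known to be a star.
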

\begin{proof}
We assume that  (i) - (iv) do not hold and prove that (v) must hold.

Since (iii) does not hold, we have:
\begin{equation}\label{eq:combinatorial1}
\text{for all $e=v_iv_j\in K\setminus {\rm cl}(E)$, ${\rm cl}(E+e)$ contains a triangle on $U\setminus \{v_i,v_j\}$. }  
\end{equation}

  \begin{claim}\label{claim:spline2}
 For all $e\in K$, ${\rm cl}(E+e)\cap K$ contains no copy of $K_4$.
  \end{claim}
  \begin{proof}
  Since (i) does not occur, the claim follows if $e\in {\rm cl}(E)$.
  Thus we suppose that $e\notin {\rm cl}(E)$, and hence $E+e$ is independent.
  
  Relabeling if necessary, we may suppose $e=v_1v_2$.
 By (\ref{eq:combinatorial1}),
 ${\rm cl}(E+e)$ contains the triangle $K(v_3,v_4,v_5)$ on $\{v_3,v_4,v_5\}$.
 Since (ii) does not hold,  ${\rm cl}(E+e)\cap K$ does not contain two copies of $K_4$ (since the union of two distinct copies of $K_4$ on $U$ would form a $C_2^1$-rigid graph on $U$.)
We consider two cases.

\medskip
\noindent \emph{Case 1:} 
 Suppose that
 ${\rm cl}(E+e)\cap K$ contains a $K_4$ which includes $e=v_1v_2$. 
 By symmetry we may assume that this is a $K_4$ on $\{v_1, v_2, v_3, v_4\}$.
 Since ${\rm cl}(E+e)\cap K$ contains $K(v_3,v_4,v_5)$ and contains at most one $K_4$, 
 $v_1v_5, v_2v_5\notin {\rm cl}(E+e)$.
We will show that:
\begin{equation}
\label{eq:claim3}
\{v_2v_3,v_2v_4\}\subset {\rm cl}(E).
 \end{equation}

Suppose  $v_2v_3\notin {\rm cl}(E)$. 
Since   $v_2v_3\in {\rm cl}(E+e)\setminus {\rm cl}(E)$,
$E+v_2v_3$ is independent and ${\rm cl}(E+e)={\rm cl}(E+v_2v_3)$.
Since $v_1v_5 \notin {\rm cl}(E+e)$, this in turn implies $v_1v_5\notin {\rm cl}(E+v_2v_3)$. Hence $E+v_2v_3+v_1v_5$ is independent, contradicting that (iii) does not hold.
Thus $v_2v_3\in {\rm cl}(E)$. 
The same argument for $v_2v_4$ implies (\ref{eq:claim3}).

Since $v_2v_5\notin {\rm cl}(E+e)$, $E+v_1v_2+v_2v_5$ is independent. Combined with  (\ref{eq:claim3}), this contradicts the assumption that (iv) does not hold.

\medskip
\noindent \emph{Case 2:} Suppose that
 ${\rm cl}(E+e)\cap K$ contains a $K_4$ which avoids $e=v_1v_2$. 
 By symmetry we may assume that this is a $K_4$ on $\{v_1, v_3, v_4,v_5\}$. Since ${\rm cl}(E+e)\cap K$ contains at most one $K_4$, there is at most one edge from $v_2$ to $\{v_3,v_4,v_5\}$ in ${\rm cl}(E+e)\cap K$ and we may assume by symmetry that
  $v_2v_4, v_2v_5\notin {\rm cl}(E+e)$.
We will show that this case cannot occur by showing that
\begin{equation}
\label{eq:claim3.5}
\text{all edges on $\{v_1,v_3,v_4,v_5\}$  are in ${\rm cl}(E)$}
 \end{equation}
and hence that (i) holds.

Suppose $v_1v_3\notin {\rm cl}(E)$.
Then, since  $v_1v_3\in {\rm cl}(E+e)\setminus {\rm cl}(E)$,
$E+v_1v_3$ is independent and ${\rm cl}(E+v_1v_2)={\rm cl}(E+v_1v_3)$.
Since $v_2v_4\notin {\rm cl}(E+v_1v_2)$, we have  $v_2v_4\notin {\rm cl}(E+v_1v_3)$. Hence $E+v_1v_3+v_2v_4$ is independent and (iii) holds.
This contradiction implies  that $v_1v_3\in {\rm cl}(E)$. 
The same argument holds for all the other edges on $\{v_1,v_3,v_4,v_5\}$ except $v_4v_5$.
If $v_4v_5\notin {\rm cl}(E)$ then, since  $v_4v_5\in {\rm cl}(E+v_1v_2)\setminus {\rm cl}(E)$,
$E+v_4v_5$ is independent and ${\rm cl}(E+v_1v_2)={\rm cl}(E+v_4v_5)$.
This in turn implies $v_2v_4\notin {\rm cl}(E+v_4v_5)$,
and $E+v_2v_4+v_4v_5$ is independent. 
This and the fact $v_1v_4,v_3v_4\in {\rm cl}(E)$ contradicts the assumption that (iv) does not hold.
Hence $v_4v_5\notin {\rm cl}(E)$ and (\ref{eq:claim3.5}) holds.
%
\end{proof}

Claim \ref{claim:spline2} implies that the second part of condition (v) in the statement holds. It remains to show that ${\rm cl}(E)\cap K$ forms a star on $U$.
This will follow by combining Claims~\ref{claim:spline5} and \ref{claim:spline6}, below.
We first derive an auxiliary claim.
 \begin{claim}
 \label{claim:spline4}
 Suppose that $E+v_1v_2$ is independent.
 Then ${\rm cl}(E)$ has at least two edges on $\{v_3, v_4, v_5\}$.
 \end{claim}
 \begin{proof}
 Recall that ${\rm cl}(E+v_1v_2)$ contains a complete graph  on $\{v_3,v_4,v_5\}$ by (\ref{eq:combinatorial1}).
 
 We first consider the complete graph on $\{v_1, v_2, v_3, v_4\}$.
 By Claim~\ref{claim:spline2}, we may assume without loss of generality that $v_1v_3\not\in {\rm cl}(E+v_1v_2)$.
 If $v_4v_5\notin {\rm cl}(E)$, then $v_4v_5\in {\rm cl}(E+v_1v_2)\setminus {\rm cl}(E)$, and so
 ${\rm cl}(E+v_4v_5) = {\rm cl}(E+v_1v_2)$. Hence $v_1v_3\notin {\rm cl}(E+v_4v_5)$.
 This gives a contradiction as  $E+v_4v_5+v_1v_3$ would be independent  and (iii) would hold.
 Thus we obtain $v_4v_5\in {\rm cl}(E)$.

We next  consider the complete graph on $\{v_1,v_2,v_4,v_5\}$.
 By Claim~\ref{claim:spline2},   we may assume without loss of generality that either $v_1v_4\notin {\rm cl}(E+v_1v_2)$ or $v_2v_4\notin {\rm cl}(E+v_1v_2)$.
  If $v_3v_5\notin {\rm cl}(E)$, then $v_3v_5\in {\rm cl}(E+v_1v_2)\setminus {\rm cl}(E)$, and so ${\rm cl}(E+v_1v_2)={\rm cl}(E+v_3v_5)$. Hence $v_1v_4\notin {\rm cl}(E+v_3v_5)$ or $v_2v_4\notin {\rm cl}(E+v_3v_5)$ respectively.
 This gives a contradiction as (iii) would hold.
 Thus we obtain $v_3v_5\in {\rm cl}(E)$.
 \end{proof}

 \begin{claim}
 \label{claim:spline5}
 For all $i$ with $1\leq i\leq 5$, $d_{{\rm cl}(E)\cap K}(v_i)\geq 1$.
 And if $d_{{\rm cl}(E)\cap K}(v_i)=1$, then the vertex $v_j$ adjacent to $v_i$ in ${\rm cl}(E)\cap K$ satisfies $d_{{\rm cl}(E)\cap K}(v_j)=4$.
 \end{claim}
 \begin{proof}
 Suppose that $d_{{\rm cl}(E)\cap K}(v_5)=0$.
 Since ${\rm cl}(E)$ has no copy of $K_4$, we may assume $v_1v_2\not\in {\rm cl}(E)$.
 Then $E+v_1v_2$ is independent, and ${\rm cl}(E)$ has at most one edge on $\{v_3, v_4, v_5\}$ since  $d_{{\rm cl}(E)\cap K}(v_5)=0$.
 This contradicts  Claim~\ref{claim:spline4}.

 Suppose that $d_{{\rm cl}(E)\cap K}(v_5)=1$.  We may assume without loss of generality that $v_5$ is adjacent to $v_1$ in ${\rm cl}(E)$.
 Suppose further that ${\rm cl}(E)$ does not contain $v_1v_2$.
 Then $E+v_1v_2$ is independent. On the other hand,  ${\rm cl}(E)$ has at most one edge on $\{v_3, v_4, v_5\}$ because $v_5$ is incident only to $v_1$ in ${\rm cl}(E)\cap K$.
 This again contradicts  Claim~\ref{claim:spline4}.
 \end{proof}

 \begin{claim}
 \label{claim:spline6}
 ${\rm cl}(E)\cap K$ is cycle free.
 \end{claim}
 \begin{proof}
 Suppose that ${\rm cl}(E)\cap K$ contains a cycle of length five.
 By Claim \ref{claim:spline2} we may choose two chords $e_1, e_2$ of this cycle such that $E+e_1+e_2$ is independent.
 This would contradict the assumption that (iii) and (iv) do not hold since either $e_1$ and $e_2$ are non-adjacent or  their common end-vertex has degree two in ${\rm cl}(E)\cap K$.

 Suppose that ${\rm cl}(E)\cap K$ contains a cycle of length four, say $v_1v_2v_3v_4v_1$.
 Since ${\rm cl}(E)\cap K$ has no $K_4$, we may suppose that $v_1v_3\notin {\rm cl}(E)$. Then Claim~\ref{claim:spline2} implies that $E+v_1v_3+v_2v_4$ is independent and (iii) holds, a contradiction. 

 Suppose that ${\rm cl}(E)\cap K$ contains a triangle, say $K(v_1,v_2,v_3)$.
 Claim~\ref{claim:spline5} and the fact that ${\rm cl}(E)\cap K$ contains no cycle of length four tells us that there is exactly one edge in ${\rm cl}(E)\cap K$ from each of $v_4,v_5$  to $\{v_1,v_2,v_3\}$, and that both of these edges have a common end-vertex, say $v_1$. Hence $E+v_2v_4$ is independent. 
 We can now use (\ref{eq:combinatorial1}) to deduce that $v_3v_5\in \clo(E+v_2v_4)$.  Claim~\ref{claim:spline2} now implies that $v_2v_5\not\in \clo(E+v_2v_4)$ and hence $E+v_2v_4+v_2v_5$ is independent. This gives a contradiction since $v_2$ has degree two in ${\rm cl}(E)\cap K$ and hence (iv) holds.
 \end{proof}

 Claims \ref{claim:spline5} and \ref{claim:spline6} imply that
 ${\rm cl}(E)\cap K$ forms a star on $\{v_1,v_2,\dots, v_5\}$ and hence (v) holds.
 \end{proof}

\medskip

Let $G=(V,E)$ be a graph.
A vertex $v_0$ in $G$ is said to be of {\em type $(\star)$} if 
\begin{itemize}
\item $v_0$ has degree five with  $N_G(v_0)=\{v_1,v_2,v_3,v_4,v_5\}$,  
\item $G-v_0+v_1v_2+v_1v_3$ and $G-v_0+v_1v_3+v_3v_4$ are both $C_2^1$-rigid, and
\item ${\rm cl}(E(G-v_0))\cap K(N_G(v_0))$ forms a star on five vertices centered on $v_5$,
\end{itemize}
See Figure~\ref{fig:type_star}.

\begin{figure}[t]
\centering
\includegraphics[scale=0.8]{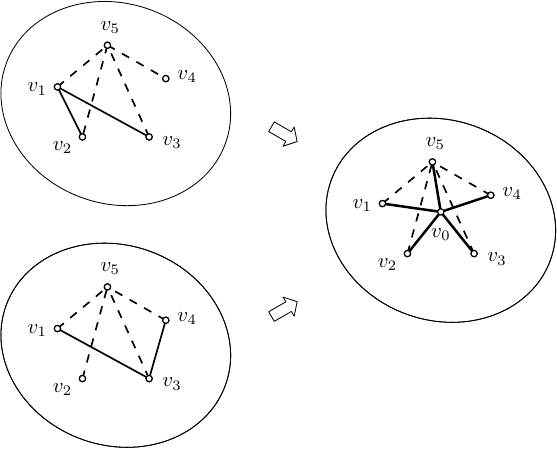}
\caption{The double V-replacement that creates a vertex $v_0$ of type ($\star$). 
The dashed lines represent the edges in ${\rm cl}(E(G-v_0))\cap K(N_G(v_0))$, which form a star on five vertices centered on $v_5$.}
\label{fig:type_star}
\end{figure}

The following theorem implies that double V-replacement preserves 
minimal $C_2^1$-rigidity in the special case when the new vertex is of type $(\star)$.

\begin{theorem}\label{thm:doubleV}
Let $G=(V,E)$ be a graph with $|E|=3|V|-6$ 
and suppose that $G$ has a vertex  $v_0$  of type $(\star)$.
Then $G$ is minimally $C_2^1$-rigid.
\end{theorem}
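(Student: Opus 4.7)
The plan is to argue by contradiction along the roadmap indicated in the introduction: assume $G$ is not minimally $C_2^1$-rigid and derive a contradiction with Theorem~\ref{thm:well-behaved0} by producing, in an appropriate generic $1$-dof subframework, a motion of precisely the form that theorem forbids. Since $|E|=3|V|-6$, any failure of $C_2^1$-rigidity produces, for generic $\bp$, a non-trivial motion $\bq$ of $(G,\bp)$. The type~$(\star)$ hypothesis that $G-v_0+v_1v_2+v_1v_3$ is $C_2^1$-rigid, combined with the edge count $|E(G-v_0)|+2=3(|V|-1)-6$, forces this enlarged graph to be minimally rigid, so $G-v_0$ is $C_2^1$-independent and, having $3(|V|-1)-8$ edges, $(G-v_0,\bp')$ is a $2$-dof framework. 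Since the generic $5\times 3$ matrix with rows $D(v_0,v_i)$, $i=1,\dots,5$, has rank $3$, the restriction $\bq':=\bq|_{V\setminus\{v_0\}}$ lies in $Z(G-v_0,\bp')$ and is non-trivial modulo~$Z_0$.

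Next I would extract the structural constraints that type~$(\star)$ imposes on $\bq'$. The star-closure property forces $D(v_5,v_i)\cdot(\bq'(v_5)-\bq'(v_i))=0$ for $i\in\{1,2,3,4\}$, while leaving the interior edges $v_iv_j$ with $i,j\in\{1,2,3,4\}$ unconstrained. The two rigid V-replacement hypotheses assert that the pairs of linear functionals on $Z(G-v_0,\bp')/Z_0$ coming from $\{v_1v_2,v_1v_3\}$ and from $\{v_1v_3,v_3v_4\}$ each form a basis of the $2$-dimensional dual. Finally, extendibility of $\bq'$ to a motion $\bq$ of $G$ is equivalent to two self-stress-like identities $\sum_{i=1}^{5}\alpha_i D(v_0,v_i)\cdot\bq'(v_i)=0$ indexed by a basis of the left kernel of that $5\times 3$ matrix. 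Assembling these ingredients, after pinning via Lemma~\ref{lem:canonical} to kill $Z_0$, I expect $\bq'$ to induce a motion of a $1$-dof subframework of $G-v_0$ (obtained by adding back a single edge among $\{v_1v_2,v_1v_3,v_3v_4\}$) that has exactly the restricted pattern whose existence in a generic $1$-dof framework is ruled out by Theorem~\ref{thm:well-behaved0}; applying that theorem then closes the argument.

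The main obstacle is this last distillation step: combining the two non-parallel V-replacement hypotheses, the star centred at $v_5$, and the two $v_0$-consistency conditions into a single transparent statement of the hypotheses of Theorem~\ref{thm:well-behaved0}, and checking that the pinning provided by Lemmas~\ref{lem:canonical}--\ref{lem:1dof_motion1} is compatible with that statement, is largely linear-algebraic bookkeeping but intricate enough that one should expect it to occupy Section~\ref{sec:bad} in its own right.
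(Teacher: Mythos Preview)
Your contradiction strategy via Theorem~\ref{thm:well-behaved0} is the right one, but you have aimed it at the wrong framework. Theorem~\ref{thm:well-behaved0} forbids a bad motion \emph{at a vertex of degree five} in a generic $1$-dof framework; in your candidate target $G-v_0+e$ (for $e\in\{v_1v_2,v_1v_3,v_3v_4\}$) the vertex $v_0$ has been deleted and there is no evident substitute of degree five about which a ``bad motion'' could even be defined, so the theorem does not apply there.

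The paper applies Theorem~\ref{thm:well-behaved0} directly to $(G,\bp)$, with $v_0$ itself as the distinguished degree-five vertex. Two points you underplay make this work. First, $(G,\bp)$ is not merely a framework with a non-trivial motion but is exactly $1$-dof: since $G+v_1v_2$ arises from the rigid graph $G-v_0+v_1v_2+v_1v_3$ by a $1$-extension followed by adding $v_0v_5$, the graph $G+v_1v_2$ is rigid, so $G$ has at most one degree of freedom. Second --- and this is the real content of Section~\ref{sec:bad} --- one must show that the essentially unique non-trivial motion $\bq$ of $(G,\bp)$ is \emph{bad at $v_0$} in the precise sense of~(\ref{eq:bad}): each coordinate of $\bq(v_i)$ for $v_i\in\hat N_G(v_0)$ is a polynomial over~$\rat$ in the coordinates of $\bp(\hat N_G(v_0))$ alone, and the induced constraint graph on $N_G(v_0)$ is the star centred at $v_5$. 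This is Lemma~\ref{lem:motion}, which does pass through the $2$-dof framework $(G-v_0,\bp|_{V_0})$ you describe --- parameterising its motion space by non-trivial motions $\bq_1,\bq_2$ of $H+v_1v_2$ and $H+v_1v_3$ --- but then solves the five constraints $D_{0,i}\cdot(\bq(v_0)-\bq(v_i))=0$ explicitly for $\bq(v_0)$ and the mixing coefficients $\alpha,\beta$, producing the closed polynomial formulae~(\ref{eq:bad1})--(\ref{eq:bad6}); Lemma~\ref{lem:bad_motion} then checks the star condition symbolically.

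So the ``intricate bookkeeping'' you anticipate is not a reduction to a $1$-dof subframework of $G-v_0$; it is an explicit computation of $\bq|_{\hat N_G(v_0)}$ that stays in $(G,\bp)$ and shows that \emph{this} motion is bad at~$v_0$. Once that is done, Theorem~\ref{thm:well-behaved0} applied to $(G,\bp)$ itself gives the contradiction in one line.
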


As noted above, we delay the proof of this theorem to Sections~\ref{sec:bad} and~\ref{sec:well-behaved} and instead use it to  verify Theorem \ref{thm:doubleVconj}.

%

\paragraph{Proof of Theorem \ref{thm:doubleVconj}} Let $G=(V,E)$ be a graph that has a vertex  $v_0$ of degree five,
and let $\{e_1,e_2\}$ and  $\{e_1',e_2'\}$ be two pairs of adjacent edges on $N_G(v_0)$.
Suppose that  $G-v_0+e_1+e_2$ and $G-v_0+e_1'+e_2'$ are both $C^1_2$-independent,
and the common endvertex of $e_1$ and $e_2$ is distinct from that of $e_1'$ and $e_2'$.
Let $N_G(v_0)=\{v_1,\dots, v_5\}$.
We need to show that $G$ is $C_2^1$-independent.

We apply Lemma~\ref{lem:combinatorial} to $H:=G-v_0$ and $U:=N_G(v_0)$.
(Note that $H$ is $C^1_2$-independent.)
Neither (i) nor (ii) of Lemma~\ref{lem:combinatorial}  hold since otherwise $G-v_0+e_1+e_2$ or $G-v_0+e_1'+e_2'$ would be dependent.
If (iii)  holds, then we can construct $G$ from a $C^1_2$-independent graph by X-replacement, and hence $G$ is $C^1_2$-independent by Lemma~\ref{lem:cofactor_ind}.
Similarly, if  (iv) holds, then $G$ can be  constructed from a $C^1_2$-independent graph by a V-replacement satisfying the hypotheses of 
Lemma~\ref{lem:V} so is $C^1_2$-independent.
Hence we may assume (v) of Lemma~\ref{lem:combinatorial} holds,
and, without loss of generality, that  ${\rm cl}(E(H))\cap K(U)$ forms a star whose center is $v_5$.
Then $E(H)+v_1v_2$ and $E(H)+v_3v_4$ are both independent.
If $v_3v_4\notin {\rm cl}(E(H)+v_1v_2)$, then $G$ can be obtained from the $C^1_2$-independent graph $H+v_1v_2+v_3v_4$ by X-replacement.
Hence we may assume that $v_3v_4\in {\rm cl}(E(H)+v_1v_2)$, 
and thus ${\rm cl}(E(H)+v_1v_2)={\rm cl}(E(H)+v_3v_4)$ by the independence of $H+v_3v_4$.
Since (v) of Lemma~\ref{lem:combinatorial} holds, 
we may further suppose that $v_1v_3\notin {\rm cl}(E(H)+v_1v_2)$.
Therefore both $H+v_1v_2+v_1v_3$ and $H+v_3v_4+v_1v_3$ are $C^1_2$-independent.

Let $k=3|V|-6-|E|$. We complete the proof by induction on $k$.
If $k=0$ then $H+v_1v_2+v_1v_3$ and $H+v_3v_4+v_1v_3$ are both minimally $C_2^1$-rigid,
and $v_0$ is of type $(\star)$.
Theorem~\ref{thm:doubleV} now implies that $G$ is minimally $C_2^1$-rigid.

For the induction step, we assume that $k>0$.
Since  ${\rm cl}(E(H)+v_1v_2)={\rm cl}(E(H)+v_3v_4)$,
we have  ${\rm cl}(E(H)+v_1v_2+v_1v_3)={\rm cl}(E(H)+v_3v_4+v_1v_3)$.
As $H+v_1v_2+v_1v_3$ is a $k$-dof graph with $k>0$, there is an edge $f\in K(V-v_0)$ such that 
$H+v_1v_2+v_1v_3+f$ and $H+v_3v_4+v_1v_3+f$ are both $C^1_2$-independent.
Applying the induction hypothesis to $H+f$, we find that
$G+f$ is $C^1_2$-independent.
Therefore $G$ is $C^1_2$-independent.
\qed

\section{\boldmath Bad $C_2^1$-Motions} \label{sec:bad}

In this section, we show that if Theorem~\ref{thm:doubleV} does not hold for some graph $G$, then every generic realization of $G$ in $\mathbb{R}^2$ has a special kind of motion, and  derive some properties of such `bad' motions.
We use these properties in Section~\ref{sec:well-behaved}, to prove that no 
generic framework can have a bad motion.

The following notation will be used throughout the remainder of this paper.
Suppose $p_i=(x_i, y_i)$ is a point in the plane for $i=0,1,2,3$. 
For any three of these points, we let 
$$\Delta(p_i, p_j, p_k)=\begin{vmatrix} x_i & y_i & 1 \\ x_j & y_j & 1 \\ x_k & y_k & 1 \end{vmatrix},$$
which is twice the signed area of the triangle defined by the three points.
The formula for the determinant of a Vandermonde matrix implies that
\begin{align}
\left|
\begin{array}{c}
D(p_0, p_1) \\ D(p_0 , p_2) \\ D(p_0, p_3)
\end{array}\right|
&=-\left|
\begin{array}{ccc}
x_0 & y_0 & 1 \\
x_1 & y_1 & 1 \\
x_2 & y_2 & 1
\end{array}\right|
\left|
\begin{array}{ccc}
x_0 & y_0 & 1 \\
x_2 & y_2 & 1 \\
x_3 & y_3 & 1
\end{array}\right|
\left|
\begin{array}{ccc}
x_0 & y_0 & 1 \\
x_3 & y_3 & 1 \\
x_1 & y_1 & 1
\end{array}\right| \\
&=-\Delta(p_0, p_1, p_2) \Delta(p_0, p_2, p_3) \Delta(p_0, p_3, p_1). \label{eq:Vandermonde}
\end{align}
See, e.g., \cite[page~15]{W90} for further details.

\subsection{Bad local behavior}
%

The following key lemma states that if Theorem~\ref{thm:doubleV} does not hold for some vertex $v_0$ of type ($\star$) in a  graph $G$, then every generic framework $(G,\bp)$ has a non-trivial $C_2^1$-motion $\bq$ with the property that each coordinate  of $\bq(v_i)$,  for $v_i\in \hat N_G(v_0)$, can be expressed as a polynomial in the coordinates of the points $\bp(v_j)$, for $v_j\in \hat N_G(v_0)$.  
We  note that the precise formulae of Lemma~\ref{lem:motion} are not important in the remaining proof, and only the fact  that each coordinate is written as a polynomial in the coordinates of $\bp(v_0), \bp(v_1),\dots, \bp(v_5)$ will be important.
\begin{lemma}
\label{lem:motion}
Let $(G,\bp)$ be a generic framework,
 $v_0$ be a vertex of type $(\star)$ in $G$
with $N_G(v_0)=\{v_1,\dots, v_5\}$,  $H=G-v_0$
and $\bp|_H$ be the restriction of $\bp$ to $H$. 
Suppose that 
$G$ is not $C_2^1$-rigid. Then there exist non-trivial $C_2^1$-motions $\bq_1$ of $(H+v_1v_2, \bp|_H)$ and 
$\bq_2$ of $(H+v_1v_3, \bp|_H)$ satisfying
\begin{align*}
\bq_1(v_1)&=\bq_1(v_2)=\bq_1(v_5)=0, \\ 
\bq_1(v_3)&=\Delta(\bp(v_1),\bp(v_2),\bp(v_3)) D(v_3, v_4) \times D(v_3, v_5),  \\
\bq_1(v_4)&=\Delta(\bp(v_1),\bp(v_2),\bp(v_4)) D(v_3, v_4)\times D(v_4, v_5),
\end{align*}
and
\begin{align*}
\bq_2(v_1)&=\bq_2(v_3)=\bq_2(v_5)=0, \\
\bq_2(v_2)&=\Delta(\bp(v_1), \bp(v_3), \bp(v_2)) D(v_2, v_4)\times D(v_2, v_5), \\
\bq_2(v_4)&=\Delta(\bp(v_1), \bp(v_3), \bp(v_4)) D(v_2, v_4)\times D(v_4, v_5),
\end{align*}
where $\times$ denotes the 
cross product of two vectors.
Moreover, $G$ has a non-trivial motion $\bq$ such that $\bq|_{V-v_0} = \alpha \bq_1 + \beta \bq_2$, where
\begin{align*}
\alpha=\begin{vmatrix} D(v_0, v_2) \\ D(v_2, v_4) \\ D(v_2,v_5) \end{vmatrix} \begin{vmatrix} D(v_0, v_3) \\ D(v_0, v_1) \\ D(v_0, v_5) \end{vmatrix}, 
\quad
\beta=-\begin{vmatrix} D(v_0, v_3) \\ D(v_3, v_4) \\ D(v_3, v_5) \end{vmatrix} \begin{vmatrix} D(v_0, v_2) \\ D(v_0, v_1) \\ D(v_0, v_5) \end{vmatrix},
\end{align*}
and $\bq(v_0)$ is given by 
\[
\bq(v_0)=\Delta(\bp(v_1), \bp(v_2) \bp(v_3))\begin{vmatrix} D(v_0, v_3) \\ D(v_3, v_4) \\ D(v_3, v_5) \end{vmatrix} 
\begin{vmatrix} D(v_0, v_2) \\ D(v_2, v_4) \\ D(v_2, v_5) \end{vmatrix}
 D(v_0, v_1)\times D(v_0, v_5).
\]
\end{lemma}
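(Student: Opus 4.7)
The plan is to use the non-rigidity of $G$ to produce a global motion $\bq\in Z(G)\setminus Z_0$, decompose its restriction $\bq|_H$ through the direct sum $Z(H)/Z_0 = Z(H+v_1v_2)/Z_0 \oplus Z(H+v_1v_3)/Z_0$, and then read the explicit formulas off the edge equations at $v_0$ by repeated use of the Vandermonde-type identity~(\ref{eq:Vandermonde}).

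The first preparatory step is a dimension count. The type-$(\star)$ hypothesis says that $H+v_1v_2+v_1v_3$ is $C_2^1$-rigid, while the star closure forces $v_1v_2, v_1v_3\notin \clo(E(H))$, so $\dim Z(H+v_1v_2)=\dim Z(H+v_1v_3)=7$ and $\dim Z(H)=8$. Since $Z(H+v_1v_2)\cap Z(H+v_1v_3)=Z(H+v_1v_2+v_1v_3)=Z_0$, the quotients $Z(H+v_1v_2)/Z_0$ and $Z(H+v_1v_3)/Z_0$ give the claimed direct-sum decomposition of $Z(H)/Z_0$. Moreover $\{v_1v_5, v_2v_5\}\subseteq \clo(E(H))$, so the triangle $v_1v_2v_5$ supports only trivial motions in $H+v_1v_2$; Lemma~\ref{lem:canonical} then lets me subtract a trivial motion from any non-trivial element of $Z(H+v_1v_2)$ to obtain $\bq_1$ with $\bq_1(v_1)=\bq_1(v_2)=\bq_1(v_5)=0$, uniquely determined up to a scalar. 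A symmetric construction on the triangle $v_1v_3v_5$ yields $\bq_2$ with $\bq_2(v_1)=\bq_2(v_3)=\bq_2(v_5)=0$.

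Now take a non-trivial $\bq\in Z(G)$; after adjusting by a trivial motion of $G$, I may assume $\bq|_{V-v_0}=\alpha\bq_1+\beta\bq_2$ for scalars $\alpha,\beta$ not both zero. Using $\bq(v_1)=\bq(v_5)=0$, the edge equations for $v_0v_1$ and $v_0v_5$ force $\bq(v_0)$ to be orthogonal to both $D(v_0,v_1)$ and $D(v_0,v_5)$, so $\bq(v_0)=\gamma\,D(v_0,v_1)\times D(v_0,v_5)$ for some scalar $\gamma$. Using $\bq_1(v_2)=\bq_2(v_3)=0$, the edges $v_0v_2$ and $v_0v_3$ give
\begin{align*}
\gamma\,\det\!\begin{pmatrix} D(v_0,v_2)\\ D(v_0,v_1)\\ D(v_0,v_5)\end{pmatrix}&=\beta\,D(v_0,v_2)\cdot\bq_2(v_2),\\
\gamma\,\det\!\begin{pmatrix} D(v_0,v_3)\\ D(v_0,v_1)\\ D(v_0,v_5)\end{pmatrix}&=\alpha\,D(v_0,v_3)\cdot\bq_1(v_3),
\end{align*}
which expresses $\alpha,\beta$ in terms of $\gamma$, $\bq_1(v_3)$, $\bq_2(v_2)$, and leaves the edge equation at $v_0v_4$ as a compatibility condition.

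The bulk of the technical work, and the main obstacle, is to pin down the explicit values of $\bq_1(v_3), \bq_1(v_4), \bq_2(v_2), \bq_2(v_4)$. The star edges $v_3v_5, v_4v_5$ only force the one-dimensional constraints $\bq_1(v_3)\perp D(v_3,v_5)$ and $\bq_1(v_4)\perp D(v_4,v_5)$, so to identify the direction within the remaining two-dimensional subspace I will combine the requirement that the $(\alpha,\beta,\gamma)$ system above have a non-trivial solution with the condition that $\bq_1$ extend to a motion of $H+v_1v_2$ on all of $V(H)$. Repeated application of the Vandermonde identity~(\ref{eq:Vandermonde}) converts the relevant $3\times 3$ determinants of $D$-vectors into products of signed triangle areas $\Delta(\cdot,\cdot,\cdot)$; matching terms forces $\bq_1(v_3)$ to be a scalar multiple of $D(v_3,v_4)\times D(v_3,v_5)$ with scalar $\Delta(\bp(v_1),\bp(v_2),\bp(v_3))$, and analogously for $\bq_1(v_4),\bq_2(v_2),\bq_2(v_4)$. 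Substitution into the displayed equations then yields the stated formulas for $\alpha,\beta$ and $\bq(v_0)$, and the compatibility at $v_0v_4$ reduces to an identity in the $D$- and $\Delta$-quantities that follows from one further application of~(\ref{eq:Vandermonde}); this identity is where most of the computational weight of the lemma sits.
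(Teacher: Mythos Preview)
Your setup matches the paper's: the decomposition $Z(H)/Z_0 = Z(H+v_1v_2)/Z_0 \oplus Z(H+v_1v_3)/Z_0$, the pinning of $\bq_1,\bq_2$ on the triangles $v_1v_2v_5$ and $v_1v_3v_5$, and the edge equations at $v_0$ expressing $\alpha,\beta,\gamma$ are all correct and essentially what the paper does in Claims~\ref{claim:Hbase}--\ref{claim:Gmotion} and equation~(\ref{eq:dV2}).

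The gap is in ``matching terms forces $\bq_1(v_3)$ to be a scalar multiple of $D(v_3,v_4)\times D(v_3,v_5)$''. The star edge only gives $\bq_1(v_3)\perp D(v_3,v_5)$; the orthogonality to $D(v_3,v_4)$ is the whole point and does not fall out of a single compatibility equation at $v_0v_4$. Two separate ideas are needed. First, an X-replacement argument (not mentioned in your proposal): if $H+v_1v_2+v_3v_4$ were $C_2^1$-rigid then $G$ would be too, so $v_3v_4\in\clo(E(H)+v_1v_2)$, giving $D_{3,4}\cdot(\bq_1(v_3)-\bq_1(v_4))=0$. Second, and this is the key mechanism you are missing, the $5\times5$ determinant obtained from~(\ref{eq:dV2}) vanishes not at a single point but as a \emph{polynomial identity} in the coordinates $(x_0,y_0)$ of $\bp(v_0)$, since $\bp(v_0)$ is generic over $\rat(\bp|_H,\bq_1,\bq_2)$. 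The paper exploits this by specialising $\bp(v_0)$ to lie on the line through $\bp(v_3),\bp(v_4)$ (Claim~\ref{cllm:dV4-1}) to force $D_{3,4}\cdot\bq_1(v_3)=0$, and then to the line through $\bp(v_1),\bp(v_2)$ (Claim~5.5) to pin down the scalar $\Delta_{1,2,4}$ in $\bq_1(v_4)$. Treating the $v_0v_4$ equation as a single scalar compatibility, as you do, cannot yield these constraints; you need the full polynomial identity (equivalently, coefficient-by-coefficient matching in $x_0,y_0$, which is what ``specialisation'' amounts to).
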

\begin{proof}
For simplicity, put $D_{i,j}=D(v_i,v_j)$ and $\Delta_{i, j, k}=\Delta(\bp(v_i), \bp(v_j), \bp(v_k))$.

Since $v_0$ is of type $(\star)$ in $G$, 
$(H+v_1v_2+v_1v_3,\bp|_H)$ and $(H+v_1v_3+v_3v_4,\bp|_H)$ are both  $C_2^1$-rigid.
Also ${\rm cl}(E(H))$ forms a star on $N_G(v_0)$ centered at $v_5$.
 We first prove a result concerning the $C_2^1$-motions of $(H,\bp|_H)$.

\begin{claim}\label{claim:Hbase}  There exist non-trivial $C_2^1$-motions $\bq_1$ of $(H+v_1v_2,\bp|_H)$ and $\bq_2$ of $(H+v_1v_3,\bp|_H)$ such that:\\
(a) $\bq_1(v_1)=\bq_1(v_2)=\bq_1(v_5)=0$, $D_{3,4}\cdot [\bq_1(v_3)-\bq_1(v_4)]=0$, and
$D_{1,3}\cdot \bq_1(v_3)\neq 0$;\\
(b) $\bq_2(v_1)=\bq_2(v_3)=\bq_2(v_5)=0$, $D_{2,4}\cdot [\bq_2(v_2)-\bq_2(v_4)]=0$, and
$D_{1,2}\cdot \bq_2(v_2)\neq 0$;\\
(c) $Z(H,\bp|_H)=Z_0(H,\bp|_H)\oplus\langle \bq_1,\bq_2 \rangle$.
\end{claim}
\begin{proof}
Since  ${\rm cl}(E(H))$ forms a star on $N_G(v_0)$ centered at $v_5$, 
${\rm cl}(E(H)+v_1v_2)$ contains the triangle $K(v_1,v_2,v_5)$.
Since $G$ contains a $1$-extension of $H+v_1v_2$ and  is not $C_2^1$-rigid, $H+v_1v_2$ is not $C_2^1$-rigid. Hence we can choose a non-trivial motion $\bq_1$ of $(H+v_1v_2,\bp|_H)$ such that  $\bq_1(v_1)=\bq_1(v_2)=\bq_1(v_5)=0$ (by pinning the triangle on $\{v_1,v_2,v_5\}$).
The fact that $(H+v_1v_2+v_1v_3,\bp|_H)$ is  $C_2^1$-rigid implies that
$D_{1,3}\cdot [\bq_1(v_3)-\bq_1(v_1)]=D_{1,3}\cdot \bq_1(v_3)\neq 0$. 
If  $D_{3,4}\cdot [\bq_1(v_3)-\bq_1(v_4)]\neq 0$ then $H+v_1v_2+v_3v_4$ would be $C_2^1$-rigid, and $G$ would also be $C_2^1$-rigid since it can be obtained from $H+v_1v_2+v_3v_4$ by an X-replacement. Thus $D_{3,4}\cdot [\bq_1(v_3)-\bq_1(v_4)]= 0$. 
This completes the proof of (a). Part (b) can be proved analogously.

The facts that  $D_{1,2}\cdot [\bq_1(v_1)-\bq_1(v_2)]=0\neq D_{1,2}\cdot [\bq_2(v_1)-\bq_2(v_2)]$ and
$D_{1,3}\cdot [\bq_1(v_1)-\bq_1(v_3)]\neq 0= D_{1,3}\cdot [\bq_2(v_1)-\bq_2(v_3)]$ imply that $\bq_1,\bq_2$ are linearly independent and can be extended to a base of $Z(H,\bp|_H)$ by adding the vectors in a base of $Z_0(H,\bp|_H)$. Hence $Z(H,\bp|_H)=Z_0(H,\bp|_H)\oplus\langle \bq_1,\bq_2 \rangle$.
\end{proof}


\begin{claim}\label{claim:Gmotion}  There exists a non-trivial $C_2^1$-motion $\bq$ of $(G,\bp)$ such that $\bq|_H=\alpha \bq_1+\beta \bq_2$ for some $\alpha, \beta\in \R$ with $\beta\neq 0$.
\end{claim}
\begin{proof}
Since $G$ is not $C_2^1$-rigid, we may choose a non-trivial $C_2^1$-motion $\bq$ of $(G,\bp)$. Since $d_G(v_0)=5$, $\bq|_H$ is a non-trivial $C_2^1$-motion of $(H,\bp|_H)$. By Claim \ref{claim:Hbase}(c),
$\bq|_H=\bq_0+\alpha \bq_1+\beta \bq_2$ for some $\bq_0\in Z_0(H,\bp|_H)$ and $\alpha,\beta\in \R$. We may choose $\tilde \bq\in Z_0(G,\bp)$ such that $\tilde \bq|_H=\bq_0$. Then $\hat \bq=\bq-\tilde \bq$ is a non-trivial $C_2^1$-motion of $(G,\bp)$ and  $\hat \bq|_H=\alpha \bq_1+\beta \bq_2$.
Since $(G+v_1v_2,\bp)$ can be obtained from  $(H+v_1v_2+v_1v_3,\bp|_H)$ by first performing a 1-extension operation and then adding the edge $v_0v_5$, $(G+v_1v_2,\bp)$ is $C_2^1$-rigid. This tells us that $D_{1,2}\cdot [\hat \bq(v_1)- \hat \bq(v_2)]\neq 0$ and hence $\beta\neq 0$. 
\end{proof}

Let  $\alpha, \beta$ be as described in Claim~\ref{claim:Gmotion}, and put $\gamma=\alpha/\beta$.
For all $1\leq i\leq 5$, $G$ contains the edge $v_0v_i$, and so
$D_{0,i}\cdot [\bq(v_0)-\bq(v_i)]=0$. Since
$\bq(v_i)=\alpha \bq_1(v_i)+\beta \bq_2(v_i)$ for $1\leq i\leq 5$,
we obtain the system of equations
$$ D_{0,i}\cdot [\bq(v_0)/\beta-\gamma \bq_1(v_i)]=D_{0,i}\cdot \bq_2(v_i) \mbox{ for $1\leq i\leq 5$}.$$
Putting $\bq(v_0)/\beta=(a_0,b_0,c_0)$, and using the facts that $\bq_1$ is zero on $v_1,v_2,v_5$ and $\bq_2$ is zero on $v_1,v_3,v_5$, we may rewrite this system as the matrix equation

\begin{equation}
\label{eq:dV2}
\left(
\begin{array}{ccc}
D_{0,1} &0 \\
D_{0,2} & 0\\
D_{0,3} & -D_{0,3}\cdot \bq_1(v_3)\\
D_{0,4} & -D_{0,4}\cdot \bq_1(v_4)\\
D_{0,5} & 0
\end{array}
\right) \left(
\begin{array}{c}
a_0\\ b_0\\ c_0\\ \gamma
\end{array}
\right) =
\left(
\begin{array}{c}
0 \\
D_{0,2}\cdot \bq_2(v_2)\\
0\\
D_{0,4}\cdot \bq_2(v_4)\\
0
\end{array}
\right)\,.
\end{equation}

\bigskip\noindent
Since this equation has a solution for $a_0,b_0,c_0,\gamma$ we have
\begin{equation}\label{eq:dV3}
\left|
\begin{array}{ccc}
D_{0,1} &0 & 0\\
D_{0,2} & 0 & D_{0,2}\cdot \bq_2(v_2)\\
D_{0,3} & -D_{0,3}\cdot \bq_1(v_3) & 0\\
D_{0,4} & -D_{0,4}\cdot \bq_1(v_4) & D_{0,4}\cdot \bq_2(v_4)\\
D_{0,5} & 0 & 0
\end{array}
\right| = 0\,.
\end{equation}

Let $\bp(v_0)=(x_0,y_0)$.
The left hand side of (\ref{eq:dV3}) is a polynomial in $x_0,y_0$ with coefficients in $\mathbb{Q}(\bp|_H, \bq_1, \bq_2)$. Since it is equal to zero for all generic values of $x_0,y_0$, each coefficient is zero. This implies that (\ref{eq:dV3}) will hold for {\em all} values of $x_0$ and $y_0$, including  non-generic choices of $\bp(v_0)$.

\begin{claim}\label{cllm:dV4-1}
$D_{3,4}\cdot \bq_1(v_3)=0.$
\end{claim}
\begin{proof}
We choose $\bp(v_0)$ to be a point on the interior of the line segment joining  $\bp(v_3)$ and $\bp(v_4)$.
Then $D_{0,3}$ and $D_{0,4}$ are scalar multiples of $D_{3,4}$.
We can now use elementary row and column operations to convert (\ref{eq:dV3}) to
\begin{equation}
\label{eq:dV3-2}
\left|
\begin{array}{ccc}
D_{0,1} &0 & 0\\
D_{0,2} & 0 & D_{0,2}\cdot \bq_2(v_2)\\
D_{3,4} & D_{3,4}\cdot \bq_1(v_3) & 0\\
0 & D_{3,4}\cdot (\bq_1(v_4)-\bq_1(v_3)) & D_{3,4}\cdot \bq_2(v_4)\\
D_{0,5} & 0 & 0
\end{array}
\right| = 0.\,
\end{equation}
Since $D_{3,4}\cdot (\bq_1(v_4)-\bq_1(v_3))=0$ by Claim \ref{claim:Hbase}(a), this gives
\begin{equation}
\label{eq:dV3-3}
\left|
\begin{array}{ccc}
D_{0,1} &0 & 0\\
D_{0,2} & 0 & D_{0,2}\cdot \bq_2(v_2)\\
D_{3,4} & D_{3,4}\cdot \bq_1(v_3) & 0\\
0 &0 & D_{3,4}\cdot \bq_2(v_4)\\
D_{0,5} & 0 & 0
\end{array}
\right| = 0.\,
\end{equation}
Hence
\begin{equation}\label{eq:dV3-4}
\left|
\begin{array}{c}
D_{0,1} \\ D_{0,2} \\ D_{0,5}
\end{array}\right|
(D_{3,4}\cdot \bq_1(v_3)) \,
(D_{3,4}\cdot \bq_2(v_4))=0.
\end{equation}
As long as $\bp(v_0)$ is generic on the line through $\bp(v_3)$ and $\bp(v_4)$, the first term is non-zero. Since $\bq_2$ is a non-trivial $C_2^1$-motion of $(H+v_1v_3,\bp|_H)$ and $(H+v_1v_3+v_3v_4,\bp|_H)$ is  $C_2^1$-rigid, we also have
$D_{3,4}\cdot \bq_2(v_4)=D_{3,4}\cdot (\bq_2(v_4)-\bq_2(v_3))\neq 0$.
Hence (\ref{eq:dV3-4}) gives $D_{3,4}\cdot \bq_1(v_3)=0$. 
\end{proof}

Since the edge $v_3v_5$ exists in ${\rm cl}(E(H))$ and $\bq_1(v_5)=0$,
we also have
\begin{equation}
\label{eq:dV4-2}
D_{3,5}\cdot \bq_1(v_3)=0.
\end{equation}
Claim~\ref{cllm:dV4-1} and (\ref{eq:dV4-2}) imply that $\bq_1(v_3)$ is a vector in the orthogonal complement of the span of $D_{3,4}$ and $D_{3,5}$.
Hence, scaling $\bq_1$ appropriately, we may assume that 
\begin{equation}
\label{eq:dV4-3}
\bq_1(v_3) = \Delta_{1,2,3} \,D_{3,4}\times D_{3,5}.
\end{equation}
By an analogous argument on the line through $\bp(v_2)$ and $\bp(v_4)$, we obtain
\begin{equation}
\label{eq:dV4-4}
\bq_2(v_2) = \Delta_{1,3,2} D_{2,4}\times D_{2,5}.
\end{equation}

We next calculate $\bq_1(v_4)$. 
Since the edge $v_4v_5$ exists in ${\rm cl}(E(H))$ and $\bq_1(v_5)=0$, we have 
$D_{4,5}\cdot \bq_1(v_4)=0$.
Also the edge $v_3v_4$ exists in ${\rm cl}(E(H)+v_1v_2)$, since otherwise we could perform an X-replacement on $H+v_1v_2+v_3v_4$ and deduce that $G$ is $C_2^1$-rigid.
Hence $D_{3,4}\cdot (\bq_1(v_4)-\bq_1(v_3))=0$. Since $D_{3,4}\cdot \bq_1(v_3)=0$ by Claim \ref{cllm:dV4-1}, we have $D_{3,4}\cdot \bq_1(v_4)=0$.
Thus $\bq_1(v_4)$ is in the orthogonal complement of the span of $D_{4,5}$ and $D_{3,4}$,
and $\bq_1(v_4)=s D_{3,4}\times D_{4,5}$ for some $s\in\mathbb{R}$.
The next claim determines the value of $s$:
\begin{claim}
$s=\Delta_{1,2,4}$.
\end{claim}
\begin{proof}
We put $\bp(v_0)$ on the interior of the line segment joining  $\bp(v_1)$ and $\bp(v_2)$.
Then $D_{0,1}$ is a scalar multiple of  $D_{0,2}$.
With this choice of $\bp(v_0)$, we may expand the determinant in (\ref{eq:dV3})
to obtain
\begin{equation}
\label{eq:dV5}
\left|
\begin{array}{c}
D_{0,1} \\ D_{0,5} \\ D_{0,3}
\end{array}\right|
(D_{0,4}\cdot \bq_1(v_4) )
(D_{0,2}\cdot \bq_2(v_2))
-
\left|
\begin{array}{c}
D_{0,1} \\ D_{0,5} \\ D_{0,4}
\end{array}\right|
(D_{0,3}\cdot \bq_1(v_3) )
(D_{0,2}\cdot \bq_2(v_2))
=0.
\end{equation}
We may use the fact that $D_{1,2}\cdot \bq_2(v_2)\neq 0$ by Claim \ref{claim:Hbase}(b) to deduce that $D_{0,2}\cdot \bq_2(v_2)\neq 0$ and  then rewrite (\ref{eq:dV5}) as
\begin{equation}
\label{eq:dV6}
\left|
\begin{array}{c}
D_{0,1} \\ D_{0,5} \\ D_{0,3}
\end{array}\right|
(D_{0,4}\cdot \bq_1(v_4) )
-
\left|
\begin{array}{c}
D_{0,1} \\ D_{0,5} \\ D_{0,4}
\end{array}\right| 
(D_{0,3}\cdot \bq_1(v_3) )
=0.
\end{equation}
Using $\bq_1(v_4)=s D_{3,4}\times D_{4,5}$, and substituting $\bq_1(v_3)$ with \eqref{eq:dV4-3}, we obtain 
\begin{equation}\label{eq:dV7}
s = \frac{
\left|
\begin{array}{c}
D_{0,1} \\ D_{0,5} \\ D_{0,4}
\end{array}\right|
\left|
\begin{array}{c}
D_{0,3} \\ D_{3,4} \\ D_{3,5}
\end{array}\right|}{ 
\left|
\begin{array}{c}
D_{0,1} \\ D_{0,5} \\ D_{0,3}
\end{array}\right|
\left|
\begin{array}{c}
D_{0,4} \\ D_{3,4} \\ D_{4,5}
\end{array}\right|}\Delta_{1,2,3}.
\end{equation}

By using the identity (\ref{eq:Vandermonde}) and the fact $D_{i,j} = D_{j,i}$, (\ref{eq:dV7}) becomes 
\begin{equation}
s=\frac{\Delta_{0, 1, 5} \Delta_{0, 5, 4} \Delta_{0, 4, 1} \Delta_{3, 0, 4} \Delta_{3, 4, 5} \Delta_{3, 5, 0}}{\Delta_{0, 1, 5} \Delta_{0, 5, 3} \Delta_{0, 3, 1} \Delta_{4, 0, 3} \Delta_{4, 3, 5} \Delta_{4, 5, 0}}\Delta_{1,2,3}
=\frac{\Delta_{0,4,1}}{\Delta_{0,3,1}}\Delta_{1,2,3}.
\end{equation}
Finally since $\bp(v_0)$ is on the interior of the line segment joining $\bp(v_1)$ and $\bp(v_2)$, 
we get 
\[
s=\frac{\Delta_{0,4,1}}{\Delta_{0,3,1}}\Delta_{1,2,3}=\frac{\Delta_{1,2,4}}{\Delta_{1,2,3}}\Delta_{1,2,3}=\Delta_{1,2,4}.\qedhere
\]
\end{proof}
This completes the derivation  of the formula for $\bq_1$  in the statement of the lemma. 
The formula for $\bq_2$ can be obtained using a symmetric argument by changing the role of $v_2$ and $v_3$.

It remains to compute $\bq(v_0)$. By (\ref{eq:dV2}), $\bq(v_0)/\beta$ is in the orthogonal complement of the span of $D_{0,1}$ and $D_{0,5}$. Hence $\bq(v_0)/\beta = k D_{0,1} \times D_{0,5}$ for some $k\in \mathbb{R}$.
By the second equation in (\ref{eq:dV2}), we also have $D_{0,2}\cdot \bq_2(v_2) = D_{0,2}\cdot (\bq(v_0)/\beta) = k D_{0,2}\cdot(D_{0,1} \times D_{0,5})$. 
Combining this with (\ref{eq:dV4-4}), we get 
\begin{equation}
\label{eq:dV8}
k = \Delta_{1,3,2} \frac{D_{0,2}\cdot (D_{2,4}\times D_{2,5})}{D_{0,2}\cdot(D_{0,1} \times D_{0,5})}.
\end{equation}
By the third equation in (\ref{eq:dV2}), $(D_{0,3}\cdot \bq_1(v_3))\gamma = D_{0,3}\cdot \bq(v_0)/\beta= k D_{0,3}\cdot (D_{0,1} \times D_{0,5})$. Hence by  (\ref{eq:dV8}) and \eqref{eq:dV4-3} we get 
\begin{align*}
\frac{\alpha}{\beta}=\gamma &= \frac{k D_{0,3}\cdot (D_{0,1} \times D_{0,5})}{D_{0,3}\cdot \bq_1(v_3)}
=\frac{\Delta_{1,3,2} D_{02}\cdot(D_{24}\times D_{25}) D_{03}\cdot (D_{01}\times D_{05})}{\Delta_{1,2,3} D_{03}\cdot(D_{34}\times D_{35}) D_{02}\cdot (D_{01}\times D_{05})} \\
&=-\frac{
\left|
\begin{array}{c}
D_{0,2} \\ D_{2,4} \\ D_{2,5}
\end{array}\right|
\left|
\begin{array}{c}
D_{0,3} \\ D_{0,1} \\ D_{0,5}
\end{array}\right|}{
\left|
\begin{array}{c}
D_{0,3} \\ D_{3,4} \\ D_{3,5}
\end{array}\right|
\left|
\begin{array}{c}
D_{0,2} \\ D_{0,1} \\ D_{0,5}
\end{array}\right|}.
\end{align*}
By scaling $\bq$ appropriately, we may take
$$\alpha = \left|
\begin{array}{c}
D_{0,2} \\ D_{2,4} \\ D_{2,5}
\end{array}\right|
\left|
\begin{array}{c}
D_{0,3} \\ D_{0,1} \\ D_{0,5}
\end{array}\right|
\mbox{ and }
\beta = -
\left|
\begin{array}{c}
D_{0,3} \\ D_{3,4} \\ D_{3,5}
\end{array}\right|
\left|
\begin{array}{c}
D_{0,2} \\ D_{0,1} \\ D_{0,5}
\end{array}\right|.$$ 
Since $\bq(v_0)= \beta k D_{0,1} \times D_{0,5}$, we can now use (\ref{eq:dV8}) to obtain the formula for $\bq(v_0)$ given in the statement of the lemma.
\end{proof}

\subsection{Bad motions and proof of Theorem \ref{thm:doubleV}}
Let $(G,\bp)$ be a generic framework and $v_0$ be a vertex of type $(\star)$ in $G$ with $N_G(v_0)=\{v_1,\dots, v_5\}$. Lemma~\ref{lem:motion} shows that, if $G$ is not $C_2^1$-rigid, then $(G,\bp)$ is a 1-dof framework and has a non-trivial motion $\bq$ with the property that the coordinates of $\bq$ at each vertex  of $\hat{N}_G(v_0)$ are three polynomials in the coordinates of $\bp(\hat{N}_G(v_0))$ with coefficients in $\rat$.
This seems unlikely as the graph on $N_G(v_0)$ induced by $\cl(E(G-v_0))$ 
is a star on the five vertices  (since $v_0$ is of type $(\star)$), which has a large degree of freedom as a subframework.
Our goal is to give a rigorous proof that this cannot happen. 
The following concepts will be useful 
for this purpose.

Let $G=(V,E)$ be a graph with $V=\{v_1,v_2,\ldots, v_n\}$. For $U\subseteq V$, let 
\[
b:U\to \rat[X_1,Y_1,X_2, Y_2,\ldots,X_n,Y_n]^3
\]
be a map which associates a 3-tuple $b_{i}$ of polynomials in $2n$ variables to each $v_i\in U$.

For a given framework $(G,\bp)$ with $\bp(v_i)=(x_i,y_i)$ for all $1\leq i\leq n$, the substitution of $(X_i, Y_i)$ with $(x_i,y_i)$ in $b_{i}$ for all $1\leq i\leq n$ gives a vector $b_{i}(\bp)$ in $\R^3$
for each $v_i\in U$.
We say that a $C_2^1$-motion $\bq$ of  $(G,\bp)$ is a {\em $b$-motion}  if $\bq(v_i)=b_{i}(\bp)$  for all $v_i\in U$.

The following lemma states that the property of having  a $b$-motion is stable over a large set $S$ of realisations. (In particular, it will imply that this  property is generic). Since we will need the realisations in $S$ to satisfy the hypotheses of Lemma \ref{lem:1dof_motion1} it will be convenient to say that the realisation $\bp$ is {\em non-degenerate on $U$} if there exist three distinct vertices in $U$ which have distinct `$y$-coordinates' in $(G,\bp)$.

\begin{lemma}\label{lem:bad_pinning}
Let $G=(V,E)$ be a $C_2^1$-independent graph with $V=\{v_1,\dots,v_n\}$, 
$U
\subseteq V$, $F
\subseteq K(U)$, 
and
$
S=\{\bp:\mbox{$(G+F,\bp)$ is minimally $C_2^1$-rigid and $\bp$ is non-degenerate on $U$}\}.
$
Suppose that $b:U\to \rat[X_1,Y_1,\ldots,X_n,Y_n]^3$ and $(G,\bp)$ has a $b$-motion for some generic $\bp$. Then $(G, \bp)$ has a $b$-motion for all $ \bp\in S$.
\end{lemma}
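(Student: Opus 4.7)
The plan is to parameterize the motions of $(G,\bp)$ rationally in $\bp$ using the minimal rigidity of $G+F$, and then propagate the generic existence of a $b$-motion to every $\bp\in S$ via a Zariski-density / rational-identity argument. For $\bp\in S$, non-degeneracy on $U$ supplies three vertices $v_a,v_b,v_c\in U$ with pairwise distinct $y$-coordinates. Since $(G+F,\bp)$ is minimally $C_2^1$-rigid, Lemma~\ref{lem:1dof_motion1} applied with $k=0$ implies that the extended cofactor matrix $\tilde{C}(G+F,\bp)$, obtained by appending the six pinning rows corresponding to the coordinates $(v_a,1),(v_a,2),(v_a,3),(v_b,1),(v_b,2),(v_c,1)$, is a non-singular $3|V|\times 3|V|$ matrix. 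Its inverse $N(\bp)$ has entries that are rational functions of the coordinates of $\bp$, all regular on $S$.

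Any motion $\bq$ of $(G,\bp)$ can then be written uniquely as
\[
\bq = N(\bp)\,(0,\vec f,\vec g)^\top,
\]
where $\vec f = C(F,\bp)\bq\in\mathbb{R}^{|F|}$ is the vector of residuals of $\bq$ on the edges of $F$ and $\vec g\in\mathbb{R}^6$ records the six selected pinning coordinates of $\bq$ at $v_a,v_b,v_c$. For $\bq$ to be a $b$-motion we are forced to take $\vec g=\vec g^b(\bp)$, the vector of six pinning values prescribed by $b$, and the remaining $3|U|-6$ conditions $\bq(v_i)^{(k)}=b_i^{(k)}(\bp)$ become a linear system
\[
M(\bp)\,\vec f=\vec d(\bp),
\]
whose coefficient matrix and right-hand side have entries rational in $\bp$ and regular on $S$. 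Existence of a $b$-motion at $\bp$ is equivalent to solvability of this system.

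The hypothesis says the system is solvable at some generic $\bp_0\in S$. Choose a maximal non-singular square submatrix of $M(\bp_0)$; Cramer's rule then produces a rational solution $\vec f^{\ast}(\bp)$ of the corresponding subsystem, regular on a Zariski-open subset $S_0\subseteq S$ containing $\bp_0$. Substituting $\vec f^{\ast}(\bp)$ into the remaining equations of $M(\bp)\vec f=\vec d(\bp)$ yields rational functions of $\bp$ regular on $S_0$ that vanish on the Zariski-dense subset of generic points of $S_0$, hence vanish identically on $S_0$; consequently a $b$-motion exists at every $\bp\in S_0$.

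The principal obstacle is extending this conclusion from $S_0$ to all of $S$. The natural strategy is to show that $\mathrm{rank}(M(\bp))$ is constant on $S$, so that by varying the choice of non-singular submatrix the corresponding open sets $S_0$ cover $S$. This rank stability amounts to proving that the dimension of $\{\bq\in Z(G,\bp):\bq|_U=0\}$ is independent of $\bp\in S$, a dimension count I expect to extract from the minimal rigidity of $(G+F,\bp)$ together with Lemma~\ref{lem:1dof_motion1} applied to appropriate subframeworks of $G$ obtained by combining pinning at $U$ with the edges of $F$.
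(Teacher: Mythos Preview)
Your setup is essentially the paper's: the paper packages the argument as an abstract lemma about square polynomial matrices (applied to the extended cofactor matrix $\tilde C(G+F,\bp)$), but the content is the same rational parameterization of $Z(G,\bp)$ by the $|F|+6$ ``free'' rows, followed by a solvability-of-a-polynomial-linear-system argument.

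The gap you flag in your last paragraph is precisely the crux, and it is easy to close --- you do not need to pass to subframeworks. For any $\bp\in S$, if $\bq\in Z(G,\bp)$ satisfies $\bq|_U=0$, then $\bq$ automatically satisfies every edge constraint in $F\subseteq K(U)$, so $\bq\in Z(G+F,\bp)=Z_0(G+F,\bp)$; and the matrix computation in the proof of Lemma~\ref{lem:canonical} (non-singularity of $Q$ when $y_a,y_b,y_c$ are distinct) forces a trivial motion vanishing at $v_a,v_b,v_c\in U$ to be zero. Hence $\{\bq\in Z(G,\bp):\bq|_U=0\}=\{0\}$ and $\rank M(\bp)=|F|$ for every $\bp\in S$. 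With full column rank established on all of $S$, your ``cover $S$ by varying the square submatrix'' step is unnecessary: solvability of $M(\bp)\vec f=\vec d(\bp)$ is now equivalent to $\rank[M(\bp)\mid\vec d(\bp)]=|F|$, i.e.\ to the vanishing of all $(|F|{+}1)$-minors of the augmented matrix; these are polynomial in $\bp$, vanish at a generic point, and hence vanish on all of $S$. (One small technical point you glossed over: the choice of $v_a,v_b,v_c$ depends on the target $\bp\in S$; fix it for that target and work on the subset of $S$ where those three $y$-coordinates stay distinct, which contains both your target and all generic points.)
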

%

Lemma~\ref{lem:bad_pinning} can be understood as follows.
Since each entry of $C(G,\bp)$ is a polynomial function of the coordinates of $\bp$, we can choose a base of the space of $C_2^1$-motions of $(G,\bp)$ such that 
each entry of the vectors in the base is a polynomial function of the coordinates of $\bp$.
By the assumption of the lemma, $(G,{\bp})$ has a $b$-motion for some generic ${\bp}$, which is spanned by  this base.
Since this holds for some generic ${\bp}$ and $b$ is a polynomial function, it will hold for all generic $\bp$. 
The statement of Lemma~\ref{lem:bad_pinning} follows for all generic $\bp$.
We need a slightly more involved argument to verify the statement for {all} $\bp\in S$. This is given in Appendix~\ref{sec:bad_pinning}.

Suppose $v_0$ is a vertex of type $(\star)$ in a generic framework $(G,\bp)$ and $(G,\bp)$ is not $C_2^1$-rigid. Then 
Lemma \ref{lem:motion} tells us that $(G,\bp)$ has a 
non-trivial motion $\bq$ 
with the property that the value of $\bq$ at each vertex  of $\hat{N}_G(v_0)$ can be described by three polynomials in the coordinates of $\bp(\hat{N}_G(v_0))$ with coefficients in $\rat$. We will see in Lemma~\ref{lem:bad_motion} below that $\bq$ also has the property that the graph on $N_G(v_0)$ with edge set $\{v_iv_j\,:\,D(v_i,v_j)\cdot (\bq(v_i)-\bq(v_j))=0\}$ is a star. 
We will concentrate on $b$-motions which share these properties.
 
Formally, let $(G,\bp)$ be a framework and  $v_0$ be a vertex of degree five in $G$ with $N_G(v_0)=\{v_1,v_2,\ldots,v_5\}$. We say that a $C_2^1$-motion $\bq$ of $(G,\bp)$ is {\em bad at $v_0$} if $\bq$ is a $b$-motion for some $b:\hat N_G(v_0)\to \rat[X_0,Y_0,X_1, Y_1,\ldots,X_5,Y_5]^3$ for which 
\begin{equation}\label{eq:bad}
\text{the graph on $N_G(v_0)$ with edge set 
$\{v_iv_j: 
D_{i,j}\cdot (b(v_i)-b(v_j))=0 \}$
 is a star,}
\end{equation}
where $D_{i,j}=((X_i-X_j)^2, (X_i-X_j)(Y_i-Y_j), (Y_i-Y_j)^2)$.
(Here $D_{i,j}\cdot (b(v_i)-b(v_j))=0$ means that polynomial $D_{i,j}\cdot (b(v_i)-b(v_j))$ is identically zero.)

Our next result verifies that the motion defined in Lemma \ref{lem:motion} is indeed a bad motion.

\begin{lemma}
\label{lem:bad_motion}
Let $(G,\bp)$ be a generic framework and $v_0$ be a vertex of type $(\star)$ 
with $N_G(v_0)=\{v_1,\dots, v_5\}$.
Suppose that $G$ is not $C_2^1$-rigid. 
Then $(G,\bp)$ has a bad motion at $v_0$.
\end{lemma}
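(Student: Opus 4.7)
The plan is to write down the motion $\bq$ produced by Lemma~\ref{lem:motion} as an explicit polynomial expression in the coordinates of $\bp(v_0),\bp(v_1),\ldots,\bp(v_5)$ and then read off the map $b$ by substituting formal variables for these coordinates.  More precisely, Lemma~\ref{lem:motion} provides polynomial formulas for $\bq_1(v_i)$ and $\bq_2(v_i)$ ($i=1,\dots,5$), for the scalars $\alpha,\beta$, and for $\bq(v_0)$, each a polynomial with rational coefficients in the coordinates of $\bp|_{\hat N_G(v_0)}$. Define $b:\hat{N}_G(v_0)\to\rat[X_0,Y_0,\ldots,X_5,Y_5]^3$ by the same formulas with $(x_i,y_i)$ replaced by $(X_i,Y_i)$. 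By construction, $\bq$ is a $b$-motion, so it only remains to verify condition~(\ref{eq:bad}).

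To check (\ref{eq:bad}), I compute $D_{i,j}\cdot(b(v_i)-b(v_j))$ as a polynomial in $\rat[X_0,Y_0,\ldots,X_5,Y_5]$ for each of the ten pairs $v_iv_j\subseteq N_G(v_0)$ and show it vanishes identically precisely on the four edges $v_iv_5$, $i\in\{1,2,3,4\}$. The four star-edges are immediate: $b(v_1)=b(v_5)=0$, and for $i\in\{2,3,4\}$ the formula for $b(v_i)$ is a linear combination of cross products each of which has $D_{i,5}$ as one of its two factors, so from $u\cdot(u\times w)=0$ we obtain $D_{i,5}\cdot b(v_i)\equiv 0$ as a polynomial.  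In particular, the star $\{v_1v_5,v_2v_5,v_3v_5,v_4v_5\}$ is always in the edge set defined by~(\ref{eq:bad}).

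For the remaining six pairs one expands the inner products and applies the Vandermonde identity~(\ref{eq:Vandermonde}) to rewrite each as a product of $\Delta$-factors. For $v_1v_2$ and $v_1v_3$, Claim~\ref{claim:Hbase} (used inside the proof of Lemma~\ref{lem:motion}) already gives $D_{1,2}\cdot\bq_2(v_2)\neq 0$ and $D_{1,3}\cdot\bq_1(v_3)\neq 0$, so the associated polynomials are not identically zero.  For $v_2v_4, v_3v_4$, two of the three terms in the expansion vanish by $u\cdot(u\times w)=0$ and the surviving term is a nonzero $3\times 3$ determinant (e.g.\ for $v_2v_4$ one gets $-\alpha\Delta_{1,2,4}\,D_{2,4}\cdot(D_{3,4}\times D_{4,5})$).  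For $v_1v_4$ and $v_2v_3$ the expansion produces two nonzero terms; one checks that the polynomial is not identically zero by exhibiting (or invoking) the generic realisation $\bp$ fixed at the start.

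Thus the edge set $\{v_iv_j:D_{i,j}\cdot(b(v_i)-b(v_j))=0\}$ on $N_G(v_0)$ is exactly the star centred at $v_5$, so (\ref{eq:bad}) holds and $\bq$ is a bad motion at $v_0$. The conceptual content is modest: the star structure falls out automatically from the fact that every cross product appearing in the formulas of Lemma~\ref{lem:motion} contains some $D_{i,5}$ as a factor. The only real work is the case-by-case (but routine) verification that the six non-star polynomials are not identically zero, which is the main bookkeeping obstacle.
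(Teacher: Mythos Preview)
Your approach is correct and essentially identical to the paper's: define $b$ by replacing the coordinates in the explicit formulas of Lemma~\ref{lem:motion} with formal variables, then verify~(\ref{eq:bad}) by a case check on the ten pairs in $N_G(v_0)$, with the star edges falling out from the $D_{i,5}$ factors in each cross product and the six non-star edges handled by direct expansion (the paper defers these computations to Appendix~\ref{app:bad}). One small point to make explicit: for $v_1v_2,v_2v_4$ and their symmetric counterparts you also need $\alpha,\beta\neq 0$ as polynomials, which follows from their Vandermonde factorizations, and the cases $v_1v_4,v_2v_3$ genuinely require a cancellation check (the paper uses a specialization argument to show $\Delta_{2,0,4}\Delta_{0,3,1}-\Delta_{3,0,4}\Delta_{0,2,1}\neq 0$), so ``routine'' is accurate but not entirely content-free.
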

\begin{proof}
Motivated by Lemma~\ref{lem:motion}, we define $b:\hat{N}_G(v_0)\rightarrow\rat[X_0,Y_0,X_1,\ldots,Y_5]^3$ by
\begin{align}
b(v_0)&=\Delta_{1,2,3} \left|
\begin{array}{c} D_{0,3} \\ D_{3,4} \\ D_{3,5}
 \end{array} \right|
\left|
\begin{array}{c} D_{0,2} \\ D_{2,4} \\ D_{2,5}
 \end{array} \right|
 D_{0,1}\times D_{0,5}, \label{eq:bad1}\\
b(v_1)&=0,\\
b(v_2)&=\beta \Delta_{1,3,2} D_{2,4}\times D_{2,5}, \\
b(v_3)&=\alpha \Delta_{1,2,3} D_{3,4} \times D_{3,5}, \\
b(v_4)&=\alpha\Delta_{1,2,4} D_{3,4}\times D_{4,5} +\beta \Delta_{1,3,4}D_{2,4}\times D_{4,5}, \\
b(v_5)&=0;  \label{eq:bad6}
\end{align}
where
 $\Delta_{i,j,k}=\left|
\begin{array}{ccc}
X_i & Y_i & 1 \\ X_j & Y_j & 1 \\ X_k & Y_k & 1
\end{array}\right|$,
$\alpha = \left|
\begin{array}{c}
D_{0,2} \\ D_{2,4} \\ D_{2,5}
\end{array}\right|
\left|
\begin{array}{c}
D_{0,3} \\ D_{0,1} \\ D_{0,5}
\end{array}\right|$,
$\beta = -
\left|
\begin{array}{c}
D_{0,3} \\ D_{3,4} \\ D_{3,5}
\end{array}\right|
\left|
\begin{array}{c}
D_{0,2} \\ D_{0,1} \\ D_{0,5}
\end{array}\right|$
are 
regarded as 
polynomials in $X_0,Y_0,X_1,\ldots,Y_5$.
Then, by Lemma~\ref{lem:motion}, $(G,\bp)$ has a $b$-motion at $v_0$.

It remains to show that $b$ satisfies (\ref{eq:bad}) symbolically.
This can be done by a hand computation using the explicit formulae,
see Appendix \ref{app:bad} for the details.
\end{proof}

Our next result asserts that  no generic framework with one degree of freedom can have a bad motion.
\begin{theorem}\label{thm:well-behaved0}
Let $(G,\bp)$ be a generic 1-dof framework and $v_0$ be a vertex of degree five.
Then $(G,\bp)$ has no bad motion  at $v_0$.
\end{theorem}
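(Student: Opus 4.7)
The proof will proceed by induction on $|V(G)|$, via a strengthened inductive hypothesis (Theorem~\ref{thm:well-behaved}) that covers generic frameworks with at most two degrees of freedom. The 2-dof extension is necessary because inverse graph operations applied during the reduction naturally pass through 2-dof intermediate configurations, and the extra freedom must be controlled uniformly with the 1-dof case. For the base case $|V(G)| = 6$, so that $G = \hat N_G(v_0)$ consists entirely of the star neighbourhood, I would analyse the $b$-polynomials directly and argue that the star condition (\ref{eq:bad}) on $N_G(v_0)$, together with the five edge equations $D_{0,i}\cdot(b(v_0) - b(v_i)) = 0$ for the edges at $v_0$, forces $\bq$ to be trivial.

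For the inductive step, given a putative bad motion $\bq$ of a generic 1-dof framework $(G,\bp)$ at $v_0$, the plan is to locate a vertex $w \ne v_0$ of small degree and apply the inverse of one of the Henneberg-type operations known to preserve $C_2^1$-independence (Lemma~\ref{lem:cofactor_ind} together with Theorem~\ref{thm:doubleVconj}): $0$- or $1$-extension reduction when $d_G(w) \le 4$, and $X$-, $V$-, or double-$V$-replacement reduction when $d_G(w) = 5$. Such a $w$ exists by a direct degree count, since $|E(G)| = 3|V(G)| - 7$ and $d_G(v_0) = 5$ force the remaining vertices to have average degree strictly less than six. The reduction yields a smaller framework $(G',\bp')$ still containing $v_0$ with degree five; because the $b$-values on $\hat N_G(v_0)$ are polynomials in the coordinates of the same six points, the polynomial description of $\bq$ on $\hat N_G(v_0)$ transfers automatically, and one only needs to verify that the restricted or modified motion remains a valid $C_2^1$-motion of $G'$, after which the inductive hypothesis applied to $(G',\bp')$ yields the contradiction.

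The main obstacle will be ensuring that a reduction compatible with the star structure on $N_G(v_0)$ is always available. When the chosen $w$ lies in $N_G(v_0)$, or when several of its neighbours lie there, any reduction risks destroying edges of the star, shifting its centre, or creating spurious edge equalities that violate the \emph{star-and-nothing-more} character of condition (\ref{eq:bad}). The 2-dof strengthening in Theorem~\ref{thm:well-behaved} is precisely what absorbs the extra degrees of freedom that appear in these interactions, permitting inverse operations that would otherwise overshoot the 1-dof regime. Throughout, Lemma~\ref{lem:bad_pinning} will play a central supporting role: by specialising $\bp$ to non-generic but non-degenerate positions---for instance placing three points of $\bp(v_1),\dots,\bp(v_5)$ collinearly, in the spirit of the argument used in the proof of Lemma~\ref{lem:motion}---one can force selected polynomial coefficients to vanish, which both simplifies the case analysis and ultimately exposes the contradiction that excludes bad motions.
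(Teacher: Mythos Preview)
Your outline captures the broad shape of the paper's argument---induction via a strengthened 2-dof hypothesis, reduction at a low-degree vertex $u_0\notin\hat N_G(v_0)$---but it contains two genuine gaps.

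First, invoking Theorem~\ref{thm:doubleVconj} is circular. The dependency chain in the paper is
\[
\text{Theorem~\ref{thm:well-behaved0}} \Longrightarrow \text{Theorem~\ref{thm:doubleV}} \Longrightarrow \text{Theorem~\ref{thm:doubleVconj}},
\]
so double V-replacement is not available here. The paper carefully restricts itself to $0$-extension, $1$-extension, $X$-replacement, and the special V-replacement of Lemma~\ref{lem:V} (all established without Theorem~\ref{thm:well-behaved0}).

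Second, and more seriously, the sentence ``one only needs to verify that the restricted or modified motion remains a valid $C_2^1$-motion of $G'$'' hides the entire difficulty. If $u_0$ has degree five and you form $G'=G_0+e_1+e_2$ for edges $e_1,e_2\in K(N_G(u_0))$, then $\bq_{\rm bad}|_{V_0}$ is a motion of $(G_0,\bp|_{V_0})$ but there is \emph{no reason} it should satisfy the constraints from $e_1,e_2$. The paper's resolution is not a verification but a substantial algebraic argument: one writes $\bq_{\rm bad}|_{V_0}$ in a basis $\{\bq_j^i\}$ of motions of carefully chosen graphs $G_0+f_i+e_{j,1}+e_{j,2}$, computes the coefficients $\alpha_j^i$ explicitly as polynomials in $\bp(u_0)$ (Claims~\ref{claim:motionA}--\ref{claim:motionD}), and then exploits the fact that $\bq_{\rm bad}(\hat N_G(v_0))\subset\mathbb{Q}(\bp(V_0))^3$ while $\bp(u_0)$ is transcendental over $\mathbb{Q}(\bp(V_0))$ to force certain coefficient determinants to vanish identically (Claims~\ref{claim:linear3}, \ref{claim:linear1}, \ref{claim:quadratic}). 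Only then can one conclude that the bad motion descends to some specific $G_0+e_{j,1}+e_{j,2}$. This step also requires first proving that $\cl(E_0)\cap K(N_G(u_0))$ is itself a star on five vertices (the content of Sections~\ref{subsubsec:2}--\ref{subsubsec:3}), which is a long combinatorial argument in its own right. Your proposal does not anticipate either of these mechanisms.
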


We will delay the proof of Theorem \ref{thm:well-behaved0} until  Section \ref{sec:well-behaved}. Instead we show how it can be used to deduce Theorem~\ref{thm:doubleV}.

\medskip
\noindent
{\bf Proof of Theorem~\ref{thm:doubleV}.}
Suppose the theorem is false. Let $G$ be a counterexample and $\bp$ be a generic realisation of $G$. 
Then Lemma~\ref{lem:bad_motion} implies that $(G,\bp)$ has a bad motion at $v_0$ and this contradicts Theorem~\ref{thm:well-behaved0}. 
(Note that $(G,\bp)$ is a 1-dof framework since $v_0$ is type $(\star)$.)
\qed

\subsection{Projective transformations}
Recall that each point $p=(x,y)\in \R^2$ can be associated to a point $p^\uparrow=[x,y,1]^T$ in 2-dimensional real projective space. 
A map $f:\R^2\to \R^2$ is a {\em projective transformation} if there exists a $3\times 3$ non-singular matrix $M$ such that, for all $p\in\R^2$, 
$f(p)^\uparrow=\lambda_pMp^\uparrow$ for some non-zero $\lambda_p\in \R$.
We say that a framework $(G,\bp')$ is a {\em projective image} of a framework $(G,\bp)$ if there is a 
projective transformation $f:\mathbb{R}^2\rightarrow \mathbb{R}^2$ such that $\bp'(v_i)=f(\bp(v_i))$ for all $v_i\in V(G)$.
Whiteley~\cite[Theorem 11.3.2.]{Wsurvey} showed that $C_2^1$-rigidity is invariant under projective transformations. 
We shall use the following lemma, which implies that a certain projective transformation preserves the existence of a bad motion, to simplify our calculations in the proof of Theorem \ref{thm:well-behaved0}. 
Its statement  implicitly uses the fact that, for any two ordered sets $S$ and $T$ of four points in general position in $\R^2$, 
there is a unique projective transformation that maps $S$ onto $T$.
\begin{lemma}\label{lem:projective_bad}
Let $(G,\bp)$ be a generic framework, $v_0$ be a vertex of degree five with $N_G(v_0)=\{v_1,\dots, v_5\}$ 
and $(G, \bp')$ be a projective image of $(G,\bp)$ such that 
$\bp'(v_1)=(1, 0)$, $\ \bp'(v_2)=(0, 0)$, $\bp'(v_3)=(0,1)$, and  $\bp'(v_4)=(1,1)$.
If $(G, \bp')$ has a bad motion at $v_0$, then $(G, \bp)$ has a bad motion at $v_0$.
\end{lemma}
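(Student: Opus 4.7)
The plan is to pull a bad motion of $(G, \bp')$ back to one of $(G, \bp)$ using the explicit projective invariance of the $C_2^1$-cofactor matroid underlying Whiteley's theorem. That invariance is proved by exhibiting a linear isomorphism $\Phi_M : Z(G, f(\bp)) \to Z(G, \bp)$ which is block-diagonal, with each $3\times 3$ block $S_i = S_i(\bp(v_i), M)$ having entries that are polynomial in the coordinates of $\bp(v_i)$ and rational in the entries of the matrix $M$ representing $f$. Moreover, for every pair $v_i, v_j$, the cofactor constraint transforms by a nonzero scalar multiple, giving an identity of the form
\[
D(\bp(v_i), \bp(v_j)) \cdot (\bq(v_i) - \bq(v_j)) = \mu_{i,j}(\bp,M) \cdot D(\bp'(v_i), \bp'(v_j)) \cdot (\bq'(v_i) - \bq'(v_j))
\]
whenever $\bq = \Phi_M(\bq')$, where $\mu_{i,j}$ is a nonzero polynomial in its arguments. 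I would first make both of these ingredients precise by unpacking Whiteley's projective invariance argument.

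Since $f$ is uniquely determined by its action on $\bp(v_1), \dots, \bp(v_4)$, the entries of $M$ and of each $S_i$ are rational functions of $X_1, Y_1, \dots, X_4, Y_4$ over $\rat$, and so are the coordinates of each $\bp'(v_i)$. Given a bad motion $\bq'$ of $(G, \bp')$ with polynomial map $b' : \hat N_G(v_0) \to \rat[X_0, Y_0, \dots, X_5, Y_5]^3$, I would set $\bq = \Phi_M(\bq')$, so that $\bq(v_i) = S_i \cdot b'_i(\bp')$ for $v_i \in \hat N_G(v_0)$. Each coordinate of $\bq(v_i)$ is then a rational function of $X_0, \dots, Y_5$ over $\rat$, and multiplying $\bq$ by a suitable common denominator polynomial $D_0 \in \rat[X_1, \dots, Y_4]$ (which is nonzero at the generic $\bp$) produces the motion $D_0 \bq$ whose values on $\hat N_G(v_0)$ are polynomials $b_i \in \rat[X_0, \dots, Y_5]^3$. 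These define the candidate polynomial map $b$.

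To verify the star condition (\ref{eq:bad}) for $b$, I would apply the edgewise identity above with this specific $f$ and clear denominators to obtain, in $\rat[X_0, \dots, Y_5]$, an identity of the form
\[
D_{i,j} \cdot (b(v_i) - b(v_j)) = \tilde\mu_{i,j} \cdot \bigl[D_{i,j} \cdot (b'(v_i) - b'(v_j))\bigr]\Big|_{\bp' = f(\bp)}
\]
for some nonzero polynomial $\tilde\mu_{i,j} \in \rat[X_0, \dots, Y_5]$, where the right-hand side denotes the result of substituting $\bp' = f(\bp)$ into the bracketed polynomial and then clearing the resulting denominators against $\tilde\mu_{i,j}$. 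The substitution $\bp' \mapsto f(\bp)$ is induced by an invertible rational map and hence injects the polynomial ring $\rat[X'_0, \dots, Y'_5]$ into the field $\rat(X_0, \dots, Y_5)$. Consequently, a polynomial in the $\bp'$-variables vanishes identically if and only if its image under this substitution does, so the star edges of $b$ are exactly those of $b'$, and $b$ defines a bad motion of $(G, \bp)$ at $v_0$.

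The step I expect to be the main obstacle is establishing the edgewise transformation formula in a form guaranteeing that $\mu_{i,j}$ (and hence $\tilde\mu_{i,j}$) is a nonzero polynomial; this requires carefully unpacking the proof of Whiteley's projective invariance to extract the explicit scalar. Once this explicit form is available, the remainder of the argument reduces to the formal polynomial manipulations described above.
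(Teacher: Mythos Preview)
Your overall plan matches the paper's: construct a vertex-by-vertex linear isomorphism between the motion spaces of $(G,\bp')$ and $(G,\bp)$ whose blocks have entries rational in the coordinates of $\bp(\hat N_G(v_0))$, verify that each edge constraint transforms by a nonzero scalar, and conclude that the star pattern of (\ref{eq:bad}) is preserved. Once such a $\Phi_M$ exists, your polynomial-clearing and substitution argument goes through essentially as you describe.

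The gap is in how you propose to obtain $\Phi_M$. You intend to ``unpack Whiteley's projective invariance argument'' to extract a block-diagonal isomorphism on $C_2^1$-\emph{motions}, but Whiteley's proof of projective invariance for the $C_2^1$-cofactor matroid \cite[Theorem~11.3.2]{Wsurvey} proceeds via the \emph{left} kernel (self-stresses), not the right kernel, and does not directly furnish such a map on motions. The paper makes exactly this point before stating Lemma~\ref{lem:projective_bad}, and the content of Appendix~\ref{sec:projective} is precisely to fill this gap: it introduces a lift of $C_2^1$-motions to ``projective $C_2^1$-motions'' $\bQ:V\to\mathbb{S}^{3\times 3}$, shows that under a linear transformation $A$ of $\mathbb{R}^3$ such a $\bQ$ transforms vertexwise by $\bQ(v_i)\mapsto C_A^{-\top}\bQ(v_i)C_A^{-1}$, and then, for each vertex separately, subtracts suitable trivial projective motions $\bQ_{i,1}^*,\bQ_{i,2}^*,\bQ_{i,3}^*$ to force the last row and column to vanish so the result projects back to an ordinary $C_2^1$-motion. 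This five-step procedure yields exactly the block-diagonal $\Phi_M$ you postulate, with the required rationality and the edgewise scalar identity following from the trace computations there. So your outline is right, but the ``main obstacle'' you flag is genuinely the whole proof, and it is not available in Whiteley's work; you would need to build this projective-motion framework (or an equivalent) yourself.
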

Lemma~\ref{lem:projective_bad} will follow from the fact that the entries in the matrix which defines the projective transformation given in the lemma are  determined by  
the coordinates of $\bp(v_i)$ for $1\leq i\leq 4$. However, a rigorous proof seems to
require a precise understanding of how the projective transformation changes the motion space of the framework. Whiteley's proof of projective invariance does not provide this as it uses a  self-stress argument.
We develop a new proof technique for the projective invariance of $C_2^1$-rigidity in Appendix~\ref{sec:projective},
and then use it to derive Lemma~\ref{lem:projective_bad}.

\section{Proof of Theorem~\ref{thm:well-behaved0}}\label{sec:well-behaved}
\subsection{Locally $k$-dof parts}\label{subsec:kdof}\label{subsec:kdof_parts}
Let $G=(V,E)$ be a graph.
A set $X\subseteq V$ is said to be a {\em locally $C_2^1$-rigid part} in $G$ if every edge in $K(X)$ is in ${\rm cl}(E(G))$. 
The set $X$ is said to be a {\em locally $k$-dof part} in $G$ if 
there is a set $F$ of $k$ edges in $K(V)$ such that 
$X$ is a locally $C_2^1$-rigid part in $G+F$, but 
no smaller edge set has this property.

Our inductive proof of Theorem~\ref{thm:well-behaved0} requires the following more general  inductive hypothesis. 
\begin{theorem}\label{thm:well-behaved}
Let $(G,\bp)$ be a generic framework and $v_0$ be a vertex of degree five such that 
$\hat{N}_G(v_0)$ is a locally $k$-dof part in $G$ with $k=1$ or $k=2$.
Then $(G,\bp)$ has no bad motion  at $v_0$.
\end{theorem}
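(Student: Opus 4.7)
I plan to prove Theorem~\ref{thm:well-behaved} by induction on $|V(G)|$, carrying both the $k=1$ and $k=2$ cases through the induction simultaneously so that reductions may freely interchange them. Before beginning the induction I would apply Lemma~\ref{lem:projective_bad} to place $\bp(v_1),\bp(v_2),\bp(v_3),\bp(v_4)$ at fixed positions such as $(1,0),(0,0),(0,1),(1,1)$; this does not affect the existence of a bad motion at $v_0$ and considerably shortens the polynomial identities enforced by the star condition~\eqref{eq:bad}, leaving $\bp(v_0),\bp(v_5)$ and the coordinates of vertices outside $\hat{N}_G(v_0)$ as the free transcendentals.

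For the base case $|V(G)|=6$, one has $V(G)=\hat{N}_G(v_0)$, and the hypothesis that $\hat{N}_G(v_0)$ is a locally $k$-dof part forces the number of edges of $G$ among $\{v_1,\dots,v_5\}$ to take a specific value depending on $k\in\{1,2\}$. Enumerating these graphs up to the natural symmetry of the star center, one substitutes the explicit polynomial formulas for $b$ (modelled on those in Lemma~\ref{lem:bad_motion}) into each motion equation $D_{i,j}\cdot(b(v_i)-b(v_j))=0$ with $v_iv_j\in E(G)$, and uses the star condition to isolate those identities that must hold identically in the free coordinates. A direct algebraic computation then shows that this system forces $b\equiv 0$, contradicting the non-triviality of the bad motion.

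For the inductive step with $|V(G)|>6$, pick a vertex $u\in V(G)\setminus\hat{N}_G(v_0)$ and reduce $(G,\bp)$ to $(G',\bp|_{V-u})$ via an inverse Henneberg-type operation chosen according to $d_G(u)$: inverse $0$-extension if $d_G(u)\le 3$, inverse $1$-extension if $d_G(u)=4$, and inverse X- or V-replacement (or possibly inverse double V-replacement) if $d_G(u)=5$, with a small number of edges within $V\setminus\{u\}$ added back to form $G'$ in the latter cases. The key observation is that the polynomial map $b:\hat{N}_G(v_0)\to\mathbb{Q}[X_0,Y_0,\dots,X_5,Y_5]^3$ does not involve $u$, so any bad motion of $(G,\bp)$ restricts to one of $(G-u,\bp|_{V-u})$, and Lemma~\ref{lem:bad_pinning} transfers this to a bad motion of $(G',\bp|_{V-u})$. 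One then verifies that $\hat{N}_G(v_0)$ is a locally $k'$-dof part of $G'$ with $k'\in\{1,2\}$ and applies the inductive hypothesis.

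The main obstacle will be guaranteeing that an admissible reduction exists while keeping $k'\le 2$. A generic $k$-dof framework with $n$ vertices has $3n-6-k$ edges, so averaging produces a vertex of degree at most five, but forcing this vertex to lie outside $\hat{N}_G(v_0)$ requires a combinatorial analysis analogous to Lemma~\ref{lem:combinatorial}, now applied to the six-vertex set $\hat{N}_G(v_0)$ with the star hypothesis on the bad motion taken into account. When $d_G(u)=5$, the choice of which edges to add back to $V\setminus\{u\}$ must simultaneously preserve $C_2^1$-independence, keep $k'\le 2$, and respect the structural data at $v_0$; this is where the case analysis will be most intricate. A secondary technical point is that Lemma~\ref{lem:bad_pinning} requires $G'$ augmented with the $k'$ witnessing edges to be minimally $C_2^1$-rigid, so independence of the auxiliary edges must be tracked carefully throughout the induction, and this bookkeeping is what will make the complete proof long.
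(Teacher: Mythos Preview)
Your overall architecture matches the paper's: minimal counterexample on $|V|$, locate a low-degree vertex $u_0$ outside $\hat N_G(v_0)$, and try to pass the bad motion down to some $G_0+e_1+e_2$. But the central step in your inductive reduction is broken as stated, and the repair is precisely where almost all of the paper's work lies.

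\medskip
\textbf{The gap.} You write that the bad motion of $(G,\bp)$ restricts to one of $(G-u,\bp|_{V-u})$ and that ``Lemma~\ref{lem:bad_pinning} transfers this to a bad motion of $(G',\bp|_{V-u})$''. The restriction to $G-u$ is fine, but $G'=G-u+\{e_1,e_2\}$ has \emph{extra} edges, and there is no reason the restricted motion satisfies the new constraints $D(e_j)\cdot(\bq(\cdot)-\bq(\cdot))=0$. Lemma~\ref{lem:bad_pinning} says nothing about adding edges: it lets you vary the realisation $\bp$, not the graph. In the paper this lemma is used in the opposite direction: one first moves to a \emph{special} non-generic realisation $\bp'$ (e.g.\ $\bp'(u_0)$ on the line through $\bp(u_i),\bp(u_j)$ for a $1$-extension, or at the intersection of two such lines for an X-replacement), so that the geometric degeneracy forces the restricted motion to satisfy $e_1,e_2$ automatically; Lemma~\ref{lem:bad_pinning} is what guarantees the bad motion survives at this special $\bp'$. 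This mechanism handles $d_G(u_0)\le 4$, and the degree-$5$ case when $e_1,e_2$ can be chosen non-adjacent (Claim~\ref{claim:2mechanism3}) or adjacent with common endpoint of degree two in $\cl(E_0)\cap K_u$ (Claim~\ref{claim:2mechanism4}, via vertex splitting).

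\medskip
\textbf{What you are missing.} The paper then uses these claims \emph{negatively}, to rule out those configurations, and is left with the case where $\cl(E_0)\cap K(N_G(u_0))$ is a star on five vertices (statement~(\ref{eq:star})). Here no special placement of $u_0$ makes two added edges simultaneously satisfied, so the ``restrict and descend'' idea fails outright. Instead the paper decomposes $Z(G_0+f_i,\bp|_{V_0})$ explicitly using three auxiliary $1$-dof graphs (types A, B, C, D), writes the non-trivial motion of $(G+f_i,\bp)$ as $\sum_j \alpha_j^i \bq_j^i$ with the $\alpha_j^i$ computed as polynomials in $\bp(u_0)$, and then exploits that $\bp(u_0)$ is algebraically independent over $\mathbb Q(\bp(V_0))$ to force certain determinants in the $\alpha_j^i$ to vanish termwise (Claims~\ref{claim:linear3}--\ref{claim:quadratic}). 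Only this polynomial separation produces a bad motion of some $(G_0+e_{j,1}+e_{j,2},\bp|_{V_0})$. Your proposal does not anticipate this step; the phrase ``this is where the case analysis will be most intricate'' suggests you expect a combinatorial search for the right $e_1,e_2$, but the obstruction is analytic, not combinatorial.

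\medskip
A secondary point: your base case $|V|=6$ is not needed. The hypotheses force $|V|\ge 7$ in any counterexample (see the line after Claim~\ref{claim:Gkdof}), so the induction actually bottoms out via the reductions above, never at $|V|=6$.
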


We give an outline of the proof of Theorem~\ref{thm:well-behaved} in Section~\ref{subsec:outline}, 
and delay the full proof to Section~\ref{subsec:final}.

\subsection{Outline of the proof of  Theorem~\ref{thm:well-behaved}}
 \label{subsec:outline}
Theorem~\ref{thm:well-behaved} is proved by contradiction. 
We choose a counterexample $(G,\bp)$ with $G=(V,E)$ and $|V|$ as small  as possible and, subject to this condition, $k$ as small as possible. 
The proof proceeds as follows.
\begin{description}
\item[Section~\ref{subsubsec:1}.] The first step  is to prove that $G$ is a 2-dof graph (Claim~\ref{claim:k=2}) and  $G$ has a vertex $u_0$ of degree five with $u_0\notin \hat{N}_G(v_0)$ (Claim~\ref{claim:2d4}).

\item[Sections~\ref{subsubsec:2} and~\ref{subsubsec:3}.] 
Let $G_0=(V_0, E_0)=G-u_0$.
We prove statement (46): ${\rm cl}(E_0)\cap K(N_G(u_0))$ is a star on five vertices.
The proof of (\ref{eq:star}) proceeds as in that of Lemma~\ref{lem:combinatorial}, but to apply a similar argument  we need to prepare several preliminary claims in Section~\ref{subsubsec:2}.

\item[Section~\ref{subsubsec:4}.] The final goal of the proof is to contradict the minimal choice of $(G,\bp)$ by showing that 
  $(G_0+e_1+e_2, \bp|_{V_0})$ is a 2-dof framework with a bad motion at $v_0$   
  for two edges $e_1,e_2\in K(N_G(u_0))$.
To do this we prove the existence of distinct  edges $f_1, f_2\in K(N_G(v_0))$ 
such that $G+f_1+f_2$ is $C_2^1$-rigid and distinct edges  $e_{j,1}, e_{j,2}\in K(N_G(u_0))$ for $j=1,2,3$ such that $(G_0+f_i+e_{j,1}+e_{j,2},\bp)$ has a non-trivial motion $\bq_j^i$ for all $i=1,2$ and $j=1,2,3$ so that   $Z(G_0+f_i, \bp|_{V_0})=Z_0(G_0+f_i, \bp|_{V_0})\oplus \langle \bq_1^i, \bq_2^i, \bq_3^i\rangle$.

\item[Section~\ref{subsubsec:5}.] Let $\bq^i$ be a nontrivial motion of $(G+f_i,\bp)$ for $i=1,2$. 
Since  $G+f_1+f_2$ is $C_2^1$-rigid, we have $Z(G, \bp)=Z_0(G, \bp)\oplus \langle \bq^1, \bq^2 \rangle$.
By $Z(G_0+f_i, \bp|_{V_0})=Z_0(G_0+f_i, \bp|_{V_0})\oplus \langle \bq_1^i, \bq_2^i, \bq_3^i\rangle$, we may suppose (by subtracting a suitable trivial motion) that, for both $i=1,2$,   
$\bq^i|_{V_0}=\sum_{j=1}^3\alpha_j^i\bq_j^i$ for some scalars $\alpha_j^i$.

Since  ${\rm cl}(E_0)\cap K(N_G(u_0))$ forms a star on five vertices,  
we can use the proof technique of Lemma~\ref{lem:motion} to derive an explicit formula for the scalars $\alpha^i_j$.

\item[Section~\ref{subsubsec:6}.] 
The original framework $(G,\bp)$ has a bad motion $\bq_{\rm bad}$ at $v_0$, and its restriction to $V_0$ is a motion of $(G_0, \bp|_{V_0})$.
Since  $Z(G, \bp)=Z_0(G, \bp)\oplus \langle \bq^1, \bq^2 \rangle$, we have 
\[
\bq_{\rm bad}|_{\hat{N}(v_0)}\in \left\langle \sum_{j=1}^3\alpha_j^1\, \bq_j^1|_{\hat{N}(v_0)}, \sum_{j=1}^3\alpha_j^2\, \bq_j^2|_{\hat{N}(v_0)}\right\rangle.
\]
We can use the explicit formulae for the scalars $\alpha_j^i$ derived in Section~\ref{subsubsec:5} and the facts that the entries of $\bq_{\rm bad}|_{\hat{N}(v_0)}$ are contained in $\mathbb{Q}(\bp(V_0))$ and $p(u_0)$ is generic over $\mathbb{Q}(\bp(V_0))$
to show  that
 \[
\bq_{\rm bad}|_{\hat{N}(v_0)}\in \left\langle  \bq_j^1|_{\hat{N}(v_0)},  \bq_j^2|_{\hat{N}(v_0)}\right\rangle
\]
for some $j\in \{1,2,3\}$.
Since any linear combination of $\bq_j^1$ and $\bq_j^2$ is a motion of $(G_0+e_{j,1}+e_{j,2},\bp|_{V_0})$,
this implies that  $(G_0+e_{j,1}+e_{j,2},\bp|_{V_0})$ has a bad motion at $v_0$. This contradiction to the minimal choice of $(G,\bp)$ completes the proof.
 \end{description}

\subsection{Proof of Theorem~\ref{thm:well-behaved}}\label{subsec:final}
We assume the statement of Theorem~\ref{thm:well-behaved} is false and choose a counterexample $(G,\bp)$ with $G=(V,E)$ and $v_0\in V$ such that the $G$ is minimal under the lexicographic ordering given by $(|V|,k, \mbox{dof } G,|E|)$.
Then $(G,\bp)$ has a bad motion at $v_0$,  
 $d_G(v_0)=5$ 
 and $\hat{N}_G(v_0)$ is a locally $k$-dof part in $G$.

\subsubsection{\boldmath 
Preliminary results on the structure of $G$}\label{subsubsec:1}
The first step of the proof is to show that $G$ is a  ${C}_2^1$-independent, 2-dof graph (statement (\ref{eq:Kv}) and Claims~\ref{claim:Gkdof} and \ref{claim:k=2}) and  $G$ has a vertex $u_0$ of degree five with $u_0\notin \hat{N}_G(v_0)$ (Claim~\ref{claim:2d4}).


Let $N_G(v_0)=\{v_1,\dots,v_5\}$ and denote for simplicity $K_v=K(N_G(v_0))$.
Since $(G,\bp)$ is a counterexample,  $(G,\bp)$ has a $b$-motion $\bq_{\rm bad}$ for some  $b:\hat{N}_G(v_0)\rightarrow \mathbb{Q}[X_0, Y_0, \dots,Y_5]$ which satisfies (\ref{eq:bad}).
Relabeling $N_G(v_0)$ if necessary, we may suppose that, for all $v_i,v_j\in N_G(v_0)$ with $i<j$,
\begin{equation}\label{eq:star1}
\text{$D(v_i,v_j)\cdot (\bq_{\rm bad}(v_i)-\bq_{\rm bad}(v_j))=0$ holds if and only if $j=5$.}
\end{equation}

By (\ref{eq:star1}),  $v_iv_j\notin {\rm cl}(E)$ for all $1\leq i<j\leq 4$.
If $v_iv_5\notin {\rm cl}(E)$ for some $1\leq i\leq 4$, then again by (\ref{eq:star1}), $(G+v_iv_5,\bp)$ would be a $(k-1)$-dof framework having the bad motion $\bq_{\rm bad}$, 
which would contradict either the fact that $(G+v_iv_5,\bp)$ is $C^1_2$-rigid (when $k=1$) or the minimality of $k$ (when $k=2$).
Hence, for all $v_i,v_j\in N_G(v_0)$,
 we have $v_iv_j\in {\rm cl}(E)$ if and only if $j=5$. 
 Then $\cl(E)\cap K_v$ induces a star on five vertices so is ${C}_2^1$-independent. This implies that
we may choose a base $E'$ of $\cl(E)$ in ${\cal C}_2^1$ with $\cl(E)\cap K_v\subseteq E'$ and $N_E(v_0)=N_{E'}(v_0)$. Let $G'$ be the subgraph of $K(V)$ induced by $E'$. Then $(G,\bp)$ and $(G',\bp)$ have the same space of motions. The minimality of $|E|$ now implies that $|E|=|E'|$ and $G$ is ${C}_2^1$-independent. Replacing $G$ by $G'$ if necessary, we may assume that
\begin{equation}\label{eq:Kv}
\text{$G$ is ${C}_2^1$-independent and $E\cap  K_v={\rm cl}(E)\cap K_v$ is a star on five vertices centred on $v_5$.}
\end{equation}



 \begin{claim}\label{claim:Gkdof} $G$ is a $k$-dof graph.
 \end{claim}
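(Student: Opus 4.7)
First I observe that $\mathrm{dof}\,G \geq k$: any collection of $\mathrm{dof}\,G$ edges whose addition makes $G$ globally $C_2^1$-rigid automatically places $K(\hat{N}_G(v_0))$ in the closure of $E$, so the minimum number $k$ of edges needed for local rigidity of $\hat{N}_G(v_0)$ is bounded above by $\mathrm{dof}\,G$. The task is therefore to prove $\mathrm{dof}\,G \leq k$, which I will do by contradiction: assume $\mathrm{dof}\,G \geq k+1$ and construct a counterexample with strictly smaller invariants in the lexicographic order $(|V|, k, \mathrm{dof}\,G, |E|)$.

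The key step is a dimension computation. Because the rank of $K(\hat{N}_G(v_0))$ modulo $\cl(E)$ equals $k$, I can choose a witnessing set $F \subseteq K(\hat{N}_G(v_0))$ with $|F|=k$ such that $K(\hat{N}_G(v_0)) \subseteq \cl(E+F)$. Consider the restriction map
\[
\pi : Z(G,\bp) \to \R^{3|\hat{N}_G(v_0)|}, \qquad \pi(\bq) = \bq|_{\hat{N}_G(v_0)}.
\]
I claim $\dim \pi(Z(G,\bp)) = 6+k$. On one hand, any motion of $(G+F,\bp)$ restricts to a trivial motion on $\hat{N}_G(v_0)$, since $K(\hat{N}_G(v_0))\subseteq \cl(E+F)$; on the other hand, any motion of $(G,\bp)$ whose restriction to $\hat{N}_G(v_0)$ is trivial already satisfies every edge of $F$ (because $F \subseteq K(\hat{N}_G(v_0))$) and therefore lies in $Z(G+F,\bp)$. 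Combining this equivalence with $\dim Z(G,\bp) - \dim Z(G+F,\bp) = k$ and $\dim \pi(Z(G+F,\bp)) = 6$ (which follows from the genericity of $\bp$ by a Vandermonde-style argument on the six basic trivial motions) gives $\dim \pi(Z(G,\bp)) = 6+k$, whence $\dim \ker\pi = \mathrm{dof}\,G - k \geq 1$.

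Pick any $\tilde{\bq} \in \ker\pi \setminus \{0\}$. A pinning argument in the spirit of Lemma~\ref{lem:canonical} shows that $\tilde{\bq}|_{V-v_0}$ cannot be a trivial motion of $K(V-v_0)$: otherwise its vanishing on the five generic vertices $N_G(v_0)$ would force it to be zero on $V-v_0$, and together with $\tilde{\bq}(v_0)=0$ this would contradict $\tilde{\bq} \neq 0$. Hence some edge $e=xy \in K(V-v_0)$ satisfies $D(x,y)\cdot(\tilde{\bq}(x)-\tilde{\bq}(y)) \neq 0$. Such $e$ is automatically outside $\cl(E)$ (since $\tilde{\bq}$ is a motion of $G$) and is not incident to $v_0$. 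Choosing the unique scalar $c$ with
\[
D(x,y) \cdot \bigl[(\bq_{\mathrm{bad}}(x)-\bq_{\mathrm{bad}}(y)) + c(\tilde{\bq}(x)-\tilde{\bq}(y))\bigr] = 0,
\]
the vector $\bq_{\mathrm{bad}} + c\tilde{\bq}$ is a motion of $(G+e,\bp)$. Because $\tilde{\bq}$ vanishes on $\hat{N}_G(v_0)$, this new motion agrees with $\bq_{\mathrm{bad}}$ there, so it remains a $b$-motion for the same $b$ and is therefore a bad motion of $(G+e,\bp)$ at $v_0$.

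Finally, $(G+e, v_0)$ meets every counterexample hypothesis: $v_0$ still has degree $5$, $G+e$ is $C_2^1$-independent, and $\hat{N}_{G+e}(v_0)$ is a locally $k'$-dof part with $k' \leq k$ (adding an edge cannot increase the local dof) and $k' \geq 1$ (the bad motion precludes local rigidity), so $k' \in \{1,2\}$. Since $\mathrm{dof}\,(G+e) = \mathrm{dof}\,G - 1$, the tuple $(|V|, k', \mathrm{dof}\,(G+e), |E|+1)$ is strictly smaller than $(|V|, k, \mathrm{dof}\,G, |E|)$ in the lexicographic order, contradicting the minimal choice of $(G,v_0)$. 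The chief obstacle is the dimension identity $\dim \pi(Z(G,\bp)) = 6+k$; insisting that the witnessing set $F$ lie inside $K(\hat{N}_G(v_0))$ is exactly what makes $F$ ``visible'' through $\pi$, so that triviality of $\pi(\bq)$ becomes equivalent to $\bq$ satisfying every edge of $F$.
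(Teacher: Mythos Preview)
Your proof is correct and uses essentially the same idea as the paper: exploit motions of $(G,\bp)$ that act trivially (in your case, identically zero) on $\hat N_G(v_0)$ to push the bad motion to a framework with fewer degrees of freedom while preserving its restriction to $\hat N_G(v_0)$.

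The execution differs slightly. The paper chooses $F$ making $\hat N_G(v_0)$ locally rigid and a set $D\subseteq K(V-v_0)$ with $G+F+D$ rigid, then, for each $e\in D$, replaces the contribution of the $1$-dof motion $\bq_e$ of $G+F+D-e$ (which is trivial on $\hat N_G(v_0)$) by an honest trivial motion with the same restriction; this produces a motion of $G+D$ agreeing with $\bq_{\rm bad}$ on $\hat N_G(v_0)$, and $G+D$ has exactly $k$ degrees of freedom. You instead compute $\dim\ker\pi=\mathrm{dof}\,G-k$ directly, pick a single nonzero element of $\ker\pi$, find one edge $e$ it blocks, and correct $\bq_{\rm bad}$ by a scalar multiple. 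Your version handles the descent one edge at a time, leaning on the lexicographic minimality (possibly dropping $k$ to $k'<k$ along the way), whereas the paper descends to a $k$-dof graph in one shot. Both arguments rest on the same identification of $\{\bq\in Z(G,\bp):\bq|_{\hat N_G(v_0)}\text{ trivial}\}$ with $Z(G+F,\bp)$, which is exactly what your choice $F\subseteq K(\hat N_G(v_0))$ makes transparent.
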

\begin{proof}
Suppose that $G$ has $k'$ degrees of freedom. We show that  there exists a set of $(k'-k)$ edges $D$ of $K(V-v_0)$ such that  $G+D$ is a $k$-dof graph,  $X:=\hat N_G(v_0)$ is a locally $k$-dof part in $G+D$ and, for any generic framework $(G,\bp)$ and any motion $\bq$ of $(G,\bp)$, $(G+D,\bp)$ has a motion $\bq'$ satisfying $\bq'(v)=\bq(v)$ for all $v\in X$.

Let 
$F$ be a set of $k$ edges in $K(V)$ such that $X$ is a locally $C_2^1$-rigid part in $G+F$, and $D$ be a minimal set of edges in $K(V-v_0)$ such that $G+F+D$ is $C^1_2$-rigid. (Note that $D$ exists since $d_G(v_0)=5$.) For each $e\in F\cup D$, $G+F+D-e$ is a 1-dof graph and we can choose a non-trivial $C^1_2$-motion $\bq_e$ of  $(G+F+D-e,\bp)$. 
Then $B_1:=\{\bq_i^*\,:\,1\leq i\leq 6\}\cup
\{\bq_e\,:\,e\in F\cup D\}$ is a base for $Z(G,\bp)$ and 
$B_2:=\{\bq_i^*\,:\,1\leq i\leq 6\}\cup
\{\bq_e\,:\,e\in F\}$ is a base for $Z(G+D,\bp)$,
where $\{q_i^*:1\leq i\leq 6\}$ is the basis for  $Z_0(G,p)$ given in Section~\ref{subsec:C_2^1-motions}.
 The facts that the motions in $B_1$ are linearly independent and that, for each $e\in F$, we have $D(u,v)\cdot (\bq_e(u)-\bq_e(v))\neq 0$ for some $u,v\in X$ now imply that we must add at least $k$ edges to $G+D$ to make $X$ locally $C_2^1$-rigid. Hence $X$ is a locally $k$-dof part in $G+D$.

Let $\bq$ be a motion of $(G,\bp)$. Since $B_1$ is a base for $Z(G,\bp)$, we have $\bq=\sum_{i=1}^6 \lambda_i \bq_i^*+\sum_{e\in  F\cup D}\mu_e \bq_e$ for some $\lambda_i, \mu_e\in \R$. Since $X$ is a locally $C_2^1$-rigid part of $G+F$, $\bq_e|_X$ is a trivial motion of $(K(X),\bp|_X)$ for all $e\in D$. Hence $\bq_e|_X=\bq_e^*|_X$ for some trivial motion $\bq_e^*$ of $(G,\bp)$. Let $\bq'=\sum_{i=1}^6 \lambda_i \bq_i^*+\sum_{e\in  F}\mu_e \bq_e+\sum_{e\in D}\mu_e \bq_e^*$. Then $\bq'\in Z(G+D,\bp)$.  Since $\bq_e|_X=\bq_e^*|_X$ for $e\in D$, we also have $\bq'|_X=\bq|_X$.

Hence $(G+D,\bp)$ is a counterexample. The minimality of dof $G$ now implies that $k'=k$ and $D=\emptyset$. Hence $G$ is a $k$-dof graph.
\end{proof}

As $G$ is a $k$-dof graph for some $1\leq k\leq 2$, (\ref{eq:Kv}) implies that $|V|\geq 7$.

\begin{claim}\label{claim:well-behaved2}
$G$ has a vertex $u_0$ of degree at most five with $u_0\notin\hat N_G(v_0)$.
\end{claim}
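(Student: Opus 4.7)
My plan is to prove the claim by a direct edge-counting argument, using the $C_2^1$-independence of $G$ established in (\ref{eq:Kv}) together with the standard rank bound $|E(H)|\leq 3|V(H)|-6$ for any $C_2^1$-independent graph $H$ on at least three vertices.

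Suppose for contradiction that every vertex in $V\setminus\hat{N}_G(v_0)$ has degree at least $6$. By Claim~\ref{claim:Gkdof} and (\ref{eq:Kv}), $G$ is a $C_2^1$-independent $k$-dof graph with $k\in\{1,2\}$, so $|E|=3|V|-6-k$ and the sum of vertex degrees equals $6|V|-12-2k$. Subtracting the contribution of the $|V|-6$ vertices outside $\hat{N}_G(v_0)$ (each of degree at least $6$) gives
\[
\sum_{v\in\hat{N}_G(v_0)} d_G(v) \leq (6|V|-12-2k)-6(|V|-6) = 24-2k.
\]

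Next I will account for the edges with both endpoints in $\hat{N}_G(v_0)$. By (\ref{eq:Kv}) these are exactly the five edges incident with $v_0$ together with the four edges of the star on $\{v_1,\ldots,v_5\}$ centred on $v_5$, giving nine such edges that contribute $18$ to the sum above. Consequently, the number of edges joining $\hat{N}_G(v_0)$ to $V\setminus\hat{N}_G(v_0)$ is at most $(24-2k)-18=6-2k$, and therefore
\[
|E(G[V\setminus\hat{N}_G(v_0)])| \;\geq\; |E|-9-(6-2k) \;=\; 3|V|-21+k.
\]

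Finally I will derive a contradiction by comparing this lower bound with the upper bound implied by $C_2^1$-independence of the induced subgraph on $V\setminus\hat{N}_G(v_0)$. When $|V|\geq 9$ this subgraph has $|V|-6\geq 3$ vertices and so has at most $3(|V|-6)-6 = 3|V|-24$ edges, forcing $3|V|-21+k\leq 3|V|-24$, i.e.\ $k\leq -3$, which is absurd. When $|V|\in\{7,8\}$ the subgraph has at most $\binom{|V|-6}{2}\leq 1$ edges, but the above lower bound satisfies $3|V|-21+k\geq 3|V|-20 \geq 1$, and in fact strictly exceeds the trivial upper bound for both $|V|=7$ and $|V|=8$ and both values of $k$. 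Either way we reach a contradiction, establishing the existence of $u_0\in V\setminus\hat{N}_G(v_0)$ with $d_G(u_0)\leq 5$. I do not anticipate any real obstacle beyond separately checking the small cases $|V|\in\{7,8\}$, where the bound $3m-6$ is no longer the sharpest available.
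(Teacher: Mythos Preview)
Your proof is correct and uses essentially the same edge-counting argument as the paper: both derive the bound $|E(G[V\setminus\hat N_G(v_0)])|\geq 3(|V|-6)-3+k$ and observe this contradicts $C_2^1$-independence of $G$. The paper's version is slightly more streamlined in that it states this single bound and lets the contradiction work uniformly, whereas you split off the cases $|V|\in\{7,8\}$ explicitly; but the content is the same.
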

\begin{proof}
Put $G-\hat{N}_G(v_0)=G'=(V',E')$ and let
$F$ be the set of edges of $G$ from $\hat{N}_G(v_0)$ to $V'$.
Since $|V|\geq 7$, we have $|V'|\geq 1$.
By (\ref{eq:Kv}) we have 
\begin{equation}
\label{eq:degree1}
9+|F|+|E'|=|E|=3|V|-6-k=3(|V'|+6)-6-k.
\end{equation}

Suppose every vertex in $V'$ has degree at least six in $G$.
Then 
\begin{equation}
\label{eq:degree3}
\sum_{v'\in V'} d_G(v') = |F|+2|E'|\geq 6|V'|.
\end{equation}
We can now use (\ref{eq:degree1}) and (\ref{eq:degree3}) to deduce that $|E'|\geq 3|V'|-3+k\geq 3|V'|-2$. This contradicts the fact that $G$ is $C_2^1$-independent.
\end{proof}

Let $u_0\in V\setminus \hat{N}_G(v_0)$ be a vertex of degree at most five in $G$,
$K_u=K(N_G(u_0))$ and put $G-u_0=G_0=(V_0,E_0)$.
Let ${\cal C}_{n-1}$ be the generic $C_2^1$-cofactor matroid on $K(V_0)$ and let ${\cal C}_{n-1}/{\rm cl}(E_0)$ be the matroid obtained from ${\cal C}_{n-1}$ by contracting ${\rm cl}(E_0)$.

\begin{claim}
\label{claim:2mechanism1}
$F_v:=\{v_1v_2, v_1v_3, v_1v_4\}$ is a base of $K_v\setminus {\rm cl}(E_0)$ in ${\cal C}_{n-1}/{\rm cl}(E_0)$ (and hence
$K_v\setminus {\rm cl}(E_0)$ has rank three in ${\cal C}_{n-1}/{\rm cl}(E_0)$).
\end{claim}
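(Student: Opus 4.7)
The approach is to prove the two parts of the claim separately: first that $F_v$ spans $K_v \setminus {\rm cl}(E_0)$ in ${\cal C}_{n-1}/{\rm cl}(E_0)$, and then that $F_v$ is independent in this contraction; the assertion that $F_v$ is a base of rank three then follows. To begin, observe that since $u_0 \notin X := \hat{N}_G(v_0)$, no edge of $K(X)$ is incident to $u_0$, so $E_0 \cap K_v = E \cap K_v$, which by (\ref{eq:Kv}) equals the star $\{v_iv_5 : 1 \le i \le 4\}$. Combined with ${\rm cl}(E_0) \subseteq {\rm cl}(E)$ and ${\rm cl}(E) \cap K_v = E \cap K_v$, we obtain ${\rm cl}(E_0) \cap K_v = \{v_iv_5 : 1 \le i \le 4\}$, whence $K_v \setminus {\rm cl}(E_0) = K(\{v_1, v_2, v_3, v_4\})$ consists of six edges.

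The key structural observation is that the subframework of $G_0 + F_v$ induced by $X$, with edge set $(E \cap K(X)) \cup F_v$ comprising the five edges $v_0 v_i$, the four star edges $v_i v_5$, and $F_v$ (totalling $12 = 3|X| - 6$ edges), is minimally $C_2^1$-rigid on $X$. I would verify this by a Henneberg construction: start from the minimally $C_2^1$-rigid $K_4$ on $\{v_0, v_1, v_2, v_5\}$, then perform two 0-extensions adding $v_3$ with edges to $\{v_0, v_1, v_5\}$ and $v_4$ with edges to $\{v_0, v_1, v_5\}$. Since 0-extensions preserve $C_2^1$-independence by Lemma~\ref{lem:cofactor_ind}, the twelve-edge result is minimally $C_2^1$-rigid on $X$. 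Spanning is then immediate: $(E \cap K(X)) \cup F_v$ spans $K(X)$ in ${\cal C}_{n-1}$, so the three edges $v_2 v_3, v_2 v_4, v_3 v_4$ lie in ${\rm cl}(E_0 \cup F_v)$, and hence $F_v$ spans $K_v \setminus {\rm cl}(E_0)$ modulo ${\rm cl}(E_0)$.

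For independence, I would argue via the restriction map $\phi : Z(G_0, \bp|_{V_0}) \to Z(E \cap K(X), \bp|_X)$ sending $\bq$ to $\bq|_X$. The codomain has dimension $3 \cdot 6 - 9 = 9$, the domain has dimension $3 + k + d_G(u_0)$, and $\ker \phi$ consists of motions of $G_0$ vanishing on $X$: such a motion is parametrised by $\bq|_{V_0 \setminus X}$ subject to the $|E_0| - 9$ linear constraints from edges of $G_0$ with at least one endpoint in $V_0 \setminus X$. For generic $\bp$ and under $k + d_G(u_0) \geq 6$, this count yields $\dim \ker \phi = 3(|V|-7) - (|E_0| - 9) = k + d_G(u_0) - 6$, so by rank-nullity $\phi$ is surjective. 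The three edges $F_v$ induce functionals $\Psi'$ on $Z(E \cap K(X), \bp|_X)$ whose common kernel equals $Z((E \cap K(X)) \cup F_v, \bp|_X)$, which is 6-dimensional by the minimal $C_2^1$-rigidity just established, so $\Psi'$ is surjective onto ${\mathbb R}^3$. Composing with the surjection $\phi$, the three functionals induced by $F_v$ on $Z(G_0, \bp|_{V_0})$ are linearly independent, so $r(E_0 \cup F_v) = |E_0| + 3$ and $F_v$ is independent in ${\cal C}_{n-1}/{\rm cl}(E_0)$. The main obstacle is securing the inequality $k + d_G(u_0) \geq 6$, without which the dimension count for $\ker \phi$ fails and $F_v$ could not have rank three over $E_0$ on cardinality grounds; this inequality holds in the counterexample setting via $k = 2$ and $d_G(u_0) = 5$ established by preceding claims.
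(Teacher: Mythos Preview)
Your spanning argument is fine and matches the paper. The independence argument, however, has a fatal circularity and a separate gap.

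\textbf{Circularity.} You invoke $k=2$ and $d_G(u_0)=5$ as ``established by preceding claims'', but in the paper's logical order these are Claims~\ref{claim:k=2} and~\ref{claim:2d4}, both of which come \emph{after} Claim~\ref{claim:2mechanism1} and depend on it (Claim~\ref{claim:degree4or5}, which feeds into both, explicitly cites Claim~\ref{claim:2mechanism1}). At the point where Claim~\ref{claim:2mechanism1} is proved, one only knows $k\in\{1,2\}$ and $d_G(u_0)\le 5$, so $k+d_G(u_0)\ge 6$ is unavailable.

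\textbf{Unjustified surjectivity.} Even granting $k+d_G(u_0)\ge 6$, your computation of $\dim\ker\phi$ assumes that the $|E_0|-9$ constraints on $\bq|_{V_0\setminus X}$ are linearly independent. That amounts to asserting that the submatrix of $C(G_0,\bp|_{V_0})$ with rows in $E_0\setminus K(X)$ and columns in $V_0\setminus X$ has full row rank. Genericity of $\bp$ guarantees row independence of the full cofactor matrix, but not of a column-restricted submatrix; for instance, any vertex $w\in V_0\setminus X$ with four or more neighbours all lying in $X$ would immediately produce a dependence among those restricted rows. Nothing you have established rules this out.

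\textbf{What the paper does instead.} The paper's proof of independence bypasses dimension counting entirely and uses the minimal-counterexample hypothesis, which is the essential idea you are missing. Choose a base $F'\subseteq F_v$ in ${\cal C}_{n-1}/{\rm cl}(E_0)$ and set $t=|F'|$. Since $F'$ spans $F_v$ in the contraction, $\hat N_G(v_0)$ is a locally $t$-dof part in $G_0$. If $t\le 2$, then $(G_0,\bp|_{V_0})$ is a generic framework with a vertex $v_0$ of degree five whose closed neighbourhood is a locally $t$-dof part, and $\bq_{\rm bad}|_{V_0}$ is a bad motion of it at $v_0$; thus $G_0$ would be a counterexample to Theorem~\ref{thm:well-behaved} with $|V_0|<|V|$, contradicting minimality. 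Hence $t=3$ and $F'=F_v$.
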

\begin{proof}
By (\ref{eq:Kv}) the subgraph of $G+F_v$
induced by $\hat N_G(v_0)$ is $C_2^1$-rigid. Hence $F_v$ spans $K_v\setminus {\rm cl}(E_0)$ in ${\cal C}_{n-1}/{\rm cl}(E_0)$.
%
Choose
a base $F'$ of $F_v$ in ${\cal C}_{n-1}/{\rm cl}(E_0)$.
Since the closure of $F'$ contains $F_v$, 
${\rm cl}(E_0\cup F')$ contains a $C^1_2$-rigid subgraph on $\hat{N}_G(v_0)$.
Hence  $\hat{N}_G(v_0)$ is a locally $t$-dof part in $G_0$ with $t = |F'|$.
Since $\bq_{\rm bad}|_{V_0}$ is a  bad motion of $(G_0, \bp|_{V_0})$  at $v_0$ and $G_0$ is not a counterexample by the minimality of $|V|$, we must have  $t=|F'|=3$ and $F'=F_v$.
\end{proof}

\begin{claim}\label{claim:degree4or5}
$d_G(u_0)\in\{4,5\}$.
\end{claim}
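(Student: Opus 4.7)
The plan is a degree-of-freedom counting argument combining Claim~\ref{claim:Gkdof} with Claim~\ref{claim:2mechanism1}. Suppose for contradiction that $d_G(u_0)\leq 3$, and let $k_0$ denote the $C_2^1$-degree of freedom of $G_0=G-u_0$.

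First, I would translate $d_G(u_0)$ into an upper bound on $k_0$. By~\eqref{eq:Kv} and Claim~\ref{claim:Gkdof}, $G$ is a $C_2^1$-independent $k$-dof graph, so $|E|=3|V|-6-k$. Since $G_0$ inherits $C_2^1$-independence from $G$, we similarly have $|E_0|=3|V_0|-6-k_0$. Substituting $|E_0|=|E|-d_G(u_0)$ and $|V_0|=|V|-1$ and simplifying gives
\[
k_0 \;=\; k + d_G(u_0) - 3.
\]
Using the hypothesis $k\leq 2$ of Theorem~\ref{thm:well-behaved} together with the assumption $d_G(u_0)\leq 3$, this forces $k_0\leq 2$.

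Second, I would produce a contradictory lower bound on $k_0$ from Claim~\ref{claim:2mechanism1}. That claim asserts $F_v=\{v_1v_2,v_1v_3,v_1v_4\}$ is a base of $K_v\setminus\cl(E_0)$ in ${\cal C}_{n-1}/\cl(E_0)$; in particular $F_v$ is independent in ${\cal C}_{n-1}/\cl(E_0)$ and disjoint from $E_0$, so $E_0\cup F_v$ is $C_2^1$-independent. Hence the generic framework $(G_0+F_v,\bp|_{V_0})$ satisfies
\[
\dim Z(G_0+F_v,\bp|_{V_0}) \;=\; 3|V_0|-(|E_0|+3) \;=\; k_0+3.
\]
Since $|V_0|\geq 6$ and $\bp$ is generic, $\bp|_{V_0}$ affinely spans $\mathbb{R}^2$, so $\dim Z \geq \dim Z_0 = 6$ and therefore $k_0\geq 3$. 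This contradicts $k_0\leq 2$.

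Combined with Claim~\ref{claim:well-behaved2}, which gives $d_G(u_0)\leq 5$, we conclude $d_G(u_0)\in\{4,5\}$. There is no real obstacle here: the substantive rank computation was already carried out in Claim~\ref{claim:2mechanism1}, and what remains is pure bookkeeping with the dof formula.
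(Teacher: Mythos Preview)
Your proof is correct and follows essentially the same approach as the paper: both argue that $d_G(u_0)\leq 3$ would make $G_0$ a $t$-dof graph with $t\leq 2$, contradicting the fact (from Claim~\ref{claim:2mechanism1}) that $E_0\cup F_v$ is independent and hence $G_0$ has at least three degrees of freedom. You have simply made explicit the dof bookkeeping and the deduction from Claim~\ref{claim:2mechanism1} that the paper leaves to the reader.
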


\begin{proof}
If $d_G(u_0)\leq 3$, then $G_0$ would be a $t$-dof graph for some $t\leq 2$. This would contradict Claim~\ref{claim:2mechanism1}.
\end{proof}

\begin{claim}\label{claim:k=2} $k=2$.
\end{claim}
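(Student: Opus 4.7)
The strategy is a proof by contradiction: assume $k=1$, and construct a bad motion on a framework with $|V|-1$ vertices, violating the minimality of $|V|$.

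First, I would force $d_G(u_0)=5$. Since $G$ is $C_2^1$-independent by (\ref{eq:Kv}), so is $G_0=G-u_0$, and an edge count yields $\operatorname{dof}(G_0)=k+d_G(u_0)-3$. Claim~\ref{claim:2mechanism1} (that $\hat N_G(v_0)$ is locally 3-dof in $G_0$) forces $\operatorname{dof}(G_0)\geq 3$, hence $d_G(u_0)\geq 5$; combined with Claim~\ref{claim:degree4or5} this gives $d_G(u_0)=5$ and $\operatorname{dof}(G_0)=3$. Moreover, the same reasoning that established (\ref{eq:Kv}) for $G$, applied to $G_0$ using the bad motion $\bq_{\rm bad}|_{V_0}$ of $G_0$, shows $\operatorname{cl}(E_0)\cap K_v$ is the 4-edge star centred at $v_5$, so $K_v\setminus\operatorname{cl}(E_0)=\{v_iv_j:1\leq i<j\leq 4\}$.

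Next, pick any such $e\in K_v\setminus\operatorname{cl}(E_0)$. Then $(G_0+e,\bp|_{V_0})$ has $|V|-1$ vertices, $\operatorname{dof}=2$, and $\hat N_G(v_0)$ is locally $2$-dof (adding $e\in K_v$ drops the rank of $K_v$ modulo $\operatorname{cl}(E_0)$ from $3$ to $2$). Thus the hypothesis of Theorem~\ref{thm:well-behaved} is satisfied at $v_0$ with parameter $k'=2$, and minimality of $|V|$ forbids a bad motion of $(G_0+e,\bp|_{V_0})$ at $v_0$. The desired contradiction will follow once such a bad motion is exhibited.

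For the exhibition, observe that because $F_v=\{v_1v_2,v_1v_3,v_1v_4\}$ is a base of $K_v\setminus\operatorname{cl}(E_0)$ in $\mathcal{C}_{n-1}/\operatorname{cl}(E_0)$, the map
\[
\phi:Z(G_0,\bp|_{V_0})/Z_0\to\mathbb R^3,\qquad \bq\mapsto\bigl(D_{1,j}\cdot(\bq(v_1)-\bq(v_j))\bigr)_{j=2,3,4},
\]
is an isomorphism, and by (\ref{eq:star1}) all three coordinates of $\phi(\bq_{\rm bad}|_{V_0})$ are nonzero. I would then modify $\bq_{\rm bad}|_{V_0}$ by adding a correction motion $\bq'\in Z(G_0,\bp|_{V_0})$, constructed via polynomial formulae in the spirit of Lemma~\ref{lem:motion} (available because the three degrees of freedom of $G_0$ are fully parameterised by $F_v$), so that $\bq_{\rm bad}|_{V_0}+\bq'$ respects $e$, has polynomial $b$-values on $\hat N_G(v_0)$, and whose respect-set on $K_v$ is again a 5-vertex star, now centred at one of $v_1,\ldots,v_4$ instead of $v_5$.

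The principal obstacle is verifying the star condition after modification: an arbitrary correction may enlarge or shrink the respect-set out of star form. The resolution relies on the symmetry among $v_1,v_2,v_3,v_4$ together with the Vandermonde-type identity (\ref{eq:Vandermonde}) underlying the polynomial bad-motion formulae, which allows one to tune $e$ and $\bq'$ so that the new respect-set becomes exactly a 4-edge star whose centre is the common endpoint of $e$ with the images under the symmetry; once this is achieved, $\bq_{\rm bad}|_{V_0}+\bq'$ is a bad motion of $(G_0+e,\bp|_{V_0})$ at $v_0$, contradicting minimality and forcing $k=2$.
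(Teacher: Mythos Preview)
Your opening is fine (forcing $d_G(u_0)=5$ and $\operatorname{dof}(G_0)=3$), but the core of the argument has a genuine gap. You propose to add an edge $e\in K_v$ on $N_G(v_0)$ and then \emph{modify} $\bq_{\rm bad}|_{V_0}$ by a correction $\bq'\in Z(G_0,\bp|_{V_0})$ so that the result is a bad motion of $(G_0+e,\bp|_{V_0})$. This requires two things you do not establish: (i) that the correction $\bq'$ has values on $\hat N_G(v_0)$ which are polynomials in $\bp(\hat N_G(v_0))$ alone (the definition of a $b$-motion), and (ii) that the polynomial identity set $\{v_iv_j:D_{i,j}\cdot(b(v_i)-b(v_j))=0\}$ of the new $b$-map is again a star. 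For (i), a generic element of $Z(G_0,\bp|_{V_0})$ depends on all of $\bp(V_0)$, not just on $\bp(\hat N_G(v_0))$; the appeal to ``formulae in the spirit of Lemma~\ref{lem:motion}'' is not applicable, since that lemma exploits a type~$(\star)$ vertex structure that is absent here. For (ii), your own acknowledged obstacle, you need to convert a star centred at $v_5$ into a star centred at some $v_a$ with $a\le 4$: this means forcing three new polynomial identities while simultaneously destroying three existing ones, and the invocation of ``symmetry'' and the Vandermonde identity is not an argument.

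The paper's proof avoids both difficulties by working on $N_G(u_0)$ rather than $N_G(v_0)$. It first finds $F\subset F_v$ with $|F|=2$ and an edge $u_iu_j\in K_u$ so that $G_0+F+u_iu_j$ is minimally $C_2^1$-rigid, hence $G_0+u_iu_j$ is $2$-dof. Then $G':=G-u_0u_5$ is a $1$-extension of $G_0+u_iu_j$; placing $u_0$ at the midpoint of $\bp(u_i),\bp(u_j)$ gives a non-generic realisation $\bp'$ in the set $S$ of Lemma~\ref{lem:bad_pinning}, so $(G',\bp')$ inherits a bad motion at $v_0$, and its restriction to $V_0$ is automatically a motion of $(G_0+u_iu_j,\bp|_{V_0})$. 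Because $u_0\notin\hat N_G(v_0)$, the $b$-map is untouched and the bad-motion property transfers for free---no correction is needed. That is the missing idea in your approach.
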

\begin{proof}
Suppose for a contradiction that $k=1$. If $d_G(u_0)=4$ then 
$G_0$ would be a $t$-dof graph for some $t\leq 2$ and this would contradict Claim \ref{claim:2mechanism1}.
Hence we may suppose $d_G(u_0)=5$. Let  $N_G(u_0)=\{u_1,u_2,\ldots,u_5\}$. We will obtain a contradiction by showing that $(G_0+u_iu_j,\bp|_{V_0})$ has a bad motion at $v_0$ for some $1\leq i<j\leq 5$.

Since $G$ is a 1-dof graph, $G_0$ is a 3-dof graph and hence, by Claim \ref{claim:2mechanism1}, $G_0+F_v$ is minimally $C_2^1$-rigid. 
As $G+F_v-u_0u_4-u_0u_5$  is obtained from $G_0+F_v$ by a 0-extension, 
Lemma \ref{lem:cofactor_ind} now implies that $G+F_v-u_0u_4-u_0u_5$ is minimally $C_2^1$-rigid. Since $G$ is $C_2^1$-independent,
$G+F-u_0u_5$ is minimally $C_2^1$-rigid for some $F\subset F_v$ with $|F|=2$. 
We may now deduce that  $G_0+F+u_iu_j$ is minimally $C_2^1$-rigid for some $1\leq i<j\leq 4$, and hence that $G_0+u_iu_j$ is a 2-dof graph. 
It remains to show that there is a bad motion for $(G_0+u_iu_j,\bp|_{V_0})$ at $v_0$. 

Let $G'=G-u_0u_5$. Observe that $G'$ is obtained from $G_0+u_iu_j$ by a 1-extension. 
We shall consider the following non-generic realisation $\bp'$ of $G'$, 
which was used in the proof of \cite[Theorem 10.2.1]{Wsurvey} to show that 
1-extension preserves independence. For $w\in V$, let
\[
\bp'(w)=\begin{cases}
\bp(w) & (\text{if } w\in V\setminus \{u_0\}) \\
\text{the mid-point of the line segment joining $\bp(u_i)$ and $\bp(u_j)$} & (\text{if } w=u_0).
\end{cases}
\]
See Figure~\ref{fig:claim5-3}.
\begin{figure}[h]
\centering
\begin{minipage}{0.45\textwidth}
\centering
\includegraphics[scale=0.6]{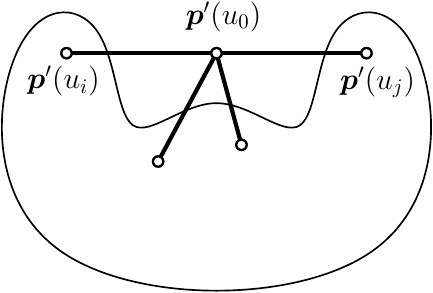}
\par(a)
\end{minipage}
\begin{minipage}{0.45\textwidth}
\centering
\includegraphics[scale=0.6]{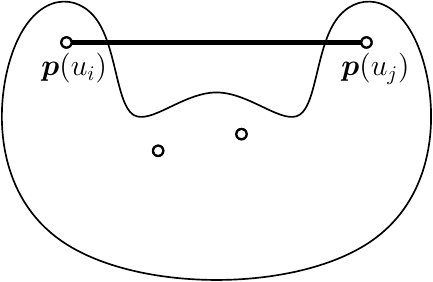}
\par(b)
\end{minipage}
\caption{(a) $(G',\bp')$ and (b) $(G_0+u_iu_j,\bp_{|V_0})$.}
\label{fig:claim5-3}
\end{figure}

We now use Lemma~\ref{lem:bad_pinning} to show that $(G',\bp')$ has a bad motion at $v_0$. Let
\[
S=\left\{\bar \bp: 
 \text{$(G'+F,\bar \bp)$ is minimally $C_2^1$-rigid and $\bar \bp$ is non-degenerate on $\hat N_{G'}(v_0)$}
\right\}.
\]
We may use  the minimal $C_2^1$-rigidity of $(G_0+F+u_iu_j,\bp|_{V_0})$, 
and the fact that $\{u_0,u_i,u_j\}$ is collinear in $(G'+F+u_iu_j,\bp')$ to deduce that 
$(G'+F,\bp')$ is minimally $C_2^1$-rigid, see the proof of \cite[Theorem 10.2.1]{Wsurvey} for more details. 
Since $\bp$ is generic, $\bp'$ is non-degenerate  on $\hat N_{G'}(v_0)$ and hence $\bp'\in S$.
Since $\bq_{\rm bad}$ is a bad motion of $(G',\bp)$  at $v_0$,
Lemma~\ref{lem:bad_pinning} now implies that $(G',\bp')$ has a bad motion $\bq_{\rm bad}'$ at $v_0$.
%
The definition of $\bp'$ implies that 
$\bq_{\rm bad}'|_{V_0}$ is a motion of $(G_0+u_iu_j,\bp|_{V_0})$.  
Since $(G_0+u_iu_j,\bp|_{V_0})$ is a generic 2-dof framework, $\bq_{\rm bad}'|_{V_0}$ is bad at $v_0$ and this
contradicts the minimality of $G$.
\end{proof}

Recall that $F_v=\{v_1v_2,v_1v_3,v_1v_4\}$.
We will denote the set of all pairs of edges in $F_v$ by ${F_v\choose 2}$.
We say that $F\in {F_v\choose 2}$ is {\em good} if $G+F$ is $C_2^1$-independent (and hence minimally $C_2^1$-rigid by $k=2$).
We say that an edge $f\in F_v$ is {\em very good} if $\{f, f'\}$ is good for any $f'\in F_v\setminus \{f\}$.
\begin{claim}\label{claim:good}
There is at least one very good edge in $F_v$. (Equivalently,  at most one pair in ${F_v\choose 2}$ is not good).
\end{claim}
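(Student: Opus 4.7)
I plan to recast the claim in the contracted matroid ${\cal C}/E$, where ${\cal C}:={\cal C}^{1}_{2,n}$. A pair $F\in\binom{F_v}{2}$ is good exactly when $F$ is independent in ${\cal C}/E$, so the claim amounts to showing that $F_v$ contains at most one $2$-element circuit in ${\cal C}/E$. The loops of ${\cal C}/E$ are precisely the edges of $\cl(E)$; by \eqref{eq:Kv} we have $\cl(E)\cap K_v=E\cap K_v$ equal to the star at $v_5$, so none of $v_1v_2, v_1v_3, v_1v_4$ is a loop in ${\cal C}/E$.

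The key technical step is to show $r_{{\cal C}/E}(F_v)=2$. Since $\hat N_G(v_0)$ is a locally $k$-dof part of $G$ with $k=2$ (Claims \ref{claim:Gkdof} and \ref{claim:k=2}), the definition of locally $k$-dof translates to $r_{{\cal C}/E}(K(\hat N_G(v_0)))=2$: the minimum number of edges whose addition places $K(\hat N_G(v_0))$ in $\cl(E\cup\cdot)$ is precisely the rank of $K(\hat N_G(v_0))$ in ${\cal C}/E$. The five edges $v_0v_i$ already lie in $E$ and hence are loops in ${\cal C}/E$, so removing them does not change the rank and we obtain $r_{{\cal C}/E}(K_v)=2$. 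On the other hand, Claim~\ref{claim:2mechanism1} gives $K_v\subseteq\cl_{{\cal C}_{n-1}}(E_0\cup F_v)$. Since ${\cal C}_{n-1}$ is the restriction of ${\cal C}_n$ to $K(V_0)$, and $E_0\subseteq E$, the inclusion persists in ${\cal C}_n$, so $K_v\subseteq\cl_{\cal C}(E\cup F_v)$. Thus $F_v$ spans $K_v$ in ${\cal C}/E$, and combined with $F_v\subseteq K_v$ this yields $r_{{\cal C}/E}(F_v)=r_{{\cal C}/E}(K_v)=2$.

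Once the rank is pinned down, the conclusion is purely matroid-theoretic. The set $F_v$ is a loopless $3$-element set of rank $2$ in ${\cal C}/E$, so it is dependent and contains a circuit. If $F_v$ itself is a $3$-circuit, then every pair is independent and every edge of $F_v$ is very good. Otherwise $F_v$ contains a $2$-circuit $\{f,f'\}$; any second $2$-circuit inside $F_v$ would put all three edges in a common rank-$1$ flat and force $r_{{\cal C}/E}(F_v)=1$, contradicting the previous step. Hence the parallel pair is unique, and the remaining edge of $F_v$ is independent from each of $f$ and $f'$ and therefore very good.

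The main obstacle I anticipate is the rank calculation: transferring Claim~\ref{claim:2mechanism1}, which lives in ${\cal C}_{n-1}/\cl(E_0)$, into ${\cal C}_n/E$, and pairing it with the translation of locally $k$-dof into the rank in the contracted matroid. After that, the structural fact about loopless rank-$2$ $3$-element subsets of a matroid is routine.
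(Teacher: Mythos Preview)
Your proof is correct and follows essentially the same idea as the paper's: both arguments boil down to the observation that $F_v$ spans $K_v$ in the contraction ${\cal C}/E$ while $r_{{\cal C}/E}(K_v)=k=2$, so $F_v$ cannot collapse to rank~$1$. The paper argues this directly by contradiction (if two pairs sharing $f_1$ are not good then $f_2,f_3\in\cl(E+f_1)$, whence $K_v\subseteq\cl(E+f_1)$ by the rigidity of the induced subgraph on $\hat N_G(v_0)$, contradicting $k=2$), whereas you reach the spanning statement via Claim~\ref{claim:2mechanism1} and then finish with the general matroid fact about loopless rank-$2$ triples; the detour through Claim~\ref{claim:2mechanism1} is unnecessary but harmless.
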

\begin{proof}
Let $F_v=\{f_1, f_2, f_3\}$, and suppose that $\{f_1, f_2\}$ and $\{f_1, f_3\}$ are not good.
By  (\ref{eq:Kv}),
 $f_1\notin {\rm cl}(E)$. Hence $G+f_1$ is $C_2^1$-independent and $f_2,f_3\in {\rm cl}(E+f_1)$.
This in turn implies that $K_v\subseteq {\rm cl}(E+f_1)$ 
and hence $\hat{N}_G(v_0)$ is a locally 1-dof part in $G$. 
This is a contradiction as $\hat{N}_G(v_0)$ is a locally $k$-dof part with $k=2$ by Claim~\ref{claim:k=2}.
\end{proof}
%

\begin{claim}\label{claim:2mechanism2}
Let $F\in {F_v\choose 2}$ be a good pair.
Then ${\rm cl}(E_0\cup F)\cap K_u$ does not contain a copy of $K_4$.
\end{claim}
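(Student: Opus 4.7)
The plan is to argue by contradiction, mirroring the proof of Claim~\ref{claim:spline1} in Section~\ref{subsec:proof_of_thm12}. The key input is that ${\cal C}^1_{2,n}$ is an abstract $3$-rigidity matroid (by Whiteley's result cited in Section~\ref{subsec:generic_cofactor}) and hence a $K_5$-matroid by Theorem~\ref{thm:hang}, so the edge set of any $K_5$ is a circuit in ${\cal C}^1_{2,n}$.

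Suppose for contradiction that ${\rm cl}(E_0 \cup F) \cap K_u$ contains a copy of $K_4$, and relabel so that this copy sits on $\{u_1, u_2, u_3, u_4\} \subseteq N_G(u_0)$. Since $K(u_0, u_1, u_2, u_3, u_4)$ is a circuit, $u_0 u_4$ lies in the closure of the nine remaining edges of this $K_5$. I will then show that all nine of these edges already lie in ${\rm cl}((E \cup F) - u_0 u_4)$: the six edges on $\{u_1, u_2, u_3, u_4\}$ lie in ${\rm cl}(E_0 \cup F) \subseteq {\rm cl}((E \cup F) - u_0 u_4)$ by hypothesis (using that $u_0 u_4 \notin E_0$), while the three edges $u_0 u_1, u_0 u_2, u_0 u_3$ lie in $E \subseteq (E\cup F) - u_0 u_4$ since $u_1, u_2, u_3 \in N_G(u_0)$. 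Combining these observations yields $u_0 u_4 \in {\rm cl}((E \cup F) - u_0 u_4)$, contradicting the $C_2^1$-independence of $G+F$ granted by the goodness of $F$.

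There is no real obstacle here: the argument is a direct application of the $K_5$-circuit property and is essentially identical to Claim~\ref{claim:spline1}. The only point to verify is that $u_0 u_4 \in E$, which is immediate from $u_4 \in N_G(u_0)$, and this works equally well whether $d_G(u_0) = 4$ or $5$. The claim will then be available in the subsequent steps to constrain the combinatorial structure of ${\rm cl}(E_0 \cup F)$ on $N_G(u_0)$, setting up the analogue of Lemma~\ref{lem:combinatorial} needed to eventually produce a smaller counterexample and contradict minimality.
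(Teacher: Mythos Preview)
Your proof is correct and takes essentially the same approach as the paper: both argue by contradiction, invoke the $K_5$-circuit property to place $u_0u_4$ in the closure of the remaining edges of $K(u_0,u_1,u_2,u_3,u_4)$, and deduce that $E\cup F$ is dependent, contradicting that $F$ is good. The paper's version is simply more terse, writing directly $u_0u_4\in \cl((E_0\cup F)+u_0u_1+u_0u_2+u_0u_3)$ without spelling out the inclusion in $\cl((E\cup F)-u_0u_4)$.
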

\begin{proof}
Suppose that ${\rm cl}(E_0\cup F)\cap K_u$  contains a copy of $K_4$, say on $\{u_1,\dots, u_4\}\subseteq N_G(u_0)$.
Then $u_0u_4\in \cl((E_0\cup F)+u_0u_1+u_0u_2+u_0u_3)$.
This in turn implies that $E\cup F$ is dependent and  contradicts the hypothesis that $F$ is good.
\end{proof}

\begin{claim}\label{claim:2d4}
$d_G(u_0)=5$.
\end{claim}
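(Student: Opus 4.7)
Suppose for contradiction that $d_G(u_0) = 4$ (so by Claim~\ref{claim:degree4or5} this is the remaining case to rule out), and write $N_G(u_0) = \{u_1, u_2, u_3, u_4\}$, so that $K_u$ is itself a copy of $K_4$. The plan is to derive a contradiction by \emph{reversing a 1-extension} at $u_0$, in the same spirit as the reverse 0-extension argument of Claim~\ref{claim:k=2}. By Claim~\ref{claim:k=2}, $G$ is a $C_2^1$-independent 2-dof graph with $|E| = 3|V| - 8$, so $G_0 = G - u_0$ is a $C_2^1$-independent 3-dof graph. Fix a good pair $F \in {F_v \choose 2}$ (available by Claim~\ref{claim:good}); by Claim~\ref{claim:2mechanism2}, $\cl(E_0 \cup F) \cap K_u$ contains no $K_4$, and since $K_u$ itself is a $K_4$, there is an edge $u_iu_j \in K_u \setminus \cl(E_0 \cup F)$. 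Consequently, $G_0 + u_iu_j$ is $C_2^1$-independent and 2-dof, $G_0 + F + u_iu_j$ is minimally $C_2^1$-rigid, and $G$ is precisely the $C_2^1$-1-extension of $G_0 + u_iu_j$ at the edge $u_iu_j$.

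Now define $\bp' : V \to \mathbb{R}^2$ by $\bp'(w) = \bp(w)$ for $w \neq u_0$ and let $\bp'(u_0)$ be the midpoint of the segment from $\bp(u_i)$ to $\bp(u_j)$. The collinear placement argument used in the proof of \cite[Theorem~10.2.1]{Wsurvey} shows that $(G + F, \bp')$ is minimally $C_2^1$-rigid; this is the same trick already invoked in Claim~\ref{claim:k=2}. Since $u_0 \notin \hat{N}_G(v_0)$ and $\bp$ is generic, $\bp'$ agrees with $\bp$ on $\hat{N}_G(v_0)$ and is therefore non-degenerate there. Setting
\[
S = \{\bar{\bp} : (G + F, \bar{\bp}) \text{ is minimally } C_2^1\text{-rigid and }\bar{\bp} \text{ is non-degenerate on } \hat{N}_G(v_0)\},
\]
both $\bp$ and $\bp'$ lie in $S$, so Lemma~\ref{lem:bad_pinning} transports the bad motion $\bq_{\rm bad}$ of $(G, \bp)$ at $v_0$ to a bad motion $\bq_{\rm bad}'$ of $(G, \bp')$ at $v_0$.

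The collinearity of $\bp'(u_0), \bp'(u_i), \bp'(u_j)$ forces $D(\bp'(u_0), \bp'(u_i))$ and $D(\bp'(u_0), \bp'(u_j))$ to be nonzero scalar multiples of $D(\bp(u_i), \bp(u_j))$, so eliminating $\bq_{\rm bad}'(u_0)$ from the two cofactor constraints at $u_0u_i$ and $u_0u_j$ yields $D(\bp(u_i), \bp(u_j)) \cdot (\bq_{\rm bad}'(u_i) - \bq_{\rm bad}'(u_j)) = 0$. Hence $\bq_{\rm bad}'|_{V_0}$ is a $C_2^1$-motion of $(G_0 + u_iu_j, \bp|_{V_0})$, and because $u_0 \notin \hat{N}_G(v_0)$ the polynomial data defining $b$ is unaffected, so this restriction remains a bad motion at $v_0$. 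A routine check shows that $\hat{N}_G(v_0)$ is a locally $t$-dof part of $G_0 + u_iu_j$ for some $t \in \{1, 2\}$: we have $t \leq 2$ because $G_0 + u_iu_j + F$ is $C_2^1$-rigid, and $t \geq 1$ because $t = 0$ would force $K_v \subseteq \cl(E \cup \{u_iu_j\})$, contradicting that $\hat{N}_G(v_0)$ is locally 2-dof in $G$. Since $|V_0| < |V|$, we have produced a strictly smaller counterexample to Theorem~\ref{thm:well-behaved}, contradicting the minimal choice of $(G, \bp)$. The main subtlety is verifying minimal $C_2^1$-rigidity of the non-generic framework $(G + F, \bp')$; this is handled by Whiteley's standard collinear 1-extension argument, and the rest is bookkeeping modeled on the proof of Claim~\ref{claim:k=2}.
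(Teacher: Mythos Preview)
Your proof is correct and follows essentially the same approach as the paper's own proof. The paper handles this claim by a one-line reference back to the $1$-extension reversal argument in Claim~\ref{claim:k=2} (``proceed as in the last paragraph of the proof of Claim~\ref{claim:k=2} with $G'$ replaced by $G$''), while you spell out that argument in full and additionally verify that $\hat N_G(v_0)$ is locally $t$-dof in $G_0+u_iu_j$ with $t\in\{1,2\}$, a detail the paper leaves implicit.
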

\begin{proof}
Assume this is false. Then by Claim \ref{claim:degree4or5}, $d_G(u_0)=4$.
Choose a good pair $F\in {F_v\choose 2}$. 
By Claim~\ref{claim:2mechanism2}, there is an edge $e=u_iu_j\in K_u$ such that 
$G_0+F+u_iu_j$ is $C_2^1$-independent. Since $d_G(u_0)=4$, it is minimally $C_2^1$-rigid.
We can now proceed as in the last paragraph of the proof of Claim \ref{claim:k=2} (with $G'$ replaced by $G$) to show that $(G_0+u_iu_j,\bp|_{V_0})$ has a bad motion at $v_0$ and hence
%
%
%
%
contradict the minimality of $|V|$.
\end{proof}

\subsubsection{\boldmath Preliminary results on $\cl(E_0)\cap K_u$}\label{subsubsec:2}
 Our next goal is to show that 
 ${\rm cl}(E_0)\cap K_u$ is a star on five vertices
 (statement (\ref{eq:star}) in the next section). 
In this section we verify some preliminary claims.
Let $N_G(u_0)=\{u_1,u_2,\ldots,u_5\}$.




\begin{claim}\label{claim:2mechanism3}
$G_0+F+e_1+e_2$ is $C_2^1$-dependent for all $F\in {F_v\choose 2}$
and all  pairs of non-adjacent edges  $e_1, e_2\in K_u\setminus E_0$.
\end{claim}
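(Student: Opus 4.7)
The plan is to argue by contradiction. Suppose that $G_0+F+e_1+e_2$ is $C_2^1$-independent for some $F=\{f_1,f_2\}\in\binom{F_v}{2}$ and some non-adjacent pair $e_1=u_1u_2,\,e_2=u_3u_4\in K_u\setminus E_0$. An edge count gives $|E(G_0+F+e_1+e_2)|=|E_0|+4=3(|V|-1)-6$, which together with the independence hypothesis forces $G_0+F+e_1+e_2$ to be minimally $C_2^1$-rigid. My first step is to apply the X-replacement operation (Lemma~\ref{lem:cofactor_ind}) at the non-adjacent edges $e_1,e_2$, introducing $u_0$ as the new vertex adjacent to $u_1,\ldots,u_5$. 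The resulting graph is exactly $G+F$, so $G+F$ is $C_2^1$-independent and, since $|E(G+F)|=3|V|-6$, it is minimally $C_2^1$-rigid; in particular $F$ is a good pair.

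To finish the contradiction I propose to adapt the strategy used in the proof of Claim~\ref{claim:k=2}: produce a strictly smaller counterexample to Theorem~\ref{thm:well-behaved} in the lexicographic ordering $(|V|,k,\text{dof}\,G,|E|)$. The candidate would be a framework of the form $(G_0+X,\bp|_{V_0})$, where $X$ is chosen (i) to preserve the star structure $E\cap K_v$ identified in~(\ref{eq:Kv}), so that the bad-motion condition~(\ref{eq:bad}) remains compatible with the edges of the framework among $N_G(v_0)$, and (ii) to make $\hat N_G(v_0)$ a locally $1$- or $2$-dof part of $G_0+X$. I would construct $X$ using the combinatorial data provided by Claim~\ref{claim:2mechanism1} (that $F_v$ has rank $3$ over $\cl(E_0)$ in the contracted matroid), together with the edges $e_1,e_2$ made available by the X-replacement of Step~1.

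To exhibit the required bad motion in the smaller framework, I would define an auxiliary graph $G'=G-u_0u_5$, which is obtained from $G_0+e_1+e_2+X$ via a $1$-extension at $e_1$ that introduces $u_0$, and consider the non-generic realisation $\bp'$ in which $\bp'(u_0)$ is placed on the line segment joining $\bp(u_1)$ and $\bp(u_2)$ (mirroring the geometric collapse used in the proof of Claim~\ref{claim:k=2}). Since $(G',\bp)$ inherits a bad motion at $v_0$ from $(G,\bp)$, Lemma~\ref{lem:bad_pinning} would transfer this motion to $(G',\bp')$; by the degenerate placement the transferred motion descends to a bad motion of $(G_0+X,\bp|_{V_0})$ at $v_0$, contradicting the minimality of $(G,\bp)$ in the lexicographic ordering.

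The principal obstacle is the correct choice of $X$: in order to both reduce $\hat N_G(v_0)$ to a locally $1$- or $2$-dof part and preserve the star structure among $N_G(v_0)$ (so that condition~(\ref{eq:bad}) survives in the smaller framework), $X$ must avoid $K_v$ while still closing enough of $K_v$ in the contracted matroid ${\cal C}_{n-1}/\cl(E_0\cup X)$. I expect this to require a delicate combinatorial argument leveraging the fact, implicit in Step~1, that $F\cup\{e_1,e_2\}$ together with $F_v\setminus F$ collapses the rank of $K_v\setminus\cl(E_0)$ from $3$ down to at most $2$ in ${\cal C}_{n-1}/\cl(E_0)$, along with the $1$-extension machinery of \cite[Theorem~10.2.1]{Wsurvey} to control the descent of the bad motion at the non-generic realisation $\bp'$.
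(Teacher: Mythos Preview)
Your opening is correct: assuming independence, the edge count forces $G_0+F+e_1+e_2$ to be minimally $C_2^1$-rigid, and X-replacement then gives that $G+F$ is minimally $C_2^1$-rigid. But from there the argument goes astray.

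The paper's proof is much more direct than what you propose, and the detour you take introduces a genuine gap. The smaller counterexample is simply $(G_0+e_1+e_2,\bp|_{V_0})$ itself --- there is no need for an auxiliary edge set $X$. Since $e_1,e_2\in K_u$ and $u_0\notin\hat N_G(v_0)$, these edges do not touch $K_v$ at all, so the star structure on $N_G(v_0)$ from~(\ref{eq:Kv}) is automatically preserved; your ``principal obstacle'' is a phantom. Moreover, $G_0+e_1+e_2$ is a $2$-dof graph (remove the two edges of $F$ from the rigid graph $G_0+F+e_1+e_2$), and $\hat N_G(v_0)$ is a locally $k$-dof part with $k\le 2$ there since adding $F\subset K_v$ back makes it locally rigid.

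The real gap is geometric. You attempt to mimic the $1$-extension degeneration from Claim~\ref{claim:k=2}, placing $\bp'(u_0)$ on the line through $\bp(u_1),\bp(u_2)$. But that only forces the restricted motion to respect $e_1=u_1u_2$; it says nothing about $e_2=u_3u_4$. (Your description of $G'=G-u_0u_5$ as a $1$-extension of $G_0+e_1+e_2+X$ is also inconsistent: $G-u_0u_5$ is a $1$-extension of $G_0+e_1$, with $e_2$ and $X$ nowhere in sight.) The correct degeneration is the one used in Whiteley's proof that X-replacement preserves independence: place $\bp'(u_0)$ at the \emph{intersection} of the line through $\bp(u_1),\bp(u_2)$ and the line through $\bp(u_3),\bp(u_4)$. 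Then both $\{u_0,u_1,u_2\}$ and $\{u_0,u_3,u_4\}$ are collinear in $(G,\bp')$, so once Lemma~\ref{lem:bad_pinning} transfers the bad motion to $(G,\bp')$, its restriction to $V_0$ satisfies the constraints of both $e_1$ and $e_2$, giving a bad motion of $(G_0+e_1+e_2,\bp|_{V_0})$ at $v_0$ and the desired contradiction.
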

\begin{proof}
Suppose that 
$G_0+F+e_1+e_2$ is $C_2^1$-independent (and hence  minimally $C_2^1$-rigid) for $e_1=u_1u_2$ and $e_2=u_3u_4$. We will contradict the minimality of $G$ by showing that $(G_0+e_1+e_2,\bp|_{V_0})$ has a bad motion at $v_0$. 


Observe that $G+F$ is obtained from $G_0+F+e_1+e_2$ by an X-replacement. 
We shall consider the following non-generic realisation $\bp'$ of $G$, 
which was used in the proof of \cite[Theorem 10.3.1]{Wsurvey} to show that 
X-replacement preserves $C_2^1$-independence. For $w\in V$, let 
$\bp'(w)=\bp(w)$ for $w\in V_0$ and $\bp'(u_0)$ be the point of intersection of the  line  through $\bp(u_1)$ and $\bp(u_2)$, and the line  through $\bp(u_3)$ and $\bp(u_4)$.
We first use Lemma~\ref{lem:bad_pinning} to show that $(G,\bp')$ has a bad motion at $v_0$. To do this, let
\[
S=\left\{\bar \bp: 
 \text{$(G+F,\bar \bp)$ is minimally $C_2^1$-rigid and $\bar \bp$ is non-degenerate on $\hat N_G(v_0)$}
\right\}.
\]
We may use  the minimal $C_2^1$-rigidity of $(G_0+F+e_1+e_2,\bp|_{V_0})$, 
and the fact the vertex sets $\{u_0,u_1,u_2\}$ and $\{u_0,u_3,u_4\}$ are collinear in $(G+F+e_1+e_2,\bp')$ to deduce that 
$(G+F,\bp')$ is minimally $C_2^1$-rigid, see the proof of \cite[Theorem 10.3.1]{Wsurvey} for more details. Since $\bp$ is generic, $\bp'$ is non-degenerate  on $\hat N_G(v_0)$ and hence $\bp'\in S$.
Since $(G,\bp)$ has a bad motion at $v_0$,
Lemma~\ref{lem:bad_pinning} now implies that $(G,\bp')$ has a bad motion $\bq_{\rm bad}'$ at $v_0$.
%
The definition of $\bp'$ implies  that 
$\bq_{\rm bad}'|_{V_0}$ is a motion of $(G_0+e_1+e_2,\bp|_{V_0})$.  
Since $(G_0+e_1+e_2,\bp|_{V_0})$ is a generic 2-dof framework, 
$\bq_{\rm bad}'|_{V_0}$ is bad at $v_0$ and this
contradicts the minimality of $G$.
\end{proof}

Claim~\ref{claim:2mechanism3} implies the following.

\begin{claim}\label{smallclaim2}
Let $F\in {F_v\choose 2}$ and let $e=u_iu_j\in K_u\setminus E_0$ such that $G_0+F+e$ is $C_2^1$-independent.
Then ${\rm cl}((E_0\cup F)+e)$ contains a triangle on $N_G(u_0)\setminus \{u_i, u_j\}$.
\end{claim}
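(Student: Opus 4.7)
The plan is to deduce this as a short contradiction argument from Claim~\ref{claim:2mechanism3}. That claim already prohibits $G_0+F+e_1+e_2$ from being independent for any non-adjacent pair $e_1,e_2\in K_u\setminus E_0$, so to produce a contradiction it will suffice to exhibit such a pair whenever the desired triangle fails to lie in the closure.

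Set $\{u_k,u_l,u_m\}:=N_G(u_0)\setminus\{u_i,u_j\}$. I would argue by contradiction: suppose $\cl((E_0\cup F)+e)$ does not contain the triangle $K(u_k,u_l,u_m)$. Then at least one edge of this triangle lies outside $\cl((E_0\cup F)+e)$, and after relabelling I may assume it is $u_ku_l$. Since $G_0+F+e$ is independent by hypothesis and $u_ku_l$ is not in its closure, the augmented edge set $(E_0\cup F)+e+u_ku_l$ is also independent.

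The final step is to check that this independent set has exactly the form forbidden by Claim~\ref{claim:2mechanism3}. Clearly $u_ku_l\in K_u$, and $u_ku_l\notin E_0$ (otherwise it would already lie in $\cl((E_0\cup F)+e)$), so together with the hypothesis $e\in K_u\setminus E_0$ both edges lie in $K_u\setminus E_0$. Moreover $\{u_k,u_l\}\cap\{u_i,u_j\}=\emptyset$, so $e$ and $u_ku_l$ are non-adjacent. Applying Claim~\ref{claim:2mechanism3} with $e_1=e$ and $e_2=u_ku_l$ then contradicts the independence just derived, completing the proof.

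I do not anticipate any substantive obstacle: the entire content of smallclaim2 is packaged inside Claim~\ref{claim:2mechanism3}, and the proof is purely a matter of tracking which candidate edge of the putative triangle escapes both $E_0$ and the closure. The role of the claim in the larger argument is to feed into the proof of statement~(\ref{eq:star}), namely that $\cl(E_0)\cap K_u$ forms a star on five vertices, where it will be combined (as in Lemma~\ref{lem:combinatorial}) with case analysis on $K_4$-subgraphs and $K_5$-spans of $\cl(E_0)\cap K_u$.
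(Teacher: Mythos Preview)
Your proof is correct and follows essentially the same approach as the paper: assume the triangle is not fully in the closure, pick a missing edge $e'=u_ku_l$ on $N_G(u_0)\setminus\{u_i,u_j\}$, and note that the resulting independent set $(E_0\cup F)+e+e'$ contradicts Claim~\ref{claim:2mechanism3}. Your version spells out the verification that $u_ku_l\notin E_0$ and that $e,e'$ are non-adjacent, which the paper leaves implicit.
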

\begin{proof}
If not, then we can choose an edge $e'$ on $N_G(u_0)\setminus \{u_i, u_j\}$ such that $G_0+F+e+e'$ is $C_2^1$-independent. 
This contradicts Claim~\ref{claim:2mechanism3}.
\end{proof}


\begin{claim}\label{claim:2mechanism4}
$G_0+F+e_1+e_2$ is $C_2^1$-dependent for all $F\in {F_v\choose 2}$
and all pairs of adjacent edges  $e_1, e_2\in K_u\setminus E_0$ whose common end-vertex has degree two in ${\rm cl}(E_0)\cap K_u$.
\end{claim}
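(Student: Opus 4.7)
I would mimic the strategy used for Claim~\ref{claim:2mechanism3}, with Lemma~\ref{lem:V} replacing the role of X-replacement. Suppose for contradiction that $G_0+F+e_1+e_2$ is $C_2^1$-independent; relabel so that $e_1=u_1u_2$, $e_2=u_1u_3$, and by the degree-two hypothesis let $u_4,u_5$ be the two vertices with $u_1u_4,u_1u_5\in \cl(E_0)\subseteq \cl(E_0\cup F)$. Since $|E(G_0+F+e_1+e_2)|=3|V_0|-6$, this graph is minimally $C_2^1$-rigid on $V_0$. Applying Lemma~\ref{lem:V} with $H=G_0+F+e_1+e_2$, deleted edges $e_1,e_2$ at the common vertex $u_1$, and closure-edges $u_1u_4,u_1u_5$, one deduces that $G+F$ is $C_2^1$-independent, hence minimally $C_2^1$-rigid; in particular, $F$ is good.

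To obtain a contradiction, I would adapt the non-generic placement argument from Claim~\ref{claim:2mechanism3}. The natural analogue for V-replacement is $\bp'$ defined by $\bp'|_{V\setminus\{u_0\}}=\bp|_{V\setminus\{u_0\}}$ and $\bp'(u_0)=\bp(u_1)$. At this placement the row of $u_0u_1$ in the cofactor matrix vanishes, creating a spurious self-stress, so $(G+F,\bp')$ is not itself minimally $C_2^1$-rigid. To compensate I would work with $G':=G-u_0u_1$ (still $C_2^1$-independent) and exhibit $f'\in K_v$ such that $(G'+F+f',\bp')$ is minimally $C_2^1$-rigid at $\bp'$; existence of such $f'$ comes from the fact that the one-dimensional non-trivial motion space of $(G'+F,\bp')$ restricts to a one-parameter family of motions of $(G_0+F,\bp|_{V_0})$ that generically violates some edge of $K_v$.

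With this set-up, Lemma~\ref{lem:bad_pinning} applied to $G'$, $F'=F\cup\{f'\}$, and the bad-motion polynomial $b$ (the assumed bad motion $\bq_{\rm bad}$ of $(G,\bp)$ remains a bad motion of $(G-u_0u_1,\bp)$, supplying the generic hypothesis) yields a bad motion $\bq'$ of $(G',\bp')$ at $v_0$. The crux is to conclude that $\bq'|_{V_0}$ is a motion of $(G_0+e_1+e_2,\bp|_{V_0})$. At $\bp'$ the edges $u_0u_j$ yield $D(u_1,u_j)\cdot(\bq'(u_0)-\bq'(u_j))=0$ for $j=2,3,4,5$; combined with the closure relations $D(u_1,u_j)\cdot(\bq'(u_1)-\bq'(u_j))=0$ for $j=4,5$ (from $u_1u_j\in\cl(E_0)$), one should argue that the vector $\bq'(u_0)-\bq'(u_1)$ is so constrained that the $j=2,3$ equations force $\bq'|_{V_0}$ to satisfy the $e_1,e_2$ constraints. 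Since $b$ depends only on the coordinates of $\bp$ at $\hat N_G(v_0)\subseteq V_0$, the restricted motion $\bq'|_{V_0}$ remains bad at $v_0$.

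Finally, $(G_0+e_1+e_2,\bp|_{V_0})$ is generically 2-dof (since $(G_0+F+e_1+e_2,\bp|_{V_0})$ is minimally rigid and $|F|=2$), and $\hat N_G(v_0)$ is a locally-2-dof part of it, inherited from $G$ after removing the two edges of $F$ from the locally rigid structure at $\hat N_G(v_0)$. Hence $(G_0+e_1+e_2,\bp|_{V_0})$ is a strictly smaller counterexample to Theorem~\ref{thm:well-behaved}, contradicting the minimality of $(G,\bp)$. The main obstacles will be verifying the existence of $f'\in K_v$ making $(G'+F+f',\bp')$ minimally rigid, and carrying out the motion analysis that yields the key implication that $\bq'|_{V_0}$ satisfies the $e_1,e_2$ constraints---both require careful accounting of the extra degree of freedom introduced by the zero row $u_0u_1$ at the degenerate placement $\bp'$.
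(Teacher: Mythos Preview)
Your overall plan—assume independence, invoke Lemma~\ref{lem:V}, place $\bp'(u_0)=\bp(u_1)$, and transfer a bad motion down to $G_0+e_1+e_2$—is the same as the paper's. The fatal gap is your step~8. At $\bp'$ the edges $u_0u_4,u_0u_5$ together with $u_1u_4,u_1u_5\in\cl(E_0)$ impose only \emph{two} linear conditions on $\bq'(u_0)-\bq'(u_1)\in\R^3$, orthogonality to $D(u_1,u_4)$ and $D(u_1,u_5)$; thus $\bq'(u_0)-\bq'(u_1)$ is generically a nonzero multiple of $D(u_1,u_4)\times D(u_1,u_5)$, and for $j=2,3$
\[
D(u_1,u_j)\cdot(\bq'(u_1)-\bq'(u_j))=-D(u_1,u_j)\cdot(\bq'(u_0)-\bq'(u_1))\neq 0,
\]
so $\bq'|_{V_0}$ does \emph{not} satisfy the $e_1,e_2$ constraints. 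Your compensating edge $f'\in K_v$ acts on $\hat N_G(v_0)$, not on the difference $\bq'(u_0)-\bq'(u_1)$, and therefore cannot supply the missing third condition. (Your step~5 is also unjustified: the unique non-trivial motion of $(G'+F,\bp')$ restricts to one particular element of the $2$-dimensional space of non-trivial motions of $(G_0+F,\bp|_{V_0})$, and there is no a~priori reason it should violate any edge of $K_v$.)

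The paper supplies the missing third condition by a different device. Rather than deleting the zero $u_0u_1$-row, it replaces that row by one built from the \emph{generic} $D(\bp(u_0),\bp(u_1))$, obtaining a modified matrix $C^*(G+F,\bp_0)$. After a preliminary base exchange inside $\cl(E_0)$ so that $u_1u_4,u_1u_5$ become actual edges, $G+F$ arises from $G_0+F+e_1+e_2$ by a genuine vertex split, and Whiteley's vertex-splitting argument shows $C^*(G+F,\bp_0)$ is row independent. Any $z\in\ker C^*(G,\bp_0)$ then satisfies three independent conditions on $z(u_0)-z(u_1)$ (orthogonality to $D(u_1,u_4)$, $D(u_1,u_5)$, and $D(\bp(u_0),\bp(u_1))$), forcing $z(u_0)=z(u_1)$ and hence the $e_1,e_2$ constraints for $z|_{V_0}$. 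Because $C^*$ is not a cofactor matrix, Lemma~\ref{lem:bad_pinning} cannot be quoted directly; the paper instead applies the underlying Lemma~\ref{lem:abstract_bad_pinning} to the family $\bp_t(u_0)=t\bp(u_0)+(1-t)\bp(u_1)$, transferring the bad motion from generic $t$ to $t=0$; this is statement~(\ref{eq:special}), proved in Appendix~\ref{sec:bad_pinning}.
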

\begin{proof}
Suppose that 
$G_0+F+e_1+e_2$ is $C^1_2$-independent (and hence minimally $C_2^1$-rigid). Our aim is to contradict the minimality of $G$ by showing that $(G_0+e_1+e_2,\bp|_{V_0})$ has a bad motion at $v_0$. 

Let $u_1$ be the common endvertex of $e_1$ and $e_2$. Since $u_1$ has degree two in $\cl(E_0)\cap K_u$ we may
choose a base $E_0'$ of $\cl(E_0)$ such that
$u_1$ has degree two in $E_0'\cap K_u$ and $v_0$ is incident with the same set of edges in $E_0$ and $E_0'$. Let $G_0'$ be the graph induced by $E_0'$ and $G'$ be the graph obtained from $G_0'$ by adding $u$ and the edges of $G$ incident to $u$. Then $G_0'+F+e_1+e_2$ is $C^1_2$-independent and hence $G'+F$ is $C^1_2$-independent by Lemma \ref{lem:V}. In addition, the fact that $\cl(E_0)=\cl(E_0')$ implies that $(G,p)$ and $(G',p)$ will have the same space of motions. Relabelling $G'$ by $G$, we may assume that  $u_1$  has degree two in $E_0\cap K_u$ and
$G+F$ is obtained from $G_0+F+e_1+e_2$ by a vertex splitting. 

Consider the realisation $\bp_0$ of $G$ 
defined by 
$\bp_0(w)=\bp(w)$ for $w\in V_0$
and $\bp_0(u_0)=\bp(u_1)$. 
Let
${C}^*(G+F,\bp_0)$ be obtained from the cofactor matrix ${C}(G+F,\bp_0)$
by replacing the zero-row indexed by the edge $u_0u_1$ with a row of the form 
$$
\kbordermatrix{
 & & u_0 & & u_1 & \\
 e=u_0u_1 & 0\dots 0 & D(\bp(u_0),\bp(u_1)) & 0 \dots 0 & -D(\bp(u_0),\bp(u_1)) & 0\dots 0 
}.
$$
The  argument used to show that vertex splitting  preserves $C^1_2$-independence in  \cite[Theorem 10.2.7]{Wsurvey} implies that ${C}^*(G+F,\bp_0)$ is row independent.
If ${C}^*(G+F,\bp_0)$ were the $C^1_2$-cofactor matrix of $(G+F,\bp_0)$ then we could find a bad motion for $(G_0+e_1+e_2,\bp|_{V_0})$ by applying Lemma~\ref{lem:bad_pinning} to $(G+F,\bp_0)$ as in the last paragraph of Claim \ref{claim:2mechanism3}, and this would contradict the minimality of $G$.  Since ${C}^*(G+F,\bp_0)$ is not a $C^1_2$-cofactor matrix 
we instead apply the proof technique of Lemma~\ref{lem:bad_pinning} directly to ${C}^*(G+F,\bp_0)$ to show that
\begin{equation}\label{eq:special}
\mbox{$z|_{V_0}$ is a bad motion of $(G_0+e_1+e_2,\bp|_{V_0})$ for some $z\in \ker {C}^*(G,\bp_0)$}
\end{equation}
and obtain the same contradiction. We give the proof of (\ref{eq:special}) immediately after the proof of  Lemma~\ref{lem:bad_pinning} in Appendix
\ref{sec:bad_pinning}.
\end{proof}


\subsubsection{\boldmath The structure of  ${\rm cl}(E_0)\cap K_u$}\label{subsubsec:3}
We show that ${\rm cl}(E_0)\cap K_u$ is a star on $N_G(u_0)$.
The proof proceeds as in that of Lemma~\ref{lem:combinatorial}.

\begin{claim}\label{claim:2mechanism5}
For every good pair $F\in {F_v\choose 2}$ and every $e\in K_u$ such that $(E_0\cup F)+e$ is independent,
${\rm cl}((E_0\cup F)+e)\cap K_u$ does not contain a $C_2^1$-rigid subgraph spanning $N_G(u_0)$.
\end{claim}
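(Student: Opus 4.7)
The plan is to argue by contradiction. Suppose that ${\rm cl}((E_0\cup F)+e)\cap K_u$ does contain a $C_2^1$-rigid subgraph spanning $N_G(u_0)$. Since $|N_G(u_0)|=5$ and the cofactor matroid on five vertices has maximum rank $3\cdot 5-6=9=|K_5|-1$, any such subgraph has closure equal to $K_u$, so the assumption is equivalent to $K_u\subseteq {\rm cl}((E_0\cup F)+e)$. My aim is to use this extra information to push the rank of $E\cup F+e$ strictly below the rank of $E\cup F$, contradicting the monotonicity of rank.

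First I count. By Claim~\ref{claim:k=2} we have $k=2$, and since $F$ is good, $G+F$ is minimally $C_2^1$-rigid; thus $|E|=3|V|-8$, $|E_0|=3|V|-13$, and (using $F\cap E_0=\emptyset$ from \eqref{eq:Kv} together with $e\notin E_0\cup F$) $|(E_0\cup F)+e|=3|V|-10$. Since this set is $C_2^1$-independent by hypothesis and the operation of adjoining the new vertex $u_0$ to $G_0+F+e$ via three of its five neighbors, say $u_1,u_2,u_3$, is a $0$-extension, Lemma~\ref{lem:cofactor_ind} implies that
\[
X := (E_0\cup F)+e+u_0u_1+u_0u_2+u_0u_3
\]
is $C_2^1$-independent, so $r(X)=3|V|-7$.

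Next I would show that the remaining two edges $u_0u_4$ and $u_0u_5$ both lie in ${\rm cl}(X)$. Combining the three edges $u_0u_i\in X$ with the triangle on $\{u_1,u_2,u_3\}$ coming from $K_u\subseteq {\rm cl}(X)$ yields $K(\{u_0,u_1,u_2,u_3\})\subseteq {\rm cl}(X)$. Since ${\cal C}_2^1$ is an abstract $3$-rigidity matroid, Theorem~\ref{thm:hang} tells us that every $K_5$ is a circuit; applied to $\{u_0,u_1,u_2,u_3,u_4\}$, whose $K_5$ has every edge except $u_0u_4$ already in ${\rm cl}(X)$, this forces $u_0u_4\in {\rm cl}(X)$, and the same argument with $u_5$ in place of $u_4$ gives $u_0u_5\in {\rm cl}(X)$. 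Consequently $r(E\cup F+e)=r(X)=3|V|-7$, whereas the minimal $C_2^1$-rigidity of $G+F$ gives $r(E\cup F)=3|V|-6$, yielding the desired contradiction. The argument is a short counting-and-closure computation and I foresee no serious obstacle; the one conceptual point worth highlighting is that the hypothesised rigidity on $N_G(u_0)$ allows the closure of $X$ to absorb the two unused edges at $u_0$, which is precisely what drops the rank of $E\cup F+e$ below $3|V|-6$.
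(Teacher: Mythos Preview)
Your proof is correct and follows essentially the same route as the paper. Both arguments observe that $K_u\subseteq{\rm cl}((E_0\cup F)+e)$, add $u_0$ via a $0$-extension to obtain the independent set $X$, and use the $K_5$-circuit property to absorb $u_0u_4$ and $u_0u_5$ into ${\rm cl}(X)$; the only difference is the endgame: the paper performs two base exchanges to write ${\rm cl}(X)={\rm cl}(E+e')$ for some $e'\in F+e$ and then notes $F\subseteq{\rm cl}(E+e')$ contradicts $F$ being good, whereas you compare ranks directly via $r(E\cup F+e)\le r(X)=3|V|-7<3|V|-6=r(E\cup F)$ --- a slightly cleaner packaging of the same contradiction.
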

\begin{proof}
Suppose ${\rm cl}((E_0\cup F)+e)\cap K_u$ contains a $C_2^1$-rigid subgraph spanning $N_G(u_0)$. Then 
$K_u\subseteq {\rm cl}((E_0\cup F)+e)$
and hence $u_0u_4, u_0u_5\in \cl((E_0\cup F)+e+u_0u_1+u_0u_2+u_0u_3)$. 
Since $G$ is $C_2^1$-independent,  we can use two base exchanges to show that $\cl((E_0\cup F)+e+u_0u_1+u_0u_2+u_0u_3)=\cl(E+e')$ for some $e'\in F+e$. Then  $F\subseteq \cl(E+e')$. This contradicts the hypothesis that $F$ is good. 
\end{proof}

For $F\subseteq K(V_0)\setminus \cl(E_0)$, we use
$[F]$ to denote the closure of $F$ in ${\cal C}_{n-1}/{\rm cl}(E_0)$.
Thus an edge $e\in K(V_0)\setminus {\rm cl}(E_0)$ belongs to $[F]$ if and only if we have $e\in C\subseteq (E_0\cup F)+e$ for some circuit $C$ of ${\cal C}_{n-1}$.
If $F$ is a singleton $\{f\}$, then we denote $[\{f\}]$ by $[f]$.

We say that a set $F\subset F_v$ is {\em influential} if $[F]\cap K_u\neq \emptyset$.
A set $F+e$ with $F\in {F_v\choose 2}$ and $e\in K_u\setminus {\rm cl}(E_0)$ is said to be a {\em stabilizer} if $\hat{N}(v_0)$ is a locally $C_2^1$-rigid part of $G_0+F+e$.
By Claim~\ref{claim:2mechanism1}, $F+e$ is a stabilizer if and only if 
${\rm cl}((E_0\cup F)+e)={\rm cl}(E_0\cup F_v)$.
This fact will be used frequently.

\begin{claim}\label{claim:2mechanism6}
Let $F\in {F_v\choose 2}$ be a good pair and $e\in K_u\setminus {\rm cl}(E_0)$ such that $(E_0\cup F)+e$ is independent.
Suppose that  either $F$ is not influential or $F+e$ is a stabilizer. 
Then ${\rm cl}((E_0\cup F)+e)\cap K_u$ contains no copy of $K_4$.
\end{claim}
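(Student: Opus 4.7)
The plan is to argue by contradiction in a manner closely mirroring the proof of Claim~\ref{claim:spline2} in Lemma~\ref{lem:combinatorial}, but with $K_u$ playing the role of $K$, Claims~\ref{claim:2mechanism3} and~\ref{claim:2mechanism4} replacing the non-existence of conditions (iii) and (iv) of Lemma~\ref{lem:combinatorial}, and Claim~\ref{claim:2mechanism2} replacing the non-existence of condition (i). Suppose $\operatorname{cl}((E_0\cup F)+e)\cap K_u$ contains a copy of $K_4$, and write $e=u_iu_j$. By Claim~\ref{smallclaim2}, $\operatorname{cl}((E_0\cup F)+e)$ also contains the triangle on $N_G(u_0)\setminus\{u_i,u_j\}$. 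Furthermore, Claim~\ref{claim:2mechanism5} implies that $\operatorname{cl}((E_0\cup F)+e)\cap K_u$ does not contain a $C_2^1$-rigid spanning subgraph of $N_G(u_0)$; since the union of any two distinct copies of $K_4$ on a common 5-vertex set is $C_2^1$-rigid, only one $K_4$ can appear inside $\operatorname{cl}((E_0\cup F)+e)\cap K_u$.

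I would then split into two cases depending on whether the hypothesised $K_4$ contains the edge $e$. In Case~1, the $K_4$ lies on, say, $\{u_1,u_2,u_3,u_4\}$ with $e=u_1u_2$, and the uniqueness of the $K_4$ together with the triangle on $\{u_3,u_4,u_5\}$ forces $u_1u_5,u_2u_5\notin\operatorname{cl}((E_0\cup F)+e)$. A closure-exchange argument of the type used repeatedly in Lemma~\ref{lem:combinatorial} (namely, if $f\in\operatorname{cl}((E_0\cup F)+e)\setminus\operatorname{cl}(E_0\cup F)$ then $\operatorname{cl}(E_0\cup F+f)=\operatorname{cl}((E_0\cup F)+e)$, so each such $f$ can be swapped in for $e$) then shows that any potential non-edge of $\operatorname{cl}(E_0\cup F)$ among the $K_4$-edges gives rise either to a pair of non-adjacent edges in $K_u\setminus\operatorname{cl}(E_0\cup F)$ contradicting Claim~\ref{claim:2mechanism3}, or to a pair of adjacent edges with common endvertex of degree two in $\operatorname{cl}(E_0)\cap K_u$ contradicting Claim~\ref{claim:2mechanism4}. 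This will force all edges of the $K_4$ into $\operatorname{cl}(E_0\cup F)\cap K_u$, contradicting Claim~\ref{claim:2mechanism2}. Case~2, where the $K_4$ avoids $e$, is handled symmetrically, producing a $K_4$ on four vertices of $N_G(u_0)$ already inside $\operatorname{cl}(E_0\cup F)\cap K_u$ and contradicting Claim~\ref{claim:2mechanism2} directly.

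The role of the two alternatives in the hypothesis is to license the use of Claim~\ref{claim:2mechanism4}: its degree-two condition is phrased in terms of $\operatorname{cl}(E_0)\cap K_u$, whereas the exchange argument naturally produces information only about $\operatorname{cl}(E_0\cup F)\cap K_u$ or $\operatorname{cl}((E_0\cup F)+e)\cap K_u$. If $F$ is not influential, then by definition $\operatorname{cl}(E_0\cup F)\cap K_u=\operatorname{cl}(E_0)\cap K_u$, so degrees in these two graphs agree and the required degree-two condition can be read off from whichever closure is convenient. If instead $F+e$ is a stabilizer, then $\operatorname{cl}((E_0\cup F)+e)=\operatorname{cl}(E_0\cup F_v)$, so the closure of $(E_0\cup F)+e$ already contains $K_v$ and we obtain a rigid structure on $\hat N_G(v_0)$ that lets us pin down the intersection pattern with $K_u$ strongly enough to verify the degree-two condition via a refined exchange argument.

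The main obstacle I anticipate is the bookkeeping for the degree-two hypothesis of Claim~\ref{claim:2mechanism4} in Case~1: one must carefully track which edges between $\{u_3,u_4\}$ and the remaining vertices of $N_G(u_0)$ lie in $\operatorname{cl}(E_0)$ (as opposed to merely in $\operatorname{cl}(E_0\cup F)$ or $\operatorname{cl}((E_0\cup F)+e)$), and it is precisely here that the stabilizer/non-influential dichotomy has to be leveraged correctly. Once this is set up, the rest of the argument is a routine transcription of the proof of Claim~\ref{claim:spline2}.
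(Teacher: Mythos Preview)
Your overall strategy matches the paper's, but the Case~1 endgame is wrong as stated, and the role of the stabilizer hypothesis is not what you describe.

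In Case~1 the $K_4$ contains $e$, and since $(E_0\cup F)+e$ is independent you have $e\notin\cl(E_0\cup F)$; so ``force all edges of the $K_4$ into $\cl(E_0\cup F)\cap K_u$, contradicting Claim~\ref{claim:2mechanism2}'' is unattainable here. (This is also not how Case~1 of Claim~\ref{claim:spline2} concludes; there the contradiction comes directly from condition~(iv).) What the paper does, with $e=u_1u_2$, the $K_4$ on $\{u_1,u_2,u_3,u_4\}$, and $u_1u_5,u_2u_5\notin\cl((E_0\cup F)+e)$, is: first obtain $u_1u_3,u_1u_4\in\cl(E_0\cup F)$ via exchange and Claim~\ref{claim:2mechanism3}; then, since $(E_0\cup F)+e+u_1u_5$ is independent, Claim~\ref{claim:2mechanism4} forces $\{u_1u_3,u_1u_4\}\not\subseteq\cl(E_0)$, so (say) $u_1u_3\in[F]$ and $F$ is influential. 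The hypothesis now makes $F+e$ a stabilizer, whose real content is $\cl((E_0\cup F)+e)=\cl((E_0\cup F)+f_e)$ for the third edge $f_e\in F_v\setminus F$. Combined with $u_1u_3\in[F]$ this lets you replace both $e$ and one edge of $F$: for some $f\in F$ one gets $\cl(E_0+f+f_e+u_1u_3)=\cl((E_0\cup F)+e)\not\ni u_2u_5$, so $(E_0\cup\{f,f_e\})+u_1u_3+u_2u_5$ is independent with $u_1u_3,u_2u_5$ non-adjacent, contradicting Claim~\ref{claim:2mechanism3} for the \emph{new} pair $\{f,f_e\}\in{F_v\choose 2}$. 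Thus the stabilizer hypothesis is not used to ``verify the degree-two condition'' but to switch to a different pair in ${F_v\choose 2}$ and reapply Claim~\ref{claim:2mechanism3}; this switching device has no analogue in Claim~\ref{claim:spline2}, so the argument is not a routine transcription. The same device is what pushes edges from $\cl(E_0\cup F)$ down to $\cl(E_0)$ in Case~2: your Case~2 endgame (a $K_4$ in $\cl(E_0\cup F)$, contradicting Claim~\ref{claim:2mechanism2}) is correct, but reaching it still requires this trick.
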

\begin{proof}
Without loss of generality, let $e=u_1u_2$.

Suppose that ${\rm cl}((E_0\cup F)+e)\cap K_u$ contains a copy of $K_4$.
If ${\rm cl}((E_0\cup F)+e)\cap K_u$ contains two distinct copies of $K_4$, then
their union would be a $C^1_2$-rigid subgraph of $K_u$. This would contradict 
Claim~\ref{claim:2mechanism5}.
Hence ${\rm cl}((E_0\cup F)+e)\cap K_u$ contains exactly one copy of $K_4$.
We consider two cases depending on the position of the copy of $K_4$.

\medskip
\noindent
\emph{Case 1:} The copy of $K_4$ in ${\rm cl}((E_0\cup F)+e)\cap K_u$ contains $e=u_1u_2$.\\
By symmetry we may assume that this is a $K_4$ on $\{u_1, u_2, u_3, u_4\}$.
By Claim~\ref{smallclaim2}, ${\rm cl}((E_0\cup F)+e)\cap K_u$ also contains $\{u_3u_5, u_4u_5\}$.
Hence, by Claim~\ref{claim:2mechanism5}, 
$$u_1u_5, u_2u_5\notin {\rm cl}((E_0\cup F)+e)\cap K_u.$$

We next show that:
\begin{equation}
\label{eq:2-6-1}
u_1u_3, u_1u_4\in {\rm cl}(E_0\cup F).
\end{equation}
To see this, suppose $u_1u_3\notin {\rm cl}(E_0\cup F)$. Since $u_1u_3\in {\rm cl}((E_0\cup F)+e)$, we have that
$(E_0\cup F)+u_1u_3$ is independent and $u_2u_5\not\in{\rm cl}((E_0\cup F)+e)={\rm cl}((E_0\cup F)+u_1u_3)$.
Hence $(E_0\cup F)+u_1u_3+u_2u_5$ is independent.
This contradicts Claim~\ref{claim:2mechanism3} so  $u_1u_3\in {\rm cl}(E_0\cup F)$. The same argument for $u_1u_4$ gives (\ref{eq:2-6-1}).  

Since $(E_0\cup F)+e+u_1u_5$ is independent,
Claim~\ref{claim:2mechanism4} implies that $\{u_1u_3,u_1u_4\}\not \subseteq {\rm cl}(E_0)$.
By symmetry, we may suppose that $u_1u_3\notin {\rm cl}(E_0)$, i.e., $u_1u_3\in {\rm cl}((E_0\cup F))\setminus {\rm cl}(E_0)=[F]$.
This implies that $F$ is influential, and hence (by an hypothesis of the claim) $F+e$ is a stabilizer.
Thus, for the unique edge  $f_e\in F_v\setminus F$, we have ${\rm cl}((E_0\cup F)+e)={\rm cl}((E_0\cup F)+f_e)$. 
Since $u_1u_3\in [F]$, we also have 
$u_2u_5\not\in{\rm cl}((E_0\cup F)+e)={\rm cl}((E_0\cup F)+f_e)={\rm cl}(E_0+f+u_1u_3+f_e)$ for some edge $f\in F$. 
Then $(E_0\cup \{f,f_e\})+u_1u_3+u_2u_5$ is independent.
This contradicts Claim~\ref{claim:2mechanism3}.

\medskip
\noindent
\emph{Case 2:} ${\rm cl}((E_0\cup F)+e)\cap K_u$ contains a copy of $K_4$ which avoids  $e=u_1u_2$.\\
By symmetry we may assume that this is a $K_4$ on $\{u_1, u_3, u_4, u_5\}$.
Since there is no other copy of $K_4$ in ${\rm cl}((E_0\cup F)+e)\cap K_u$, 
we may also assume 
$$u_2u_3, u_2u_4\notin {\rm cl}((E_0\cup F)+e).$$

We proceed in a similar way to Case 1. We first show: 
\begin{equation}
\label{eq:2-6-2}
\{u_1u_3, u_1u_4, u_1u_5, u_3u_5, u_4u_5\}\subseteq  {\rm cl}(E_0\cup F).
\end{equation}
To see this, suppose $u_1u_3\notin {\rm cl}(E_0\cup F)$. 
Since $u_1u_3\in {\rm cl}((E_0\cup F)+e)$, 
$(E_0\cup F)+u_1u_3$ is independent and  $u_2u_4\notin {\rm cl}((E_0\cup F)+e)={\rm cl}((E_0\cup F)+u_1u_3)$.
Hence $(E_0\cup F)+u_1u_3+u_2u_4$ is independent.
This contradicts Claim~\ref{claim:2mechanism3}.
The same proof can be applied to the other edges to give (\ref{eq:2-6-2}).

We next show that:
\begin{equation}
\label{eq:2-6-3}
\{u_1u_3, u_1u_4, u_1u_5, u_3u_5, u_4u_5\}\subseteq {\rm cl}(E_0).
\end{equation}
To see this, suppose $u_1u_3\notin {\rm cl}(E_0)$. Then $u_1u_3\in {\rm cl}((E_0\cup F))\setminus {\rm cl}(E_0)=[F]$.
This implies that $F$ is influential, and hence (by an hypothesis of the claim) $F+e$ is a stabilizer.
Thus, for the unique edge  $f_e\in F_v\setminus F$, ${\rm cl}((E_0\cup F)+e)={\rm cl}((E_0\cup F)+f_e)$ holds, and we have 
$u_2u_4\not\in {\rm cl}((E_0\cup F)+e)={\rm cl}((E_0\cup F)+f_e)={\rm cl}(E_0+f+u_1u_3+f_e)$ for some edge $f\in F$. 
Hence $(E_0\cup \{f,f_e\})+u_1u_3+u_2u_4$ is independent.
This contradicts Claim~\ref{claim:2mechanism3}.
 The same argument can be applied to the other edges to give  (\ref{eq:2-6-3}).
 
Suppose that $u_3u_4\notin {\rm cl}(E_0 \cup F)$.
As $u_3u_4\in {\rm cl}((E_0\cup F)+e)$, 
$(E_0\cup F)+u_3u_4$ is independent and $u_2u_4\not\in {\rm cl}((E_0\cup F)+e)={\rm cl}((E_0\cup F)+u_3u_4)$.
Hence $(E_0\cup F)+u_3u_4+u_2u_4$ is independent. Since $u_4$ has degree two in ${\rm cl}(E_0)\cap K_u$ by
(\ref{eq:2-6-3}),
this contradicts Claim~\ref{claim:2mechanism4}. Hence $u_3u_4\in {\rm cl}(E_0 \cup F)$.
This and (\ref{eq:2-6-2})  imply that ${\rm cl}(E_0 \cup F)$ contains a copy of $K_4$ on 
$\{u_1, u_3, u_4, u_5\}$, contradicting Claim~\ref{claim:2mechanism2}.
\end{proof}

\begin{claim}\label{claim:mechanism7}
Let $F\in {F_v\choose 2}$ be a good pair  and $e=u_iu_j\in K_u$ be such that $(E_0\cup F)+e$ is independent.
Suppose that either $F$ is not influential or $F+e$ is a stabilizer.
Then ${\rm cl}(E_0)$ contains at least two edges of the triangle on $N_G(u_0)\setminus \{u_i, u_j\}$.
\end{claim}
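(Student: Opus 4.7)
The proof follows the pattern of Claim~\ref{claim:spline4} inside the proof of Lemma~\ref{lem:combinatorial}, replacing its Claim~\ref{claim:spline2} by Claim~\ref{claim:2mechanism6} (no $K_4$ in ${\rm cl}((E_0\cup F)+e)\cap K_u$) and its appeal to the failure of condition (iii) by Claim~\ref{claim:2mechanism3} (no non-adjacent pair of edges in $K_u$ can be added to $E_0\cup F'$ for any $F'\in {F_v\choose 2}$ while preserving independence). Setting $e=u_1u_2$ without loss of generality, the triangle to analyse sits on $\{u_3,u_4,u_5\}$; by Claim~\ref{smallclaim2} all three of $u_3u_4,u_3u_5,u_4u_5$ already lie in ${\rm cl}((E_0\cup F)+e)$, and the task is to upgrade two of them to ${\rm cl}(E_0)$.

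First I would apply Claim~\ref{claim:2mechanism6} to the $K_4$ on $\{u_1,u_2,u_3,u_4\}$. Since $u_1u_2=e$ and $u_3u_4$ are both in ${\rm cl}((E_0\cup F)+e)$, some edge in $\{u_1u_3,u_1u_4,u_2u_3,u_2u_4\}$ must be missing; by swapping $u_1\leftrightarrow u_2$ and $u_3\leftrightarrow u_4$ I may assume $u_1u_3\notin{\rm cl}((E_0\cup F)+e)$. Supposing, for contradiction, $u_4u_5\notin{\rm cl}(E_0)$, I split according to the disjunctive hypothesis of the claim. If $F$ is not influential then $[F]\cap K_u=\emptyset$ forces $u_4u_5\notin{\rm cl}(E_0\cup F)$, and matroid exchange from $u_4u_5\in{\rm cl}((E_0\cup F)+e)\setminus{\rm cl}(E_0\cup F)$ makes $(E_0\cup F)+u_4u_5$ independent with closure ${\rm cl}((E_0\cup F)+e)$. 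If instead $F+e$ is a stabilizer then ${\rm cl}((E_0\cup F)+e)={\rm cl}(E_0\cup F_v)$, and $E_0\cup F_v$ is independent by Claim~\ref{claim:2mechanism1}; since $u_4u_5\notin{\rm cl}(E_0)$, any circuit certifying $u_4u_5\in{\rm cl}(E_0\cup F_v)$ must meet $F_v$, so deletion-exchange yields some $y\in F_v$ for which $(E_0\cup(F_v-y))+u_4u_5$ is independent with closure ${\rm cl}(E_0\cup F_v)$. In either branch I obtain $F'\in{F_v\choose 2}$ such that $(E_0\cup F')+u_4u_5$ is independent and has closure equal to ${\rm cl}((E_0\cup F)+e)$. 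Since $u_1u_3$ is not in this closure, $(E_0\cup F')+u_4u_5+u_1u_3$ is independent, and the non-adjacent pair $(u_4u_5,u_1u_3)$ contradicts Claim~\ref{claim:2mechanism3}. Hence $u_4u_5\in{\rm cl}(E_0)$.

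For the second triangle edge, I would apply Claim~\ref{claim:2mechanism6} to the $K_4$ on $\{u_1,u_2,u_4,u_5\}$ to obtain a missing edge $g\in\{u_1u_4,u_1u_5,u_2u_4,u_2u_5\}$. If $g$ is incident to $u_4$, the identical Case~A / Case~B dichotomy with $u_3u_5$ in the role previously played by $u_4u_5$ (so that $g$ is the non-adjacent partner) shows $u_3u_5\in{\rm cl}(E_0)$; if $g$ is incident to $u_5$, the same argument with $u_3u_4$ in that role gives $u_3u_4\in{\rm cl}(E_0)$. In either branch, combined with $u_4u_5\in{\rm cl}(E_0)$ from the first step, ${\rm cl}(E_0)$ contains two edges of the triangle on $\{u_3,u_4,u_5\}$, as required.

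The main obstacle is executing the matroid exchange cleanly in the stabilizer case: I need the circuit witnessing $u_4u_5\in{\rm cl}(E_0\cup F_v)$ to intersect $F_v$ so that the deletion-exchange returns a genuine pair in ${F_v\choose 2}$, which is forced exactly by the standing assumption $u_4u_5\notin{\rm cl}(E_0)$. A second delicate point is that Claim~\ref{claim:2mechanism3} is available for every pair in ${F_v\choose 2}$, not merely the good pair $F$ we started with; this is precisely what legitimises passing from $F$ to the $F_v-y$ produced by exchange.
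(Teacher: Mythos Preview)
Your proof is correct and follows essentially the same approach as the paper's. The only organizational difference is that the paper first proves $u_4u_5\in{\rm cl}(E_0\cup F)$ (using just the exchange with $e$ and Claim~\ref{claim:2mechanism3}) and then separately upgrades to $u_4u_5\in{\rm cl}(E_0)$ by invoking the disjunctive hypothesis, whereas you split on the hypothesis immediately; both routes land on the same contradiction via a pair $F'\in{F_v\choose 2}$ with $(E_0\cup F')+u_4u_5+u_1u_3$ independent.
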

\begin{proof}
Without loss of generality suppose $e=u_1u_2$.
By Claim~\ref{smallclaim2}, ${\rm cl}(E_0+F+e)$ contains a triangle on $\{u_3,u_4,u_5\}$.

We first consider the complete graph on $\{u_1, u_2, u_3, u_4\}$.
By Claim~\ref{claim:2mechanism6} we may assume without loss of generality that 
${\rm cl}((E_0\cup F)+e)$ does not contain $u_1u_3$.
If $u_4u_5\in {\rm cl}((E_0\cup F)+e)\setminus {\rm cl}(E_0\cup F)$, then 
$(E_0\cup F)+u_4u_5$ is independent and  ${\rm cl}((E_0\cup F)+u_4u_5)={\rm cl}((E_0\cup F)+e)$. This would imply that $(E_0\cup F)+u_1u_3+u_4u_5$ is independent and 
contradict Claim~\ref{claim:2mechanism3}.
Hence $u_4u_5\in {\rm cl}(E_0\cup F)$.

Suppose next that $u_4u_5\in {\rm cl}(E_0\cup F)\setminus {\rm cl}(E_0)$, i.e., $u_4u_5\in [F]$.
Then $F$ is influential and so, by the hypothesis, $F+e$ is a stabilizer. 
Hence ${\rm cl}((E_0\cup F)+e)={\rm cl}(E_0\cup F_v)={\rm cl}(E_0+f_1+f_2+u_4u_5)$ holds for some edges  $f_1, f_2\in F_v$.
Then $E_0+f_1+f_2+u_4u_5+u_1u_3$ is independent and we again 
contradict Claim~\ref{claim:2mechanism3}.
Hence $u_4u_5\in {\rm cl}(E_0)$.

By applying the 
same argument to the complete graph on $\{u_1, u_2, u_4, u_5\}$, 
we also get $u_3u_4\in {\rm cl}(E_0)$ or $u_3u_5\in {\rm cl}(E_0)$.
(Specifically, we have $u_3u_4\in {\rm cl}(E_0)$ 
when  $u_1u_5\notin {\rm cl}(E_0+f+e)$ or $u_2u_5\notin {\rm cl}((E_0\cup F)+e)$ holds,
and we have $u_3u_5\in {\rm cl}(E_0)$ when  $u_1u_4\notin {\rm cl}((E_0\cup F)+e)$ or $u_2u_4\notin {\rm cl}((E_0\cup F)+e)$ holds.)
\end{proof}

Recall that an edge $f\in F_v$ is said to be very good if $\{f, f'\}$ is a good pair for every $f'\in F_v\setminus \{f\}$.
In order to apply Claim~\ref{claim:2mechanism5} for a given edge $e\in K_u$, we need to find 
a good pair $F\subset F_v$ such that $(E_0\cup F)+e$ is independent.
The following claim shows we can do this under a mild assumption.

\begin{claim}\label{claim:verygood}
Let  $e\in K_u\setminus {\rm cl}(E_0)$, and suppose that $e\notin [f]$ for some very good edge $f\in F_v$.
Then there is a good pair $F\in {F_v\choose 2}$ such that $(E_0\cup F)+e$ is independent.
\end{claim}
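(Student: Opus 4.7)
The plan is to exploit the very good edge $f$ directly and extend it by one of the two other edges of $F_v$. Write $F_v=\{f,f_1,f_2\}$. Since $f$ is very good, both pairs $\{f,f_1\}$ and $\{f,f_2\}$ are good, so the candidates for the required $F$ are $\{f,f_1\}$ and $\{f,f_2\}$, and it suffices to prove that at least one of the sets $E_0+f+f_1+e$ or $E_0+f+f_2+e$ is $C_2^1$-independent.

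First I would check the preparatory independence $E_0+f+e$. Note that $f\in F_v\subseteq K_v$ avoids $v_5$, while by~(\ref{eq:Kv}) every edge of $\cl(E)\cap K_v$ is incident with $v_5$; hence $f\notin\cl(E)\supseteq\cl(E_0)$, so $E_0+f$ is independent. Unpacking the definition of $[f]$ in the matroid $\mathcal{C}_{n-1}/\cl(E_0)$, the hypothesis $e\notin[f]$ combined with $e\notin\cl(E_0)$ is equivalent to $e\notin\cl(E_0+f)$, so $E_0+f+e$ is independent.

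For the main step I argue by contradiction and suppose that both $E_0+f+f_1+e$ and $E_0+f+f_2+e$ are dependent. Since $\{f,f_i\}$ is good, $E_0+f+f_i$ is independent, and together with the independence of $E_0+f+e$ the circuit elimination axiom (or simple rank counting) forces $f_i\in\cl(E_0+f+e)$ for $i=1,2$. Consequently $F_v=\{f,f_1,f_2\}\subseteq\cl(E_0+f+e)$, and so $\cl(E_0\cup F_v)\subseteq\cl(E_0+f+e)$. But Claim~\ref{claim:2mechanism1} gives $r(E_0\cup F_v)=r(E_0)+3$, whereas $r(E_0+f+e)\leq r(E_0)+2$, a contradiction. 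Hence one of $\{f,f_1\}$ or $\{f,f_2\}$ serves as the required good pair $F$.

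This argument is essentially a short matroid computation and I do not anticipate any serious obstacle; the only subtle point is translating the hypothesis $e\notin[f]$ into the usable form $E_0+f+e$ independent, and recognising that the rank jump supplied by Claim~\ref{claim:2mechanism1} is exactly what rules out the bad case. The very-goodness of $f$ is what guarantees that in either successful branch the resulting pair $F$ is automatically good, so no additional verification of goodness of $F$ is needed.
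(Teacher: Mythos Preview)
Your proof is correct and follows essentially the same argument as the paper: both restrict attention to the two good pairs containing the very good edge $f$, show $f_1,f_2\in\cl(E_0+f+e)$ if neither pair works, and derive a contradiction with Claim~\ref{claim:2mechanism1}. The only cosmetic difference is that you phrase the final contradiction via a rank count, whereas the paper swaps closures (deducing $f_3\in\cl(E_0+f_1+f_2)$) to contradict the independence of $F_v$ in $\mathcal{C}_{n-1}/\cl(E_0)$ directly.
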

\begin{proof}
Let $F_v=\{f_1, f_2, f_3\}$, and suppose that an edge $e\in K_u\setminus {\rm cl}(E_0)$ satisfies $e\notin [f_1]$ for a very good edge $f_1$.
Since $e\notin [f_1]$, $E_0+f_1+e$ is independent.
As $f_1$ is very good, $\{f_1, f_2\}$ and $\{f_1, f_3\}$ are both good.
Hence, if no good pair has the desired property, then $\{f_2, f_3\}\subset {\rm cl}(E_0+f_1+e)$.
By Claim~\ref{claim:2mechanism1}, $E_0+f_1+f_2$ is independent. Since $f_2\in {\rm cl}(E_0+f_1+e)$,
this gives ${\rm cl}(E_0+f_1+e)={\rm cl}(E_0+f_1+f_2)$,
and $f_3\in {\rm cl}(E_0+f_1+f_2)$ follows since $f_3\in {\rm cl}(E_0+f_1+e)$.
This would contradict Claim~\ref{claim:2mechanism1}. 
\end{proof}

Our next claim extends  Claim~\ref{claim:2mechanism6}.
(Note that $F$ is not required to be a good pair in the following claim.)
\begin{claim}\label{claim:2K4}
${\rm cl}(E_0\cup F_v)\cap K_u$ has no copy of $K_4$ and, for any stabilizer $F+e$, 
${\rm cl}((E_0\cup F)+e)\cap K_u$ has no copy of $K_4$.
\end{claim}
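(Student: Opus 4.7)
The plan is to reduce both assertions to \textbf{Claim~\ref{claim:2mechanism6}} via a short base-exchange argument. First I would observe that the second assertion is a formal consequence of the first: if $F+e$ is a stabilizer then \textbf{Claim~\ref{claim:2mechanism1}} gives ${\rm cl}((E_0\cup F)+e)={\rm cl}(E_0\cup F_v)$, so absence of a $K_4$ in the latter transfers to the former. I will therefore concentrate on the first assertion.

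For the first assertion I plan to argue by contradiction. Suppose ${\rm cl}(E_0\cup F_v)\cap K_u$ contains a copy of $K_4$. Using \textbf{Claim~\ref{claim:good}}, I fix a very good edge $f_1\in F_v$, pick any $f_2\in F_v\setminus\{f_1\}$, and set $F=\{f_1,f_2\}$, which is then automatically a good pair; let $f_3$ denote the third edge of $F_v$. By \textbf{Claim~\ref{claim:2mechanism2}}, ${\rm cl}(E_0\cup F)\cap K_u$ contains no $K_4$. A first consequence is that $f_3\notin {\rm cl}(E_0\cup F)$: otherwise ${\rm cl}(E_0\cup F)$ and ${\rm cl}(E_0\cup F_v)$ would coincide, contradicting the assumption. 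Hence $E_0\cup F_v$ is independent.

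Next I would choose an edge $e$ of the assumed $K_4$ lying outside ${\rm cl}(E_0\cup F)$; such an $e$ exists, again because ${\rm cl}(E_0\cup F)\cap K_u$ contains no $K_4$. The independence of $E_0\cup F_v$, together with $e\in {\rm cl}(E_0\cup F_v)\setminus {\rm cl}(E_0\cup F)$, implies that $(E_0\cup F)+e$ is independent and that it spans the same flat as $E_0\cup F_v$, so ${\rm cl}((E_0\cup F)+e)={\rm cl}(E_0\cup F_v)$. In particular $F+e$ is a stabilizer. All hypotheses of \textbf{Claim~\ref{claim:2mechanism6}} are then satisfied for the triple $(F,e)$, and its conclusion is that ${\rm cl}((E_0\cup F)+e)\cap K_u$ contains no $K_4$, contradicting the initial assumption.

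I do not anticipate a real obstacle here, given the machinery already in place: the only care needed is in verifying that the base exchange produces precisely the configuration $(F,e)$ with $F$ good and $F+e$ a stabilizer, which is exactly the hypothesis format required by \textbf{Claim~\ref{claim:2mechanism6}}. The very good edge supplied by \textbf{Claim~\ref{claim:good}} is what lets me pick $F$ to be a good pair without having to worry about which edge of the $K_4$ is excluded.
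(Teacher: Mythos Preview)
Your proof is correct and follows essentially the same strategy as the paper: reduce to Claim~\ref{claim:2mechanism6} by producing a good pair $F$ and an edge $e\in K_u$ with $(E_0\cup F)+e$ independent and $F+e$ a stabilizer. The only cosmetic difference is the order of choices: the paper first picks $e$ outside $[f]$ for a very good $f$ and then invokes Claim~\ref{claim:verygood} to manufacture the good pair, whereas you fix the good pair $F$ up front (any pair containing a very good edge) and then pick $e$ outside ${\rm cl}(E_0\cup F)$, which sidesteps Claim~\ref{claim:verygood} entirely; also, your derivation of the independence of $E_0\cup F_v$ is redundant since it is already given by Claim~\ref{claim:2mechanism1}.
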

\begin{proof}
Suppose that a copy of $K_4$ exists in ${\rm cl}(E_0\cup F_v)\cap K_u$.
By Claim~\ref{claim:good} we can choose a very good edge $f$ from $F_v$.
Then Claim~\ref{claim:2mechanism2} implies that ${\rm cl}(E_0+f)\cap K_u$ has no copy of $K_4$.
Hence there is an edge $e$ in the copy of $K_4$ with $e\notin {\rm cl}(E_0+f)$.
By Claim~\ref{claim:verygood}, there exists a good pair $F$ of edges in $F_v$ such that $(E_0\cup F)+e$ is independent.
Since $e\in {\rm cl}(E_0\cup F_v)$ and $(E_0\cup F)+e$ is independent,
we have $F_v\subset {\rm cl}(E_0\cup F_v)={\rm cl}((E_0\cup F)+e)$. (Note that $E_0\cup F_v$ is independent by Claim~\ref{claim:2mechanism1}.)
Now $F_v\subset {\rm cl}((E_0\cup F)+e)$ implies that $F+e$ is a stabilizer while 
${\rm cl}(E_0\cup F_v)={\rm cl}((E_0\cup F)+e)$ implies that  ${\rm cl}((E_0\cup F)+e)$ has a copy of $K_4$.
This contradicts Claim~\ref{claim:2mechanism6}, and completes the proof of the first part of the claim.
The second part now follows from the fact that ${\rm cl}(E_0\cup F_v)={\rm cl}((E_0\cup F)+e)$ when $F+e$ is a stabilizer. 
\end{proof}

\begin{claim}\label{claim:2mechanism8}
Let $e=u_iu_j\in K_u\setminus {\rm cl}(E_0)$ be such that $e\notin [f]$ for some very good edge $f\in F_v$.
Then ${\rm cl}(E_0)$ contains at least two edges of the triangle on $N_G(u_0)\setminus \{u_i, u_j\}$.
\end{claim}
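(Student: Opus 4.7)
The plan is to reduce to Claim~\ref{claim:mechanism7} by choosing a suitable good pair. First I would apply Claim~\ref{claim:verygood} to obtain a good pair $F\in {F_v\choose 2}$ containing the very good edge $f$ such that $(E_0\cup F)+e$ is independent. Without loss of generality assume $e=u_1u_2$; the goal is to show that ${\rm cl}(E_0)$ contains at least two edges of the triangle on $\{u_3,u_4,u_5\}$.

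If either $F$ is not influential or $F+e$ is a stabilizer, then Claim~\ref{claim:mechanism7} applies directly and we are done. Otherwise, let $f''\in F_v\setminus F$. Since $F+e$ is not a stabilizer we have $f''\notin {\rm cl}((E_0\cup F)+e)$, so $(E_0\cup F_v)+e$ is independent. In particular, the alternative good pair $F'=\{f,f''\}$ (good because $f$ is very good) satisfies that $(E_0\cup F')+e$ is independent, and I would repeat the case analysis with $F'$ in place of $F$. If $F'$ is non-influential or $F'+e$ is a stabilizer, Claim~\ref{claim:mechanism7} again finishes the proof.

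In the residual case where neither $F+e$ nor $F'+e$ is a stabilizer, I would mimic the two-step argument of Claim~\ref{claim:mechanism7}, substituting Claim~\ref{claim:2K4} (which tells us unconditionally that ${\rm cl}(E_0\cup F_v)\cap K_u$ has no $K_4$) for Claim~\ref{claim:2mechanism6}. First, using Claim~\ref{claim:2K4} together with base exchanges inside the independent set $(E_0\cup F_v)+e$, I would conclude that ${\rm cl}((E_0\cup F)+e)\cap K_u$ still contains no copy of $K_4$. Then, as in Claim~\ref{claim:mechanism7}, Claim~\ref{smallclaim2} gives a triangle on $\{u_3,u_4,u_5\}$ in ${\rm cl}((E_0\cup F)+e)$, and Claim~\ref{claim:2mechanism3} forces $u_4u_5\in {\rm cl}(E_0\cup F)$ via the usual non-adjacent base-exchange contradiction. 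To push $u_4u_5$ from ${\rm cl}(E_0\cup F)$ down into ${\rm cl}(E_0)$ without having the stabilizer lever available, I would exploit the symmetry between the good pairs $F$ and $F'$ together with Claim~\ref{claim:2mechanism4}: if $u_4u_5\in [F]$, then swapping in $F'$ produces an analogous independent set exhibiting either two non-adjacent edges (contradicting Claim~\ref{claim:2mechanism3}) or two adjacent edges whose common endpoint has degree two in ${\rm cl}(E_0)\cap K_u$ (contradicting Claim~\ref{claim:2mechanism4}). Repeating this analysis for the $K_4$ on $\{u_1,u_2,u_4,u_5\}$ yields a second edge of the triangle on $\{u_3,u_4,u_5\}$ in ${\rm cl}(E_0)$.

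The main obstacle will be this last step in the residual case: replacing the stabilizer-based pushdown used in Claim~\ref{claim:mechanism7} by an argument that trades the stabilizer hypothesis for the existence of two distinct good pairs $F, F'$ containing the very good edge $f$. Matching the parity between Claim~\ref{claim:2mechanism3} (non-adjacent edges) and Claim~\ref{claim:2mechanism4} (adjacent edges whose common endpoint has ${\rm cl}(E_0)$-degree two) across the two base exchanges, and ensuring the degree-two hypothesis of Claim~\ref{claim:2mechanism4} is available from the book-keeping built up along the way, is the step that will require the most care.
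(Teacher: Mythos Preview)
Your reduction to Claim~\ref{claim:mechanism7} handles the easy cases correctly, and your observation that when $F+e$ is not a stabilizer one can pass to the second good pair $F'=\{f,f''\}$ (via the independence of $(E_0\cup F_v)+e$) is sound. The genuine gap is in the residual case.

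The crucial step there is your assertion that ``using Claim~\ref{claim:2K4} together with base exchanges inside the independent set $(E_0\cup F_v)+e$, I would conclude that ${\rm cl}((E_0\cup F)+e)\cap K_u$ still contains no copy of $K_4$.'' This does not follow. Claim~\ref{claim:2K4} controls ${\rm cl}(E_0\cup F_v)$, but in the residual case $F+e$ is \emph{not} a stabilizer, so ${\rm cl}((E_0\cup F)+e)$ and ${\rm cl}(E_0\cup F_v)$ are two \emph{different} rank-$(|E_0|+3)$ flats inside the rank-$(|E_0|+4)$ set $(E_0\cup F_v)+e$; base exchanges let you move between bases of the latter, but say nothing about transferring the no-$K_4$ property from one hyperplane to another. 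Worse, the paper's own argument shows that in precisely the situation you must exclude (two triangle edges missing from ${\rm cl}(E_0)$), ${\rm cl}((E_0\cup F)+e)\cap K_u$ \emph{does} contain a $K_4$---so your intermediate claim is false exactly when it matters, and the subsequent mimicry of Claim~\ref{claim:mechanism7} (which needs an edge of $K_u$ outside ${\rm cl}((E_0\cup F)+e)$) cannot start. The pushdown step you sketch afterwards inherits the same problem and is in any case too vague to assess.

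The paper's proof takes the opposite tack: it argues by contradiction, assuming two triangle edges (say $u_3u_5,u_4u_5$) are missing from ${\rm cl}(E_0)$. The key technical step (\ref{eq:2-8-2}) uses these missing edges, via Claim~\ref{smallclaim2} applied after a swap, to force a large set $S\subset K_u$ into ${\rm cl}((E_0\cup F)+e)$ for \emph{every} admissible $F$, hence producing a $K_4$ there. Claim~\ref{claim:2K4} then gives that $F+e$ is never a stabilizer, which in turn forces \emph{all three} pairs $\{f_i,f_j\}+e$ to be independent. This lets one push $S$ all the way down to ${\rm cl}(E_0+e)$ (not merely ${\rm cl}(E_0\cup F)$), after which Claims~\ref{claim:2mechanism3} and~\ref{claim:2mechanism4} finish. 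You will need this kind of contradiction structure---building the $K_4$ from the missing triangle edges rather than trying to forbid it.
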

\begin{proof}
We may assume, without loss of generality, that $e=u_1u_2$.
Suppose that two edges on $\{u_3,u_4,u_5\}$ are missing in ${\rm cl}(E_0)$.
By symmetry, we may assume that $u_3u_5$ and $u_4u_5$ are missing.

We first show that for any  $F\in {F_v\choose 2}$ with $(E_0\cup F)+e$ independent, we have
\begin{equation}\label{eq:2-8-1}
\{u_1u_2, u_1u_3, u_1u_4, u_2u_3, u_2u_4, u_3u_4, u_3u_5, u_4u_5\}\subset {\rm cl}((E_0\cup F)+e).
\end{equation}
To see this, choose an $F\in {F_v\choose 2}$ such that $(E_0\cup F)+e$ is independent.
By Claim~\ref{smallclaim2}, ${\rm cl}((E_0\cup F)+e)$ contains a triangle on $\{u_3,u_4,u_5\}$.
To show that $\{u_1u_2,u_1u_3, u_1u_4, u_2u_3, u_2u_4\}$ is contained in  ${\rm cl}((E_0\cup F)+e)$, we first prove:
\begin{equation}
\label{eq:2-8-2}
\begin{split}
&\text{ if $u_ku_l\notin {\rm cl}(E_0)$ for some  $k,l \in \{3,4,5\}$, then} \\
&\text{${\rm cl}((E_0\cup F)+e)$ contains a triangle on $N(u_0)\setminus \{u_k,u_l\}$.}
\end{split}
\end{equation}
This follows from Claim~\ref{smallclaim2} if $u_ku_l\not\in {\rm cl}(E_0\cup F)$.
Hence we assume $u_ku_l\in {\rm cl}(E_0\cup F)\setminus {\rm cl}(E_0)=[F]$.
By Claim~\ref{claim:2mechanism1}, the unique edge $f'$ in $F_v\setminus F$ satisfies $f'\notin [F]$.
Also, since $u_ku_l\in [F]$, we can choose $f''\in F$ such that  ${\rm cl}(E_0\cup F)={\rm cl}(E_0+f''+u_ku_l)$. Since $f'\notin {\rm cl}(E_0\cup F)$, this implies that $(E_0\cup \{f',f''\})+u_ku_l$ is independent.
If $e=u_1u_2\notin {\rm cl}((E_0\cup \{f',f''\})+u_ku_l)$, then $(E_0\cup \{f',f''\})+u_ku_l+e$ would be independent, which  contradicts Claim \ref{claim:2mechanism3}.
Hence $e\in  {\rm cl}((E_0\cup \{f',f''\})+u_ku_l)$. 
As $[\{f'',u_ku_l\}]=[F]$ and $(E_0\cup F)+e$ is independent, 
we have ${\rm cl}((E_0\cup \{f',f''\})+u_ku_l)={\rm cl}((E_0\cup F)+e)$.
By Claim~\ref{smallclaim2}, ${\rm cl}((E_0\cup \{f',f''\})+u_ku_l)$ contains a triangle on $N_G(u_0)\setminus \{u_k, u_l\}$. This
in turn implies (\ref{eq:2-8-2}), since ${\rm cl}((E_0\cup \{f',f''\})+u_ku_l)={\rm cl}((E_0\cup F)+e)$.

The assumption that $u_3u_5\not\in{\rm cl}(E_0)$ and  (\ref{eq:2-8-2}) imply that 
$u_1u_2, u_1u_4, u_2u_4\in {\rm cl}((E_0\cup F)+e)$. 
Similarly, $u_4u_5\not\in{\rm cl}(E_0)$ and  (\ref{eq:2-8-2}) imply that 
$u_1u_3, u_2u_3\in {\rm cl}((E_0\cup F)+e)$. 
Hence (\ref{eq:2-8-1}) holds.
In particular, ${\rm cl}((E_0\cup F)+e)$ contains a copy of $K_4$, and 
Claim~\ref{claim:2K4} now gives:
\begin{equation}\label{eq:2-8-3}
\mbox{for any $F\in {F_v\choose 2}$ such that $(E_0\cup F)+e$ is independent, $F+e$ is not a stabilizer.}
\end{equation}

Let $F_v=\{f_1,f_2,f_3\}$. 
By Claim~\ref{claim:verygood} there is a good pair $F$ such that $(E_0\cup F)+e$ is independent.
We may assume without loss of generality that  $F=\{f_1, f_2\}$.
We next prove:
\begin{equation}\label{eq:2-8-3-2}
\text{$E_0+f_1+f_3+e$ and $E_0+f_2+f_3+e$ are both independent.}
\end{equation}

Suppose $E_0+f_1+f_3+e$ is dependent. Then, since $E_0+f_1+e$ is independent,  $f_3\in {\rm cl}(E_0+f_1+e)$.
This in turn implies that $f_1+f_2+e$ is a stabilizer, contradicting (\ref{eq:2-8-3}).
Hence $E_0+f_1+f_3+e$ is independent. A similar argument for 
$E_0+f_2+f_3+e$ 
gives (\ref{eq:2-8-3-2}).

We next show that:
\begin{equation}\label{eq:2-8-4}
S:=\{u_1u_2, u_1u_3, u_1u_4, u_2u_3, u_2u_4, u_3u_4, u_3u_5, u_4u_5\}\subset {\rm cl}(E_0+e).
\end{equation}
To see this suppose $u_iu_j\notin {\rm cl}(E_0+e)$ for some edge $u_iu_j\in S$.
Since $u_iu_j\in {\rm cl}(E_0+f_1+f_2+e)$ by (\ref{eq:2-8-1})  but $u_iu_j\notin {\rm cl}(E_0+e)$, 
we may assume without loss of generality that ${\rm cl}(E_0+f_1+f_2+e)={\rm cl}(E_0+f_1+e+u_iu_j)$.
Then $f_2\in {\rm cl}(E_0+f_1+e+u_iu_j)$.
We also have $u_iu_j\in {\rm cl}(E_0+f_1+f_3+e)$ by (\ref{eq:2-8-1}) and (\ref{eq:2-8-3-2}).
This gives ${\rm cl}(E_0+f_1+f_3+e)={\rm cl}(E_0+f_1+f_3+e+u_iu_j)$.
Since $f_2\in {\rm cl}(E_0+f_1+f_3+e+u_iu_j)$, this in turn implies that $f_1+f_3+e$ is a stabilizer, contradicting (\ref{eq:2-8-3}).
Thus (\ref{eq:2-8-4}) holds.

Since $F$ is a good pair,
Claim~\ref{claim:2mechanism5} implies that ${\rm cl}((E_0\cup F)+e)\cap K_u$ cannot contain a $C_2^1$-rigid graph which spans $N_G(u_0)$. 
Combining this fact with (\ref{eq:2-8-4}), we have 
\begin{equation}\label{eq:2-8-5}
{\rm cl}((E_0\cup F)+e)\cap K_u= {\rm cl}(E_0+e)\cap K_u=S.
\end{equation}
In particular, $u_1u_5$ and $u_2u_5$ are missing in ${\rm cl}((E_0\cup F)+e)$.
If $u_1u_3, u_1u_4\in {\rm cl}(E_0)$, then the fact that
$(E_0\cup F)+e+u_1u_5$ is independent would 
contradict Claim~\ref{claim:2mechanism4}.
Thus we may assume $u_1u_3\notin {\rm cl}(E_0)$.
Then $u_1u_3\in [e]$ by (\ref{eq:2-8-5}).
This implies that ${\rm cl}((E_0\cup F)+e)={\rm cl}((E_0\cup F)+u_1u_3)$, and hence $(E_0\cup F)+u_1u_3+u_2u_5$ is independent as $u_2u_5\notin {\rm cl}((E_0\cup F)+e)$.
This  contradicts Claim~\ref{claim:2mechanism3}.
\end{proof}

\begin{claim}\label{claim:2mechanism9}
For all $i$ with $1\leq i\leq 5$, $d_{{\rm cl}(E_0)\cap K_u}(u_i)\geq 1$.
And if $d_{{\rm cl}(E_0)\cap K_u}(u_i)=1$, then the vertex $u_j$ adjacent to $u_i$ in ${\rm cl}(E_0)\cap K_u$ satisfies 
$d_{{\rm cl}(E_0)\cap K_u}(u_j)=4$.
\end{claim}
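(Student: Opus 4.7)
The plan is to mimic the proof of Claim~\ref{claim:spline5} in Lemma~\ref{lem:combinatorial}, using Claim~\ref{claim:2K4} in place of Claim~\ref{claim:spline2} (``no $K_4$'' in the relevant closure) and Claim~\ref{claim:2mechanism8} in place of Claim~\ref{claim:spline4} (``two triangle edges lie in the closure''). The one subtlety is that Claim~\ref{claim:2mechanism8} carries the extra hypothesis $e\notin[f]$ for some very good $f\in F_v$, so every invocation must be with an edge $e$ lying outside $\cl(E_0\cup F_v)$---for such an $e$ we have $e\notin[f]$ for every $f\in F_v$---and paired with a very good $f$, whose existence is guaranteed by Claim~\ref{claim:good}.

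For the first assertion $d_{\cl(E_0)\cap K_u}(u_i)\ge 1$, I would assume the contrary, WLOG $d(u_5)=0$. Claim~\ref{claim:2K4} forces the $K_4$ on $\{u_1,u_2,u_3,u_4\}$ to miss some edge $e^*=u_iu_j\notin\cl(E_0\cup F_v)$. Then $e^*\notin\cl(E_0)$ and $e^*\notin[f]$ for every $f\in F_v$, so Claim~\ref{claim:2mechanism8} applies and says $\cl(E_0)$ contains at least two edges of the triangle on $N_G(u_0)\setminus\{u_i,u_j\}$. That triangle contains $u_5$ together with two vertices from $\{u_1,\dots,u_4\}$, so two of its edges are incident to $u_5$; by $d(u_5)=0$ none of these lie in $\cl(E_0)$, leaving at most one edge in $\cl(E_0)$ and producing a contradiction.

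For the second assertion, suppose $d(u_5)=1$ with unique neighbour $u_1$; by symmetry it is enough to force $u_1u_2\in\cl(E_0)$, so assume $u_1u_2\notin\cl(E_0)$. If $u_1u_2\notin\cl(E_0\cup F_v)$ then Claim~\ref{claim:2mechanism8} applied to $e=u_1u_2$ gives the triangle $\{u_3,u_4,u_5\}$; since $d(u_5)=1$ has only neighbour $u_1$, both $u_3u_5$ and $u_4u_5$ lie outside $\cl(E_0)$, leaving at most one edge ($u_3u_4$) and a contradiction. Running the same argument on $u_1u_3$ and $u_1u_4$ shows we may assume all three of $u_1u_2,u_1u_3,u_1u_4$ lie in $\cl(E_0\cup F_v)$. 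Claim~\ref{claim:2K4} then provides a missing edge $e^{**}$ of the $K_4$ on $\{u_1,u_2,u_3,u_4\}$, which must lie on $\{u_2,u_3,u_4\}$. Invoking Claim~\ref{claim:2mechanism8} on $e^{**}$: the case $e^{**}=u_3u_4$ gives the triangle $\{u_1,u_2,u_5\}$, and with $u_2u_5\notin\cl(E_0)$ this forces $u_1u_2\in\cl(E_0)$, contradicting our standing assumption; the other two cases $e^{**}\in\{u_2u_3,u_2u_4\}$ instead force one of $u_1u_3$ or $u_1u_4$ into $\cl(E_0)$. In these residual cases I would iterate, applying Claim~\ref{claim:2K4} to the other $K_4$'s---especially those containing $u_5$, namely $\{u_1,u_3,u_4,u_5\}$ and $\{u_1,u_2,u_4,u_5\}$---to produce further missing edges outside $\cl(E_0\cup F_v)$, each of which feeds back into Claim~\ref{claim:2mechanism8} to either yield $u_1u_2\in\cl(E_0)$ directly (when the resulting triangle is $\{u_1,u_2,u_5\}$ or $\{u_1,u_2,u_3\}$ with only $u_1u_3$ in $\cl(E_0)$) or add more edges of the form $u_tu_4$ or $u_1u_t$ to $\cl(E_0)$ until the hypothesis $d(u_5)=1$ is violated somewhere in the forced structure.

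The hard part will be the iteration in the second assertion: because Claim~\ref{claim:2mechanism8} only concludes ``\emph{at least} two triangle edges lie in $\cl(E_0)$,'' whenever two such edges are already forced in, the conclusion is vacuous and does not directly yield $u_1u_2\in\cl(E_0)$. Combinatorially unwinding all the $K_4$'s provided by Claim~\ref{claim:2K4} and tracking which edges of $K_u$ are known to be absent from $\cl(E_0)$ versus merely absent from $\cl(E_0\cup F_v)$ is the main technical burden, and organizing this case analysis so that every branch terminates in a contradiction (without being able to invoke Claim~\ref{claim:2mechanism9} itself on some other low-degree vertex) is the principal obstacle.
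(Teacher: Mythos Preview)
Your treatment of the first assertion is correct and essentially matches the paper (you use Claim~\ref{claim:2K4} where the paper uses Claim~\ref{claim:2mechanism2}, but both produce the required edge).

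For the second assertion there is a genuine gap, and it stems from the sentence ``every invocation must be with an edge $e$ lying outside $\cl(E_0\cup F_v)$''. This is too strong: Claim~\ref{claim:2mechanism8} only requires $e\notin[f_1]$ for the single very good edge $f_1$, i.e.\ $e\notin\cl(E_0+f_1)$, not $e\notin\cl(E_0\cup F_v)$. By using the coarser dichotomy you land in the case ``$u_1u_2\in\cl(E_0\cup F_v)$'' with no control over \emph{which} $f_j$ is responsible, and that is exactly what forces you into the unresolved iteration you flag as the principal obstacle.

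The paper instead splits on $u_1u_2\in[f_1]$ versus $u_1u_2\notin[f_1]$. The second alternative feeds directly into Claim~\ref{claim:2mechanism8} and yields the contradiction you already found. In the first alternative one knows $u_1u_2\in[f_1]$ (and likewise $u_1u_3,u_1u_4\in\cl(E_0+f_1)$), so by Claim~\ref{claim:2mechanism1} one can \emph{exchange} $f_1$ for $u_1u_2$: the set $E_0+f_2+f_3+u_1u_2$ is independent and $\cl(E_0\cup F_v)=\cl(E_0+f_2+f_3+u_1u_2)$. Now apply Claim~\ref{smallclaim2} with $F=\{f_2,f_3\}$ and $e=u_1u_2$ to place the triangle on $\{u_3,u_4,u_5\}$ inside $\cl(E_0\cup F_v)$. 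Together with $u_1u_3,u_1u_4\in\cl(E_0+f_1)\subseteq\cl(E_0\cup F_v)$ and $u_1u_5\in\cl(E_0)$, this gives a copy of $K_4$ on $\{u_1,u_3,u_4,u_5\}$ in $\cl(E_0\cup F_v)$, contradicting Claim~\ref{claim:2K4}. No iteration is needed.
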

\begin{proof}
Let $F_v=\{f_1, f_2, f_3\}$. By Claim~\ref{claim:good}, we may assume that $f_1$ is a very good edge.

 Suppose that $d_{{\rm cl}(E_0)\cap K_u}(u_5)=0$.
 By Claim~\ref{claim:2mechanism2}, ${\rm cl}(E_0+f_1)\cap K_u$ has no copy of $K_4$.
 Hence we can choose an edge $e$ on  $\{u_1,u_2,u_3,u_4\}$ such that $e\notin {\rm cl}(E_0+f_1)$.
We can then apply Claim~\ref{claim:2mechanism8} to $e$ to deduce that
 ${\rm cl}(E_0)$ has an edge incident to $u_5$. This gives a contradiction and completes the proof of first part of the claim.

 To prove the second part we assume that $d_{{\rm cl}(E_0)\cap K_u}(u_5)=1$ and
 that $u_5$ is adjacent to $u_1$ in ${\rm cl}(E_0)$. We will show that 
 $u_1u_2,u_1u_3,u_1u_4\in {\rm cl}(E_0)$.
 Suppose,  for a contradiction, that 
 $u_1u_2\not\in {\rm cl}(E_0)$.
 If $u_1u_2\notin [f_1]$, then Claim~\ref{claim:2mechanism8} would imply that $u_5$ has degree at least two in ${\rm cl}(E_0)$, a contradiction.
 Hence $u_1u_2\in [f_1]$ which gives $u_1u_2\in {\rm cl}(E_0+f_1)$. 
 We aslo have $u_1u_3, u_1u_4\in {\rm cl}(E_0+f_1)$ since, if $u_1u_i\not\in {\rm cl}(E_0)$ for some $i=1,2$, then we can use the same argument as for $u_1u_2$ to deduce that $u_1u_i\in [f_1]$.
 The facts that $u_1u_2\in [f_1]$ and  $F_v$ is independent in ${\cal C}_{n-1}/{\rm cl}(E_0)$ imply that  $E_0+f_2+f_3+u_1u_2$ is independent and  ${\rm cl}(E_0\cup F_v)={\rm cl}(E_0+f_2+f_3+u_1u_2)$.
Since $u_3u_4, u_4u_5, u_5u_3\in {\rm cl}(E_0+f_2+f_3+u_1u_2)$  by Claim~\ref{smallclaim2},
${\rm cl}(E_0\cup F_v)$ contains a copy of $K_4$ on $\{u_1,u_3,u_4,u_5\}$.
This contradicts Claim~\ref{claim:2K4}.
 \end{proof}

 \begin{claim}
 \label{claim:2mechanism10}
 ${\rm cl}(E_0)\cap K_u$ is cycle free.
 \end{claim}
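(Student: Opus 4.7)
The plan is to suppose for contradiction that ${\rm cl}(E_0)\cap K_u$ contains a cycle and to derive a contradiction by producing edges $e_1,e_2\in K_u$ and a good pair $F\in\binom{F_v}{2}$ such that $(E_0\cup F)+e_1+e_2$ is independent with $\{e_1,e_2\}$ either non-adjacent (contradicting Claim~\ref{claim:2mechanism3}) or sharing a common endpoint of degree $2$ in ${\rm cl}(E_0)\cap K_u$ (contradicting Claim~\ref{claim:2mechanism4}). This mirrors Claim~\ref{claim:spline6}, but the independence extensions now have to be engineered through the stabilizer/influential framework of Claims~\ref{claim:2K4}--\ref{claim:2mechanism8} instead of direct independence manipulations. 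I would split into three subcases based on the length of a shortest cycle.

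For a $5$-cycle $u_1u_2u_3u_4u_5u_1$ in ${\rm cl}(E_0)\cap K_u$: Claim~\ref{claim:2K4} rules out a $K_4$ in ${\rm cl}(E_0\cup F_v)\cap K_u$, so at least two chords of this cycle avoid ${\rm cl}(E_0\cup F_v)$, hence avoid $[f_1]$ for a very good edge $f_1\in F_v$ supplied by Claim~\ref{claim:good}. Pick such a chord $e_1$ and apply Claim~\ref{claim:verygood} to obtain a good pair $F$ with $(E_0\cup F)+e_1$ independent. Claim~\ref{claim:2mechanism6} (applied after arranging that $F+e_1$ is a stabilizer or $F$ is non-influential by swapping the roles of the edges in $F_v$) then forbids a $K_4$ in ${\rm cl}((E_0\cup F)+e_1)\cap K_u$, which together with the five cycle edges of ${\rm cl}(E_0)$ forces a second chord $e_2\notin{\rm cl}((E_0\cup F)+e_1)$. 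The pair $\{e_1,e_2\}$ then contradicts Claim~\ref{claim:2mechanism3} if non-adjacent, or Claim~\ref{claim:2mechanism4} if adjacent, using the fact that their common endpoint lies on only the cycle in ${\rm cl}(E_0)\cap K_u$ and hence has degree two.

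For a $4$-cycle $u_1u_2u_3u_4$: Claim~\ref{claim:2K4} forces some diagonal, say $u_1u_3$, to lie outside ${\rm cl}(E_0\cup F_v)$, so outside $[f_1]$ for a very good $f_1$, and Claim~\ref{claim:verygood} yields a good $F$ with $(E_0\cup F)+u_1u_3$ independent. A symmetric application of Claims~\ref{smallclaim2} and~\ref{claim:2mechanism6} gives $u_2u_4\notin{\rm cl}((E_0\cup F)+u_1u_3)$, so $(E_0\cup F)+u_1u_3+u_2u_4$ is independent with non-adjacent diagonals, contradicting Claim~\ref{claim:2mechanism3}. For a triangle $K(u_1,u_2,u_3)$ in ${\rm cl}(E_0)\cap K_u$: Claim~\ref{claim:2mechanism9}, together with the absence of longer cycles established in the previous subcases, shows that each of $u_4,u_5$ has exactly one neighbour in ${\rm cl}(E_0)\cap K_u$ and these two neighbours coincide at a common vertex, which may be taken to be $u_1$. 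Hence $u_2$ has degree exactly two in ${\rm cl}(E_0)\cap K_u$ and $u_2u_4\notin{\rm cl}(E_0)$; using Claim~\ref{claim:mechanism7} to place the triangle on $\{u_1,u_3,u_5\}$ into ${\rm cl}((E_0\cup F)+u_2u_4)$ and Claim~\ref{claim:2mechanism6} to exclude $u_2u_5$ from that closure, one concludes that $(E_0\cup F)+u_2u_4+u_2u_5$ is independent with common endpoint $u_2$ of degree two, contradicting Claim~\ref{claim:2mechanism4}.

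The main obstacle will be verifying simultaneously the two hypotheses of Claim~\ref{claim:2mechanism6} (namely that the chosen good pair $F$ be non-influential or that $F+e$ be a stabilizer) in tandem with the chord choices, since an adversarial interplay between $F_v$ and $[f_1]$ could otherwise slip a $K_4$ into ${\rm cl}((E_0\cup F)+e)\cap K_u$ and block the argument. I expect this to require a short auxiliary case analysis in each subcase, essentially swapping $F$ among the three pairs in $\binom{F_v}{2}$ to locate one for which the stabilizer/influential dichotomy falls the right way, exploiting the fact that at most one pair in $\binom{F_v}{2}$ fails to be good (Claim~\ref{claim:good}).
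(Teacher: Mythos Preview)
Your three-subcase skeleton (5-cycle, 4-cycle, triangle) matches the paper's, and you correctly pinpoint the real difficulty: getting the hypothesis of Claim~\ref{claim:2mechanism6} (``$F$ non-influential or $F+e$ a stabilizer'') to hold for the pair $(F,e)$ you want to work with. But your proposed fix --- swapping among the three pairs in $\binom{F_v}{2}$ --- does not close the gap. Once you commit to an edge $e_1\notin{\rm cl}(E_0\cup F_v)$ (as you do in every subcase, by choosing a chord outside $[F_v]$), the set $F+e_1$ can \emph{never} be a stabilizer, because a stabilizer requires ${\rm cl}((E_0\cup F)+e)={\rm cl}(E_0\cup F_v)$ and hence $e\in[F_v]$. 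So you are forced to find a good \emph{non-influential} $F$. That may fail: for instance, if $[F_v]\cap K_u=\{e'\}$ with $e'\in[f_1]$, then the two pairs containing $f_1$ are both influential, and if the remaining pair $\{f_2,f_3\}$ happens to be the unique non-good pair allowed by Claim~\ref{claim:good}, all good pairs are influential and your argument stalls.

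The paper avoids this by reversing the order of choice. It fixes $e$ and $F$ once, \emph{before} the cycle-length casework, according to the dichotomy: if $[F_v]\cap K_u\neq\emptyset$, take $e\in[F_v]\cap K_u$ and then pick any $F$ with $(E_0\cup F)+e$ independent (Claim~\ref{claim:2mechanism1} guarantees one), so $F+e$ is automatically a stabilizer and Claim~\ref{claim:2K4} yields the no-$K_4$ conclusion without requiring $F$ to be good; if $[F_v]\cap K_u=\emptyset$, every $F$ is non-influential, so any good $F$ and any $e\in K_u\setminus{\rm cl}(E_0)$ work via Claim~\ref{claim:2mechanism6}. Either way the no-$K_4$ statement (\ref{eq:noK4}) is secured in one stroke, and the 5-cycle and 4-cycle cases then go through by the short chord arguments you outlined. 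The price is that in the first branch $e$ is no longer your favourite chord; in the triangle case this forces $e$ to be $u_2u_4$ or $u_4u_5$ (up to symmetry), and the $u_4u_5$ possibility needs a separate short argument showing $[F_v]\cap K_u=\{u_4u_5\}$ and then switching to a good pair $F'$ with the edge $u_2u_4$. Your triangle sketch also mis-cites Claim~\ref{claim:mechanism7} where you need Claim~\ref{smallclaim2}.
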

 \begin{proof}
We choose $F\in {F_v\choose 2}$ and $e\in K_u$ as follows.
\\[-6mm]
\begin{itemize}
\item When $[F_v]\cap K_u\neq \emptyset$, we first choose $e\in [F_v]\cap K_u$
and then choose $F$ such that $F+e$ is independent. Such a choice of $F$ is possible by Claim \ref{claim:2mechanism1} and will ensure that $F+e$ is a stabilizer.
\\[-4mm]
\item
When  $[F_v]\cap K_u= \emptyset$, we first choose $F$ to be good and then choose $e\in K_u\setminus {\rm cl}(E_0)$. This choice will ensure that  $(E_0\cup F)+e$ is independent and $F$ is good but not influential.
\\[-4mm]
\end{itemize}
This choice of $F$ and $e$  and Claims~\ref{claim:2mechanism6} and \ref{claim:2K4} imply  that 
\begin{equation}\label{eq:noK4}
\text{${\rm cl}((E_0\cup F)+e)\cap K_u$ contains no copy of $K_4$.}
\end{equation}
  
 Suppose that ${\rm cl}(E_0)\cap K_u$ contains a cycle of length five, say 
 $u_1u_2u_3u_4u_5u_1$.
 We may assume  without loss of generality that $e=u_1u_3$.
 By (\ref{eq:noK4}), either
 $u_2u_4\notin {\rm cl}((E_0\cup F)+e)$ or $u_1u_4\notin {\rm cl}((E_0\cup F)+e)$ holds. Both alternatives lead to a  contradiction by applying 
 Claim~\ref{claim:2mechanism3}
and \ref{claim:2mechanism4}, respectively.

 Suppose that ${\rm cl}(E_0)\cap K_u$ contains a cycle of length four, say 
$u_1u_2u_3u_4u_1$. 
If $e$ is a chord of the cycle, then the other chord is missing in ${\rm cl}((E_0\cup F)+e)$ by (\ref{eq:noK4}),
 and we can apply Claim~\ref{claim:2mechanism3} to get a contradiction.
 Hence $e$ is not a chord of the cycle.
Since  at least one diagonal of the cycle is missing in ${\rm cl}((E_0\cup F)+e)$ by (\ref{eq:noK4}),  we can again apply Claim~\ref{claim:2mechanism3} or Claim~\ref{claim:2mechanism4} to get a contradiction.

It remains to consider the case when  ${\rm cl}(E_0)\cap K_u$ contains a cycle of length three, say 
$u_1u_2u_3u_1$.
Claim~\ref{claim:2mechanism9} and (\ref{eq:noK4}) tell us that there is exactly one edge in ${\rm cl}(E_0)\cap K_u$ from each of $u_4, u_5$ to $\{u_1, u_2, u_3\}$, and that both of these edges have a common end-vertex, say $u_1$.
By symmetry, we may assume that either $e=u_2u_4$ or $e=u_4u_5$ holds.
If $e=u_2u_4$, then $u_3u_5\in {\rm cl}((E_0\cup F)+e)$ by Claim~\ref{smallclaim2},
and $u_2u_5\notin {\rm cl}((E_0\cup F)+e)$ by (\ref{eq:noK4}).
Then $(E_0\cup F)+e+u_2u_5$ is independent, 
contradicting Claim~\ref{claim:2mechanism4}.
Hence $e=u_4u_5$. 

Recall our selection of  $e$ at the beginning of the proof.
If $[F_v]\cap K_u=\emptyset$, then $e$ could have been chosen to be any edge in $K_u\setminus {\rm cl}(E_0)$. In particular we could choose $e=u_2u_4$, and obtain the contradiction in the previous paragraph.
Hence  $[F_v]\cap K_u\neq \emptyset$. Since $e$ can be any edge in $[F_v]\cap K_u$, we must have $[F_v]\cap K_u=\{u_4u_5\}$.

Choose any good pair $F'\in {F_v\choose 2}$, and consider $F'+u_2u_4$. 
Since $[F_v]\cap K_u=\{u_4u_5\}$, $F'+u_2u_4$ is independent.
Hence by Claims~\ref{claim:2mechanism3} and
\ref{claim:2mechanism4},
$u_2u_5, u_3u_5\in {\rm cl}((E_0\cup F')+u_2u_4)$.
Since $u_3u_5\notin {\rm cl}(E_0\cup F')$, we have ${\rm cl}((E_0\cup F')+u_2u_4)={\rm cl}((E_0\cup F')+u_3u_5)$.
Then by Claim~\ref{claim:2mechanism4},
${\rm cl}((E_0\cup F')+u_3u_5)$ also contains $u_3u_4$.
Summarizing, we have shown that ${\rm cl}((E_0\cup F')+u_2u_4)$ contains a $C_2^1$-rigid graph which spans $N_G(u_0)$. This contradicts Claim~\ref{claim:2mechanism5}.
\end{proof}

Combining Claim~\ref{claim:2mechanism9} and \ref{claim:2mechanism10}, we conclude that 
${\rm cl}(E_0)\cap K_u$ is a star on five vertices.
Without loss of generality, we may assume that 
\begin{equation}
\label{eq:star}
\text{${\rm cl}(E_0)\cap K_u$ is a star centered at $u_5$.}
\end{equation}

\subsubsection{\boldmath Further structural properties of $G$}\label{subsubsec:4}

We 
let $K$ be the edge set of the complete graph on $\{u_1, u_2, u_3, u_4\}$ and, 
for each edge $e\in K$,  we refer to the unique edge of $K$ which is not adjacent to $e$ as the {\em opposite edge to} $e$ and denote it by $\tilde{e}$. For $e\in K$ and $f\in F_v$, we say that   {\em $e$ is $f$-coupled} if $\tilde e\in [\{e,f\}]$.

\begin{claim}\label{claim:2opposite1}
Let $e\in [F_v]\cap K$. Then $\tilde{e}\in [F_v]$. 
\end{claim}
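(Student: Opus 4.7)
The plan is to apply Claim~\ref{claim:2mechanism3} to each of the three pairs in ${F_v\choose 2}$ together with the non-adjacent edges $e,\tilde e$, and then to combine the three resulting closure memberships via two modular-pair identities in the contracted matroid ${\cal C}_{n-1}/{\rm cl}(E_0)$.

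I will begin by assuming for contradiction that $e\in[F_v]\cap K$ while $\tilde e\notin[F_v]$; after relabelling we may take $e=u_1u_2$ and $\tilde e=u_3u_4$, which are non-adjacent in $K$. For each $F\in{F_v\choose 2}$, Claim~\ref{claim:2mechanism3} makes $E_0\cup F\cup\{e,\tilde e\}$ dependent, so it contains a circuit $C$ of ${\cal C}_{n-1}$. The set $E_0\cup F$ is independent (by Claim~\ref{claim:2mechanism1}, $E_0\cup F_v$ is independent in ${\cal C}_{n-1}$), so $C$ must contain $e$ or $\tilde e$. If $\tilde e\in C$, then
\[
\tilde e\in {\rm cl}(C-\tilde e)\subseteq {\rm cl}(E_0\cup F\cup\{e\})\subseteq {\rm cl}(E_0\cup F_v\cup\{e\})={\rm cl}(E_0\cup F_v),
\]
where the last equality uses $e\in[F_v]\subseteq {\rm cl}(E_0\cup F_v)$; since $\tilde e\in K$ and $K\cap{\rm cl}(E_0)=\emptyset$ by~\eqref{eq:star}, this forces $\tilde e\in[F_v]$, contradicting our assumption. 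Hence $C$ contains $e$ but not $\tilde e$, which gives $e\in[F]$ for every $F\in{F_v\choose 2}$.

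Writing $F_v=\{f_1,f_2,f_3\}$, I then apply two modular-pair closure identities. The pair $(\{f_1,f_2\},\{f_1,f_3\})$ is modular in ${\cal C}_{n-1}/{\rm cl}(E_0)$, since $r(\{f_1,f_2\})+r(\{f_1,f_3\})=2+2=3+1=r(F_v)+r(\{f_1\})$, so $[\{f_1,f_2\}]\cap[\{f_1,f_3\}]=[\{f_1\}]=[f_1]$ and in particular $e\in[f_1]$. Similarly $(\{f_1\},\{f_2,f_3\})$ is modular, giving $[f_1]\cap[\{f_2,f_3\}]=[\emptyset]$; combined with $e\in[f_1]\cap[\{f_2,f_3\}]$, this yields $e\in[\emptyset]$. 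But $[\emptyset]$ is the set of loops of ${\cal C}_{n-1}/{\rm cl}(E_0)$, which is empty since every element of the ground set $K(V_0)\setminus{\rm cl}(E_0)$ is a non-loop of the contraction. This is the desired contradiction.

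The main obstacle I anticipate is the clean invocation of the modular-pair identity ${\rm cl}(A)\cap{\rm cl}(B)={\rm cl}(A\cap B)$ for modular pairs $(A,B)$; this standard matroid fact follows from a one-line submodularity calculation applied to $A\cup\{x\}$ and $B\cup\{x\}$ for $x\in{\rm cl}(A)\cap{\rm cl}(B)$, but it should be spelled out briefly to keep the proof self-contained. Once it is in hand, the argument reduces to three applications of Claim~\ref{claim:2mechanism3} and the two closure identities above.
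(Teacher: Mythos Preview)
Your proof is correct, but it takes a substantially longer route than the paper's. The paper's argument is direct and uses only one pair $F\in{F_v\choose 2}$: since $e\in[F_v]$ and $F_v$ is independent of rank three in ${\cal C}_{n-1}/{\rm cl}(E_0)$, one can choose $F\in{F_v\choose 2}$ so that $(E_0\cup F)+e$ is independent; then Claim~\ref{smallclaim2} (an immediate consequence of Claim~\ref{claim:2mechanism3}) puts the whole triangle on $N_G(u_0)\setminus\{u_i,u_j\}$, in particular $\tilde e$, into ${\rm cl}((E_0\cup F)+e)$. Because $e\in[F_v]$ and $(E_0\cup F)+e$ is independent, this closure equals ${\rm cl}(E_0\cup F_v)$, and since $\tilde e\notin{\rm cl}(E_0)$ by~\eqref{eq:star}, one gets $\tilde e\in[F_v]$ at once.

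By contrast, you assume $\tilde e\notin[F_v]$, apply Claim~\ref{claim:2mechanism3} to \emph{all three} pairs $F$, and in each case rule out the circuit containing $\tilde e$ (which, note, would already give the conclusion directly). This forces $e\in[F]$ for every $F\in{F_v\choose 2}$, and you finish with two modular-pair intersection identities in the contraction to reach the contradiction $e\in[\emptyset]$. The modular-pair step is sound (your submodularity sketch is the right justification), and the argument is a nice illustration of lattice-theoretic reasoning in matroids, but it is considerably more machinery than the situation requires. The key simplification you are missing is that one well-chosen $F$ with $(E_0\cup F)+e$ independent already satisfies ${\rm cl}((E_0\cup F)+e)={\rm cl}(E_0\cup F_v)$, so a single application of Claim~\ref{smallclaim2} suffices.
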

\begin{proof}
Since $e\in  [F_v]$, $e\notin {\rm cl}(E_0)$ and we can choose $F\in {F_v\choose 2}$ such that $(E_0\cup F)+e$ is independent. Together  with 
Claim~\ref{smallclaim2}, this implies that $\tilde{e}\in {\rm cl}((E_0\cup F)+e)={\rm cl}(E_0\cup F_v)$. Since $\tilde e\not\in \cl(E_0)$ by (\ref{eq:star}), we have $\tilde e\in [F_v]$.
\end{proof}

\begin{claim}\label{claim:2opposite2}
Suppose $e$ is not $f$-coupled for some $e\in K$ and $f\in F_v$.
Then $e, \tilde{e}\in [F_v]$.
\end{claim}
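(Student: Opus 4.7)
The plan is to first establish $e\in[F_v]$ and then invoke Claim~\ref{claim:2opposite1} to conclude $\tilde e\in[F_v]$. The main task is therefore to prove $e\in[F_v]$, which I will do by contradiction: assuming $E_0\cup F_v\cup\{e\}$ is independent, I will derive an impossible rank inequality using the dependence statements provided by Claim~\ref{claim:2mechanism3} together with the hypothesis $\tilde e\notin[\{e,f\}]$.

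Concretely, by (\ref{eq:star}) every edge of $K$ lies outside $\cl(E_0)$, and by Claim~\ref{claim:2mechanism1} the set $E_0\cup F_v$ is independent in ${\cal C}_{n-1}$. Assume for contradiction that $e\notin[F_v]$, so that $E_0\cup F_v\cup\{e\}$ is independent; in particular $E_0\cup\{f,f_i\}\cup\{e\}$ is independent for both choices of $f_i$, where $\{f_1,f_2\}=F_v\setminus\{f\}$. Since $e$ and $\tilde e$ are non-adjacent edges in $K_u\setminus E_0$, Claim~\ref{claim:2mechanism3} applied with $F=\{f,f_i\}$ forces $\tilde e\in\cl(E_0+f+f_i+e)$ for $i=1,2$. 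The hypothesis $\tilde e\notin[\{e,f\}]$ gives $\tilde e\notin\cl(E_0+f+e)$, so $E_0+f+e+\tilde e$ is independent of rank $|E_0|+3$.

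A rank comparison now yields the contradiction. Both $\cl(E_0+f+f_i+e)$ and $\cl(E_0+f+e+\tilde e)$ have rank $|E_0|+3$, both lie inside $\cl(E_0+f+f_i+e+\tilde e)$, and both contain $E_0+f+e$; hence they coincide, and therefore $f_i\in\cl(E_0+f+e+\tilde e)$ for $i=1,2$. Consequently $E_0\cup F_v\cup\{e\}\subseteq\cl(E_0+f+e+\tilde e)$, which bounds the rank of $E_0\cup F_v\cup\{e\}$ by $|E_0|+3$ and contradicts its independence (rank $|E_0|+4$). Thus $e\in[F_v]$, and then Claim~\ref{claim:2opposite1} gives $\tilde e\in[F_v]$.

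I do not expect any genuine obstacle here beyond careful bookkeeping of which subsets are independent and which closures coincide; the proof is essentially a direct rank-matroidal manipulation of the tools already built, namely Claims~\ref{claim:2mechanism1}, \ref{claim:2mechanism3}, and \ref{claim:2opposite1}, together with the structural fact (\ref{eq:star}).
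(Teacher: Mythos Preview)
Your proof is correct and follows essentially the same route as the paper. The only cosmetic difference is that the paper cites Claim~\ref{smallclaim2} (an immediate corollary of Claim~\ref{claim:2mechanism3}) to get $\tilde e\in\cl(E_0+f+f_i+e)$, and phrases the rank comparison as a direct base exchange rather than via the common upper bound $\cl(E_0+f+f_i+e+\tilde e)$; the underlying matroid argument is identical.
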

\begin{proof}
Let $F_v=\{f_1, f_2, f_3\}$ where $f=f_1$.

Suppose $e\notin [F_v]$.
Then, by Lemma~\ref{claim:2mechanism1}, $E_0+f_i+f_j+e$ is independent for all distinct $i,j \in \{1,2,3\}$.
Hence, for each $i\in\{2,3\}$, Claim~\ref{smallclaim2} implies $\tilde{e}\in [\{f_1,f_i,e\}]$.
Since $\tilde{e}\notin [\{f_1,e\}]$ (as $e$ is not $f_1$-coupled), we have $f_i\in {\rm cl}(E_0+f_1+f_i+e)={\rm cl}(E_0+f_1+e+\tilde{e})$ for $i\in \{2,3\}$.
As $E_0\cup F_v$ is independent, this in turn implies that ${\rm cl}(E_0+f_1+e+\tilde{e})={\rm cl}(E_0\cup F_v)$, and
contradicts our initial assumption that $e\notin [F_v]$.

Hence $e\in [F_v]$.  Claim~\ref{claim:2opposite1} now implies that $\tilde{e}\in [F_v]$.
\end{proof}

\begin{claim}\label{claim:Fv}
Suppose $E_0+f+e+e'$ is independent for some 
$f\in F_v$ and distinct $e, e'\in [F_v]\cap K$. Then 
${\rm cl}(E_0+f+e+e')={\rm cl}(E_0+K_v)$.
\end{claim}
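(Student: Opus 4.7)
The plan is to prove this equality of closures by a short matroid rank argument. First I would observe that $[F_v]$ is, by definition, the closure of $F_v$ in the contracted matroid ${\cal C}_{n-1}/{\rm cl}(E_0)$, so $[F_v] \subseteq {\rm cl}(E_0 \cup F_v)$; in particular, both $e$ and $e'$ lie in ${\rm cl}(E_0 \cup F_v)$. Since $f \in F_v$ as well, this yields $E_0 + f + e + e' \subseteq {\rm cl}(E_0 \cup F_v)$, and hence ${\rm cl}(E_0 + f + e + e') \subseteq {\rm cl}(E_0 \cup F_v)$. Moreover, by Claim~\ref{claim:2mechanism1} the set $F_v$ spans $K_v \setminus {\rm cl}(E_0)$ in ${\cal C}_{n-1}/{\rm cl}(E_0)$, so $K_v \subseteq {\rm cl}(E_0 \cup F_v)$, and therefore ${\rm cl}(E_0 \cup F_v) = {\rm cl}(E_0 \cup K_v)$.

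For the reverse inclusion I would compare ranks. Since $E_0 + f + e + e'$ is independent by hypothesis and has $|E_0| + 3$ elements, ${\rm cl}(E_0 + f + e + e')$ has rank $|E_0| + 3$. On the other hand, Claim~\ref{claim:2mechanism1} tells us that $F_v$ is independent of rank three in ${\cal C}_{n-1}/{\rm cl}(E_0)$, so $E_0 \cup F_v$ is independent in ${\cal C}_{n-1}$ and ${\rm cl}(E_0 \cup F_v)$ also has rank $|E_0| + 3$. As one closure is contained in the other and the two ranks agree, they must coincide, giving ${\rm cl}(E_0 + f + e + e') = {\rm cl}(E_0 \cup F_v) = {\rm cl}(E_0 \cup K_v)$. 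I do not anticipate any real obstacle here: the claim is an immediate bookkeeping consequence of Claim~\ref{claim:2mechanism1} together with the definition of the contraction closure $[\,\cdot\,]$, and is recorded presumably because it will be invoked repeatedly in the subsequent coupling arguments about edges in $K \cap [F_v]$.
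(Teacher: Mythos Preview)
Your argument is correct and essentially matches the paper's proof: both observe that $f,e,e'\in[F_v]$ gives one inclusion, and then use Claim~\ref{claim:2mechanism1} (that $F_v$ has rank three in ${\cal C}_{n-1}/{\rm cl}(E_0)$) together with the independence of $E_0+f+e+e'$ to force equality. The paper phrases this last step as ``$\{f,e,e'\}$ spans $F_v$ in ${\cal C}_{n-1}/{\rm cl}(E_0)$'' rather than via an explicit rank comparison, but the content is identical.
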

\begin{proof}
Recall that, by Claim~\ref{claim:2mechanism1},  $F_v$ is a base of $K_v\setminus {\rm cl}(E_0)$ in ${\cal C}_{n-1}/{\rm cl}(E_0)$.
Since $E_0+f+e+e'$ is independent and $f,e,e'\in [F_v]$, $\{f,e,e'\}$ 
spans $F_v$ in ${\cal C}_{n-1}/{\rm cl}(E_0)$.
This implies that ${\rm cl}(E_0+f+e+e')={\rm cl}(E_0\cup F_v)={\rm cl}(E_0\cup K_v)$.
\end{proof}

We will assume henceforth that  $F
=\{f_1, f_2\}
\in {F_v \choose 2}$ is a good pair. This is justified by Claim \ref{claim:good}.
We next categorize $G_0+f_i$ according to the properties of ${\rm cl}(E_0+f_i)$. We say that $G_0+f_i$  is:
\begin{description}
\item[Type 0] 
if $K_u\subseteq {\rm cl}(E_0+f_i+e_1+e_2)$ for some two edges $e_1, e_2\in K_u$.

\item[Type A] 
if the graphs
$A_1^i :=G_0+f_i+u_1u_2+u_2u_3$, $A_2^i:=G_0+f_i+u_2u_3+u_3u_1$, and $A_3^i:=G_0+f_i+u_3u_1+u_1u_2$ have the following properties.
\begin{itemize}
\item[(1)] Each $A_j^i$ is $C^1_2$-independent and has one degree of freedom. 
\item[(2)] Each ${\rm cl}(A_j^i)\cap K_u$ is as shown in Figure~\ref{fig:A}.
\item[(3)] For some (or equivalently, every) non-trivial motion $\bq_j^i$ of $(A_j^i, \bp)$, $1\leq j\leq 3$, we have $Z(G_0+f_i, \bp)=Z_0(G_0+f_i,\bp)\oplus \langle \bq_1^i, \bq_2^i, \bq_3^i\rangle$. 
\end{itemize}

\item[Type B] 
if the graphs
$B_1^i:=G_0+f_i+u_1u_2+u_3u_4$,  $B_2^i:=G_0+f_i+u_2u_3+u_3u_1$, and $B_3^i:=G_0+f_i+u_3u_1+u_1u_2$ have the following properties.
\begin{itemize}
\item[(1)] Each $B_j^i$ is $C^1_2$-independent and has one degree of freedom.  
\item[(2)] Each ${\rm cl}(B_j^i)\cap K_u$ is as shown in Figure~\ref{fig:B}.
\item[(3)] For some (or equivalently, every) non-trivial motion $\bq_j^i$ of $(B_j^i, \bp)$, $1\leq j\leq 3$, we have $Z(G_0+f_i, \bp)=Z_0(G_0+f_i,\bp)\oplus \langle \bq_1^i, \bq_2^i, \bq_3^i\rangle$.
\item[(4)] $K_v\subset {\rm cl}(B_1^i)$ and $\{u_1u_2,u_3u_4\}\subset [F_v]$.
\end{itemize}

\item[Type C] 
if  the graphs 
$C_1^i:=G_0+f_i+u_1u_2+u_2u_3$, $C_2^i:=G_0+f_i+u_2u_3+u_3u_1$, and $C_3^i:=G_0+f_i+u_3u_1+u_1u_2$ have the following properties.
\begin{itemize}
\item[(1)] Each $C_j^i$ is $C^1_2$-independent and has one degree of freedom. 
\item[(2)] Each ${\rm cl}(C_j^i)\cap K_u$ is as  shown in Figure~\ref{fig:C}.
\item[(3)] For some (or equivalently, every) non-trivial motion $\bq_j^i$ of $(C_j^i, \bp)$, $1\leq j\leq 3$, we have $Z(G_0+f_i, \bp)=Z_0(G_0+f_i,\bp)\oplus \langle \bq_1^i, \bq_2^i, \bq_3^i\rangle$.
\item[(4)] $K_v\subset {\rm cl}(C_1^i)$.
\end{itemize}

\item[Type D] 
if  the graphs
$D_1^i:=G_0+f_i+u_1u_2+u_3u_4$,  $D_2^i:=G_0+f_i+u_1u_3+u_3u_4$, and $D_3^i:=G_0+f_i+u_3u_1+u_1u_2$ have the following properties.
\begin{itemize}
\item[(1)] Each $D_j^i$ is $C^1_2$-independent and has one degree of freedom. 
\item[(2)] Each ${\rm cl}(D_j^i)\cap K_u$ is as  shown in Figure~\ref{fig:D}.
\item[(3)] For some (or equivalently, every) non-trivial motion $\bq_j^i$ of $(D_j^i, \bp)$, $1\leq j\leq 3$, we have $Z(G_0+f_i, \bp)=Z_0(G_0+f_i,\bp)\oplus \langle \bq_1^i, \bq_2^i, \bq_3^i\rangle$.
\item[(4)] $K_v\subset {\rm cl}(D_1^i)$.
\end{itemize}

\end{description}
In addition, for X,Y in $\{$A,B,C,D$\}$, we say that $(G_0,F)$ is {\em type} XY if $G_0+f_1$ is type X and $G_0+f_2$ is type Y.

\begin{figure}[p]
\begin{minipage}{0.32\textwidth}
\centering
\includegraphics[scale=0.8]{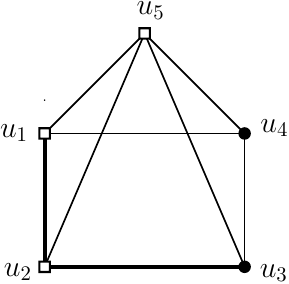}
\par
(i) ${\rm cl}(A_1^i)\cap K_u$
\end{minipage}
\begin{minipage}{0.32\textwidth}
\centering
\includegraphics[scale=0.8]{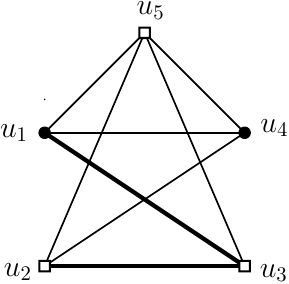}
\par
(ii) ${\rm cl}(A_2^i)\cap K_u$
\end{minipage}
\begin{minipage}{0.32\textwidth}
\centering
\includegraphics[scale=0.8]{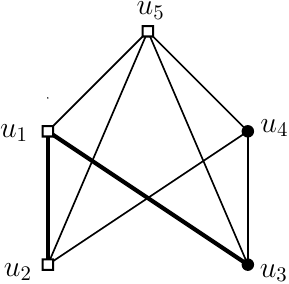}
\par
(iii) ${\rm cl}(A_3^i)\cap K_u$
\end{minipage}
\caption{${\rm cl}(A_j^i)\cap K_u$ for $1\leq j\leq 3$.}
\label{fig:A}
\end{figure}

\begin{figure}[p]
\begin{minipage}{0.32\textwidth}
\centering
\includegraphics[scale=0.8]{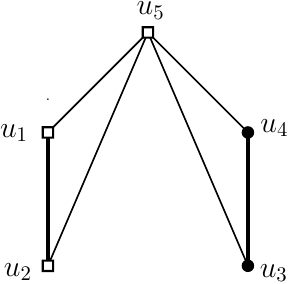}
\par
(i) ${\rm cl}(B_1^i)\cap K_u$
\end{minipage}
\begin{minipage}{0.32\textwidth}
\centering
\includegraphics[scale=0.8]{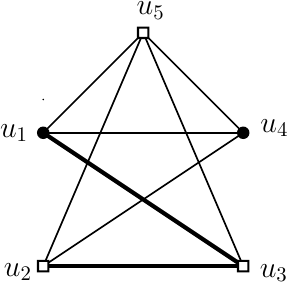}
\par
(ii) ${\rm cl}(B_2^i)\cap K_u$
\end{minipage}
\begin{minipage}{0.32\textwidth}
\centering
\includegraphics[scale=0.8]{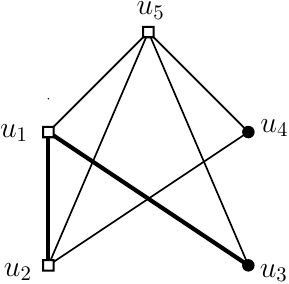}
\par
(iii) ${\rm cl}(B_3^i)\cap K_u$
\end{minipage}
\caption{${\rm cl}(B_j^i)\cap K_u$ for $1\leq j\leq 3$.}
\label{fig:B}
\end{figure}

\begin{figure}[p]
\begin{minipage}{0.32\textwidth}
\centering
\includegraphics[scale=0.8]{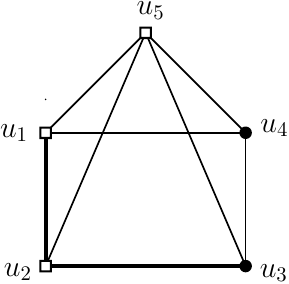}
\par
(i) ${\rm cl}(C_1^i)\cap K_u$
\end{minipage}
\begin{minipage}{0.32\textwidth}
\centering
\includegraphics[scale=0.8]{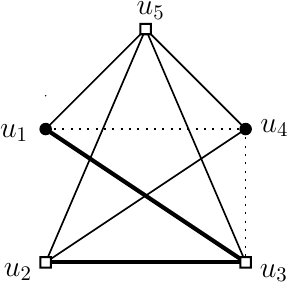}
\par
(ii) ${\rm cl}(C_2^i)\cap K_u$
\end{minipage}
\begin{minipage}{0.32\textwidth}
\centering
\includegraphics[scale=0.8]{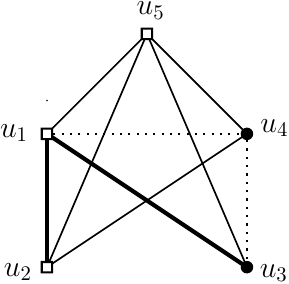}
\par
(iii) ${\rm cl}(C_3^i)\cap K_u$
\end{minipage}
\caption{${\rm cl}(C_j^i)\cap K_u$ for $1\leq j\leq 3$ (at most one dotted edge may exist in each graph).}
\label{fig:C}
\end{figure}

\begin{figure}[p]
\begin{minipage}{0.32\textwidth}
\centering
\includegraphics[scale=0.8]{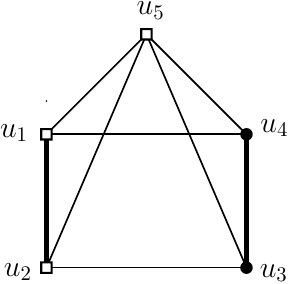}
\par
(i) ${\rm cl}(D_1^i)\cap K_u$
\end{minipage}
\begin{minipage}{0.32\textwidth}
\centering
\includegraphics[scale=0.8]{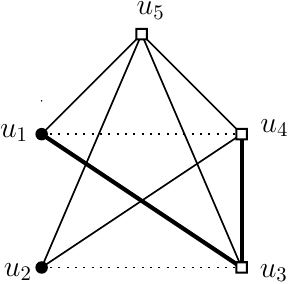}
\par
(ii) ${\rm cl}(D_2^i)\cap K_u$
\end{minipage}
\begin{minipage}{0.32\textwidth}
\centering
\includegraphics[scale=0.8]{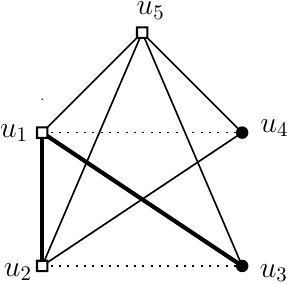}
\par
(iii) ${\rm cl}(D_3^i)\cap K_u$
\end{minipage}
\caption{${\rm cl}(D_j^i)\cap K_u$ for $1\leq j\leq 3$ (at most one dotted edge may exist in each graph).}
\label{fig:D}
\end{figure}

\begin{claim}\label{claim:types}
Suppose $F=\{f_1,f_2\}$ is a good pair and $f_i\in F$. Then, 
up to a possible relabeling of 
$u_1,u_2,u_3, u_4$, 
$G_0+f_i$ is type 0, A, B, C, or D, and, if neither $G_0+f_1$ nor $G_0+f_2$ is type 0, then $(G_0,F)$ is type AA, AB, BA, BB, CC or DD.
\end{claim}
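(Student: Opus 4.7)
The plan is a careful two-level case analysis: first, for a fixed $f_i\in F$, I classify the possible structures of ${\rm cl}(E_0+f_i)\cap K_u$ and show each one falls into type 0, A, B, C, or D (possibly after relabeling); then I analyze which combinations of types can occur for $(f_1,f_2)$ under the good-pair hypothesis.

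For the single-edge analysis, I would begin by noting that $G_0+f_i$ is a $3$-dof graph. Indeed, by Claim~\ref{claim:k=2}, $G$ is a $2$-dof graph, and since $d_G(u_0)=5$ by Claim~\ref{claim:2d4}, deleting $u_0$ adds $5-3=2$ to the degree of freedom, so $G_0$ is $4$-dof and $G_0+f_i$ is $3$-dof. Consequently any independent $G_0+f_i+e_1+e_2$ with $e_1,e_2\in K_u$ is $1$-dof. The candidates for such independent pairs are severely restricted by Claims~\ref{smallclaim2},~\ref{claim:2mechanism3}, and~\ref{claim:2mechanism4}: Claim~\ref{claim:2mechanism3} forbids non-adjacent pairs, Claim~\ref{claim:2mechanism4} forbids adjacent pairs whose common endpoint has degree two in ${\rm cl}(E_0)\cap K_u$, and Claim~\ref{smallclaim2} tells me that the closure after any such independent extension must contain a triangle on the remaining three vertices of $N_G(u_0)$. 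Combining this with (\ref{eq:star}) (the star structure of ${\rm cl}(E_0)\cap K_u$ centred at $u_5$) and Claim~\ref{claim:2K4} (no $K_4$), I can enumerate the possible sets ${\rm cl}(E_0+f_i)\cap K_u$.

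For each such configuration I would then identify (after a possible relabeling of $u_1,u_2,u_3,u_4$) three pairs $\{e_1,e_2\}$ of edges keeping $G_0+f_i$ independent, giving three non-trivial $1$-dof motions $\bq_1^i,\bq_2^i,\bq_3^i$ of $(G_0+f_i,\bp)$. Their spanning property in condition (3) will follow from a dimension count, $\dim Z(G_0+f_i,\bp)-\dim Z_0(G_0+f_i,\bp)=3$, together with a pinning argument in the style of Lemma~\ref{lem:canonical} to certify linear independence: the three auxiliary edges added in the three extensions are pairwise distinct, so the three motions take non-zero values on distinct vertex pairs. The case distinction among types A, B, C, D corresponds to whether no triangle edge of $\{u_1,u_2,u_3\}$ lies in $[F_v]$ (type A), whether $u_1u_2$ and the opposite edge $u_3u_4$ both lie in $[F_v]$ (type B, with condition (4) coming from Claim~\ref{claim:2opposite1}), and whether some extension forces $K_v\subseteq{\rm cl}$ without yet yielding type 0 (types C and D, distinguished by the adjacency pattern of the $K$-edges in ${\rm cl}(E_0+f_i)$). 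Type 0 is reserved for the case in which some pair forces $K_u\subseteq{\rm cl}(E_0+f_i+e_1+e_2)$.

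The main obstacle is the second assertion: showing that the only mixed possibilities are AB and BA, and ruling out AC, AD, BC, BD, CD together with their reverses. Here the good-pair hypothesis is essential, since it makes $E_0+f_1+f_2$ independent and keeps ${\rm cl}(E_0\cup F_v)\cap K_u$ free of $K_4$ by Claim~\ref{claim:2K4}. My plan is to use the $f$-coupled machinery of Claims~\ref{claim:2opposite1},~\ref{claim:2opposite2} and the stabilizer identity of Claim~\ref{claim:Fv} to show that, if $G_0+f_i$ is of type C or D, then $K_v\subseteq{\rm cl}(E_0+f_i+e_1+e_2)$ forces several edges of $K$ into $[F_v]$; combining this with the coupling structure forced by the other $f_{i'}$ being type A or B then produces two $[F_v]$-edges which together with the star contribute a $K_4$ to ${\rm cl}(E_0\cup F_v)\cap K_u$, contradicting Claim~\ref{claim:2K4}. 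A symmetric argument handles CD. The surviving pairings AA, AB, BA, BB, CC, DD are precisely those whose coupling patterns are compatible with both the no-$K_4$ restriction and the star structure of ${\rm cl}(E_0)\cap K_u$.
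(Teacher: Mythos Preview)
Your two-level plan (classify each $G_0+f_i$, then rule out bad pairings) is workable in principle, but it misses the organizing idea that makes the paper's proof clean, and the vagueness in your second step conceals a real difficulty.

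The paper does \emph{not} classify $G_0+f_1$ and $G_0+f_2$ separately. Instead it splits into cases according to the single global invariant $|[F_v]\cap K|$ (where $K=K(u_1,\dots,u_4)$), which is even by Claim~\ref{claim:2opposite1} and at most $4$ by Claim~\ref{claim:2mechanism5}. When $|[F_v]\cap K|=0$ both $G_0+f_i$ are type A; when $|[F_v]\cap K|=2$ each is type A or B depending on whether $u_1u_2$ is $f_i$-coupled; when $|[F_v]\cap K|=4$ a further subcase on $[\{f_i,u_1u_2\}]\cap\{u_2u_3,u_1u_4\}$ yields CC or DD. The point is that the pairing restriction comes \emph{for free}: types A/B live in the regime $|[F_v]\cap K|\le 2$ and types C/D in the regime $|[F_v]\cap K|=4$, so AC, AD, BC, BD, CD are impossible simply because $|[F_v]\cap K|$ is one number, not two. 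Your plan to rule these out by producing a $K_4$ in ${\rm cl}(E_0\cup F_v)\cap K_u$ is heading toward the same contradiction but by a longer route, and your description does not make clear which two $[F_v]$-edges you would exhibit or why they force a $K_4$.

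There is also a subtler issue with relabeling. You propose to relabel $u_1,\dots,u_4$ separately for each $f_i$, but the claim requires a \emph{single} relabeling valid for both. In the paper this is automatic because the relabeling is chosen from the structure of $[F_v]\cap K$ (e.g.\ ``assume $[F_v]\cap K=\{u_1u_2,u_3u_4\}$''), which is independent of $i$. Finally, your description of the individual types is slightly off: type A does not mean ``no triangle edge of $\{u_1,u_2,u_3\}$ lies in $[F_v]$''; in the $|[F_v]\cap K|=2$ case one has $u_1u_2\in[F_v]$ yet $G_0+f_i$ is still type A precisely when $u_1u_2$ is $f_i$-coupled. Getting these characterizations right essentially forces you back to the $|[F_v]\cap K|$ case split anyway.
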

\begin{proof}
Since the claim holds trivially if $G_0+f_i$ is type 0 for some $i\in \{1,2\}$, we may assume that this is not the case. 
Recall that $K$ is the edge set of the complete graph on $\{u_1,u_2,u_3, u_4\}$. 
By Claim~\ref{claim:2opposite1}, $|[F_v]\cap K|$ is even.
We will consider the different alternatives for $|[F_v]\cap K|$. 

Suppose that $|[F_v]\cap K|=6$.
Since $F$ is a good pair, Claim \ref{claim:2mechanism2} implies there is an edge $e\in K$ such that $(E_0\cup F)+e$ is independent. Since  $e\in K=[F_v]\cap K$, this and \eqref{eq:star} give  $K_u\subset {\rm cl}(E_0\cup F_v)={\rm cl}((E_0\cup F)+e)$. 
This contradicts Claim~\ref{claim:2mechanism5}.
Hence $|[F_v]\cap K|\in \{0, 2, 4\}$, and we split the proof accordingly.

\medskip

\noindent
\emph{Case 1:} $|[F_v]\cap K|=0$. 
We show $G_0+f_i$ is type A and hence $(G_0,F)$ is type AA.
Note that Claim~\ref{claim:2opposite2} and the fact that $[F_v]\cap K=\emptyset$ imply that 
$\tilde{e}\in [\{f_i, e\}]$ for all $e\in K$, so every $e\in K$ is $f_i$-coupled.

Let $H^i:=G_0+f_i+u_1u_2+u_2u_3+u_3u_1$.
If $H^i$ is $C_2^1$-dependent then, by symmetry, we may assume  that $u_3u_1\in {\rm cl}(E_0+f_i+u_1u_2+u_2u_3)$.
Since each $e\in K$ is $f_i$-coupled, this gives $K\subset {\rm cl}(E_0+f_i+u_1u_2+u_2u_3)$, and contradicts our initial assumption that  $G_0+f_i$ is not type 0.
Hence $H^i$ is $C_2^1$-independent.

Since each $A_j^i$ is a subgraph of $H^i$, each $A_j^i$ is a $C_2^1$-independent 1-dof graph.
The $C_2^1$-independence of $H^i$ and the fact that each $e\in K$ is $f_i$-coupled imply that
 ${\rm cl}(A_j^i)\cap K_u$ is as shown in Figure~\ref{fig:A}. This in turn implies that
$Z(G_0+f_i,\bp)=Z_0(G_0+f_i,\bp)\oplus \langle \bq_1^i,\bq_2^i,\bq_3^i\rangle$ for any non-trivial motions $\bq_j^i$ of $(A_j^i, \bp)$, $1\leq j\leq 3$.

\medskip
\noindent
\emph{Case 2:} $|[F_v]\cap K|=2$. By relabeling $u_1,u_2,u_3,u_4$ if necessary, we may assume that $[F_v]\cap K=\{u_1u_2, u_3u_4\}$.
Then by Claim~\ref{claim:2opposite2}, each of  the edges in $K\setminus [F_v]$ is $f_i$-coupled.
We will show $G_0+f_i$ is type A or type B, and hence $(G_0,F)$ is type AA, AB, BA or BB.
%

Let $H^i:=G_0+f_i+u_1u_2+u_2u_3+u_3u_1$.
Since $u_2u_3, u_3u_1$ are $f_i$-coupled and $G_0+f_i$ is not type 0, 
$H^i$ is $C_2^1$-independent.
If 
$u_1u_2$ is $f_i$-coupled, then we may use  the same proof as in  Case 1 to show that $G_0+f_i$ is type A.
Hence we may assume that  $u_3u_4\notin [\{f_i, u_1u_2\}]$. 

Since $B_2^i$ and $B_3^i$ are subgraphs of $H^i$, they are $C_2^1$-independent 1-dof graphs.
The assumption that $G_0+f_i$ is not type 0 now implies that  ${\rm cl}(B_2^i)\cap K_u$ and ${\rm cl}(B_3^i)\cap K_u$ are as shown in Figure~\ref{fig:B}(ii)(iii). 
(The edge $u_3u_4$ does not exist in $\cl(B^i_2)\cap K_u$ since $G_0+f_i$ is not type 0. The reason why the edge $u_3u_4$ does not exist in $\cl(B^i_3)\cap K_u$ will be clarified later.)
The graph $B_1^i$ is $C^1_2$-independent since $u_3u_4\notin [\{f_i, u_1u_2\}]$.
Also, by Claim~\ref{claim:Fv}, ${\rm cl}(B_1^i)={\rm cl}(E_0+f_i+u_1u_2+u_3u_4)={\rm cl}(E_0+K_v)$.
This gives $K_v\subset {\rm cl}(B_1^i)$. Since $[F_v]\cap K=\{u_1u_2, u_3u_4\}$, it also implies  that 
${\rm cl}(B_1^i)\cap K_u$ is as shown in Figure~\ref{fig:B}(i).
The fact that ${\rm cl}(B_1^i)\cap K_u$ is as shown in Figure~\ref{fig:B}(i) now tells us that
$u_3u_4$ does not exist in $\cl(B^i_3)\cap K_u$.

The assertion that 
$Z(G_0+f_i,\bp)=Z_0(G_0+f_i,\bp)\oplus \langle \bq_1^i,\bq_2^i,\bq_3^i\rangle$ for any non-trivial motions $\bq_j^i$ of $(B_j^i, \bp)$ easily follows from
the fact that ${\rm cl}(B_j^i)\cap K_u$ is as in Figure~\ref{fig:B}.

\medskip
\noindent
\emph{Case 3:} $|[F_v]\cap K|=4$. 
By relabeling $u_1,u_2,u_3,u_4$ if necessary, we may assume that $[F_v]\cap K=\{u_1u_2, u_2u_3, u_3u_4, u_4u_1\}$.
Then by Claim~\ref{claim:2opposite2}, $u_1u_3$ and $u_2u_4$  are $f_i$-coupled.
We will show that $G_0+f_i$ is type C or type D.
If $|[\{f_i, u_1u_2\}]\cap \{u_2u_3, u_3u_4, u_1u_4\}|\geq 2$ then,  since $u_1u_3$ is coupled, ${\rm cl}(E_0+f_i+u_1u_2+u_1u_3)$ would contain at least five edges from $K$.  This in turn would imply that $K_u\subset {\rm cl}(E_0+f_i+u_1u_2+u_1u_3)$
and contradict our initial assumption that $G_0+f_i$ is not type 0. Hence
\begin{equation}\label{eq:type_case3}
\left|[\{f_i, u_1u_2\}]\cap \{u_2u_3, u_3u_4, u_1u_4\}\right|\leq 1.
\end{equation}

%
%

Consider the following two subcases:

\medskip
\noindent
\emph{Subcase 3-1:} $[\{f_1, u_1u_2\}]\cap \{u_2u_3, u_1u_4\}=\{u_1u_4\}$ and $[\{f_2, u_1u_2\}]
\cap \{u_2u_3, u_1u_4\}=\{u_2u_3\}$.
We will show that  $(G_0,F)$ is type DD. 
We first show that  $J^i:=G_0+f_i+u_1u_2+u_1u_3+u_3u_4$ is $C_2^1$-independent for both $i=1,2$.
To see this, observe that $E_0+f_i+u_1u_2+u_3u_4$ is independent by (\ref{eq:type_case3}) and the assumption of subcase 3-1.
Hence, by Claim~\ref{claim:Fv}, 
\begin{equation}\label{eq:type_case3_1}
{\rm cl}(E_0+f_i+u_1u_2+u_3u_4)={\rm cl}(E_0\cup F_v)={\rm cl}(E_0\cup K_v).
\end{equation} 
Since $[F_v]\cap K=\{u_1u_2, u_2u_3, u_3u_4, u_4u_1\}$ we have $u_1u_3\notin {\rm cl}(E_0+f_i+u_1u_2+u_3u_4)$ and hence 
$J^i$ is $C_2^1$-independent.

Since each $D_j^i$ is a subgraph of $J^i$, each $D_j^i$ is a $C_2^1$-independent 1-dof graph.
We have $K_v\subset {\rm cl}(D_1^i)$ by (\ref{eq:type_case3_1}).
The assumption that $G_0+f_i$ is not type 0 and (\ref{eq:type_case3_1}) also imply that 
${\rm cl}(D_j^i)\cap K_u$ is  as shown in Figure~\ref{fig:D}. The assertion that
$Z(G_0+f_i,\bp)=Z_0(G_0+f_i,\bp)\oplus \langle \bq_1^i,\bq_2^i,\bq_3^i\rangle$ for any non-trivial motions $\bq_j^i$ of $(D_j^i, \bp)$, $1\leq i\leq 3$,  follows easily from
the fact that ${\rm cl}(D_j^i)\cap K_u$ is as in Figure~\ref{fig:D}.

\medskip
\noindent
\emph{Subcase 3-2:} Either $[\{f_i, u_1u_2\}]\cap \{u_2u_3, u_1u_4\}\subseteq \{u_1u_4\}$ for both $i=1,2$, 
or\\ $[\{f_i, u_1u_2\}]\cap \{u_2u_3, u_1u_4\}\subseteq \{u_2u_3\}$ for both $i=1,2$.\\
Relabeling if necessary, we may assume that the first alternative holds.
We will show that  $(G_0,F)$ is type CC. 
We first show that  $H^i:=G_0+u_1u_2+u_2u_3+u_3u_1$  is $C_2^1$-independent (for both $i=1,2$).
To see this, observe that $E_0+f_i+u_1u_2+u_2u_3$ is independent since  $[\{f_i, u_1u_2\}]\cap \{u_2u_3, u_1u_4\}\subseteq \{u_1u_4\}$.
Hence, by Claim~\ref{claim:Fv}, 
\begin{equation}\label{eq:type_case3_2}
{\rm cl}(E_0+f_i+u_1u_2+u_2u_3)={\rm cl}(E_0\cup F_v)={\rm cl}(E_0\cup K_v).
\end{equation} 
Since  $[F_v]\cap K=\{u_1u_2, u_2u_3, u_3u_4, u_4u_1\}$, we have $u_1u_3\notin {\rm cl}(E_0+f_i+u_1u_2+u_2u_3)$ and hence 
$H^i$ is $C_2^1$-independent.

Since each $C_j^i$ is a subgraph of $H^i$, each $C_j^i$ is a $C_2^1$-independent  1-dof graph.
We have $K_v\subset {\rm cl}(C_1^i)$  by (\ref{eq:type_case3_2}).
The assumption that $G_0+f_i$ is not type 0 and  (\ref{eq:type_case3_2}) imply that 
${\rm cl}(C_j^i)\cap K_u$ is  as shown in Figure~\ref{fig:C}.
The assertion that 
$Z(G_0+f_i,\bp)=Z_0(G_0+f_i,\bp)\oplus \langle \bq_1^i,\bq_2^i,\bq_3^i\rangle$ for any non-trivial motions $\bq_j^i$ of $(C_j^i, \bp)$  follows easily from
the fact that ${\rm cl}(C_j^i)\cap K_u$ is as in Figure~\ref{fig:C}.
%
\end{proof}

\subsubsection{The motion spaces of $(G_0+f_i,p|_{V_0})$ and $(G+f_i,p)$}
\label{subsubsec:5}

We will obtain expressions for a non-trivial motion of each of the 1-dof frameworks $(A^i_j,\bp|_{V_0})$,
$(B^i_j,\bp|_{V_0})$, $(C^i_j,\bp|_{V_0})$ and  $(D^i_j,\bp|_{V_0})$. 
We first apply a projective transformation to $(G,\bp)$ to ensure that
\begin{equation}\label{eq:quasi_generic}
\bp(u_1)=(1,0),\ \bp(u_2)=(0, 0),\ \bp(u_3)=(0,1),\ \bp(u_4)=(1,1),
\end{equation}
and all other points in $\bp(V_0)$ are generically placed.
Note that this transformation does not change the underlying $C_2^1$-cofactor matroid of $(G,\bp)$ by the projective invariance of $C_2^1$-rigidity~(see~\cite{Wsurvey} or Appendix~\ref{sec:projective}), and does not change the fact that $(G,\bp)$ has a bad motion at $v_0$ by Lemma \ref{lem:projective_bad}. 

For simplicity we use the notation 
$$D_{ij}=D(\bp(u_i),\bp(u_j)) \quad \text{ and } \quad \Delta_{ijk}=\Delta(\bp(u_i),\bp(u_j),\bp(u_k))$$ 
throughout the remainder of this section. 

Let $F=\{f_1,f_2\}$ be the fixed good pair, and suppose that $G_0+f_i$ is type A for some $i\in \{1,2\}$.
Since $(G,\bp)$ is 2-dof, $(G+f_i,\bp)$ is 1-dof.
Let $\bq^i$ be a non-trivial motion of  $(G+f_i,\bp)$. Then $\bq^i|_{V_0}$ is a motion of $(G_0+f_i,\bp|_{V_0})$ and 
property (3) in the definition of type A implies that $\bq^i|_{V_0}$ is a linear combination of $\bq_{1}^i,\bq_{2}^i,\bq_{3}^i$ and a trivial motion, where $\bq_j^i$ is a non-trivial motion of $(A_j^i, \bp|_{V_0})$.   By adding a suitable trivial motion to $\bq^i$, we may assume that  we have  
\begin{equation}\label{eq:uncoupled2}
\bq^i|_{V_0}=\sum_{j=1}^3\alpha_j^i \bq_j^i
\end{equation}
for some scalars $\alpha_j^i\in \mathbb{R}$.
\begin{claim}\label{claim:motionA}
Suppose that $G_0+f_i$ is type A. Then
for all $1\leq j\leq 3$, $(A_j^i,\bp|_{V_0})$ 
has a non-trivial motion $\bq_j^i$ satisfying $\bq_j^i(w)\in\mathbb{Q}(\bp(V_0))$ for all $w\in V_0$,
 and 
\begin{itemize}
\item $\bq_1^i(u_5)=\bq_1^i(u_1)=\bq_1^i(u_2)=0, \bq_1^i(u_3)=D_{3,2} \times D_{3,5}$,  $\bq_1^i(u_4)=t_1 D_{4,1} \times D_{4,5}$;
\item $\bq_2^i(u_5)=\bq_2^i(u_2)=\bq_2^i(u_3)=0, \bq_2^i(u_1)=D_{1,3} \times D_{1,5}$, $\bq_2^i(u_4)=t_2 D_{4,2} \times D_{4,5}$;
\item $\bq_3^i(u_5)=\bq_3^i(u_1)=\bq_3^i(u_2)=0, \bq_3^i(u_3)=D_{3,1} \times D_{3,5}$,  $\bq_3^i(u_4)=t_3 D_{4,2} \times D_{4,5}$;
\end{itemize}
where 
$t_1=\Delta_{523}/\Delta_{514}$, $t_2=-\Delta_{513}/\Delta_{524}$ and  $t_3=\Delta_{513}/\Delta_{524}$.

In addition, $(G+f_i,\bp)$ has  a non-trivial motion $\bq^i$ such that 
$\bq^i|_{V_0}=\sum_{j=1}^3\alpha_j^i \bq_j^i$ where:
\begin{align*}
\alpha_1^i&=-\Delta_{012}\Delta_{013}\Delta_{024}\Delta_{135}\Delta_{245}t_2 - \Delta_{013}^2\Delta_{024}\Delta_{135}^2 + \Delta_{013}\Delta_{023}\Delta_{024}\Delta_{135}\Delta_{245}t_3; \\
\alpha_2^i&=\Delta_{012}\Delta_{013}\Delta_{014}\Delta_{135}\Delta_{145}t_1 - \Delta_{012}\Delta_{023}\Delta_{024}\Delta_{235}\Delta_{245}t_3;\\
\alpha_3^i&=\Delta_{012}\Delta_{023}\Delta_{024}\Delta_{235}\Delta_{245}t_2 - \Delta_{013}\Delta_{014}\Delta_{023}\Delta_{135}\Delta_{145}t_1 + \Delta_{013}\Delta_{023}\Delta_{024}\Delta_{135}\Delta_{235}.
\end{align*}
\end{claim}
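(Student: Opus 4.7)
The plan is to mimic the proof of Lemma~\ref{lem:motion}: construct each $\bq_j^i$ by pinning three specific vertices in the spirit of Lemma~\ref{lem:canonical}, use the closure structure of $\cl(A_j^i)\cap K_u$ depicted in Figure~\ref{fig:A} to force cross-product expressions at the remaining vertices of $N_G(u_0)$, and then determine the scalars $t_j$ and $\alpha_j^i$ from the outstanding orthogonality conditions and the Vandermonde identity (\ref{eq:Vandermonde}).

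I would describe the construction of $\bq_1^i$ in detail; the constructions of $\bq_2^i$ and $\bq_3^i$ are analogous after relabelling. Since by (\ref{eq:star}) the edges $u_1u_5, u_2u_5$ lie in $\cl(E_0)$, and together with the added edge $u_1u_2\in A_1^i$ they form a $C_2^1$-rigid triangle on $\{u_1,u_2,u_5\}$, I can pin $\bq_1^i(u_1)=\bq_1^i(u_2)=\bq_1^i(u_5)=0$ using the apparatus of Lemma~\ref{lem:canonical}. Figure~\ref{fig:A}(i) encodes that $u_3u_2,\,u_3u_5,\,u_4u_1,\,u_4u_5,\,u_3u_4$ all lie in $\cl(A_1^i)$, so $\bq_1^i(u_3)\perp\langle D_{3,2},D_{3,5}\rangle$ and $\bq_1^i(u_4)\perp\langle D_{4,1},D_{4,5}\rangle$. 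Hence $\bq_1^i(u_3)=c\,D_{3,2}\times D_{3,5}$ and $\bq_1^i(u_4)=ct_1\,D_{4,1}\times D_{4,5}$ for some nonzero $c$ and scalar $t_1$; scaling $\bq_1^i$ fixes $c=1$. The one remaining edge constraint $D_{3,4}\cdot(\bq_1^i(u_3)-\bq_1^i(u_4))=0$ then determines $t_1$: expanding both scalar triple products $D_{3,4}\cdot(D_{3,2}\times D_{3,5})$ and $D_{3,4}\cdot(D_{4,1}\times D_{4,5})$ by (\ref{eq:Vandermonde}) and cancelling common factors yields $t_1=\Delta_{523}/\Delta_{514}$ after collecting signs. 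The values of $\bq_1^i$ at vertices outside $N_G(u_0)$ lie in $\mathbb{Q}(\bp(V_0))$ by Cramer's rule applied to the extended cofactor matrix of $A_1^i$ carrying the pinning and normalization equations.

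For the formulas for $\alpha_j^i$, property (3) in the definition of type A allows us to subtract a suitable trivial motion from $\bq^i$ so that $\bq^i|_{V_0}=\sum_{j=1}^3\alpha_j^i\bq_j^i$. The five edges $u_0u_k$ of $G$ then impose $D_{0,k}\cdot(\bq^i(u_0)-\sum_j\alpha_j^i\bq_j^i(u_k))=0$ for $k=1,\dots,5$. Substituting the explicit values of $\bq_j^i(u_k)$ produces a homogeneous $5\times 6$ system in the unknowns $(\bq^i(u_0),\alpha_1^i,\alpha_2^i,\alpha_3^i)$, exactly analogous to (\ref{eq:dV2}). Because $(G+f_i,\bp)$ is $1$-dof, its solution space is one-dimensional, so the $\alpha_j^i$ are recovered (up to a common scalar) as the $3\times 3$ minors of the coefficient matrix obtained by eliminating $\bq^i(u_0)$. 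Repeated application of (\ref{eq:Vandermonde}) collapses these minors into the stated closed-form expressions.

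The main obstacle is the symbolic simplification in that last step. Each minor initially expands into a sum of products of six $\Delta$-symbols, and only after careful invocations of (\ref{eq:Vandermonde}) together with the $t_j$-substitutions do the expressions collapse to the three-fold products listed in the claim. In contrast to Lemma~\ref{lem:motion}, where a single scalar was pinned down via a degenerate placement of one vertex, here three coefficients must be extracted simultaneously in the generic regime, and the asymmetric treatment of $u_4$ in Figure~\ref{fig:A} prevents symmetry-based shortcuts.
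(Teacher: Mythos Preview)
Your approach is essentially the paper's: pin three vertices via the triangle in $\cl(A_j^i)$, use the remaining edges in Figure~\ref{fig:A} to force cross-product forms, solve for $t_j$ from the $u_3u_4$ (resp.\ $u_1u_4$) constraint via (\ref{eq:Vandermonde}), and then set up the $5\times 6$ linear system from the edges $u_0u_k$. The one place your outline is imprecise is the extraction of the $\alpha_j^i$: the paper does not ``eliminate $\bq^i(u_0)$'' and read off $3\times 3$ minors, but instead adjoins the normalization row $(1,1,1,0,0,0)$ with right-hand side $\mu=\alpha_1+\alpha_2+\alpha_3$, applies Cramer's rule to the resulting $6\times 6$ system, and then fixes $\mu=\det M/(\Delta_{015}\Delta_{025}\Delta_{035}\Delta_{045})$ to clear denominators---this is what makes the $5\times 5$ cofactors collapse cleanly to the stated products of $\Delta$'s.
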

\begin{proof}
We will suppress the superscript $i$ in $A^i_j,\bq^i_j,\bq^i,\alpha^i_j$ as it remains constant throughout the proof.

Consider $(A_1,\bp|_{V_0})$. Since $\cl(A_1)$ contains the triangle on $\{u_1, u_2, u_5\}$ and $u_1u_3\not\in \cl(A_1)$,  
$(A_1,\bp|_{V_0})$ has a non-trivial motion $\bq_1$ such that $\bq_1(u_1)=\bq_1(u_2)=\bq_1(u_5)=0$ and $\bq_1(u_3)\neq 0$. Since ${\rm cl}(A_1)$ contains $u_2u_3$ and $u_3u_5$, $\bq_1(u_3)=s_1 D_{2,3}\times D_{3,5}$ for some scalar $s_1$. Since $\bq_1(u_3)\neq 0$ we can scale $\bq_1$, so that $s_1=1$.
Since ${\rm cl}(A_1)$ contains $u_1u_4$ and $u_4u_5$, $\bq_1(v_4)=t_1 D_{1,4}\times D_{4,5}$ for some scalar $t_1$.
Since $u_3u_4\in \cl(A_1)$, we have 
$$0=D_{34}\cdot (\bq_1(u_3)-\bq_1(u_4))=\left|\begin{array}{c} D_{34} \\ D_{32} \\ D_{35} \end{array}\right|- \left|\begin{array}{c} D_{43} \\ D_{41} \\ D_{45} \end{array}\right|t_1 \mbox{ so } t_1=\left|\begin{array}{c} D_{34} \\ D_{32} \\ D_{35} \end{array}\right|/ \left|\begin{array}{c} D_{43} \\ D_{41} \\ D_{45} \end{array}\right|.$$ 
This can be simplified to $t_1=\Delta_{523}/\Delta_{514}$ by using (\ref{eq:Vandermonde}) and (\ref{eq:quasi_generic}).
Furthermore, since $(A_1,\bp|_{V_0})$ is a 1-dof framework and  $\bq_1(u_3)$ takes fixed non-zero values in $\mathbb{Q}(\bp(V_0))$, $\bq_1$ is  uniquely determined by Cramer's rule and will satisfy $\bq_1(w)\in\mathbb{Q}(\bp(V_0))$ for all $w\in V_0$.

We may apply the same argument, to deduce that  $(A_2,\bp|_{V_0})$  and $(A_3,\bp|_{V_0})$ have the non-trivial motions $\bq_2$ and $\bq_3$ given in the claim.

\medskip
We next prove the second part of the claim.
The existence of a non-trivial motion $\bq$ such that 
$\bq|_{V_0}=\sum_{j=1}^3 \alpha_j\bq_j$ is established in (\ref{eq:uncoupled2}).
Since $u_0u_k$ is an edge of $G$ for all $1\leq k\leq 5$, we have
$D_{0,k}\cdot [\bq(u_0)-\bq(u_k)]=0$ for all $1\leq k\leq 5$. 
Hence we obtain the system of equations
$$ D_{0,k}\cdot [\bq(u_0)-\alpha_1 \bq_1(u_k)-\alpha_2 \bq_2(u_k)-\alpha_3 \bq_3(u_k)]=0 \mbox{ for $1\leq k\leq 5$},$$
or in matrix form,
\begin{equation}
\label{eq:coupled1}
\left(
\begin{array}{cccc}
D_{0,1}\cdot \bq_1(u_1) & D_{0,1}\cdot \bq_2(u_1) &  D_{0,1}\cdot \bq_3(u_1) & D_{0,1} \\
D_{0,2}\cdot \bq_1(u_2) & D_{0,2}\cdot \bq_2(u_2) &  D_{0,2}\cdot \bq_3(u_2) & D_{0,2} \\
D_{0,3}\cdot \bq_1(u_3) & D_{0,3}\cdot \bq_2(u_3) &  D_{0,3}\cdot \bq_3(u_3) & D_{0,3} \\
D_{0,4}\cdot \bq_1(u_4) & D_{0,4}\cdot \bq_2(u_4) &  D_{0,4}\cdot \bq_3(u_4) & D_{0,4} \\
D_{0,5}\cdot \bq_1(u_5) & D_{0,5}\cdot \bq_2(u_5) &  D_{0,5}\cdot \bq_3(u_5) & D_{0,5} \\
\end{array}
\right) \left(
\begin{array}{c}
\alpha_1 \\ \alpha_2 \\ \alpha_3 \\ -a_0\\ -b_0\\ -c_0
\end{array}
\right) = 0,
\end{equation}
where  $(a_0,b_0,c_0)=\bq(u_0)$.
Substituting the zero values for $\bq_j(u_k)$ given in the first  part of the claim, we may rewrite this system  as: 
\begin{equation}
\label{eq:coupled2}
\left(
\begin{array}{cccc}
0 & D_{0,1}\cdot \bq_2(u_1) &  0 & D_{0,1} \\
0 & 0 &  0 & D_{0,2} \\
D_{0,3}\cdot \bq_1(u_3) & 0 &  D_{0,3}\cdot \bq_3(u_3) & D_{0,3} \\
D_{0,4}\cdot \bq_1(u_4) & D_{0,4}\cdot \bq_2(u_4) &  D_{0,4}\cdot \bq_3(u_4) & D_{0,4} \\
0 & 0 &  0 & D_{0,5} \\
\end{array}
\right) \left(
\begin{array}{c}
\alpha_1 \\ \alpha_2 \\ \alpha_3\\ -a_0\\ -b_0\\ -c_0 
\end{array}
\right) = 0.
\end{equation}
By scaling $\bq$, we may suppose that $\alpha_1+\alpha_2+\alpha_3=\mu$, 
where $\mu$ is a non-zero number which will be chosen later. 
Let $M'$ be the matrix of coefficients in (\ref{eq:coupled2}) and  $M$ denote the square matrix $\left(\begin{array}{c}b\\M'\end{array}\right)$ where $b=(1,1,1,0,0,0)$. Then we may rewrite (\ref{eq:coupled2}) as $M(\alpha_1,\alpha_2,\alpha_3,-a_0,-b_0,-c_0)^\top=(\mu,0,0,0,0,0)^\top$ and Cramer's rule gives
\begin{align*}
\alpha_1&=\frac{\mu}{{\rm det} M}\left|
\begin{array}{cccc}
 D_{0,1}\cdot \bq_2(u_1) & 0  & D_{0,1} \\
 0 &  0 & D_{0,2} \\
 0 &  D_{0,3}\cdot \bq_3(u_3) & D_{0,3} \\
 D_{0,4}\cdot \bq_2(u_4) &  D_{0,4}\cdot \bq_3(u_4) & D_{0,4} \\
 0 &  0 & D_{0,5} \\
\end{array}
\right| \\
\alpha_2&=-\frac{\mu}{{\rm det} M}\left|
\begin{array}{cccc}
0  &  0 & D_{0,1} \\
0  &  0 & D_{0,2} \\
D_{0,3}\cdot \bq_1(u_3)  &  D_{0,3}\cdot \bq_3(u_3) & D_{0,3} \\
D_{0,4}\cdot \bq_1(u_4)  &  D_{0,4}\cdot \bq_3(u_4) & D_{0,4} \\
0 &  0 & D_{0,5} \\
\end{array}
\right| \\
\alpha_3&=\frac{\mu}{{\rm det} M}\left|
\begin{array}{cccc}
0 & D_{0,1}\cdot \bq_2(u_1) &   D_{0,1} \\
0 & 0 &   D_{0,2} \\
D_{0,3}\cdot \bq_1(u_3) & 0  & D_{0,3} \\
D_{0,4}\cdot \bq_1(u_4) & D_{0,4}\cdot \bq_2(u_4)  & D_{0,4} \\
0 & 0  & D_{0,5} \\
\end{array}
\right|
\end{align*}
Note  that each entry of the form $D_{0,k}\cdot \bq_j(u_k)$ in the above determinants can be written as 
a product of areas of three triangles.
For example, we have
$D_{01}\cdot \bq_2(u_1)=\left|\begin{array}{c} D_{10}\\ D_{13} \\ D_{15}  \end{array}\right|
=-\Delta_{103}\Delta_{135}\Delta_{150}$, 
where the first equation follows from the first part of the claim and the second equation follows from the Vadermonde identity (\ref{eq:Vandermonde}).

We may  expand the determinant in the formula for $\alpha_1$ to obtain
\begin{align*}
\frac{{\rm det} M}{\mu}\alpha_1 
&=  D_{0,1}\cdot \bq_2(u_1)\left(
-D_{0,3}\cdot \bq_3(u_3) 
\left|
\begin{array}{c}
D_{0,2} \\ D_{0,4} \\ D_{0,5}
\end{array}\right|
+
D_{0,4}\cdot \bq_3(u_4) 
\left|
\begin{array}{c}
D_{0,2} \\ D_{0,3} \\ D_{0,5}
\end{array}\right|\right) \\
&\quad -D_{0,4} \cdot \bq_2(u_4) D_{0,3}\cdot \bq_3(u_3) 
\left|
\begin{array}{c}
D_{0,1} \\ D_{0,2} \\ D_{0,5}
\end{array}\right| \\ 
&= \left|
\begin{array}{c}
D_{1,0} \\ D_{1,3} \\ D_{1,5}
\end{array}\right|
\left(
-
\left|
\begin{array}{c}
D_{3,0} \\ D_{3,1} \\ D_{3,5}
\end{array}\right|
\left|
\begin{array}{c}
D_{0,2} \\ D_{0,4} \\ D_{0,5}
\end{array}\right|
+t_3\left|
\begin{array}{c}
D_{4,0} \\ D_{4,2} \\ D_{4,5}
\end{array}\right|
\left|
\begin{array}{c}
D_{0,2} \\ D_{0,3} \\ D_{0,5}
\end{array}\right|
\right)
-t_2
\left|
\begin{array}{c}
D_{4,0} \\ D_{4,2} \\ D_{4,5}
\end{array}\right|
\left|
\begin{array}{c}
D_{3,0} \\ D_{3,1} \\ D_{3,5}
\end{array}\right|
\left|
\begin{array}{c}
D_{0,1} \\ D_{0,2} \\ D_{0,5}
\end{array}\right|\\
&=
(\Delta_{013}\Delta_{135}\Delta_{015}\Delta_{035}\Delta_{024}\Delta_{045}\Delta_{025})
(-\Delta_{013}\Delta_{135} +t_3\Delta_{245}\Delta_{023} -t_2\Delta_{245}\Delta_{012}).
\end{align*}
The required expression for $\alpha_1$ follows by putting  $\mu={\rm det} M/ \Delta_{015}\Delta_{025}\Delta_{035}\Delta_{045}$. 

Similar calculations give the required formulae for $\alpha_2$ and $\alpha_3$ (using the same constant $\mu$).
\end{proof}

We next obtain an analogous claim for the case when $(G+f_i)$ is type B.

\begin{claim}\label{claim:motionB}
Suppose that $G_0+f_i$ is type B.
Then
for all $1\leq j\leq 3$, $(B_j^i,\bp|_{V_0})$ 
has a non-trivial motion $\bq_j^i$ satisfying $\bq_j^i(w)\in \rat(\bp(V_0))$ for all $w\in V_0$, 
 and 
\begin{itemize}
\item $\bq_1^i(u_5)=\bq_1^i(u_1)=\bq_1^i(u_2)=0$, \\
$\bq_1^i(u_3)=D_{3,1} \times D_{3,5}+s_{12} D_{3,2}\times D_{3,5}$,  \\
$\bq_1^i(u_4)=t_{11}D_{4,1} \times D_{4,5}+ t_{12} D_{4,2}\times D_{4,5}$;
\item $\bq_2^i(u_5)=\bq_2^i(u_2)=\bq_2^i(u_3)=0, \bq_2^i(u_1)=D_{1,3} \times D_{1,5}$,  $\bq_2^i(u_4)=t_2 D_{4,2} \times D_{4,5}$;
\item $\bq_3^i(u_5)=\bq_3^i(u_1)=\bq_3^i(u_2)=0, \bq_3^i(u_3)=D_{1,3} \times D_{3,5}$, $\bq_3^i(u_4)=t_3^i D_{4,2} \times D_{4,5}$;
\end{itemize}
for $t_2=-\Delta_{513}/\Delta_{524}$ and some
non-zero constants $s_{12},t_{11},t_{12},t_3^i \in\mathbb{Q}(\bp(V_0))$, and we may take  $s_{12},t_{11},t_{12}$ to be the same constants for both $\bq_1^1$ and $\bq_1^2$ when $(G_0,F)$ is type BB.

In addition, $(G+f_i,\bp)$ has  a non-trivial motion $\bq^i$ such that 
$\bq^i|_{V_0}=\sum_{j=1}^3\beta_j^i \bq_j^i$ where: 
\begin{align*}
\beta_1^i&=-\Delta_{012}\Delta_{013}\Delta_{024}\Delta_{135}\Delta_{245}t_2
- \Delta_{013}^2 \Delta_{024} \Delta_{135}^2
+ \Delta_{013}\Delta_{023} \Delta_{024} \Delta_{135} \Delta_{245} t_3^i; \\
\beta_2^i&=\Delta_{012}\Delta_{013}\Delta_{014}\Delta_{135}\Delta_{145}t_{11}
-\Delta_{012}\Delta_{013}\Delta_{024}\Delta_{135}\Delta_{245}t_3^i \\
&\hspace{1em}+ \Delta_{012}\Delta_{013}\Delta_{024}\Delta_{135}\Delta_{245}t_{12} 
- \Delta_{012}\Delta_{023}\Delta_{024}\Delta_{235}\Delta_{245}s_{12}t_3^i; \\
\beta_3^i&=\Delta_{012}\Delta_{013}\Delta_{024}\Delta_{135}\Delta_{245}t_2  
+ \Delta_{012}\Delta_{023}\Delta_{024}\Delta_{235}\Delta_{245}s_{12}t_2 \\
&\hspace{1em}+ \Delta_{013}^2\Delta_{024}\Delta_{135}^2 
- \Delta_{013}\Delta_{014}\Delta_{023}\Delta_{135}\Delta_{145}t_{11}\\
&\hspace{1em}+ \Delta_{013}\Delta_{023}\Delta_{024}\Delta_{135}\Delta_{235}s_{12} 
- \Delta_{013}\Delta_{023}\Delta_{024}\Delta_{135}\Delta_{245}t_{12}.
\end{align*}
\end{claim}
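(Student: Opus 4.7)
My plan is to follow the same template as the proof of Claim~\ref{claim:motionA}: for each $j\in\{1,2,3\}$, identify a triangle in $\cl(B_j^i)\cap K_u$ that we can use to pin out the trivial motions, then use the remaining closure edges incident with $u_3$ and $u_4$ to write down $\bq_j^i(u_3)$ and $\bq_j^i(u_4)$ explicitly; finally derive the coefficients $\beta_j^i$ by applying Cramer's rule to the linear system analogous to (\ref{eq:coupled1}). Throughout, uniqueness of $\bq_j^i$ (up to scalar) and the fact that its entries lie in $\mathbb{Q}(\bp(V_0))$ follows because each $(B_j^i,\bp|_{V_0})$ is a $1$-dof framework with the rest of its motion pinned by the triangle.

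For $\bq_2^i$: the closure pattern of $B_2^i\cap K_u$ depicted in Figure~\ref{fig:B}(ii) coincides with that of $A_2^i\cap K_u$, so the calculation is the same as in Claim~\ref{claim:motionA} and produces the same $t_2=-\Delta_{513}/\Delta_{524}$. For $\bq_3^i$: pin $\bq_3^i(u_1)=\bq_3^i(u_2)=\bq_3^i(u_5)=0$ using the triangle $\{u_1,u_2,u_5\}\subset\cl(B_3^i)$ (note $u_1u_2$ is added and $u_1u_5,u_2u_5\in\cl(E_0)$ by \eqref{eq:star}); the edges $u_1u_3,u_3u_5\in\cl(B_3^i)$ constrain $\bq_3^i(u_3)$ to the $1$-dimensional line spanned by $D_{1,3}\times D_{3,5}$, and after scaling we take $\bq_3^i(u_3)=D_{1,3}\times D_{3,5}$, while $u_2u_4,u_4u_5\in\cl(B_3^i)$ give $\bq_3^i(u_4)=t_3^i D_{4,2}\times D_{4,5}$; the scalar $t_3^i$ is then determined (as in the $A_1$ calculation) by imposing the remaining closure constraint from the ambient graph $G_0+f_i$, so in general $t_3^i$ depends on $f_i$.

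For $\bq_1^i$: pin $\bq_1^i(u_1)=\bq_1^i(u_2)=\bq_1^i(u_5)=0$ using the triangle $\{u_1,u_2,u_5\}\subset\cl(B_1^i)$. The key difference from Claim~\ref{claim:motionA} is that the only edge of $\cl(B_1^i)\cap K_u$ incident to $u_3$ (other than $u_3u_4$, which connects to $u_4$) is $u_3u_5$, so $\bq_1^i(u_3)$ only satisfies the single orthogonality constraint $D_{3,5}\cdot\bq_1^i(u_3)=0$; hence $\bq_1^i(u_3)$ lies in the $2$-dimensional subspace spanned by $D_{3,1}\times D_{3,5}$ and $D_{3,2}\times D_{3,5}$. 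The analogous situation at $u_4$ gives $\bq_1^i(u_4)\in\langle D_{4,1}\times D_{4,5},\,D_{4,2}\times D_{4,5}\rangle$. The edge $u_3u_4$ together with the global constraints of $G_0+f_i$ determine the three ratios $s_{12},t_{11},t_{12}$ uniquely once we scale so that the coefficient of $D_{3,1}\times D_{3,5}$ in $\bq_1^i(u_3)$ is $1$. When $(G_0,F)$ is of type BB, the hypothesis $\{u_1u_2,u_3u_4\}\subset[F_v]$ means that, modulo $\cl(E_0)$, the edge $u_1u_2$ and $u_3u_4$ lie in the same flat of ${\cal C}_{n-1}/\cl(E_0)$ regardless of which $f_i\in F$ is used to pin, so the constraints fixing $s_{12},t_{11},t_{12}$ are identical for $i=1,2$; we may therefore take these three numbers common to $\bq_1^1$ and $\bq_1^2$, while $t_3^i$ may still depend on $i$.

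Having $\bq_1^i,\bq_2^i,\bq_3^i$, property~(3) in the definition of type B gives $\bq^i|_{V_0}=\sum_{j=1}^3\beta_j^i\bq_j^i$ for a non-trivial motion $\bq^i$ of $(G+f_i,\bp)$ (after subtracting a trivial motion, as in \eqref{eq:uncoupled2}). Writing out $D_{0,k}\cdot[\bq^i(u_0)-\sum_j\beta_j^i\bq_j^i(u_k)]=0$ for $k=1,\ldots,5$ gives the analogue of \eqref{eq:coupled1}; Cramer's rule then yields $\beta_j^i$ as explicit $3\times 3$ determinants in the vectors $D_{0,k}$ and $D_{0,k}\cdot\bq_\ell^i(u_k)$. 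Each inner product $D_{0,k}\cdot\bq_\ell^i(u_k)$ expands via the first part of the claim into a product of three $\Delta_{ijk}$'s by the Vandermonde identity (\ref{eq:Vandermonde}), and using the normalisation \eqref{eq:quasi_generic} these collapse to the stated polynomial expressions for $\beta_1^i,\beta_2^i,\beta_3^i$ after choosing the common scaling $\mu=\det M/(\Delta_{015}\Delta_{025}\Delta_{035}\Delta_{045})$, just as in Claim~\ref{claim:motionA}.

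The main obstacle is the $B_1^i$ calculation: unlike $A_j^i$ or the other $B_j^i$, the closure of $B_1^i$ on $K_u$ provides only one orthogonality constraint at each of $u_3,u_4$, so the value of $\bq_1^i$ at these vertices genuinely lives in a $2$-dimensional subspace and the scalars $s_{12},t_{11},t_{12}$ must be extracted from the geometry of the whole of $G_0$. The most delicate step will be justifying that in the BB case these three scalars are common to $i=1$ and $i=2$, because this common choice is what allows the $\beta_j^i$ formulas to be written with identical $s_{12},t_{11},t_{12}$ across $i$; the justification relies on the fact that the linear relations among $u_1u_2$, $u_3u_4$, and the rest of $K_v$ modulo $\cl(E_0)$ are determined inside $[F_v]$, independently of which $f_i\in F$ we choose.
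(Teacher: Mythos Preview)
Your approach is correct and follows the paper's template, but the paper handles two points differently.

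For $\bq_1^i$, rather than appealing abstractly to 1-dof uniqueness, the paper builds $\bq_1^i$ explicitly as a linear combination of two already-known motions of $(B_1^i-u_3u_4,\bp|_{V_0})$: namely $\bq_3^i$ (since $B_1^i-u_3u_4+u_1u_3=B_3^i$) and a symmetric variant $\bar\bq_3$ obtained from $\bar B_3^i:=B_1^i-u_3u_4+u_1u_4$ by interchanging the roles of $u_3$ and $u_4$. One then takes the unique non-trivial combination satisfying the $u_3u_4$ constraint. This constructive route also delivers the non-zeroness of $s_{12},t_{11},t_{12}$ directly from $u_1u_3,u_2u_3,u_1u_4,u_2u_4\notin\cl(B_1^i)$ --- a requirement of the claim that your outline does not address. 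Likewise $t_3^i\neq 0$ follows from $u_1u_4\notin\cl(B_3^i)$; and note that, contrary to your ``as in the $A_1$ calculation'' remark, there is no edge of $\cl(B_3^i)\cap K_u$ linking $u_3$ and $u_4$, so $t_3^i$ admits no local explicit formula and is fixed only by the global 1-dof constraint (this is why it genuinely depends on $i$).

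For the BB case your reasoning is aimed correctly but is loose. The paper's argument is simply that $f_i,u_1u_2,u_3u_4$ are three independent elements of $[F_v]$ in ${\cal C}_{n-1}/\cl(E_0)$, so Claim~\ref{claim:Fv} gives $\cl(B_1^1)=\cl(E_0\cup K_v)=\cl(B_1^2)$, whence $Z(B_1^1,\bp|_{V_0})=Z(B_1^2,\bp|_{V_0})$ and a common $\bq_1$ can be chosen. That is the clean justification you were reaching for.
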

\begin{proof}
We will suppress the superscript $i$ in $B^i_j,\bq^i_j,\beta^i_j, t_3^i$ as it remains constant throughout the proof.

We first consider $(B_2,\bp|_{V_0})$. Since $K(u_2,u_3,u_5)\subset \cl(B_2)$ and $u_1u_2\not\in \cl(B_2)$,  
$(B_2,\bp|_{V_0})$ has a non-trivial motion $\bq_2$ such that $\bq_2(u_2)=\bq_2(u_3)=\bq_2(u_5)=0$ and $\bq_2(u_1)\neq 0$. Since ${\rm cl}(B_2)$ contains $u_1u_3$ and $u_1u_5$, $\bq_2(u_1)=s_2 D_{1,3}\times D_{1,5}$ for some scalar $s_2$. Since $\bq_2(u_1)\neq 0$ we can scale $\bq_2$, so that $s_2=1$.
Since ${\rm cl}(B_2)$ contains $u_4u_2$ and $u_4u_5$, $\bq_2(v_4)=t_2 D_{4,2}\times D_{4,5}$ for some scalar $t_2$.
Since $u_1u_4\in \cl(B_2)$, we have 
$0=D_{14}\cdot (\bq_2(u_1)-\bq_2(u_4))$ and we can now use 
(\ref{eq:Vandermonde}) and (\ref{eq:quasi_generic}) to deduce that 
$t_2= -\Delta_{513}/\Delta_{524}$.
Furthermore, since $(B_2,\bp|_{V_0})$ is a 1-dof framework and $\bq_2(u_1)$ takes fixed non-zero values in $\mathbb{Q}(\bp(V_0))$, $\bq_2$ is  uniquely determined by Cramer's rule and will satisfy $\bq_2(w)\in\mathbb{Q}(\bp(V_0))$ for all $w\in V_0$.

%

We can apply the same argument to $(B_3, \bp|_{V_0})$ using the fact that $K(u_1,u_2,u_5)\subset B_3$ to obtain the formula for $\bq_3$. Note that $\bq_3(u_3)\neq 0\neq \bq_3(u_4)$ since $u_1u_4,u_2u_3\not\in \cl(B_3)$ and that we cannot obtain an explicit formula for $t_3$ because $u_3u_4\not\in \cl(B_3)$.


We next consider $(B_1, \bp|_{V_0})$. Since $B_1-u_3u_4+u_3u_1=B_3$,
$\bq_3$ is a motion of $(B_1-u_3u_4,\bp|_{V_0})$ satisfying $\bq_3(u_5)=\bq_3(u_1)=\bq_3(u_2)=0, \bq_3(u_3)=D_{1,3} \times D_{3,5}$, $\bq_3(u_4)=t_3 D_{4,2} \times D_{4,5}$ and  $D_{3,4}\times (\bq_3(u_3)-\bq_3(u_4))\neq 0$. We may use the symmetry between $u_3$ and $u_4$ in type $B$, see Figure \ref{fig:B}, to deduce that $(B_1-u_3u_4,\bp|_{V_0})$ also has a nontrivial motion $\bar \bq_3$ satisfying $\bar\bq_3(u_5)=\bar\bq_3(u_1)=\bar\bq_3(u_2)=0, \bar\bq_3(u_3)=D_{2,3} \times D_{3,5}$, $\bar\bq_3(u_4)=\bar t_3 D_{4,1} \times D_{4,5}$ and
$D_{3,4}\times (\bar\bq_3(u_3)-\bar\bq_3(u_4))\neq 0$. (The motion $\bar \bq_3$ is a non-trivial motion of $\bar B_3:=B_1-u_3u_4+u_4u_1$ and we have ${\rm cl}(\bar B_3)\cap K_u={(\rm cl}(B_3)\cap K_u)-u_1u_3-u_2u_4+u_1u_4+u_2u_3$.)

We can now obtain the  non-trivial motion $\bq_1$ of $(B_1,\bp|_{V_0})$ satisfying $\bq_1(u_5)=\bq_1(u_1)=\bq_1(u_2)=0$, 
$\bq_1(u_3)=s_{11}D_{3,1} \times D_{3,5}+s_{12} D_{3,2}\times D_{3,5}$ and 
$\bq_1(u_4)=t_{11}D_{4,1} \times D_{4,5}+ t_{12} D_{4,2}\times D_{4,5}$
by choosing a non-trivial linear combination of $\bq_3$ and $\bar \bq_3$ which will satisfy the constraint given by the edge $u_3u_4$. The constants $s_{11},s_{12},t_{11},t_{12}$ will all be non-zero since $u_3u_2,u_3u_1,u_4u_2,u_4u_1\not\in {\rm cl}(B_1)$, and so we can scale $\bq_1$ to ensure that $s_{11}=1$.

It remains to show that we can choose the same values for $s_{12}^i,t_{11}^i,t_{12}^i$ when $i=1,2$ and $G_0+f_1$ and $G_0+f_2$ are both type B.
Property (4) in the definition of type B gives $\{u_1u_2,u_3u_4\}\subset [F_v]$.
Claim~\ref{claim:Fv} and the independence of $B_1$ now imply that ${\rm cl}(B_1^1)={\rm cl}(E_0+K_v)={\rm cl}(B_1^2)$.
This in turn implies that  $Z(B_1^1, \bp|_{V_0})=Z(B_1^2, \bp|_{V_0})$, so we can take
a common non-trivial motion that has the form stated in the claim.
This completes the proof of the first part of the claim.


\medskip

We next prove the second part of the claim. We can construct a non-trivial motion $\bq^i$ of $(G+f_i,\bp)$ with $\bq^i|_{V_0}=\sum_{j=1}^3 \beta^i_j \bq^i_j$ for some $\beta^i_j\in \R$ by adding a suitable trivial motion to an arbitrary non-trivial motion of $(G+f_i,\bp)$, as in the derivation of (\ref{eq:uncoupled2}). We need to show we can choose the $\beta^i_j$ to take the values stated in the claim.  
The proof is identical to that of the second part of Claim~\ref{claim:motionA} so we only give a sketch. 
As $i$ is fixed, we once more suppress the superscripts on $\beta^i, \bq^i$ and $\bq^i_j$.

Since $u_0u_k$ is an edge of $G$ for all $1\leq k\leq 5$, we have
$D_{0,k}\cdot [\bq(u_0)-\bq(u_k)]=0$ for all $1\leq k\leq 5$. 
Hence we obtain the system of equations
$$ D_{0,k}\cdot [\bq(u_0)-\beta_1 \bq_1(u_k)-\beta_2 \bq_2(u_k)-\beta_3 \bq_3(u_k)]=0 \mbox{ for $1\leq k\leq 5$}.$$
Putting $(a_0,b_0,c_0)=\bq(u_0)$ and
substituting the zero values for $\bq_j(u_k)$ given in the first  part of the claim, we may rewrite this system  as: 
\begin{equation}
\label{eq:beta1}
\left(
\begin{array}{cccc}
0 & D_{0,1}\cdot \bq_2(u_1) &  0 & D_{0,1} \\
0 & 0 &  0 & D_{0,2} \\
D_{0,3}\cdot \bq_1(u_3) & 0 &  D_{0,3}\cdot \bq_3(u_3) & D_{0,3} \\
D_{0,4}\cdot \bq_1(u_4) & D_{0,4}\cdot \bq_2(u_4) &  D_{0,4}\cdot \bq_3(u_4) & D_{0,4} \\
0 & 0 &  0 & D_{0,5} \\
\end{array}
\right) \left(
\begin{array}{c}
\beta_1 \\ \beta_2 \\ \beta_3\\ -a_0\\ -b_0\\ -c_0 
\end{array}
\right) = 0.
\end{equation}
Each entry of the form $D_{0,k}\cdot \bq_j(u_k)$ in the above matrix can be expressed as 
a product of areas of three triangles by using the non-zero values for $\bq_j(u_k)$ given in the first  part of the claim.
By scaling $\bq$, we may suppose that $\beta_1+\beta_2+\beta_3=\mu$ for some constant $\mu$. 
We can now proceed as in the proof of Claim~\ref{claim:motionA} and obtain the stated formula for $\beta_j$ by setting $\mu={\rm det} M/ \Delta_{015}\Delta_{025}\Delta_{035}\Delta_{045}$, where $M=\left(
\begin{array}{c}
b\\M'
\end{array}
\right)$, $M'$ is the matrix of coefficients in (\ref{eq:beta1}) and $b=(1,1,1,0,0,0)$.
%
%
\end{proof}

We next state the analogous claims for the cases when $(G+f_i)$ is type C or D without proof as their proofs are identical to the proof of Claim~\ref{claim:motionA}.
\begin{claim}\label{claim:motionC}
Suppose that $G_0+f_i$ is type C.
Then
for all $1\leq j\leq 3$, $(C_j^i,\bp|_{V_0})$ 
has a non-trivial motion $\bq_j^i$ satisfying $\bq_j^i(w)\in \rat(\bp(V_0))$ for all $w\in V_0$,  and 
\begin{itemize}
\item $\bq_1^i(u_5)=\bq_1^i(u_1)=\bq_1^i(u_2)=0, \bq_1^i(u_3)= D_{3,2} \times D_{3,5}$, $\bq_1^i(u_4)=t_1 D_{4,1} \times D_{4,5}$;
\item $\bq_2^i(u_5)=\bq_2^i(u_2)=\bq_2^i(u_3)=0, \bq_2^i(u_1)= D_{1,3} \times D_{1,5}$, $\bq_2^i(u_4)=t_2^i D_{4,2} \times D_{4,5}$;
\item $\bq_3^i(u_5)=\bq_3^i(u_1)=\bq_3^i(u_2)=0, \bq_3^i(u_3)= D_{3,1} \times D_{3,5}$, $\bq_3^i(u_4)=t_3^i D_{4,2} \times D_{4,5}$;
\end{itemize}
for $t_1=\Delta_{523}/\Delta_{514}$ and some $t_2^i, t_3^i\in\mathbb{Q}(\bp(V_0))$. 

In addition, $(G+f_i,\bp)$ has  a non-trivial motion $\bq^i$ such that 
$\bq^i|_{V_0}=\sum_{j=1}^3\gamma_j^i \bq_j^i$ where:
\begin{align*}
\gamma_2^i&=\Delta_{012}\Delta_{013}\Delta_{014}\Delta_{135}\Delta_{145}t_1 - \Delta_{012}\Delta_{023}\Delta_{024}\Delta_{235}\Delta_{245}t_3^i;\\
\gamma_3^i&=\Delta_{012}\Delta_{023}\Delta_{024}\Delta_{235}\Delta_{245}t_2^i - \Delta_{013}\Delta_{014}\Delta_{023}\Delta_{135}\Delta_{145}t_1 + \Delta_{013}\Delta_{023}\Delta_{024}\Delta_{135}\Delta_{235}.
\end{align*}
\end{claim}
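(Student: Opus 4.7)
The plan is to adapt the proof of Claim~\ref{claim:motionA} almost verbatim, since the subframeworks $(C_j^i,\bp|_{V_0})$ in type C share the same combinatorial skeleton as the $(A_j^i,\bp|_{V_0})$ in type A (the three choices of a pair from $\{u_1u_2,u_2u_3,u_3u_1\}$ added to $G_0+f_i$). The only thing that changes is which additional edges appear in $\cl(C_j^i)\cap K_u$, as recorded in Figure~\ref{fig:C}. I would use the projective coordinates \eqref{eq:quasi_generic} already fixed in Section~\ref{subsubsec:5} and work throughout with the Vandermonde identity \eqref{eq:Vandermonde} to convert determinants of rows $D_{ij}$ into products of triangle areas.

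First I would construct the individual motions $\bq_j^i$. For each $j$, inspection of Figure~\ref{fig:C} shows that $\cl(C_j^i)\cap K_u$ contains a triangle on three vertices of $N_G(u_0)$ including $u_5$, so the one-dof, $C_2^1$-independent framework $(C_j^i,\bp|_{V_0})$ has a unique non-trivial motion (up to scalar) vanishing on those three vertices. For each of the two remaining vertices, the edges from that vertex to $u_5$ and to one other pinned vertex that lie in $\cl(C_j^i)$ force $\bq_j^i$ there to be a scalar multiple of $D_{a,b}\times D_{a,5}$, giving the formulae as stated with $t_1,t_2^i,t_3^i$ as undetermined scalars. For $j=1$ the edge $u_3u_4\in\cl(C_1^i)$ (the only solid extra edge in Figure~\ref{fig:C}(i)) imposes $D_{3,4}\cdot(\bq_1^i(u_3)-\bq_1^i(u_4))=0$, and the same Vandermonde computation as in Claim~\ref{claim:motionA} yields $t_1=\Delta_{523}/\Delta_{514}$. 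For $j=2,3$ no analogous constraining edge is present on $u_4$, so $t_2^i$ and $t_3^i$ remain free; they nonetheless lie in $\rat(\bp(V_0))$ by Cramer's rule applied to the cofactor matrix of $(C_j^i,\bp|_{V_0})$ once the pinning is fixed, which also yields $\bq_j^i(w)\in\rat(\bp(V_0))$ for every $w\in V_0$.

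Next I would derive $\bq^i$ and the formulae for $\gamma_2^i,\gamma_3^i$ exactly as in the second half of the proof of Claim~\ref{claim:motionA}. Property (3) of type C gives $Z(G_0+f_i,\bp|_{V_0})=Z_0(G_0+f_i,\bp|_{V_0})\oplus\langle \bq_1^i,\bq_2^i,\bq_3^i\rangle$, and since $(G+f_i,\bp)$ is one-dof, after subtracting a suitable trivial motion we obtain a non-trivial motion $\bq^i$ of $(G+f_i,\bp)$ with $\bq^i|_{V_0}=\sum_{j=1}^{3}\gamma_j^i\bq_j^i$. The edge constraints $D_{0,k}\cdot[\bq^i(u_0)-\bq^i(u_k)]=0$ for $k=1,\dots,5$, together with the vanishing pattern of the $\bq_j^i(u_k)$, give the same $5\times 6$ system as \eqref{eq:coupled2} with $\alpha_j$ replaced by $\gamma_j^i$. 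Normalising by $\gamma_1^i+\gamma_2^i+\gamma_3^i=\mu$, applying Cramer's rule, rewriting each $D_{0,k}\cdot\bq_j^i(u_k)$ as a product of three triangle areas via \eqref{eq:Vandermonde}, and choosing $\mu=\det M/(\Delta_{015}\Delta_{025}\Delta_{035}\Delta_{045})$ yields the two required expressions.

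The only genuinely new bookkeeping point to flag is that the formula for $\gamma_1^i$ (omitted in the statement) would, after the same Cramer expansion, involve a cross term $t_2^i t_3^i$; since neither $t_2^i$ nor $t_3^i$ is determined in closed form this cannot be simplified, which is why only $\gamma_2^i$ (involving $t_1,t_3^i$) and $\gamma_3^i$ (involving $t_1,t_2^i$) are recorded. A small auxiliary check is that the possible presence of the dotted edges in Figure~\ref{fig:C} does not affect the motion formulae: these edges do not touch $u_4$ in the shape that would change $t_j^i$, and the constraints they would impose are already consistent with the prescribed zero values of $\bq_j^i$ on the pinned vertices. Beyond that, no step is expected to be harder than in Claim~\ref{claim:motionA}.
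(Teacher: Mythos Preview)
Your proposal is correct and follows exactly the approach the paper intends: the paper itself states that the proof of Claim~\ref{claim:motionC} is identical to that of Claim~\ref{claim:motionA}, and your adaptation of the pinning argument and the Cramer--Vandermonde expansion is the right way to carry this out.

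One minor correction to your side remark: the formula for $\gamma_1^i$ does \emph{not} contain a cross term $t_2^i t_3^i$. In the cofactor expansion of the relevant $5\times 5$ determinant, the entries $D_{0,4}\cdot\bq_2^i(u_4)$ and $D_{0,4}\cdot\bq_3^i(u_4)$ sit in the same row (the $u_4$ row), so no term of the expansion can pick both; $\gamma_1^i$ is linear in $t_2^i$ and $t_3^i$ separately, just as $\alpha_1^i$ is in Claim~\ref{claim:motionA}. The reason $\gamma_1^i$ is omitted from the statement is simply that it is never used downstream: only $\gamma_2^i$ and $\gamma_3^i$ enter the linear-independence argument in Claim~\ref{claim:linear1}(a).
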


\begin{claim}\label{claim:motionD}
Suppose that $G_0+f_i$ is type D.
Then
for all $1\leq j\leq 3$, $(D_j^i,\bp|_{V_0})$ 
has
a non-trivial motion $\bq_j^i$ satisfying $\bq_j^i(w)\in \rat(\bp(V_0))$ for all $w\in V_0$, 
 and 
\begin{itemize}
\item $\bq_1^i(u_5)=\bq_1^i(u_1)=\bq_1^i(u_2)=0, \bq_1^i(u_3)=D_{3,2} \times D_{3,5}$,  $\bq_1^i(u_4)=t_1 D_{4,1} \times D_{4,5}$;
\item $\bq_2^i(u_5)=\bq_2^i(u_3)=\bq_2^i(u_4)=0, \bq_2^i(u_1)=s_2^iD_{1,3} \times D_{1,5}$,  $\bq_2^i(u_2)=t_2^i D_{2,4} \times D_{2,5}$;
\item $\bq_3^i(u_5)=\bq_3^i(u_1)=\bq_3^i(u_2)=0, \bq_3^i(u_3)=s_3^iD_{3,1} \times D_{3,5}$, $\bq_3^i(u_4)=t_3^i D_{4,2} \times D_{4,5}$;
\end{itemize}
for $t_1=\Delta_{523}/\Delta_{514}$ and some $s_2^i, t_2^i, s_3^i, t_3^i\in\mathbb{Q}(\bp(V_0))$ 
with $\Delta_{513} s_j^i -\Delta_{524} t_j^i\neq 0$ for $j=2,3$.\footnote{The expression for $q_2^i$ contains two parameters $s^i_2,t^i_2$ because one of the edges $u_1u_4, u_2u_3$ may exist in $\cl(D^i_2)$, and hence we could have $q^i_2(u_1)=0$ or $q^i_2(u_2)=0$. We can deduce that $\Delta_{513}s^i_2-\Delta_{524}t^i_2\neq 0$ from the fact that the edge $u_1u_2$ is not in $\clo(D^i_2)$. A similar remark holds for $q_3^i$.}

In addition, $(G+f_i,\bp)$ has  a non-trivial motion $\bq^i$ such that 
$\bq^i|_{V_0}=\sum_{j=1}^3\delta_j^i \bq_j^i$ where:
\begin{align*}
\delta_2^i=&\Delta_{012} \Delta_{013} \Delta_{014} \Delta_{135} \Delta_{145} s_3^it_1 - \Delta_{012} \Delta_{023} \Delta_{024} \Delta_{235} \Delta_{245}  t_3^i; \\
\delta_3^i=&-\Delta_{013} \Delta_{014} \Delta_{023} \Delta_{135} \Delta_{145} s_2^it_1 + \Delta_{013} \Delta_{014} \Delta_{024} \Delta_{145} \Delta_{245} t_1 t_2^i \\ 
&+\Delta_{013} \Delta_{023} \Delta_{024} \Delta_{135} \Delta_{235} s_2^i - \Delta_{014} \Delta_{023} \Delta_{024} \Delta_{235} \Delta_{245}  t_2^i.
\end{align*}
\end{claim}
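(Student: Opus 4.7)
The plan is to mirror the structure of the proof of Claim~\ref{claim:motionA}, adapted to the type D configuration; I suppress the superscript $i$ throughout the sketch. For each $j \in \{1,2,3\}$, the graph $D_j$ is $C_2^1$-independent and has one degree of freedom, so its motion space is $Z_0 \oplus \langle \bq_j\rangle$ for some non-trivial $\bq_j$. The closure $\cl(D_j)\cap K_u$, pictured in Figure~\ref{fig:D}, always contains the star edges $u_1u_5, u_2u_5, u_3u_5, u_4u_5$ coming from $\cl(E_0)\cap K_u$ together with the two added edges defining $D_j$.

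The first step is to pin the $\bq_j$ explicitly. For $D_1 = G_0+f_i+u_1u_2+u_3u_4$: since $u_1u_2, u_1u_5, u_2u_5 \in \cl(D_1)$, pin the triangle $\{u_1,u_2,u_5\}$ to set $\bq_1(u_1)=\bq_1(u_2)=\bq_1(u_5)=0$; the presence of $u_2u_3, u_3u_5$ in $\cl(D_1)$ forces $\bq_1(u_3) \perp \langle D_{23}, D_{35}\rangle$, hence $\bq_1(u_3) = s D_{3,2}\times D_{3,5}$, and we scale so $s=1$; similarly $\bq_1(u_4) = t_1 D_{4,1}\times D_{4,5}$, and the added edge $u_3u_4$ forces $D_{3,4}\cdot(\bq_1(u_3)-\bq_1(u_4))=0$, which by the Vandermonde identity~\eqref{eq:Vandermonde} and the quasi-normalization~\eqref{eq:quasi_generic} evaluates to $t_1 = \Delta_{523}/\Delta_{514}$ exactly as in Claim~\ref{claim:motionA}. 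For $D_2 = G_0+f_i+u_1u_3+u_3u_4$, pinning $\{u_3,u_4,u_5\}$ (since $u_3u_4,u_3u_5,u_4u_5\in \cl(D_2)$) gives $\bq_2(u_3)=\bq_2(u_4)=\bq_2(u_5)=0$; the edges $u_1u_3,u_1u_5 \in \cl(D_2)$ force $\bq_2(u_1) = s_2^i D_{1,3}\times D_{1,5}$, and $u_2u_4,u_2u_5 \in \cl(D_2)$ force $\bq_2(u_2) = t_2^i D_{2,4}\times D_{2,5}$. Here $s_2^i, t_2^i$ genuinely remain as parameters because the potentially-present edges $u_1u_4$ or $u_2u_3$ (the dotted edges in Figure~\ref{fig:D}(ii)) would zero out one of them but no constraint couples $s_2^i$ with $t_2^i$; the non-triviality of $\bq_2$ forces $D_{1,2}\cdot(\bq_2(u_1)-\bq_2(u_2))\neq 0$, which after expanding the two $3\times 3$ determinants via~\eqref{eq:Vandermonde} becomes precisely $\Delta_{513}s_2^i - \Delta_{524} t_2^i \neq 0$. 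The analysis of $D_3$ is identical up to relabeling and produces $s_3^i, t_3^i$ with the same non-degeneracy. Rationality of $\bq_j(w)\in\rat(\bp(V_0))$ for all $w\in V_0$ then follows from Cramer's rule applied to the underlying $C_2^1$-cofactor system, exactly as in Claim~\ref{claim:motionA}.

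The second step is to compute $\delta_2^i, \delta_3^i$. Starting from (\ref{eq:uncoupled2})-style reasoning, choose a non-trivial $\bq \in Z(G+f_i,\bp)$ and subtract a trivial motion so that $\bq|_{V_0} = \sum_{j=1}^3 \delta_j^i \bq_j$. Writing $\bq(u_0) = (a_0,b_0,c_0)$ and using the five edge equations $D_{0,k}\cdot[\bq(u_0)-\bq(u_k)] = 0$, substitute the zero patterns from step one to obtain a homogeneous $5\times 6$ linear system in $(\delta_1^i,\delta_2^i,\delta_3^i,-a_0,-b_0,-c_0)$. Fix a scaling by imposing $\delta_1^i+\delta_2^i+\delta_3^i = \mu$ with $\mu = \det M /(\Delta_{015}\Delta_{025}\Delta_{035}\Delta_{045})$ for the augmented matrix $M$, and apply Cramer's rule; each entry of the form $D_{0,k}\cdot \bq_j(u_k)$ expands via~\eqref{eq:Vandermonde} as a product of three $\Delta$-terms, and collecting these after expanding the resulting $3\times 3$ determinants along the $D_{0,\cdot}$ columns yields the stated formulas for $\delta_2^i$ and $\delta_3^i$. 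The expression for $\delta_1^i$ is not needed for the subsequent use in Section~\ref{subsubsec:6} and so is omitted.

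The main obstacle is bookkeeping. Because each of $\bq_2$ and $\bq_3$ has a two-term dependence on parameters ($s_j^i, t_j^i$) rather than a single term as in Claim~\ref{claim:motionA}, the Cramer expansion of the $3\times 3$ determinants produces more monomials in $\Delta$, and these must be regrouped so that the four terms in the formula for $\delta_3^i$ and the two terms in the formula for $\delta_2^i$ emerge cleanly. This is entirely mechanical once the Vandermonde pattern of Claim~\ref{claim:motionA} is set up; no new conceptual ingredient is needed beyond the careful handling of the parameters $s_2^i, t_2^i, s_3^i, t_3^i$.
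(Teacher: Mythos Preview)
Your proposal is correct and follows exactly the route the paper intends: the paper states explicitly that the proof of Claim~\ref{claim:motionD} is identical to that of Claim~\ref{claim:motionA}, and your adaptation---pinning the appropriate triangle for each $D_j^i$, reading off the cross-product form from the edges present in $\cl(D_j^i)\cap K_u$, and then running the Cramer/Vandermonde computation for $\delta_2^i,\delta_3^i$---is precisely this. One small sharpening: the inequality $D_{1,2}\cdot(\bq_2(u_1)-\bq_2(u_2))\neq 0$ is not a consequence of non-triviality alone but of the fact that $u_1u_2\notin\cl(D_2^i)$ (as the paper's footnote emphasizes); since $D_2^i$ is $1$-dof, adding $u_1u_2$ would make it rigid, so the non-trivial $\bq_2$ must violate that constraint.
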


We will show that each of the above expressions for a motion of $(G+f_i,p)$  
lead to a contradiction. We first show that the motion given by Claim \ref{claim:motionA} cannot occur.

\begin{claim}\label{claim:typeA}
$G_0+f_i$ cannot be of type A.
\end{claim}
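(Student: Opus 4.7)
The plan is to argue by contradiction: I assume $G_0+f_i$ is type A for some $i\in\{1,2\}$. By Claim~\ref{claim:types}, $(G_0,F)$ must then be of type AA, AB, or BA. I will describe the approach in detail for type AA and indicate the modifications for AB/BA at the end.

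Since $F$ is a good pair and $k=2$, $(G+f_1+f_2,\bp)$ is minimally $C_2^1$-rigid, so $Z(G,\bp)=Z_0(G,\bp)\oplus\langle \bq^1,\bq^2\rangle$ for any non-trivial motions $\bq^i$ of $(G+f_i,\bp)$. Hence I can write $\bq_{\rm bad}=\bq_0+\lambda_1\bq^1+\lambda_2\bq^2$ for some trivial $\bq_0$ and $(\lambda_1,\lambda_2)\neq(0,0)$. Using Claim~\ref{claim:motionA}, I choose $\bq^i$ so that $\bq^i|_{V_0}=\sum_{j=1}^3 \alpha_j\bq_j^i$, where the $\alpha_j$ admit the explicit expressions given in that claim and are common to $i=1,2$ (since the system determining $\alpha_j^i$ involves only values on $\hat N_G(u_0)$, where $\bq_j^1$ and $\bq_j^2$ agree). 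Setting $\bq_j^\ast:=\lambda_1\bq_j^1+\lambda_2\bq_j^2$, each $\bq_j^\ast$ is then a motion of $(G_0+e_{j,1}+e_{j,2},\bp|_{V_0})$, where $e_{j,1},e_{j,2}$ are the triangle edges common to $A_j^1$ and $A_j^2$, and I obtain
\[\bq_{\rm bad}|_{V_0}-\bq_0|_{V_0}=\sum_{j=1}^3\alpha_j\,\bq_j^\ast.\]

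The decisive step will be to isolate a single index $j^\ast\in\{1,2,3\}$. I plan to pin three vertices of $\hat N_G(v_0)$ with distinct $y$-coordinates via Lemma~\ref{lem:canonical} so as to eliminate the trivial contribution $\bq_0|_{\hat N_G(v_0)}$, reducing the identity to
\[\bq_{\rm bad}|_{\hat N_G(v_0)}=\sum_{j=1}^3\alpha_j\,\bq_j^\ast|_{\hat N_G(v_0)}.\]
The left-hand side and each $\bq_j^\ast|_{\hat N_G(v_0)}$ lie in $\mathbb{Q}(\bp(V_0))$ and do not involve $\bp(u_0)$, whereas $\alpha_1,\alpha_2,\alpha_3$ are polynomials of degree three in $\bp(u_0)$ over $\mathbb{Q}(\bp(V_0))$. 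Since $\bp(u_0)$ is algebraically independent over $\mathbb{Q}(\bp(V_0))$, a monomial-by-monomial comparison in $x_0,y_0$ should force $\bq_j^\ast|_{\hat N_G(v_0)}=0$ for two of the three values of $j$. For the surviving index $j^\ast$, the motion $\bq_{j^\ast}^\ast$ is then a non-trivial motion of $(G_0+e_{j^\ast,1}+e_{j^\ast,2},\bp|_{V_0})$ whose restriction to $\hat N_G(v_0)$ inherits the bad-motion polynomial structure of $\bq_{\rm bad}$, contradicting the minimality of $|V|$.

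For the mixed subcases AB and BA, the same approach runs with Claim~\ref{claim:motionB} used in place of Claim~\ref{claim:motionA} for the type-B side; the coefficient comparison should then isolate $j^\ast\in\{2,3\}$, for which the edge pair in $A_{j^\ast}^1$ coincides with that in $B_{j^\ast}^2$, and the same contradiction follows. The main technical obstacle will be the monomial-independence argument: while the explicit formulas for $\alpha_1,\alpha_2,\alpha_3$ (and analogously for the $\beta_j$ in the type-B case) visibly suggest that their leading parts in $\bp(u_0)$ span a three-dimensional $\mathbb{Q}(\bp(V_0))$-subspace, verifying this rigorously requires careful bookkeeping with the $\Delta$-factors. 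A natural tool here is the Vandermonde identity~\eqref{eq:Vandermonde}, which re-expresses each determinant $|D_{0i}\ D_{0j}\ D_{0k}|$ appearing in the $\alpha_j$'s as a product of signed triangle areas, thereby exposing the dependence on $\bp(u_0)$ in a separable form.
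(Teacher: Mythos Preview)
Your approach diverges substantially from the paper's, and contains a genuine gap.

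The paper's argument is much more direct and uses only a single $f_i$. It does not attempt to descend to a smaller framework on $V_0$, nor does it produce a bad motion at $v_0$. Instead, it exploits the special feature of type~A: in Claim~\ref{claim:motionA} \emph{all} of the parameters $t_1,t_2,t_3$ are explicit rational functions of $\bp(u_5)$ alone. Consequently each coordinate of $\bq^i(u_k)$ for $u_k\in\hat N_G(u_0)$ is a polynomial in $x_0,y_0,x_5,y_5$ over $\mathbb Q$, so $\bq^i$ is (after clearing denominators) a $b$-motion on $\hat N_G(u_0)$. One then checks the star condition~\eqref{eq:bad} on $N_G(u_0)$, so $(G+f_i,\bp)$ has a bad motion at $u_0$. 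After a projective transformation (Lemma~\ref{lem:projective_bad}) this yields a generic $1$-dof counterexample with the same $|V|$ but $k=1$, contradicting the lexicographic minimality of $(|V|,k)$. No case split into AA/AB/BA is needed.

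The gap in your proposal lies in the sentence ``each $\bq_j^\ast|_{\hat N_G(v_0)}$ lie in $\mathbb Q(\bp(V_0))$ and do not involve $\bp(u_0)$''. You defined $\bq_j^\ast=\lambda_1\bq_j^1+\lambda_2\bq_j^2$, and while $\bq_j^i(w)\in\mathbb Q(\bp(V_0))$ for $w\in V_0$, the scalars $\lambda_1,\lambda_2$ come from decomposing the motion $\bq_{\rm bad}$ of $(G,\bp)$ in the basis $\bq^1,\bq^2$; the motions $\bq^i$ themselves depend on $\bp(u_0)$ through the $\alpha_j$, so there is no reason $\lambda_1,\lambda_2$ should lie in $\mathbb Q(\bp(V_0))$. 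Without this, the monomial-by-monomial comparison in $x_0,y_0$ collapses. (This is precisely why the paper, in the BB/CC/DD cases of Section~\ref{subsubsec:6}, avoids solving for $\lambda_1,\lambda_2$ and instead passes to a $3\times 3$ determinant via the map $\psi$, obtaining a polynomial identity whose coefficients \emph{are} in $\mathbb Q(\bp(V_0))$; that route requires the quadratic Claim~\ref{claim:quadratic}, for which there is no type-A analogue.) Even granting your independence claim, the assertion that monomial comparison ``should force $\bq_j^\ast|_{\hat N_G(v_0)}=0$ for two of the three values of $j$'' is not justified: the three $\alpha_j$ are not coordinate monomials, and a constant linear combination $\sum_j\alpha_j c_j$ need not force two of the $c_j$ to vanish.
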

\begin{proof}
Suppose, for a contradiction, that $G_0+f_i$ is type $A$. Let $\bq_j^i$ be the nontrivial motion of $(A^i_j,\bp|_{V_0})$ given by Claim \ref{claim:motionA}. 
We will omit the superscript $i$ from $\bq_j^i$ throughout the proof as it remains fixed. Let $\bp(u_0)=(x_0, y_0)$ and $\bp(u_5)=(x_5, y_5)$, and recall that the values of $\bp(u_k)$, $1\leq k\leq 4$, are given by (\ref{eq:quasi_generic}).
We will contradict the choice of $G$ by showing that $(G+f_i,\bp)$ is a 1-dof framework with a bad motion at $u_0$.

By Claim~\ref{claim:motionA}, $(G+f_i, \bp)$ has  a non-trivial motion $\bq^i$ such that
$\bq^i|_{V_0}=\sum_{j=1}^3 \alpha_j \bq_j$.
The formulae for $\alpha_j$, $j=1,2,3$,  and $\bq_j(u_k)$, $1\leq k\leq 5$, given in Claim~\ref{claim:motionB} imply that 
each coordinate of $\bq^i(u_k)$, $1\leq k\leq 5$, can be expressed as a  polynomial in $x_0, y_0, x_5, y_5$ over $\mathbb{Q}$.  
We  show that the same is true for $\bq^i(u_0)$.

Let $\bq^i(u_0)=(a_0,b_0,c_0)$. 
We recall the the following linear system in the proof of  Claim~\ref{claim:motionA}:

\begin{equation}
\label{eq:Aq^i}
\left(
\begin{array}{cccc}
1 & 1 & 1 & 0  \\
0 & D_{0,1}\cdot \bq_2(u_1) &  0 & D_{0,1} \\
0 & 0 &  0 & D_{0,2} \\
D_{0,3}\cdot \bq_1(u_3) & 0 &  D_{0,3}\cdot \bq_3(u_3) & D_{0,3} \\
D_{0,4}\cdot \bq_1(u_4) & D_{0,4}\cdot \bq_2(u_4) &  D_{0,4}\cdot \bq_3(u_4) & D_{0,4} \\
0 & 0 &  0 & D_{0,5} \\
\end{array}
\right) \left(
\begin{array}{c}
\alpha_1 \\ \alpha_2 \\ \alpha_3\\ -a_0\\ -b_0\\ -c_0 
\end{array}
\right) = 
\left(
\begin{array}{c}
\mu \\ 0 \\ 0 \\ 0\\ 0\\ 0 
\end{array}
\right)
\end{equation}
where $\mu=\alpha_1+\alpha_2+\alpha_3\in \rat(\bp(\hat N_G(u_0)))$ takes the value given at the end of the proof of Claim~\ref{claim:motionA} so can be considered as a rational function of $x_0, y_0, x_5, y_5$ over $\rat$. 
Since each entry of the matrix of coefficients in (\ref{eq:Aq^i}) is a polynomial in  $x_0, y_0, x_5, y_5$, each of 
$a_0,b_0,c_0$ can be expressed as a rational function of $x_0, y_0, x_5, y_5$ over $\rat$.
Hence, by 
a further 
scaling 
$\bq^i$, we can suppose that each component of $\bq^i(u_k)$, $0\leq k\leq 5$, can be expressed as a polynomial in  $x_0, y_0, x_5, y_5$. 
This gives us a map $b:\hat{N}_G(u_0)\rightarrow \mathbb{Q}[X_0,Y_0,X_5, Y_5]^3$ such that 
$\bq^i$ is a $b$-motion on $\hat{N}_G(u_0)$.

We next show that $b$ satisfies (\ref{eq:bad}) i.e.
the polynomial identity given by an edge $u_ku_\ell$, $1\leq k < \ell\leq 5$, 
is satisfied if and only if $\ell=5$. Since $x_0,y_0,x_5,y_5$ are generic, this is equivalent to showing that the motion $\bq^i$ satisfies the constraint  corresponding to the edge $u_ku_\ell$, 
if and only if $\ell=5$. 
Since the current motion $\bq^i$, is a non-zero scalar multiple of our original motion  
it will suffice to show this for the original $\bq^i$. 
(More specifically, it will suffice to show $D_{k,\ell}\cdot (\bq^i(u_k)-\bq^i(u_{\ell}))\neq 0$ for $1\leq k<\ell\leq 4$ as $\bq^i$ is a motion of $(G+f_i,\bp)$ and ${\rm cl}(G+f_i)$ contains the star on $N_G(u_0)$ centered at $u_5$.) 
This follows since $\bq^i|_{V_0}=\sum_{j=1}^3\alpha_j \bq_j$ and each $u_ku_\ell$ is an edge of $\cl(A^i_j)$ for exactly two values $1\leq j\leq 3$.
%
%
Thus $(G+f_i, \bp)$ has a bad motion at $u_0$.

Although $(G+f_i, \bp)$ is not generic as it satisfies (\ref{eq:quasi_generic}), Lemma~\ref{lem:projective_bad} ensures that we may use a projective transformation to construct a generic 1-dof framework $(G+f_i,\bp')$ which has a bad motion at $u_0$. This contradicts our  initial choice of $(G,\bp)$ to minimize $|V|$ and then $k$ since $k=2$ and $G+f_i$ is $1$-dof.
\end{proof}

Our proof that the motions given by Claim \ref{claim:motionB}, \ref{claim:motionC} and \ref{claim:motionD}  cannot occur is more complicated because the expressions for the motions $\bq^i_j$ in these claims contain undetermined parameters. 
%
We first give a preliminary claim which shows that 
specific values of these parameters cannot occur when  
$G_0+f_i$ is type B. 

\begin{claim}\label{claim:bad_case}
Suppose that $G_0+f_i$ is type B.  Let $\bq^i$ and $\bq_j^i$ be the non-trivial motion of  $(G+f_i,\bp)$ and $(B_j^i,\bp|_{V_0})$, respectively,  given in Claim~\ref{claim:motionB}.
Then 
\[
 (s_{12},  t_{11}, t_{12}, t_2, t_3^i)\neq
\left( -\frac{\Delta_{513}}{\Delta_{523}}, \frac{\Delta_{513}}{\Delta_{514}}, 
-\frac{\Delta_{513}}{\Delta_{524}}, -\frac{\Delta_{513}}{\Delta_{524}}, -\frac{\Delta_{513}}{\Delta_{524}}\right).
\]
%
\end{claim}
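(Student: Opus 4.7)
My plan is to argue by contradiction: assume the parameter tuple $(s_{12}, t_{11}, t_{12}, t_2, t_3^i)$ equals the forbidden value displayed in the statement, and derive a contradiction with the minimal choice of the counterexample $(G,\bp)$. The whole strategy parallels the proof of Claim~\ref{claim:typeA}: I will build a ``bad motion'' of $(G+f_i,\bp)$ at $u_0$ in the sense of Lemma~\ref{lem:bad_motion} and then, via Lemma~\ref{lem:projective_bad}, transfer it to a generic 1-dof framework, contradicting the minimal choice of $(G,\bp)$ under the lexicographic ordering (since $G+f_i$ has $k=1 < k=2$).

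The key observation underlying the approach is that the forbidden values are precisely rational functions of $\bp(u_1),\ldots,\bp(u_5)$ alone; they involve no coordinate of a vertex in $V_0\setminus\hat N_G(u_0)$. Consequently, once the parameters are specialized to the forbidden values, the formulas for $\beta_1^i,\beta_2^i,\beta_3^i$ in Claim~\ref{claim:motionB} become polynomials in the coordinates of $\bp(\hat N_G(u_0))$ alone, after the cyclic simplifications $\Delta_{513}=\Delta_{135}$, $\Delta_{524}=\Delta_{245}$, $\Delta_{514}=\Delta_{145}$, $\Delta_{523}=\Delta_{235}$. Substituting these into $\bq^i|_{V_0}=\sum_{j=1}^3 \beta_j^i\bq_j^i$ using the explicit cross-product expressions for $\bq_j^i(u_k)$ from Claim~\ref{claim:motionB}, each $\bq^i(u_k)$ for $k=1,\ldots,5$ becomes polynomial in $\bp(\hat N_G(u_0))$. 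Solving the linear system analogous to (\ref{eq:coupled2}) by Cramer's rule determines $\bq^i(u_0)$ as a rational function of the same coordinates; after rescaling $\bq^i$ to clear denominators, one obtains a polynomial map $b:\hat N_G(u_0)\to\mathbb{Q}[X_0,Y_0,\ldots,X_5,Y_5]^3$ so that $\bq^i$ is a $b$-motion on $\hat N_G(u_0)$.

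The decisive step is to verify that $b$ satisfies the star condition (\ref{eq:bad}) on $N_G(u_0)$. The edges of the star centered at $u_5$ are automatic, since $\bq^i$ is a motion of $G+f_i$ and ${\rm cl}(E_0)\cap K_u$ is that star by (\ref{eq:star}). For each of the six non-star edges $u_ku_\ell\in K$, I would compute $D_{k,\ell}\cdot(\bq^i(u_k)-\bq^i(u_\ell))=\sum_j\beta_j^i\, D_{k,\ell}\cdot(\bq_j^i(u_k)-\bq_j^i(u_\ell))$, observing from Figure~\ref{fig:B} that each such edge lies in two of the three sets ${\rm cl}(B_j^i)\cap K_u$; thus at most one $\bq_j^i$ contributes a nonzero term, and the check reduces to showing that the corresponding $\beta_j^i$ does not vanish identically at the forbidden specialisation, which can be established by a direct signed-area computation. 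Lemma~\ref{lem:projective_bad} then promotes the bad motion to a generic 1-dof framework, producing the desired contradiction.

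The main obstacle will be this final verification. One must compute the specialized $\beta_j^i$ explicitly for each $j\in\{1,2,3\}$ and confirm non-vanishing as polynomials in the remaining generic variables (essentially $\bp(u_0)$), using only the cyclic-area identities and the Vandermonde-type identity (\ref{eq:Vandermonde}). Handling all six edges of $K$ systematically will require careful sign bookkeeping, but each individual verification reduces to an inequality between explicit products of signed areas, so the computation is routine once the correct $\beta_j^i$ and cross-product simplifications are in place.
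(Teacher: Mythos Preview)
Your overall strategy is correct and is exactly what the paper does: assume the forbidden tuple, show that the resulting $\bq^i$ is a $b$-motion on $\hat N_G(u_0)$ for some polynomial $b$, verify the star condition~(\ref{eq:bad}), and invoke Lemma~\ref{lem:projective_bad} to contradict minimality.

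However, your verification outline for~(\ref{eq:bad}) contains a genuine error. You assert that each edge $u_ku_\ell\in K(u_1,u_2,u_3,u_4)$ lies in exactly two of the three sets $\cl(B_j^i)\cap K_u$, so that only one $\bq_j^i$ contributes and the check reduces to $\beta_j^i\neq 0$. This is the type~A picture (Figure~\ref{fig:A}), not type~B. In type~B (Figure~\ref{fig:B}) the pattern is different: only the edges $u_1u_2$, $u_1u_3$, $u_2u_4$ lie in two of the three closures; the edges $u_1u_4$, $u_2u_3$, $u_3u_4$ each lie in only \emph{one} of them. For these latter three edges, \emph{two} of the motions $\bq_j^i$ contribute non-trivially, and the quantity $D_{k,\ell}\cdot(\bq^i(u_k)-\bq^i(u_\ell))$ is a genuine linear combination of two terms.

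This is precisely where the specific forbidden values matter. For instance, for $u_1u_4$ one has $u_1u_4\in\cl(B_2^i)$, $\bq_1^i(u_1)=\bq_3^i(u_1)=0$, and $D_{1,4}\cdot\bq_1^i(u_4)=t_{12}\det(D_{4,1},D_{4,2},D_{4,5})$ while $D_{1,4}\cdot\bq_3^i(u_4)=t_3^i\det(D_{4,1},D_{4,2},D_{4,5})$. Since at the forbidden tuple $t_{12}=t_3^i$, the check collapses to showing $(\beta_1^i+\beta_3^i)\neq 0$ rather than any single $\beta_j^i\neq 0$. One then computes $\beta_1^i+\beta_3^i$ at the forbidden specialisation (using~(\ref{eq:quasi_generic})) and finds it equals a non-zero polynomial in $x_0$. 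Analogous combinations arise for $u_2u_3$ and $u_3u_4$. So your reduction ``the check reduces to showing that the corresponding $\beta_j^i$ does not vanish'' is wrong for half of the edges; the actual computation hinges on relations among the forbidden parameters (such as $t_{12}=t_3^i=t_2$) that make certain \emph{sums} of $\beta$'s the relevant quantities.
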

\begin{proof}
Suppose that $
 (s_{12},  t_{11}, t_{12}, t_2, t_3^i)=
\left( -\frac{\Delta_{513}}{\Delta_{523}}, \frac{\Delta_{513}}{\Delta_{514}}, 
-\frac{\Delta_{513}}{\Delta_{524}}, -\frac{\Delta_{513}}{\Delta_{524}}, -\frac{\Delta_{513}}{\Delta_{524}}\right)
$.
Then we can use the same argument as in the proof of Claim \ref{claim:typeA} to show that some scalar multiple of $\bq^i$ is a
  $b$-motion of $(G+f_i,\bp)$ on $\hat{N}_G(u_0)$ for some $b:\hat{N}_G(u_0)\rightarrow \mathbb{Q}[X_0,Y_0,X_5, Y_5]^3$. In order to
 verify that $b$ satisfies (\ref{eq:bad}) it suffices to show that 
$b$ satisfies
  $D_{k,\ell}\cdot (\bq^i(u_k)-\bq^i(u_{\ell}))\neq 0$ for all
  $u_ku_\ell\in K(u_1,u_2,u_3,u_4)$.  
This inequality holds for the edges $u_1u_2,u_1u_3,u_2u_4$ since they are  edges of $\cl(A^i_j)$ for exactly two values $1\leq j\leq 3$. We can verify the inequality holds for the remaining three edges by a direct computation using the expressions for the motions $\bq^i$ and $\bq^i_j$ given in Claim~\ref{claim:motionB}. For example, the facts that $u_1u_4$ is in $\cl(A^i_2)$, $q^i_1(u_1)=0=q^i_3(u_1)$ and $t_{12}=t_3^i$ give 
\begin{eqnarray*}
D_{1,4}\cdot (\bq^i(u_1)-\bq^i(u_{4}))&=&-D_{1,4}\cdot (\bq^i_1(u_4)+\bq^i_3(u_{4}))=t_{12}(\beta_1+\beta_3)
\left|
\begin{array}{c}
D_{4,1} \\ D_{4,2} \\ D_{4,3}
\end{array}\right|\\
&=&t_{12}\Delta_{513}^2 x_0(2x_0-1)(x_0-1)
\left|
\begin{array}{c}
D_{4,1} \\ D_{4,2} \\ D_{4,3}
\end{array}\right|
\neq 0.
\end{eqnarray*}
%
%
Thus $(G+f_i, \bp)$ has a bad motion at $u_0$.

We can now use Lemma~\ref{lem:projective_bad}  to construct a generic 1-dof framework $(G+f_i,\bp')$ which has a bad motion at $u_0$. This contradicts our  initial choice of $(G,\bp)$ to minimize $|V|$ and then $k$ since $k=2$ and $G+f_i$ is $1$-dof.
\end{proof}

We can use Claim \ref{claim:bad_case} to obtain the following key result for handling the case when $G_0+f_i$ is type B.

\begin{claim}\label{claim:linear3}
Suppose that $G_0+f_i$ is type B, and, for $1\leq j\leq 3$, let $\bq_j^i$ be the non-trivial motion of  $(B_j^i,\bp|_{V_0})$ given in Claim~\ref{claim:motionB}.
If $\lambda_2\beta_2+\lambda_3\beta_3=0$ for some $\lambda_2,\lambda_3\in \mathbb{Q}(\bp(V_0))$, then $\lambda_2=\lambda_3=0$.
\end{claim}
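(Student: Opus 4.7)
The plan is to exploit the fact that $\beta_2$ and $\beta_3$, as given by Claim~\ref{claim:motionB}, are polynomial expressions in $(x_0,y_0)=\bp(u_0)$ with coefficients in the field $K:=\mathbb{Q}(\bp(V_0))$. Indeed, the parameters $s_{12},t_{11},t_{12},t_2,t_3^i$ and every factor $\Delta_{ijk}$ with $\{i,j,k\}\subseteq\{1,\dots,5\}$ lie in $K$, whereas each factor $\Delta_{0jk}$ is linear in $(x_0,y_0)$ with coefficients in $K$. Because $\bp$ is generic, $x_0,y_0$ are algebraically independent over $K$, so the hypothesis $\lambda_2\beta_2+\lambda_3\beta_3=0$ with $\lambda_2,\lambda_3\in K$ is equivalent to a polynomial identity in $K[x_0,y_0]$.

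The key structural observation is that every summand of $\beta_2$ is divisible by the linear form $\Delta_{012}$, while the summand $\Delta_{013}^2\Delta_{024}\Delta_{135}^2$ of $\beta_3$ is free of $\Delta_{012}$. So I would first restrict $\bp(u_0)$ to the line through $\bp(u_1),\bp(u_2)$ (forcing $\Delta_{012}=0$) and parameterize it by $\bp(u_0)=(1-\tau)\bp(u_2)+\tau\bp(u_1)$; the substitutions $\Delta_{013}=(\tau-1)\Delta_{123}$, $\Delta_{014}=(\tau-1)\Delta_{124}$, $\Delta_{023}=\tau\Delta_{123}$, $\Delta_{024}=\tau\Delta_{124}$ produce $\beta_2|_{\Delta_{012}=0}=0$ and
\[
\beta_3|_{\Delta_{012}=0}=\Delta_{123}^2\Delta_{124}\Delta_{135}\,\tau(\tau-1)\bigl[(\tau-1)(t_{11}\Delta_{145}-\Delta_{135})+\tau(s_{12}\Delta_{235}-t_{12}\Delta_{245})\bigr].
\]
If $\lambda_3\neq 0$, vanishing of every $\tau$-coefficient forces $t_{11}=\Delta_{135}/\Delta_{145}$ and $s_{12}\Delta_{235}=t_{12}\Delta_{245}$.

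To rule out this residual possibility I would apply the same restriction technique to a second line. Restricting to $\Delta_{013}=0$ kills all but one summand of $\beta_3$ (namely $s_{12}t_2\,\Delta_{012}\Delta_{023}\Delta_{024}\Delta_{235}\Delta_{245}$) and all but one summand of $\beta_2$ (namely $-s_{12}t_3^i\,\Delta_{012}\Delta_{023}\Delta_{024}\Delta_{235}\Delta_{245}$); the identity then produces the extra relation $\lambda_2 t_3^i=\lambda_3 t_2$. Combining these with $t_2=-\Delta_{513}/\Delta_{524}$ from Claim~\ref{claim:motionB} and with the edge-constraint from $u_3u_4\in E(B^i_1)$, which one can verify reads $\Delta_{134}(\Delta_{135}+t_{11}\Delta_{145})+\Delta_{234}(s_{12}\Delta_{235}+t_{12}\Delta_{245})=0$, the parameter tuple $(s_{12},t_{11},t_{12},t_2,t_3^i)$ becomes entirely determined; the plan is to show that this determined tuple coincides exactly with the tuple excluded by Claim~\ref{claim:bad_case}, producing the contradiction. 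Thus $\lambda_3=0$, and since the leading summand $t_{11}\Delta_{012}\Delta_{013}\Delta_{014}\Delta_{135}\Delta_{145}$ of $\beta_2$ is nonzero for generic $\bp$, the identity $\lambda_2\beta_2=0$ forces $\lambda_2=0$.

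The hard part will be the final matching step: each individual line restriction only extracts a partial system of relations, and verifying that the combined system with $t_2$ and the $u_3u_4$-constraint reproduces precisely the tuple excluded by Claim~\ref{claim:bad_case} requires careful algebraic bookkeeping. If the alignment is not direct, one would iterate the restriction procedure along further lines (for example $\Delta_{024}=0$) until all five parameters are pinned down and the reduction to Claim~\ref{claim:bad_case} is complete.
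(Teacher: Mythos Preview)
Your overall strategy---regarding $\lambda_2\beta_2+\lambda_3\beta_3$ as a polynomial in $(x_0,y_0)$ over $K=\mathbb{Q}(\bp(V_0))$, extracting coefficient relations, and reducing to the forbidden tuple of Claim~\ref{claim:bad_case}---is exactly what the paper does. The paper simply uses the normalisation~(\ref{eq:quasi_generic}) and lists all nine monomial coefficients directly; your line-restriction device is an equivalent (and more geometric) way of reading off linear combinations of those coefficients. Your extractions from the restriction to $\Delta_{012}=0$ are correct and already yield $t_{11}=\Delta_{513}/\Delta_{514}$ and $s_{12}\Delta_{523}=t_{12}\Delta_{524}$, matching the paper.

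However, the step where you invoke the edge constraint $u_3u_4\in E(B_1^i)$ does not work. First, there is a sign slip: the constraint $D_{3,4}\cdot(\bq_1^i(u_3)-\bq_1^i(u_4))=0$ unwinds (via the Vandermonde identity) to
\[
\Delta_{134}(\Delta_{135}-t_{11}\Delta_{145})+\Delta_{234}(s_{12}\Delta_{235}-t_{12}\Delta_{245})=0,
\]
with minus signs where you wrote plus signs. Second---and this is the real problem---once you already have $t_{11}\Delta_{145}=\Delta_{135}$ and $s_{12}\Delta_{235}=t_{12}\Delta_{245}$ from the first restriction, both brackets above vanish identically, so the edge constraint is \emph{automatically satisfied} and yields no new information. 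It therefore cannot pin down $t_{12}$ or $t_3^i$ as you hope.

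Your fallback (``iterate along further lines'') is the correct fix, but it is not optional. Restricting to $\Delta_{024}=0$ gives $\lambda_2=\lambda_3$; combining with $\lambda_2 t_3^i=\lambda_3 t_2$ from the $\Delta_{013}=0$ restriction gives $t_3^i=t_2=-\Delta_{513}/\Delta_{524}$; a further restriction (for instance $\Delta_{023}=0$) then forces $t_{12}=-\Delta_{513}/\Delta_{524}$, completing the reduction to Claim~\ref{claim:bad_case}. So the argument can be salvaged, but the edge-constraint shortcut must be dropped and replaced by two additional line restrictions.
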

\begin{proof}
 We will again omit the superscript $i$ from the proof since it stays fixed. We assume that  $\lambda_2\beta_2+\lambda_3\beta_3=0$ for some $\lambda_2,\lambda_3\in \mathbb{Q}(\bp(V_0))$ which are not both zero, and show that this contradicts Claim \ref{claim:bad_case}. 
%

Let  $\bp(u_i)=(x_i, y_i)$ for $0\leq i\leq5$. 
By Claim~\ref{claim:motionB} we have an explicit formula for each $\beta_i$ in terms of $x_0,\dots, x_5, y_0, \dots, y_5$.
Since $V_0=V\setminus \{u_0\}$, 
 we may regard each $\beta_i$ as a polynomial in $x_0, y_0$ with coefficients in $\mathbb{Q}(\bp(V_0))$. 
If $\lambda_2\beta_2+\lambda_3\beta_3=0$ with $\lambda_i\in \mathbb{Q}(\bp(V_0))$, then $\lambda_2\beta_2+\lambda_3\beta_3$ is identically zero as a polynomial in $x_0, y_0$. 
In particular, the coefficient of each monomial is zero. 
%

Using Claim~\ref{claim:motionB} and (\ref{eq:quasi_generic}),
a computation of the list of coefficients of the polynomial $\lambda_2\beta_2+\lambda_3\beta_3$ in $x_0, y_0$ over $\mathbb{Q}(\bp(V_0))$ gives:
\begin{itemize}
\item[$x_0^3$:] $-\Delta_{513}^2 \lambda_3 + \Delta_{513} \Delta_{514} \lambda_3 t_{11} - \Delta_{513} \Delta_{523} \lambda_3 s_{12} + \Delta_{513} \Delta_{524} \lambda_3 t_{12}$,
\item[$x_0^2y_0$:] $-\Delta_{513} \Delta_{514} \lambda_2 t_{11} + \Delta_{513} \Delta_{514} \lambda_3 t_{11} + \Delta_{513} \Delta_{523} \lambda_3 s_{12} - \Delta_{513} \Delta_{524} \lambda_2 t_{12}$ 

 $+ \Delta_{513} \Delta_{524} \lambda_2 t_3 +  \Delta_{523} \Delta_{524} \lambda_2 s_{12} t_3$,
\item[$x_0^2$:] $2 \Delta_{513}^2 \lambda_3 - 2 \Delta_{513} \Delta_{514} \lambda_3 t_{11} + \Delta_{513} \Delta_{523} \lambda_3 s_{12} - \Delta_{513} \Delta_{524} \lambda_3 t_{12}$,
\item[$x_0 y_0^2$:] $\Delta_{513}^2 \lambda_3 - \Delta_{513} \Delta_{514} \lambda_2 t_{11} - \Delta_{513} \Delta_{524} \lambda_3 t_{12} - \Delta_{523} \Delta_{524} \lambda_2 s_{12} t_3$,
\item[$x_0y_0$:] $-\Delta_{513}^2 \lambda_3 + 2 \Delta_{513} \Delta_{514} \lambda_2 t_{11} - \Delta_{513} \Delta_{514} \lambda_3 t_{11} - \Delta_{513} \Delta_{523} \lambda_3 s_{12} + \Delta_{513} \Delta_{524} \lambda_2 t_{12} - \Delta_{513} \Delta_{524} \lambda_2 t_3 +  \Delta_{513} \Delta_{524} \lambda_3 t_{12}$,
\item[$x_0$:] $-\Delta_{513}^2 \lambda_3 + \Delta_{513} \Delta_{514} \lambda_3 t_{11}$,
\item[$y_0^3$:] $\Delta_{513} \Delta_{524} \lambda_2 t_{12} - \Delta_{513} \Delta_{524} \lambda_2 t_3$,
\item[$y_0^2$:] $-\Delta_{513}^2 \lambda_3 + \Delta_{513} \Delta_{514} \lambda_2 t_{11} - \Delta_{513} \Delta_{524} \lambda_2 t_{12} + \Delta_{513} \Delta_{524} \lambda_2 t_3$,
\item[$y_0$:] $\Delta_{513}^2 \lambda_3 - \Delta_{513} \Delta_{514} \lambda_2 t_{11}$
\end{itemize}
where  $s_{12}, t_{11}, t_{12}, t_3$ are all  non-zero by Claim~\ref{claim:motionB}.
Since $\lambda_2\beta_2+\lambda_3\beta_3=0$, all coefficients are zero. The hypothesis that $\lambda_2,\lambda_3$ are not both zero and the expression for the coefficient of $y_0$ give $\lambda_2\neq 0$ and $\lambda_3\neq 0$.
The coefficient of $x_0$ now gives $t_{11}=\Delta_{513}/\Delta_{514}$, and
the coefficient of $y_0$ now gives $\lambda_2=\lambda_3$.
Then the coefficients of $x_0^3$ and $y_0^2$, give $s_{12}=\Delta_{524} t_{12}/ \Delta_{523}$ and $t_3=t_{12}$.
Finally, the coefficient of $x_0^2y_0$ gives $t_{12}=-\Delta_{513}/\Delta_{524}$.
Hence 
$$ (s_{12},  t_{11}, t_{12}, t_2, t_3)=
\left( -\frac{\Delta_{513}}{\Delta_{523}}, \frac{\Delta_{513}}{\Delta_{514}}, 
-\frac{\Delta_{513}}{\Delta_{524}}, -\frac{\Delta_{513}}{\Delta_{524}}, -\frac{\Delta_{513}}{\Delta_{524}}\right).$$
This contradicts Claim~\ref{claim:bad_case}.
\end{proof}

Our next result is key to handling the cases when $G_0+f_i$ is type C or D.

\begin{claim}\label{claim:linear1} Suppose  
$\gamma_j^i,\delta_j^i, $ are as in Claims~\ref{claim:motionC} and \ref{claim:motionD}, respectively.\\
(a) If $\lambda_2\gamma_2^i+\lambda_3\gamma_3^i=0$ for some $\lambda_2,\lambda_3\in \mathbb{Q}(\bp(V_0))$, then $\lambda_2=\lambda_3=0$.
\\
(b) If $\lambda_2\delta_2^i+\lambda_3\delta_3^i=0$ for some $\lambda_2,\lambda_3\in \mathbb{Q}(\bp(V_0))$, then $\lambda_2=\lambda_3=0$.
\end{claim}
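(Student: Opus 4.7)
The plan is to mimic the argument used for the type B case in Claim~\ref{claim:linear3}, treating parts (a) and (b) separately but along identical lines. In each case I would argue by contradiction: assume there exist $\lambda_2,\lambda_3\in\mathbb{Q}(\bp(V_0))$, not both zero, with the asserted linear relation. Using the explicit formulae of Claim~\ref{claim:motionC} (resp.\ Claim~\ref{claim:motionD}) together with the normalization \eqref{eq:quasi_generic}, each $\gamma_j^i$ (resp.\ $\delta_j^i$) is a polynomial in $x_0,y_0$ with coefficients in $\mathbb{Q}(\bp(V_0))$. Since $\bp(u_0)$ is generic over $\mathbb{Q}(\bp(V_0))$, the relation forces the polynomial $\lambda_2\gamma_2^i+\lambda_3\gamma_3^i$ (resp.\ $\lambda_2\delta_2^i+\lambda_3\delta_3^i$) to vanish identically as a polynomial in $x_0,y_0$, so every monomial coefficient must be zero.

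Extracting the coefficients of successive monomials in $x_0,y_0$, I would then run a short triangular elimination to force the free parameters from Claims~\ref{claim:motionC} and~\ref{claim:motionD} to take very specific rational expressions in the $\Delta_{ijk}$'s. More precisely, for part (a) the vanishing of the $x_0$- and $y_0$-type coefficients should pin down $t_2^i$ and $t_3^i$ in terms of $t_1=\Delta_{523}/\Delta_{514}$ and the $\Delta$'s on $\{u_1,\dots,u_5\}$, and simultaneously force $\lambda_2=\lambda_3$ (up to a nonzero factor). For part (b) the analogous elimination will determine $s_2^i,t_2^i,s_3^i,t_3^i$ uniquely, subject only to the non-degeneracy constraint $\Delta_{513}s_j^i-\Delta_{524}t_j^i\ne 0$.

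The final step is to show that these forced parameter values cannot occur. Just as Claim~\ref{claim:linear3} leans on Claim~\ref{claim:bad_case}, I would prove two analogous ``bad case'' lemmas: one stating that, when $G_0+f_i$ is type C, the tuple $(t_1,t_2^i,t_3^i)$ produced above cannot be realised; and one stating the same for type D with $(s_2^i,t_2^i,s_3^i,t_3^i)$. Each of these is proved exactly as in Claim~\ref{claim:bad_case}: assuming such parameter values occur, the corresponding motion $\bq^i$ of $(G+f_i,\bp)$ constructed from Claims~\ref{claim:motionC} or~\ref{claim:motionD} can, after rescaling, be written as a polynomial in $x_0,y_0,x_5,y_5$ at each vertex of $\hat N_G(u_0)$, giving a $b$-motion for some $b:\hat N_G(u_0)\to\mathbb{Q}[X_0,Y_0,X_5,Y_5]^3$. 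The condition (\ref{eq:bad}) is then checked edge by edge on $K(u_1,u_2,u_3,u_4)$: the three edges lying in exactly two of the $\cl(C_j^i)$ (resp.\ $\cl(D_j^i)$) are handled automatically, and the remaining edges are verified by direct computation from the explicit formulae. Lemma~\ref{lem:projective_bad} then delivers a generic 1-dof framework with a bad motion at $u_0$, contradicting the minimal choice of $(G,\bp)$ (recall $k=2$ and $G+f_i$ is 1-dof).

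The main obstacle is computational rather than conceptual: in part (b) the expressions for $\delta_2^i,\delta_3^i$ contain four free parameters instead of the two in part~(a), so the coefficient extraction produces more relations, and one has to take some care to use the constraints $\Delta_{513}s_j^i-\Delta_{524}t_j^i\ne 0$ to rule out spurious degenerate solutions when concluding that the forced tuple contradicts the bad-case lemma. Apart from that, the proof is a direct translation of the argument already given for type B.
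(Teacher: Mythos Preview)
Your starting idea---regard $\lambda_2\gamma_2^i+\lambda_3\gamma_3^i$ (resp.\ $\lambda_2\delta_2^i+\lambda_3\delta_3^i$) as a polynomial in $x_0,y_0$ over $\mathbb{Q}(\bp(V_0))$ and read off coefficients---is exactly what the paper does. But you have significantly overcomplicated the rest by mimicking the type~B argument (Claim~\ref{claim:linear3}) too closely. In types~C and~D the coefficient extraction terminates immediately: there exist single monomials whose coefficients already isolate $\lambda_2$ or $\lambda_3$ with a nonzero factor, so no triangular elimination, no forced parameter values, and no auxiliary ``bad case'' lemma are needed at all.

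Concretely, the paper's proof is four lines. For~(a), the coefficient of $y_0^2$ in $\lambda_2\gamma_2^i+\lambda_3\gamma_3^i$ is $\Delta_{513}\Delta_{523}\lambda_2$, giving $\lambda_2=0$; then the coefficient of $x_0y_0$ is $-2\Delta_{513}\Delta_{523}\lambda_3$, giving $\lambda_3=0$. For~(b), the coefficient of $x_0^2$ is $(-\Delta_{513}s_2^i+\Delta_{524}t_2^i)\Delta_{523}\lambda_3$, which is nonzero precisely because of the constraint $\Delta_{513}s_2^i-\Delta_{524}t_2^i\ne 0$ recorded in Claim~\ref{claim:motionD}; this kills $\lambda_3$, and then the coefficient of $x_0^2y_0$ similarly kills $\lambda_2$ using $\Delta_{513}s_3^i-\Delta_{524}t_3^i\ne 0$. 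The reason type~B genuinely needs Claim~\ref{claim:bad_case} is that $\beta_2^i,\beta_3^i$ share enough monomial support that no single coefficient isolates a $\lambda_j$; that phenomenon simply does not occur for $\gamma_j^i$ and $\delta_j^i$. Your proposed bad-case lemmas for types~C and~D are therefore unnecessary, and you would discover this as soon as you actually wrote out the coefficients.
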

\begin{proof}
We first prove (a).
Let  $\bp(u_i)=(x_i, y_i)$ for $0\leq i\leq5$. 
As in the proof  of Claim~\ref{claim:linear3}, we regard each $\gamma_j^i$ as a polynomial in $x_0, y_0$ with coefficients in $\mathbb{Q}(\bp(V_0))$. 
Suppose $\sum_{2\leq j\leq 3} \lambda_j \gamma_j^i=0$ with $\lambda_j^i\in \mathbb{Q}(\bp(V_0))$. Then $P:=\sum_{2\leq j\leq 3} \lambda_j \gamma_j^i$ is identically zero as a polynomial in $x_0, y_0$. 
Using Claim~\ref{claim:motionC} and (\ref{eq:quasi_generic}), we find that the coefficient of $y_0^2$ in $P$ is $\Delta_{513}\Delta_{523} \lambda_2$,
and hence $\lambda_2=0$. This implies that the coefficient of $x_0y_0$ in $P$ is $-2\Delta_{513}\Delta_{523} \lambda_3$ and hence $\lambda_3=0$.

We next prove (b). 
If $\sum_{2\leq j\leq 3} \lambda_j \delta_j^i=0$ with $\lambda_j^i\in \mathbb{Q}(\bp(V_0))$, then $Q:=\sum_{2\leq j\leq 3} \lambda_j \delta_j^i$ is identically zero as a polynomial in $x_0, y_0$. 
Using Claim~\ref{claim:motionD} and (\ref{eq:quasi_generic}), 
we find that the coefficient of $x_0^2$ in $Q$ is $(-\Delta_{513} s_2^i + \Delta_{524}t_2^i)\Delta_{523} \lambda_3$.
Since  $\Delta_{513} s_2^i - \Delta_{524}t_2^i\neq 0$ by Claim~\ref{claim:motionD}, 
 $\lambda_3=0$. This implies that the coefficient of $x_0^2y_0$ in $Q$ is $(-\Delta_{513} s_3^i + \Delta_{524}t_3^i)\Delta_{523} \lambda_2$.
Since  $\Delta_{513} s_3^i - \Delta_{524}t_3^i\neq 0$ by Claim~\ref{claim:motionD}, 
 $\lambda_2=0$.
\end{proof}

We next 
obtain quadratic versions of Claims~\ref{claim:linear3} and \ref{claim:linear1}.
\begin{claim}\label{claim:quadratic}
Let 
$\beta_j^i, 
\gamma_j^i, \delta_j^i$ be as defined in Claims~\ref{claim:motionB}, 
\ref{claim:motionC} and \ref{claim:motionD}, and  $\lambda_{h,j}\in \mathbb{Q}(\bp(V_0))$ for all $1\leq h,j\leq 3$. 
\begin{description}
%
%
\item[BB:] If $\sum_{2\leq h\leq 3}\sum_{2\leq j\leq 3} \lambda_{h,j} \beta_h^1\beta_j^2=0$, then $\lambda_{k,k}=0$ for some $k\in \{2,3\}$.

\item[CC:] If $\sum_{2\leq h\leq 3}\sum_{2\leq j\leq 3} \lambda_{h,j} \gamma_h^1\gamma_j^2=0$, then $\lambda_{k,k}=0$ for some $k\in \{2,3\}$.

\item[DD:] If $\sum_{2\leq h\leq 3}\sum_{2\leq j\leq 3} \lambda_{h,j} \delta_h^1\delta_j^2=0$, then $\lambda_{k,k}=0$ for some $k\in \{2,3\}$.
\end{description}
\end{claim}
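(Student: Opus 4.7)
The plan is to treat each of the three cases BB, CC, DD separately, following the same template as the linear versions (Claims~\ref{claim:linear3} and \ref{claim:linear1}). That is, view each $\beta_j^i$, $\gamma_j^i$, or $\delta_j^i$ as a polynomial in $x_0,y_0$ (the coordinates of $u_0$) with coefficients in $\mathbb{Q}(\bp(V_0))$, substitute into the assumed bilinear identity, and match coefficients of appropriate monomials to force $\lambda_{k,k}=0$ for some $k\in\{2,3\}$.

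First, assume for contradiction that $\lambda_{2,2}\ne 0$ and $\lambda_{3,3}\ne 0$. The key simplification is that the formulas in Claims~\ref{claim:motionB}--\ref{claim:motionD} reveal very simple $i$-dependence in each case. For BB, the expression for $\beta_3^i$ involves none of the $i$-dependent parameters, so $\beta_3^1=\beta_3^2$, while $\beta_2^i=M+Nt_3^i$ for fixed $M,N\in \mathbb{Q}(\bp(V_0))[x_0,y_0]$; for CC, $\gamma_2^i=P-Qt_3^i$ and $\gamma_3^i=R+Qt_2^i$ for fixed $P,Q,R$; for DD, $\delta_2^i$ and $\delta_3^i$ decompose analogously in terms of a few basic polynomials and the scalars $s_2^i,t_2^i,s_3^i,t_3^i$. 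Substituting these decompositions expresses the assumed bilinear identity as a $\mathbb{Q}(\bp(V_0))$-linear relation among quadratic monomials in a small fixed set of basic polynomials in $x_0,y_0$ (for instance, among $\{M^2,MN,N^2,M\beta_3,N\beta_3,\beta_3^2\}$ for BB, or $\{P^2,PQ,Q^2,PR,QR,R^2\}$ for CC).

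Next, I would show that in each of these linear relations, the coefficients of the two ``diagonal'' monomials (the ones whose coefficients contain $\lambda_{2,2}$ and $\lambda_{3,3}$, namely $M^2$ and $\beta_3^2$ in BB, or $P^2$ and $R^2$ in CC) cannot both be non-zero. This mirrors the coefficient-extraction in Claims~\ref{claim:linear3} and \ref{claim:linear1}, where specific monomials in $x_0,y_0$ (such as $y_0$, $x_0$, $x_0^3$, $y_0^2$) already isolated individual parameters. Concretely, after expansion the relation is a polynomial identity in $x_0,y_0$ of total degree at most six; selecting the coefficients of a handful of well-chosen monomials (in particular, the top-degree monomials $x_0^6$ or $y_0^6$, where only $M^2$ or $\beta_3^2$ contributes) should let us read off that $\lambda_{2,2}$ or $\lambda_{3,3}$ must vanish---or, failing that, force the forbidden parameter values ruled out by Claim~\ref{claim:bad_case}, exactly as happened in Claim~\ref{claim:linear3}.

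The main obstacle is computational organisation rather than a conceptual one. Unlike the linear versions, which needed only a handful of low-degree monomials in $x_0,y_0$, the bilinear identity is of bi-degree up to $(3,3)$ in $(x_0,y_0)$ with many surviving monomials, and the resulting linear system on $\{\lambda_{h,j}\}$ has more equations and unknowns. The challenge is to identify a small set of monomials whose coefficients already suffice to force $\lambda_{2,2}=0$ or $\lambda_{3,3}=0$; the structural observations above (especially $\beta_3^1=\beta_3^2$ for BB, and the common factor $Q$ in the decompositions for CC and DD) should keep the computation within hand (or symbolic-algebra) reach. Once this is done for each case, the minimality of $(G,\bp)$ is contradicted exactly as in the previous subclaims.
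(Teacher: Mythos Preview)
Your plan is essentially the paper's own approach: regard the bilinear identity as a polynomial in $x_0,y_0$ over $\mathbb{Q}(\bp(V_0))$ and extract coefficients of well-chosen monomials, invoking Claim~\ref{claim:bad_case} in the BB case. Your structural observation that $\beta_3^1=\beta_3^2$ and $\beta_2^i=M+Nt_3^i$ in type~BB (and the analogous factorisations for CC and DD) is correct and neat, though the paper does not make it explicit and simply computes.

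Two small comments. First, for CC and DD the paper is even more direct than your outline suggests: a \emph{single} monomial already does the job (the coefficient of $y_0^4$ gives $\Delta_{513}^2\Delta_{523}^2\lambda_{2,2}$ in CC, and the coefficient of $x_0^4$ gives $\lambda_{3,3}$ times a nonzero factor in DD), so no contradiction argument or fallback is needed there. Second, your specific assertion that ``the top-degree monomials $x_0^6$ or $y_0^6$ receive contributions only from $M^2$ or $\beta_3^2$'' is not justified and is in fact not how the BB case works: $M$, $N$ and $\beta_3$ all have the same total degree in $x_0,y_0$, so all six quadratic products can contribute at top degree. The paper instead extracts coefficients of a sequence of monomials ($x_0^2$, $y_0^6$, $x_0^4$, $y_0^2$, $x_0y_0^2$, $x_0y_0^5$, $x_0y_0^3$, $x_0^2y_0^4$), successively pinning down $t_{11}$, $t_{12}$, $s_{12}$, $t_3^1$, $t_3^2$ and relations among the $\lambda_{h,j}$, until the forbidden values of Claim~\ref{claim:bad_case} are forced. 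Your hedge (``or, failing that, force the forbidden parameter values'') is exactly what happens, so your plan goes through once you drop the optimistic top-degree claim and carry out the monomial-by-monomial extraction for BB.
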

\begin{proof}
We first check case (CC).
Suppose $\sum_{2\leq i\leq 3}\sum_{2\leq j\leq 3} \lambda_{i,j} \gamma_i^1\gamma_j^2=0$. 
We may regard this as a polynomial identity with indeterminates  $x_0, y_0$ and hence the coefficient of each monomial must be zero.
If we compute the formula for $\sum_{2\leq i\leq 3}\sum_{2\leq j\leq 3} \lambda_{i,j} \gamma_i^1\gamma_j^2$ explicitly using Claim~\ref{claim:motionC} and (\ref{eq:quasi_generic}), we find that
the coefficient of $y_0^4$  is 
$\Delta_{513}^2\Delta_{523}^2 \lambda_{2,2}$.
Since $\Delta_{513}\neq 0$ and $\Delta_{523}\neq 0$, we obtain $\lambda_{2,2}=0$.

We next check case (DD).
Suppose $\sum_{2\leq i\leq 3}\sum_{2\leq j\leq 3} \lambda_{i,j} \delta_i^1\delta_j^2=0$. 
If we compute the formula for $\sum_{2\leq i\leq 3}\sum_{2\leq j\leq 3} \lambda_{i,j} \delta_i^1\delta_j^2$ explicitly using Claim~\ref{claim:motionD} and (\ref{eq:quasi_generic}), we find that
the coefficient of $x_0^4$  is $\Delta_{523}^2 \lambda_{3,3}(\Delta_{524}t_2^1-T_{513}s_2^1)(\Delta_{524}t_2^2-\Delta_{513}s_2^2)$.
Since $\Delta_{524}t_2^i-\Delta_{513}s_2^i\neq 0$ by Claim~\ref{claim:motionD}, we obtain $\lambda_{3,3}=0$.

Case (BB) can be verified in  the same way by using (\ref{eq:quasi_generic}), Claim~\ref{claim:motionB},  and Claim~\ref{claim:bad_case}. The main steps are as follows. We suppose that $P:=\sum_{2\leq h\leq 3}\sum_{2\leq j\leq 3} \lambda_{h,j} \beta_h^1\beta_j^2=0$ and $\lambda_{k,k}\neq 0$ for $k\in\{2,3\}$. 
The coefficient of $x_0^2$ in $P$  implies that $t_{11}=\Delta_{513}/\Delta_{514}$, and the coefficient of $y_0^6$ implies that either $t_{12}=t_{3}^1$ or $t_{12}=t_{3}^2$. By symmetry we may assume that $t_{12}=t_{3}^1$. Then 
the coefficient of $x_0^4$ gives $s_{12}=t_{3}^1\Delta_{524}/\Delta_{523}$, and the coefficient of $y_0^2$ gives $\lambda_{3,2}=\lambda_{2,2}+\lambda_{3,3}-\lambda_{2,3}$. 
The coefficient of $x_0y_0^2$ now implies that either $\lambda_{3,3}=\lambda_{2,3}$ or $t_3^1=t_3^2$. 
If $t_3^1\neq t_3^2$ then we have $\lambda_{3,3}=\lambda_{2,3}$, the coefficient of $x_0y^5$ now implies that $t_{3}^1=-\Delta_{513}/\Delta_{524}$   and the resulting values of $t_{12},s_{12}$ contradict Claim \ref{claim:bad_case}. Hence $t_3^1=t_3^2$. The coefficient of $x_0y_0^3$ and the fact that $t_{3}^1\neq -\Delta_{513}/\Delta_{524}$ by Claim \ref{claim:bad_case} then give $\lambda_{2,2}=\lambda_{3,3}$. The coefficient of $x_0^2y_0^4$ now gives $t_{3}^1=-\Delta_{513}/\Delta_{524}$ and this again contradicts Claim \ref{claim:bad_case}.
 \end{proof}

%

We  close this section with an observation on the type 0 case.
\begin{claim}\label{claim:motion0}
Suppose that $G_0+f_i$ is type 0. Then 
$(G+f_i, \bp)$ has a non-trivial motion $\bq^i$ such that 
$\bq^i|_{V_0}$ is a non-trivial motion of $(G_0+f_i+K_u,\bp|_{V_0})$, $\bq^i(v_0)=\bq^i(v_1)=\bq^i(v_5)=0$, and
$\bq^i(w)\in \mathbb{Q}(\bp(V_0))$ for all $w\in V_0$.
\end{claim}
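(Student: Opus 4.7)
The plan is to construct $\bq^i$ by first building a suitable non-trivial motion $\bar\bq$ on $V_0$ and then extending it to $u_0$ using a trivial-motion argument. The key observation that makes this possible is that the "type 0" hypothesis forces $(G_0+f_i+K_u,\bp|_{V_0})$ to carry a non-trivial motion: since $K_u\subseteq {\rm cl}(E_0+f_i+e_1+e_2)$ for some $e_1,e_2\in K_u$, the rank of $G_0+f_i+K_u$ is at most $\mathrm{rank}(G_0+f_i)+2 = (3|V_0|-9)+2 = 3|V_0|-7$, so the framework $(G_0+f_i+K_u,\bp|_{V_0})$ has at least one degree of freedom.

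My next step is to observe that $\{v_0,v_1,v_5\}$ induces a rigid triangle in $E_0$: by (\ref{eq:Kv}) the edge $v_1v_5$ lies in $E\cap K_v$ (the star centred at $v_5$), and $v_0v_1,v_0v_5\in E$ are edges of $G$ at $v_0$; since $u_0\notin \hat N_G(v_0)$, all three edges survive in $E_0$. I will then "pin" this triangle by restricting to motions of $(G_0+f_i+K_u,\bp|_{V_0})$ that vanish at $v_0,v_1,v_5$. A standard counting argument (the restriction map from the motion space to $\R^9$ at these three vertices has image equal to the 6-dimensional trivial motion space of the triangle, since $\bp$ is generic and in particular $v_0,v_1,v_5$ have pairwise distinct $y$-coordinates) shows that the pinned subspace has dimension equal to the degree of freedom of $(G_0+f_i+K_u,\bp|_{V_0})$, which is $\geq 1$. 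Moreover, any nonzero motion in this pinned subspace is non-trivial (by Lemma~\ref{lem:canonical}, a trivial motion vanishing at three vertices with distinct $y$-coordinates is zero). Since the extended cofactor matrix that defines the pinned subspace has all entries in $\rat(\bp(V_0))$, Gaussian elimination over $\rat(\bp(V_0))$ furnishes a non-trivial $\bar\bq$ in the pinned subspace with $\bar\bq(w)\in \rat(\bp(V_0))$ for every $w\in V_0$.

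The final step is to extend $\bar\bq$ to a motion $\bq^i$ of $(G+f_i,\bp)$. Because $\bar\bq$ satisfies every edge in $K_u$, its restriction to $N_G(u_0)$ is a trivial motion of $(K_u,\bp|_{N_G(u_0)})$. Since $N_G(u_0)$ consists of five generic points, there is a unique trivial motion $\tilde\tau$ of $V$ whose restriction to $N_G(u_0)$ coincides with $\bar\bq|_{N_G(u_0)}$; I set $\bq^i(u_0):=\tilde\tau(u_0)$ and $\bq^i|_{V_0}:=\bar\bq$. The edge constraints at $u_0u_j$ then become
\[
D_{0,j}\cdot(\bq^i(u_0)-\bq^i(u_j))=D_{0,j}\cdot(\tilde\tau(u_0)-\tilde\tau(u_j))=0,
\]
which holds automatically because $\tilde\tau$ is trivial, while the remaining edges of $G+f_i$ are satisfied because $\bar\bq\in Z(G_0+f_i,\bp|_{V_0})$. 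Non-triviality of $\bq^i$ on $V$ follows from non-triviality of $\bar\bq$ on $V_0$ (a trivial motion of $V$ restricts to a trivial motion of $V_0$), and all the other listed properties hold by construction, noting that the requirement $\bq^i(w)\in\rat(\bp(V_0))$ is only imposed for $w\in V_0$ so the fact that $\bq^i(u_0)$ depends on $\bp(u_0)$ causes no issue.

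The main conceptual point — and the only step that requires care — is the extension to $u_0$: one has to recognize that the $K_u$-condition on $\bar\bq$ exactly expresses "$\bar\bq$ moves $N_G(u_0)$ rigidly", so the value at $u_0$ is forced by a unique trivial motion, and the five constraints coming from the edges $u_0u_j$ are automatically consistent. Everything else reduces to the dimension count for type 0, the star structure around $v_0$, and standard pinning over $\rat(\bp(V_0))$.
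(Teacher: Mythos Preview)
Your proof is correct and follows essentially the same line as the paper's: both use the type~0 hypothesis to get $K_u$ in the closure of $E_0+f_i+e_1+e_2$, pin the triangle $K(v_0,v_1,v_5)$, invoke rationality over $\rat(\bp(V_0))$, and extend to $u_0$ via the unique trivial motion agreeing with $\bar\bq$ on $N_G(u_0)$. The only cosmetic difference is that the paper first shows $K_u\subseteq{\rm cl}(G+f_i)$ and works with a motion of $(G+f_i,\bp)$ before pinning, whereas you pin on $V_0$ first and extend last; the content is the same.
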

\begin{proof}
Since $G_0+f_i$ is type 0, there are two edges $e_1, e_2\in K$ such that 
$K_u\subseteq {\rm cl}(E_0+f_i+e_1+e_2)$.
Since $G+f_i$ is $C_2^1$-independent, we have ${\rm cl}(G+f_i+e_1+e_2-u_0u_4-u_0u_5)={\rm cl}(G+f_i)$ and hence $K_u\subseteq {\rm cl}(G+f_i)$.
This implies that  every non-trivial motion of $(G_0+f_i+e_1+e_2, \bp|_{V_0})$ is extendable to a non-trivial motion $\bq^i$ of $(G+f_i,\bp)$. Since $K(v_0,v_1,v_5)\subset G$ by (\ref{eq:Kv}), we may combine $\bq^i$ with a suitable trivial motion to ensure that $\bq^i(v_0)=\bq^i(v_1)=\bq^i(v_5)=0$. By scaling, we may also assume that some component of $\bq_i|_{V_0}$ is equal to one. Since $(G_0+f_i+e_1+e_2, \bp|_{V_0})$ is a 1-dof framework, we may now use Cramer's rule to deduce that $\bq^i(w)\in \mathbb{Q}(\bp(V_0))$ for all $w\in V_0$.
\end{proof}

\subsubsection{Completing the proof of Theorem \ref{thm:well-behaved}}\label{subsubsec:6}

We are now ready to complete the proof of Theorem \ref{thm:well-behaved}. 
Recall that $(G,\bp)$ is a minimal counterexample to Theorem \ref{thm:well-behaved}, and  has a motion $\bq_{\rm bad}$ which is bad at $v_0$, and $F=\{f_1,f_2\}\in {{F_v}\choose{2}}$ is a good pair.
Since  $K(v_0, v_1, v_5)\subset G$ by (\ref{eq:Kv}),
we may combine $\bq_{\rm bad}$ with a suitable trivial motion $\bt$ so that the resulting motion 
\begin{equation}\label{eq:q+t}
\hat{\bq}_{\rm bad}:=\bq_{\rm bad}+\bt
\end{equation}
 satisfies 
$\hat{\bq}_{\rm bad}(v_0)=\hat{\bq}_{\rm bad}(v_1)=\hat{\bq}_{\rm bad}(v_5)=0$.
Similarly, if $G_0+f_i$ is type X for some X in $\{$B,C,D$\}$,
we may combine the motion $\bq_j^i$ of $(X_j^i,\bp|_{V_0})$ given in Claims~\ref{claim:motionA}-\ref{claim:motionD} with a suitable trivial motion $\bt_j^i$ so that the resulting motion 
\begin{equation}\label{eq:q_j^i+t_j^i}
\hat{\bq}_j^i:=\bq_j^i+\bt_j^i
\end{equation}
 satisfies  
$\hat{\bq}_j^i(v_0)=\hat{\bq}_j^i(v_1)=\hat{\bq}_j^i(v_5)=0$.
Since all entries of ${\bq}_{\rm bad}(v_k)$ and ${\bq}_j^i(v_k)$ are contained in $\mathbb{Q}(\bp(V_0))$ for each $v_k\in \hat{N}_G(v_0)$, we may deduce that all entries of 
$\hat{\bq}_{\rm bad}(v_k)$ and $\hat{\bq}_j^i(v_k)$ are contained in $\mathbb{Q}(\bp(V_0))$ for each $v_k\in \hat{N}_G(v_0)$.

%

We will use the following parameterization of the motion space of $(G,\bp)$ over $\hat{N}_G(v_0)$. Let $G_v$ be the subgraph of $G$ induced by $\hat N_G(v_0)$. Then (\ref{eq:Kv}) implies that $G_v$ is isomorphic to the graph obtained from a complete bipartite graph $K_{2,4}$ by adding an edge between the two vertices in the 2-set $\{v_0,v_5\}$ of the bipartition. See Figure~\ref{fig:K24plus}.
\begin{figure}[h]
\centering
\includegraphics[scale=0.6]{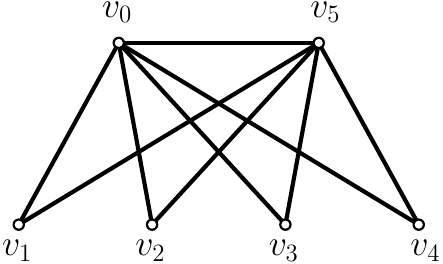}
\caption{$G_v$.}
\label{fig:K24plus}
\end{figure}
Let 
$Z'$ be the space of all motions $\bq'$ of $(G_v,\bp|_{\hat{N}(v_0)})$ with $\bq'(v_0)=\bq'(v_1)=\bq'(v_5)=0$.
As $G_v$ is a 3-dof graph, $Z'$ is a 3-dimensional linear space.
Since $u_0\not\in V(G_v)$ we may choose a base $B$ for $Z'$ in which the coordinates of each vector lie in $\rat(\bp(V_0))$, and let $\psi:Z'\rightarrow \mathbb{R}^3$ be the linear bijection which maps $B$ onto the standard base of $\R^3$. Note that $\psi$ can be represented by a matrix over $\mathbb{Q}(\bp(V_0))$.
%

Using  Claims \ref{claim:types} and \ref{claim:typeA}, and relabeling $u_1,u_2,u_3,u_4$ and $f_1,f_2$ if necessary, we may assume that each $G_0+f_i$ is type 0, B, C or D, and that $(G_0,F)$ is type  BB, CC or DD if neither $G_0+f_1$ nor $G_0+f_2$ is type 0.
The proof is completed by considering the following three cases, depending on the type of each $G+f_i$.  

\medskip
\noindent \emph{Case 1:} Neither $G_0+f_1$ nor $G_0+f_2$ is type 0. \\
%
Let $\bq^i$ be the non-trivial motion of $(G+f_i,\bp)$ given in Claims~\ref{claim:motionB}-\ref{claim:motionD}. Then $\bq^i|_{V_0}=\sum_{j=1}^3\chi_j^i\bq_j^i$ for some appropriate $\chi\in \{\beta,\gamma,\delta\}$  and we put 
\begin{equation}\label{eq:58.1}
\hat \bq^i=\bq^i+\sum_{j=1}^3\chi_j^i \bt_j^i
\end{equation}
where $\bt_j^i$ is the trivial motion given in (\ref{eq:q_j^i+t_j^i}) 
(and we are abusing notation by using the same symbol $\bt_j^i$ for a trivial motion of $(G+f_i,\bp)$ and its restriction to $V_0$). 
Then, by  $\bq^i|_{V_0}=\sum_{j=1}^3\chi_j^i\bq_j^i$  and (\ref{eq:q_j^i+t_j^i}), 
$$\hat \bq^i|_{V_0}=\bq^i|_{V_0}+\sum_{j=1}^3\chi_j^i \bt_j^i=\sum_{j=1}^3\chi_j^i \bq_j^i+\sum_{j=1}^3\chi_j^i \bt_j^i=\sum_{j=1}^3\chi_j^i (\bq_j^i+\bt_j^i)=\sum_{j=1}^3\chi_j^i \hat{\bq}_j^i.$$ 
%
%
As $(f_1, f_2)$ is a good pair, $Z(G,\bp)=Z_0(G,\bp)\oplus \langle \hat{\bq}^1, \hat{\bq}^2\rangle$ holds.
Hence, by Lemma~\ref{lem:canonical}, 
\begin{equation}\label{eq:58.5}
\hat{\bq}_{\rm bad}\in \langle \hat{\bq}^1, \hat{\bq}^2\rangle.
\end{equation}
Suppose $(G_0,F)$  is type BB. Since $\hat{\bq}_{\rm bad}\in \langle \hat{\bq}^1, \hat{\bq}^2\rangle$ by (\ref{eq:58.1}), we have 
\begin{equation}\label{eq:BB1}
\hat{\bq}_{\rm bad}|_{V_0}\in \left\langle \sum_{j=1}^3\beta_j^1 \,\hat{\bq}_j^1|_{V_0}, \sum_{j=1}^3\beta_j^2 \,\hat{\bq}_j^2|_{V_0}\right\rangle.
\end{equation}
We also have $K_v\subseteq {\rm cl}(B_1^i)$ by Property (4) of the definition of type B, so
$\hat{\bq}_1^i|_{\hat{N}(v_0)}=0$ for both $i=1,2$.
Hence the restriction of (\ref{eq:BB1}) to $\hat{N}(v_0)$ gives
\begin{equation}\label{eq:BB2}
\hat{\bq}_{\rm bad}|_{\hat{N}(v_0)}\in \left\langle \sum_{j=2}^3\beta_j^1 \,\hat{\bq}_j^1|_{\hat{N}(v_0)}, \sum_{j=2}^3\beta_j^2 \,\hat{\bq}_j^2|_{\hat{N}(v_0)}\right\rangle.
\end{equation}

Let $k$ and $k_j^i$ denote the images of $\hat{\bq}_{\rm bad}|_{\hat{N}(v_0)}$ and $\hat{\bq}_j^i|_{\hat{N}(v_0)}$,  respectively, under $\psi$. Then (\ref{eq:BB2}) implies that 
\[
{\rm det} 
\left(\begin{array}{c|c|c}
 & & \\
 k& \sum_{j=2}^3\beta_j^1 k_j^1 & \sum_{j=2}^3\beta_j^2 k_j^2 \\
 & & 
 \end{array}\right)=0,
\]
and the bilinearity of the determinant gives
\[
\sum_{i=2}^3\sum_{j=2}^3{\rm det} 
\left(\begin{array}{c|c|c}
 & & \\
 k &  k_i^1 &  k_j^2 \\
 & & 
 \end{array}\right)\beta_i^1\beta_j^2=0.
\]
%
%
%
%
%
Claim~\ref{claim:quadratic} now implies that
\[
{\rm det} 
\left(\begin{array}{c|c|c}
 & & \\
 k &  k_j^1 &  k_j^2 \\
 & & 
 \end{array}\right)=0
\]
for some $j\in \{2,3\}$. We may assume without loss of generality that $j=2$.
Then
$k$ is a linear combination of $k_2^1$ and $k_2^2$ so $\hat{\bq}_{\rm bad}|_{\hat{N}(v_0)}=a\,\hat{\bq}_2^1|_{\hat{N}(v_0)}+b\,\hat{\bq}_2^2|_{\hat{N}(v_0)}$
for some $a,b\in \R$. 
Let 
\[
\bar \bq :=a\,\hat{\bq}_2^1+b\,\hat{\bq}_2^2.
\]
Since $\hat{\bq}_2^i$ is  a non-trivial motion of $(B_2^i, \bp|_{V_0})$ and $B_2^i=G_0+f_i+u_1u_3+u_2u_3$,
$\bar \bq$ is a motion of $(G_0+u_1u_3+u_2u_3,\bp|_{V_0})$. 
Let 
\[
\tilde \bq:=\bar \bq- \bt,
\] 
where $\bt$ is the trivial motion given in (\ref{eq:q+t}).
Then $\tilde\bq$ is a motion of 
the 2-dof framework $(G_0+u_1u_3+u_2u_3,\bp|_{V_0})$ and 
$$\tilde \bq|_{\hat{N}(v_0)}=\bar \bq|_{\hat{N}(v_0)}- \bt|_{\hat{N}(v_0)}=\hat \bq_{\rm bad}|_{\hat{N}(v_0)}- \bt|_{\hat{N}(v_0)}=\bq_{\rm bad}|_{\hat{N}(v_0)}.$$
Hence $\tilde \bq$ is bad a bad motion at $v_0$ for the framework $(G_0+u_1u_3+u_2u_3,\bp|_{V_0})$.
This contradicts the fact that $(G,\bp)$ is a minimal counterexample.

%

The remaining cases when $(G_0,F)$ is type  CC or DD can be solved similarly.

\medskip
\noindent \emph{Case 2:} $G_0+f_1$ is type 0 and $G_0+f_2$ is not. \\
We only give a proof for the case when $G_0+f_2$ is type B since the other cases can be solved in an identical manner.

By Claim~\ref{claim:motion0}, $(G+f_1,\bp)$ has a non-trivial motion $\bq^1$ such that $\bq^1|_{V_0}$ is a motion of $(G_0+f_1+K_u, \bp|_{V_0})$, $\bq^1(v_0)=\bq^1(v_1)=\bq^1(v_5)=0$ and $\bq^1(w)\in \mathbb{Q}(\bp(V_0))$ for all $w\in V_0$. 

Let $\hat{\bq}^2=\bq^2+\sum_{j=1}^3\beta^2_j \bt^2_j$ be as defined in (\ref{eq:58.1}). 
Since  $(f_1, f_2)$ is a good pair, $Z(G,\bp)=Z_0(G,\bp)\oplus \langle {\bq}^1, \hat{\bq}^2\rangle$ holds.
Hence, by Lemma~\ref{lem:canonical},   $\hat{\bq}_{\rm bad}\in \langle {\bq}^1, \hat{\bq}^2\rangle$.
Since $K_v\subseteq {\rm cl}(B_1^2)$ by Property (4) of the definition of type B, this gives
\begin{equation}\label{eq:0B1}
\hat{\bq}_{\rm bad}|_{\hat{N}(v_0)}\in \left\langle \bq^1|_{\hat{N}(v_0)}, \sum_{j=2}^3\beta_j^2 \,\hat{\bq}_j^2|_{\hat{N}(v_0)}\right\rangle.
\end{equation}
As in the previous case, this implies that 
\[
{\rm det} 
\left(\begin{array}{c|c|c}
 & & \\
 k & k^1 & \sum_{j=2}^3\beta_j^2 k_j^2 \\
 & & 
 \end{array}\right)=0,
\]
where $k$, $ k^1$, $k_j^2$ are the image of $\hat \bq_{\rm bad}|_{\hat{N}(v_0)}$, $\bq^1|_{\hat{N}(v_0)}$, $\hat{\bq}_j^2|_{\hat{N}(v_0)}$, respectively, under $\psi$.
The bilinearity of the determinant now gives 
\[
\sum_{2\leq j\leq 3}{\rm det} 
\left(\begin{array}{c|c|c}
 & & \\
 k &  k^1 &  k_j^2 \\
 & & 
 \end{array}\right)\beta_j^2=0,
\]
and Claim~\ref{claim:linear3} implies that 
\[
{\rm det} 
\left(\begin{array}{c|c|c}
 & & \\
 k &  k^1 &  k_j^2 \\
 & & 
 \end{array}\right)=0
\]
for each $j\in \{2,3\}$. In particular, we have that
$k$ is a linear combination of $k^1$ and $k_2^2$ so $\hat{\bq}_{\rm bad}|_{\hat{N}(v_0)}=a{\bq}^1|_{\hat{N}(v_0)}+b\hat{\bq}_2^2|_{\hat{N}(v_0)}$
for some $a,b\in \R$. Let $\bar \bq :=a{\bq}^1|_{V_0}+b\hat{\bq}_2^2$.
Since
$\bq^1|_{V_0}$ is a motion of $(G_0+f_1+K_u, \bp|_{V_0})$ and
$\hat{\bq}_2^2$ is  a  motion of $(B_2^2, \bp|_{V_0})$ where $B_2^2=G_0+f_2+u_2u_3+u_3u_1$,
$\bar \bq$ is a motion of $(G_0+u_2u_3+u_3u_1,\bp|_{V_0})$. 
Let $\tilde \bq:=\bar \bq- \bt$, where $\bt$ is the trivial motion given in (\ref{eq:q+t}).
Then $\tilde \bq$ is a motion of the 2-dof framework $(G_0+u_2u_3+u_3u_1,\bp|_{V_0})$ and 
$$\tilde \bq|_{\hat{N}(v_0)}=\bar \bq|_{\hat{N}(v_0)}- \bt|_{\hat{N}(v_0)}=\hat \bq_{\rm bad}|_{\hat{N}(v_0)}- \bt|_{\hat{N}(v_0)}=\bq_{\rm bad}|_{\hat{N}(v_0)}.$$
Hence $\tilde \bq$ is bad at $v_0$.
This contradicts the fact that $(G,\bp)$ is a minimal counterexample.

%
%

\medskip
\noindent \emph{Case 3:} Both $G_0+f_1$ and $G_0+f_2$ are type 0. \\ 
By Claim~\ref{claim:motion0}, $(G+f_i, \bp)$ has a motion $\bq^i$ such that $\bq^i|_{V_0}$ is a motion of $(G_0+f_i+K_u, \bp|_{V_0})$, $\bq^i(v_0)=\bq^i(v_1)=\bq^i(v_5)=0$ and $\bq^i(w)\in \mathbb{Q}(\bp(V_0))$ for all $w\in V_0$. 

As $(f_1, f_2)$ is a good pair, $Z(G,\bp)=Z_0(G,\bp)\oplus \langle \bq^1, \bq^2\rangle$ holds.
Hence, by Lemma~\ref{lem:canonical},  $\hat{\bq}_{\rm bad}\in \langle \bq^1, \bq^2\rangle$.
This gives
\begin{equation}\label{eq:001}
\hat{\bq}_{\rm bad}|_{\hat{N}(v_0)}\in \left\langle \bq^1|_{\hat{N}(v_0)}, \bq^2|_{\hat{N}(v_0)}\right\rangle.
\end{equation}
and hence
\[
{\rm det} 
\left(\begin{array}{c|c|c}
 & & \\
 k & k^1 &  k^2 \\
 & & 
 \end{array}\right)=0,
\]
where $k$, $k^1$, $k^2$ are the image of $\hat \bq_{\rm bad}|_{\hat{N}(v_0)}$, $\bq^1|_{\hat{N}(v_0)}$, $\bq^2|_{\hat{N}(v_0)}$respectively, under $\psi$.
This in turn implies that  $\hat \bq_{\rm bad}|_{\hat{N}(v_0)}$ is a linear combination of  $\bq^1|_{\hat{N}(v_0)}$ and $\bq^2|_{\hat{N}(v_0)}$.
We can now use a similar argument to the previous case to deduce that 
$(G_0+K_u,\bp|_{V_0})$ has a bad motion at $v_0$. Since $(G_0+K_u,\bp|_{V_0})$ is a $k$-dof framework for some $k=1,2$, this  contradicts the minimality of $(G,\bp)$.

This completes the proof of Theorem~\ref{thm:well-behaved}.
%
%
%

%
%
%

\appendix 
\section*{Appendix}
\section{Proof of  Lemma~\ref{lem:bad_pinning} and statement (\ref{eq:special})}\label{sec:bad_pinning}
We deduce both Lemma~\ref{lem:bad_pinning}  and statement (\ref{eq:special}) from Lemma \ref{lem:abstract_bad_pinning} below, which is a general result on matrices with polynomial entries. 

Given a matrix $M$ with entries in $\mathbb{R}[X_1,X_2,\ldots,X_m]$ and $x=(x_1,x_2,\ldots,x_m)\in \R^m$ we use $M(x)$ to denote the real matrix obtained by substituting the values $X_i=x_i$ into $M$. 
We assume that the rows and columns of $M$ are indexed by two disjoint sets $R$ and $C$ respectively. For $D\subseteq R$ and $U\subseteq C$, we use $M_D$ to denote the submatrix of $M$ with rows indexed by $D$, and $M_{D,U}$ to denote the submatrix  of $M_D$ with columns indexed by $U$. We also set $\overline{D}=R\setminus D$ and $\overline{U}=C\setminus U$.  For $x\in \R^m$, we consider the components of each vector  $z(x)\in \ker M(x)$ to be indexed by $C$ and use $z_i(x)$ to denote the $i$'th component of $z(x)$ for $i\in C$.

%
%

\begin{lemma}\label{lem:abstract_bad_pinning}
Let $\F$ be a subfield of $\R$, 
$M$ be a square matrix with entries in $\F[X_1,\ldots,X_m]$, and 
$
T=\left\{x\in \R^m: 
 \text{$M(x)$ is non-singular}
 \right\}.
$
Let
$D\subseteq R$ and $U\subseteq C$ such that every entry of $M_{D,\overline{U}}$ is zero, and let
$b:U\to \F[X_1,X_2,\ldots,X_m]$.
Suppose that,  for some $\tilde x\in T$ which is generic over $\F$, 
there is a vector $z(\tilde x)\in \ker M_{\overline{D}}(\tilde x)$ such that $z_i(\tilde x)=b_i(\tilde x)$ for all $i\in U$.
Then, for  all $x\in T$, there is a vector $z(x)\in \ker M_{\overline{D}}(x)$ such that $z_i(x)=b_i(x)$
for all $i\in U$.
\end{lemma}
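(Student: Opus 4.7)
The plan is to recast the conclusion as the solvability of an inhomogeneous linear system in the unknown $z_{\overline{U}}(x)$, and then to propagate that solvability from the generic point $\tilde{x}$ to every $x \in T$ via a Zariski density argument over $\F$.

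The first step will be to establish that for every $x \in T$ the submatrix $M_{\overline{D},\overline{U}}(x)$ has full column rank $|\overline{U}|$. Indeed, if $v \in \R^{\overline{U}}$ lies in its kernel, then the vector $\tilde{v} \in \R^C$ obtained by extending $v$ with zeros on coordinates in $U$ satisfies $M(x)\tilde{v}=0$---the hypothesis $M_{D,\overline{U}}(x)=0$ makes the top block of $M(x)\tilde{v}$ automatically vanish---contradicting the non-singularity of $M(x)$. Next, splitting $\R^C = \R^U \oplus \R^{\overline{U}}$, the existence of $z(x) \in \ker M_{\overline{D}}(x)$ with $z_U(x) = b_U(x)$ becomes precisely the solvability in $z_{\overline{U}}(x)$ of
\[
M_{\overline{D},\overline{U}}(x)\, z_{\overline{U}}(x) \;=\; -M_{\overline{D},U}(x)\, b_U(x).
\]
By the Rouché--Capelli theorem, combined with the rank fact just established, this system is consistent at $x \in T$ if and only if the augmented matrix $[\, M_{\overline{D},\overline{U}}(x) \mid -M_{\overline{D},U}(x)\,b_U(x)\,]$ also has rank exactly $|\overline{U}|$; equivalently, every $(|\overline{U}|+1)\times(|\overline{U}|+1)$ minor of the augmented matrix must vanish at $x$.

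The second step will invoke the generic hypothesis. Each such minor is a polynomial in $X_1,\ldots,X_m$ with coefficients in $\F$, since the entries of $M$ and the values of $b$ lie in $\F[X_1,\ldots,X_m]$. The hypothesized vector $z(\tilde{x})$ certifies that the system above is consistent at $\tilde{x}$, so every one of these minors vanishes at $\tilde{x}$. Since the coordinates of $\tilde{x}$ are algebraically independent over $\F$, any polynomial in $\F[X_1,\ldots,X_m]$ vanishing at $\tilde{x}$ is identically zero; hence each minor is the zero polynomial, and the augmented matrix has rank at most $|\overline{U}|$ at every point of $\R^m$. Combining this with the lower rank bound from the first step forces rank equality on $T$, so the system admits a solution $z_{\overline{U}}(x)$, and pairing it with $b_U(x)$ produces the required $z(x)$.

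I expect the main subtlety to lie in identifying the correct invariant to propagate. A Cramer's-rule approach would try to write $z_{\overline{U}}(x)$ as an explicit rational function of $x$; this works cleanly when $M_{\overline{D},\overline{U}}(x)$ is square, but in general one may have $|\overline{D}|>|\overline{U}|$, so the linear system is potentially over-determined and the right invariant is the vanishing of the auxiliary $(|\overline{U}|+1)$-minors of the augmented matrix rather than the existence of an inverse. Once this shift in viewpoint is made, the density step is routine.
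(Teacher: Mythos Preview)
Your proof is correct and takes a genuinely different route from the paper's.

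The paper parametrises $\ker M_{\overline{D}}(x)$ by an explicit polynomial basis $\{z_d : d\in D\}$ (one null vector for each deleted row, built via Cramer's rule), then shows that the restrictions $z_d|_U$ remain linearly independent using the hypothesis $M_{D,\overline{U}}=0$, and finally casts the problem as solvability of $A_x\lambda=b_x$ where $A_x$ is the $|U|\times|D|$ matrix with columns $z_d|_U$. You instead fix $z_U=b_U(x)$ up front and treat the residual condition on $z_{\overline{U}}$ as the linear system $M_{\overline{D},\overline{U}}(x)\,z_{\overline{U}}=-M_{\overline{D},U}(x)\,b_U(x)$, whose coefficient matrix you show has full column rank directly from the non-singularity of $M(x)$ and the zero block $M_{D,\overline{U}}$. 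Both arguments then finish with the same Rouch\'e--Capelli rank-propagation: the relevant augmented-matrix minors are polynomials over $\F$, they vanish at the generic $\tilde{x}$, hence vanish identically. Your setup is slightly more economical (no need to build the kernel basis or verify independence of restrictions), while the paper's version has the advantage of tracking actual kernel vectors $z_d$ with polynomial entries, which is conceptually closer to the $C_2^1$-motions in the downstream applications.
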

\begin{proof}
 Choose any $x\in T$.
As $M(x)$ is non-singular, it is row independent and hence $M_{R-d}(x)$ is  row independent for all $d\in D$.
 Since each entry of $M_{R-d}(x)$ is a polynomial function of the coordinates of $x$, 
Cramer's rule implies that we can choose a non-zero $z_d\in \ker M_{R-d}(x)$ in such a way that 
each of its coordinates is a polynomial function of the coordinates of $x$.
If  for some $d_1\in D$, the vector $z_{d_1}$ is spanned by $\{z_d\,:\,d\in D-d_1\}$ then we would have $\ker M_{\overline{D}}(x)= \ker M_{\overline{D-d_1}}(x)$ and this would contradict the row independence of $M(x)$. Thus  
 $\{z_d:d\in D\}$ is linearly independent, and so $\dim \ker M_{\overline{D}}(x)\geq|D|$. The independence of $M(x)$ implies this holds with equality. Hence $\{z_d:d\in D\}$ is  a base for $\ker M_{\overline{D}}(x)$ and each $z\in \ker M_{\overline{D}}(x)$ can be expressed as $z=\sum_{d\in D} \lambda_d z_d$ for some $\lambda_d\in \R$.

Let  $z_d|_U$ denote the restriction of $z_d$ to $U$ for each $d\in D$. We will show that 
\begin{equation}\label{eq:abstract_bad_pinning2}
\text{
$\{ z_d|_U\,:\,d\in D\}$ is linearly independent.}
\end{equation}
Suppose to the contrary that, for some  
 some $d_1\in D$, $z_{d_1}|_U$ is spanned by $\{z_d|_U:d\in D-d_1\}$. Then  the hypothesis that every entry of $M_{D,\overline{U}}$ is zero implies that we again have $\ker M_{\overline{D}}(x)= \ker M_{\overline{D-d_1}}(x)$ and contradict the row independence of $M(x)$.
Hence (\ref{eq:abstract_bad_pinning2}) holds.

We next consider the matrix equation $A_x\,\lambda=b_x$ with variable $\lambda$, where $A_x$ is the $|U|\times |D|$ matrix with columns $z_d|_U$ for $d\in D$, 
$\lambda=(\lambda_1,\ldots,\lambda_{|D|})^\top$, and   $b_x\in \R^{|U|}$ is the column vector with entries given by the coordinates of
$b_{i}(x)$ for $i\in U$. 
By (\ref{eq:abstract_bad_pinning2}), $\rank A_x=|D|$ for all $x\in T$.

Since, for some generic $\tilde x\in T$,   
there is a vector $z(\tilde x)\in \ker M_{\overline{D}}(\tilde x)$ such that $z_i(\tilde x)=b_i(\tilde x)$ for all $i\in U$, the equation $A_{\tilde x}\,\lambda=b_{\tilde x}$ has a solution, and hence $\rank (A_{\tilde x},b_{\tilde x})=\rank A_{\tilde x}=|D|$ for this generic $\tilde x$.
Since each entry in $(A_x,b_x)$ is a polynomial function of the coordinates of $x$, this implies that $\rank (A_x,b_x)\leq |D|$ for all $x\in T$.
On the other hand we have seen that $\rank A_x=|D|$ for all $x\in T$.
Hence $\rank (A_x,b_x)=\rank A_x$ holds for all $x\in T$, and as a result, the equation $A_x\,\lambda=b_x$ has a solution for all $x\in T$.
This solution will give us a vector $z(x)\in \ker M_{\overline{D}}(x)$ such that $z_i(x)=b_i(x)$ for all $i\in U$ and all $x\in T$.
\end{proof}

We are now ready to prove Lemma~\ref{lem:bad_pinning}. For convenience we repeat the statement.

\newtheorem*{slemma}{Lemma~\ref{lem:bad_pinning}}
\begin{slemma}
Let $G=(V,E)$ be a $C_2^1$-independent graph with $V=\{v_1,\dots,v_n\}$, 
$U
\subseteq V$, $F
\subseteq K(U)$, 
and
$
S=\{\bp:\mbox{$(G+F,\bp)$ is minimally $C_2^1$-rigid and $\bp$ is non-degenerate on $U$}\}.
$
Suppose that $b:U\to \rat[X_1,Y_1,\ldots,X_n,Y_n]^3$ and $(G,\bp)$ has a $b$-motion for some generic $\bp$. Then $(G, \bp)$ has a $b$-motion for all $ \bp\in S$.
\end{slemma}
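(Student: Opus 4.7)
The plan is to deduce Lemma~\ref{lem:bad_pinning} from the abstract matrix-theoretic Lemma~\ref{lem:abstract_bad_pinning}, by specialising it to a symbolic version of the extended $C_2^1$-cofactor matrix of $G+F$.  Fix an arbitrary $\bp^\circ \in S$; non-degeneracy of $\bp^\circ$ on $U$ yields three vertices $v_a, v_b, v_c \in U$ whose $y$-coordinates under $\bp^\circ$ are pairwise distinct. Let $M$ be the $3n \times 3n$ symbolic matrix over $\rat[X_1, Y_1, \ldots, X_n, Y_n]$ whose rows consist of (i) the six pinning rows ${\bf e}_{a,1}, {\bf e}_{a,2}, {\bf e}_{a,3}, {\bf e}_{b,1}, {\bf e}_{b,2}, {\bf e}_{c,1}$; (ii) the symbolic $C_2^1$-cofactor row of each edge of $F$; and (iii) the symbolic $C_2^1$-cofactor row of each edge of $G$.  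Since $(G+F, \bp^\circ)$ is minimally $C_2^1$-rigid, $|E(G+F)| = 3n-6$, so $M$ is indeed $3n \times 3n$, with columns indexing the coordinates of a motion $\bq$.

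Let $D$ be the union of the rows in groups (i) and (ii), and let $\mathcal U$ be the set of columns indexing coordinates of vertices in $U$.  Since $v_a, v_b, v_c \in U$ and $F \subseteq K(U)$, every row in $D$ has zero entries outside $\mathcal U$, so $M_{D, \overline{\mathcal U}} = 0$.  The complementary row set $\overline D$ is the symbolic cofactor matrix of $G$, so $\ker M_{\overline D}(\bp)$ equals the motion space of $(G, \bp)$.  Define $T = \{\bp : M(\bp) \text{ is non-singular}\}$.  Applying Lemma~\ref{lem:1dof_motion1} to $(G+F, \bp^\circ)$ with $k = 0$ gives $\bp^\circ \in T$.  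Moreover, since $S \neq \emptyset$, $G+F$ is generically minimally $C_2^1$-rigid by semi-continuity of rank; so for any generic $\tilde \bp$, the framework $(G+F, \tilde \bp)$ is minimally $C_2^1$-rigid and the $y$-coordinates of $v_a, v_b, v_c$ under $\tilde \bp$ are automatically distinct, whence $\tilde \bp \in T$.  The hypothesis provides some generic $\bp$ at which $(G, \bp)$ admits a $b$-motion; since admitting a $b$-motion is a Zariski condition in $\bp$ that is stable across generic points (using Cramer's rule to parametrise a basis of $\ker C(G, \bp)$ by polynomials in $\bp$, the existence of a $b$-motion reduces to a polynomial identity which, if valid at one algebraically generic point, holds identically), we may pick $\tilde \bp \in T$ generic with a $b$-motion $\bq$ for $(G, \tilde \bp)$.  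Then $\bq \in \ker M_{\overline D}(\tilde \bp)$ satisfies $\bq_i = b_i(\tilde \bp)$ on $\mathcal U$, and Lemma~\ref{lem:abstract_bad_pinning} delivers a vector $z(\bp^\circ) \in \ker M_{\overline D}(\bp^\circ)$ with $z_i(\bp^\circ) = b_i(\bp^\circ)$ on $\mathcal U$, namely a $b$-motion of $(G, \bp^\circ)$.  Since $\bp^\circ \in S$ was arbitrary, the proof is complete.

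The main technical obstacle is one of bookkeeping: the matrix $M$ depends on the choice of pinning triple $(v_a, v_b, v_c)$ in $U$, which may vary with $\bp^\circ \in S$.  This is harmless because the argument is re-run separately for each $\bp^\circ \in S$, with non-degeneracy producing a valid triple each time, while any fixed triple in $U$ remains non-degenerate at a generic $\tilde \bp$, so the auxiliary generic point can always be chosen to be compatible.  A secondary subtlety — persistence of the property "$(G,\bp)$ admits a $b$-motion" across all generic $\bp$ once it holds at one — is handled by the same Cramer's-rule argument indicated above, which is also the basis for the statement~(\ref{eq:special}) needed later: replacing the zero row for $u_0u_1$ in $C(G+F,\bp_0)$ by the prescribed row gives a matrix to which the proof technique of Lemma~\ref{lem:abstract_bad_pinning} applies verbatim, even though the resulting matrix is no longer a $C_2^1$-cofactor matrix.
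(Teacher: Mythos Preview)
Your proof is correct and follows essentially the same route as the paper: apply Lemma~\ref{lem:abstract_bad_pinning} to the symbolic extended cofactor matrix $M$ of $G+F$ pinned at a non-degenerate triple $v_a,v_b,v_c\in U$, with $D$ consisting of the six pinning rows together with the rows of $F$. One simplification: your ``secondary subtlety'' about transferring the $b$-motion between generic points is unnecessary, since $T$ is a non-empty Zariski-open set (it contains $\bp^\circ$) and therefore already contains the generic $\bp$ furnished by the hypothesis.
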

\begin{proof}
Choose $ \bp_1\in S$ and let $\bp_1(v_i)=(x_i,y_i)$ for all $v_i\in V$. Since $\bp_1$ is non-degenerate on $U$, we can choose three distinct vertices $v_a,v_b,v_c\in U$ which have distinct `$y$-coordinates' in $(G, \bp_1)$. Let $\tilde{C}(G+F, \bp_1)$ be the extended $C_2^1$-cofactor matrix with respect to $v_a,v_b,v_c$ and let $M$ be the matrix obtained from $\tilde{C}(G+F, \bp_1)$ by replacing each of the coordinates $x_i,y_i$ in the definition of $\tilde{C}(G+F, \bp_1)$ by indeterminates $X_i,Y_i$ for all $v_i\in V$. Then each entry in $M$ belongs to $\rat[X_1,Y_1,\ldots,X_n,Y_n]$.
We will deduce that $(G, \bp_1)$ has a $b$-motion  at $v_0$ by  applying Lemma 
\ref{lem:abstract_bad_pinning} to 
$M$.

Let
$
T=\{ \bp: 
 \text{$M( \bp)$ is non-singular}
 \}$ and let $S_1$ be the set of all $\bp\in S$ such that $\bp(v_a)$, $\bp(v_b)$ and $\bp(v_c)$ have distinct $y$-coordinates. 
 Lemma~\ref{lem:1dof_motion1} implies that $\tilde{C}(G+F, \bp)$ is non-singular for all $ \bp\in S_1$ so $S_1\subseteq T$.
Let $D$ be the set of rows of $M$ indexed by  
$$F\cup \{\be_{s,t}:(s,t)\in \{(a,1), (a, 2), (a,3), (b,1), (b,2), (c,1)\},$$
using the row labelling defined in Section \ref{subsec:pinning}. 
Then every entry of $M_{D,\overline{U}}$ is zero, and  $M_{\overline{D}}(\bp)=C(G,\bp)$ for all realisations $\bp$. Since $(G,\bp)$ has a $b$-motion for some  generic $\bp$,  Lemma  
\ref{lem:abstract_bad_pinning} tells us that 
$(G,\bp)$ has a $b$-motion for all $\bp\in T$. In particular $(G,\bp_1)$ has a $b$-motion. 
\end{proof}

\paragraph{Proof of statement (\ref{eq:special}).}
Since $q_{\rm bad}$ is a bad motion of $(G,\bp)$ at $v_0$, there exists $b:\hat N(v_0)\to \rat[X_0,Y_0,\ldots,X_5,Y_5]^3$ which satisfies (\ref{eq:bad}) and has $q_{\rm bad}(v)=b(v)$ for all $v\in \hat N_G(v_0)$. 

Consider the family of realisations $\bp_t$ of $G$ parametrized by $t\in \mathbb{R}$ defined by 
 $\bp_t(w)=\bp(w)$ for $w\in V_0$
and $\bp_t(u_0)=t\bp(u_0)+(1-t)\bp(u_1)$. (This family was used in the proof of \cite[Theorem 10.2.7]{Wsurvey} to show that 
vertex-splitting preserves $C_2^1$-independence.) 
Since $\bp$ is generic and $\bp_t|_{V_0}=\bp|_{V_0}$, 
we can choose three distinct vertices $v_a,v_b,v_c\in \hat N_G(v_0)$ which have distinct `$y$-coordinates' in $(G,\bp_t)$ for all $t\in \mathbb R$.  
Let $\tilde{C}(G+F,\bp_t)$ be the extended $C_2^1$-cofactor matrix for $(G+F,\bp_t)$ with respect to  $v_a,v_b,v_c$, and  let ${C}^*(G+F,\bp_t)$ be obtained from $\tilde{C}(G+F,\bp_t)$
by replacing the row indexed by the edge $u_0u_1$ with a row of the form 
$$
\kbordermatrix{
 & & u_0 & & u_1 & \\
 e=u_0u_1 & 0\dots 0 & D(\bp(u_0),\bp(u_1)) & 0 \dots 0 & -D(\bp(u_0),\bp(u_1)) & 0\dots 0 
}.
$$
Since  $D(\bp_t(u_0),\bp_t(u_1))=t^2 D(\bp(u_0), \bp_t(u_1))$,
$\ker  \tilde{C}(G,\bp_t)=\ker {C}^*(G,\bp_t)$ for $t\neq 0$.
Moreover, when $t=0$,  the  argument for vertex splitting  in 
\cite[Theorem 10.2.7]{Wsurvey}
can be applied directly to show that ${C}^*(G+F,\bp_0)$ is non-singular.\footnote{
Since $(G_0+F+e_1+e_2,\bp|_{V_0})$ is minimally $C_2^1$-rigid, Lemma~\ref{lem:1dof_motion1}  implies that  $\tilde{C}(G_0+F+e_1+e_2,\bp|_{V_0})$ is row independent. 
We may now use elementary matrix manipulations and the fact that $\bp_0(u_0)=\bp(u_1)$ 
to deduce that 
${C}^*(G+F,\bp_0)$ is row independent and hence non-singular, see the proof 
of  \cite[Theorem 10.2.7]{Wsurvey}
for more details.}

Observe that $\bp_t$ is generic over $\rat$ whenever $t$ is taken to be generic over $\mathbb{Q}(\bp(V))$.
Together with Lemma~\ref{lem:bad_pinning} and the facts that $\bp$ is generic over $\rat$ and $(G,\bp)$ has a $b$-motion, this implies that  $(G, \bp_t)$ has a $b$-motion for any  $t$ which is generic over $\mathbb{Q}(\bp(V))$. 

We next consider $M:={C}^*(G+F,\bp_t)$ to be a matrix with entries in $\F[t]$ where $\F=\rat(p(V))$ and $t$ is an indeterminate. 
Let $T=\{t\in \mathbb{R} : \mbox{$M(t)$ is non-singular}\}$.
Then $0\in T$ and $t\in T$ for any generic $t$ over $\mathbb{Q}(\bp(V))$.
Let $U=\hat N_G(v_0)$ and $D$ be the set of rows of $M$ indexed by  
$F\cup \{\be_{h,k}:(h,k)\in \{(a,1), (a, 2), (a,3), (b,1), (b,2), (c,1)\}$,
using the row labelling defined in Section \ref{subsec:pinning}. 
Then every entry of $M_{D,\overline{U}}$ is zero, and  $M_{\overline{D}}(t)=C^*(G,\bp_t)$ for all  $t\in \R$. Since $(G,\bp_t)$ has a $b$-motion and $\bp_t\in T$ for any  $t$ which is generic over $\mathbb{Q}(\bp(V))$, and $0\in T$,  Lemma  
\ref{lem:abstract_bad_pinning} implies that there exists a $z\in \ker M(0)$ such that $z(v)=b(v)$ for all $v\in U=\hat N_G(v_0)$. The fact that 
$M_{\overline{D}}(0)=C^*(G,\bp_0)$ and the defininition of $\bp_0$ now imply that
 $z|_{V_0}$ is a bad motion of $(G_0+e_1+e_2,\bp|_{V_0})$  so (\ref{eq:special}) holds.
\qed

\section{Calculation in the proof of Lemma~\ref{lem:bad_motion}}\label{app:bad}
We use the same notation for the polynomials $D_{i,j}$ and $\Delta_{i,j,k}$, and the polynomial map $b$ as in the proof of Lemma~\ref{lem:bad_motion}. 
We need to show that:
$$\text{the graph on $N_G(v_0)$ with edge set 
$\{v_iv_j: 
D_{v_i,v_j}\cdot (b(v_i)-b(v_j))=0 \}$
 is a star.}
$$
Since $\bp$ is generic   and the $C^1_2$-motion $\bq$ of $(G,p)$ defined in the conclusion of  Lemma~\ref{lem:motion} satisfies $\bq(v_i)=b(v_i)$ for all $v_i\in N_G(v_0$), it will suffice to show that 
$$\text{the graph on $N_G(v_0)$ with edge set 
$\{v_iv_j: 
D_{v_i,v_j}\cdot (\bq(v_i)-\bq(v_j))=0 \}$
 is a star,}
$$
where we are abusing notation by continuing to use  $D_{ij}$ for the real number obtained from the polynomial $D(i,j)$ by the  substitution $(X_i,Y_i)\to (x_i,y_i)=\bp(v_i)$ for all $v_i\in \hat N_G(v_0)$. 
The hypotheses that $\bq$ is a $C^1_2$-motion of $(G,\bp)$ and $v_0$ is a type $(\star)$ vertex in $G$ imply that
  $D_{v_i,v_5}\cdot (\bq(v_i)-\bq(v_5))=0$ for all $1\leq i\leq 4$. Hence it only remains to check that $\delta_{ij}:=D_{v_i,v_j}\cdot (\bq(v_i)-\bq(v_j))\neq 0$ for all $1\leq i<j\leq 4$.  We can do this for each $\delta_{i,j}$ as follows, where we use $c_{ij}$ and $c_{ij}'$ to denote non-zero constants.
\begin{align*}
\delta_{1,2}&=D_{1,2}\cdot (\bq(v_1)-\bq(v_2))=c_{1,2} D_{1,2}\cdot (D_{2,4}\times D_{2,5}) 
=c_{1,2}\left|
\begin{array}{c}
D_{1,2} \\ D_{2,4} \\ D_{2,5}
\end{array}\right| \neq 0. \\
\delta_{1,3}&\neq 0 \quad \text{(by a symmetric argument)}. \\
\delta_{1,4}&=D_{1,4}\cdot (\bq(v_1)-\bq(v_4))=-\alpha\Delta_{1,2,4} D_{1,4}\cdot (D_{3,4}\times D_{4,5})-\beta\Delta_{1,3,4} D_{1,4}\cdot (D_{2,4}\times D_{4,5}) \\
&=-\alpha \Delta_{1,2,4} \left|
\begin{array}{c}
D_{1,4} \\ D_{3,4} \\ D_{4,5}
\end{array}\right| 
-
\beta
\Delta_{1,3,4} \left|
\begin{array}{c}
D_{1,4} \\ D_{2,4} \\ D_{4,5}
\end{array}\right| \\
&=-\Delta_{2,5,0}\Delta_{0,1,5}\Delta_{0,5,3}\Delta_{4,5,1}\Delta_{2,4,5}\Delta_{1,2,4}\Delta_{4,3,5}\Delta_{4,1,3}(\Delta_{2,0,4}\Delta_{0,3,1}-\Delta_{3,0,4}\Delta_{0,2,1})\\
&\neq 0,
\end{align*}
where the third equation follows from the Vandermonde identity and the last relation holds since
$\Delta_{2,0,4}\Delta_{0,3,1}\neq \Delta_{3,0,4}\Delta_{0,2,1}$. 
(To see this, consider the special position where $\bp(v_3)$ is on the line through $\bp(v_0)$ and $\bp(v_1)$. 
Then the left term is zero while the right term is non-zero when the other points are in a generic position.) 
\begin{align*}
\delta_{2,3}&=D_{2,3}\cdot (\bq(v_2)-\bq(v_3)) =D_{2,3}\cdot (\beta \Delta_{1,3,2} D_{2,4}\times D_{2,5} - \alpha \Delta_{1,2,3}D_{3,4} \times D_{3,5}) \\
&=-\Delta_{1,2,3}\left( 
\beta\left|
\begin{array}{c}
D_{2,3} \\ D_{2,4} \\ D_{2,5}
\end{array}\right|
+\alpha\left|
\begin{array}{c}
D_{3,2} \\ D_{3,4} \\ D_{3,5}
\end{array}\right|
\right) \\
&=\Delta_{1,2,3} \Delta_{2,3,4}\Delta_{2,4,5}\Delta_{2,5,3}\Delta_{3,4,5}\Delta_{2,5,0}\Delta_{0,1,5}\Delta_{0,5,3}(\Delta_{3,0,4} \Delta_{0,2,1} -\Delta_{2,0,4} \Delta_{0,3,1}) \neq 0.\\
\delta_{2,4}&=D_{2,4}\cdot (\bq(v_2)-\bq(v_4))\\
&=D_{2,4} \cdot (c_{24} D_{2,4}\times D_{2,5} -\alpha \Delta_{1,2,4} D_{3,4} \times D_{4,5} - c_{24}' D_{2,4} \times D_{4,5}) \\
&=-\alpha \Delta_{1,2,4}  \left|
\begin{array}{c}
D_{2,4} \\ D_{3,4} \\  D_{4,5}
\end{array}\right| \neq 0. \\
\delta_{3,4}&\neq 0 \quad \text{(by a symmetric argument)}. 
\end{align*}  

\section{Projective Transformations}\label{sec:projective}

We give a new analysis of the projective invariance of $C_2^1$-rigidity from the viewpoint of $C_2^1$-motions, and then use this to prove Lemma~\ref{lem:projective_bad}.

Let $\mathbb{S}^{3\times 3}$ be the set of symmetric $3\times 3$ matrices. We will use the fact  that ${\rm Trace}(AB)=\sum_{1\leq i, j\leq 3} a_{ij}b_{ij}$ holds for any $A=(a_{ij}), B=(b_{ij})\in \mathbb{S}^{3\times 3}$.

\subsection{\boldmath $C_2^1$-motions in the projective setting}
Recall that each point $p_i=(x_i,y_i)^\top\in \R^2$ is associated with the point $p_i^{\uparrow}=[x_i,y_i,1]^\top$ in 2-dimensional real projective space $\bP^2$. This allows us to associate a framework $(G,\bp^{\uparrow})$ in $\bP^2$ with any framework $(G,\bp)$ in $\R^2$.   
We will consider $(G,\bp^{\uparrow})$ to be a framework in $\R^3$ in which the `$z$-component' of every vertex is non-zero and  define a projective  $C_2^1$-motion as a new kind of motion of such a framework.
  
Let $G=(V,E)$ be a graph and $\tilde{\bp}:V \to \mathbb{R}^3$ such that $\tilde \bp(v_i)=(x_i, y_i, z_i)^{\top}$ and
$z_i\neq 0$ for all $v_i\in V$. 
We say that $\bQ:V\rightarrow \mathbb{S}^{3\times 3}$ is a {\em projective  $C_2^1$-motion} of $(G, \tilde{\bp})$ if 
\begin{equation}\label{eq:projective}
{\rm Trace}\left((\tilde{\bp}(v_i)\times \tilde{\bp}(v_j))(\tilde{\bp}(v_i)\times \tilde{\bp}(v_j))^{\top} (\bQ(v_i)-\bQ(v_j))\right)=0 
\end{equation}
for all $v_iv_j\in E$, where $(\tilde{\bp}(v_i)\times \tilde{\bp}(v_j))$ denotes the cross product of $\tilde{\bp}(v_i)$ and $\tilde{\bp}(v_j)$.
A projective $C^1_2$-motion $\bQ$ is \emph{trivial} if \eqref{eq:projective} holds for all $v_i,v_j\in V$. 
These definitions are inspired by the following observation (and Lemma~\ref{lem:projective2} below).

\begin{lemma}\label{lem:projective1}
For a framework $(G,\bp)$ in $\R^2$, 
define $\bp^{\uparrow}:V\rightarrow \mathbb{R}^3$ by 
$\bp^{\uparrow}(v_i)=(\bp(v_i), 1)^\top$.
For $\bq:V\rightarrow \mathbb{R}^3$, define $\bQ:V\rightarrow \mathbb{S}^{3\times 3}$ by 
\begin{equation}\label{eq:lifting}
\bq(v_i)=\begin{pmatrix} 
q_1(v_i) \\ q_2(v_i)\\ q_3(v_i)
\end{pmatrix}
\mapsto
\bQ(v_i)=\begin{pmatrix}
2q_3(v_i) & -q_2(v_i) & 0 \\
-q_2(v_i) & 2q_1(v_i) & 0 \\
0 & 0 & 0
\end{pmatrix} \mbox{for all $v_i\in V$.}
\end{equation}
Then $\bq$ is a $C^1_2$-motion of $(G,\bp)$ if and only if $\bQ$ is a projective  $C_2^1$-motion of $(G,\bp^{\uparrow})$.
\end{lemma}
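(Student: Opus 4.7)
The proof is entirely a direct computation, so my plan is to verify edge-by-edge that the projective constraint (\ref{eq:projective}) coincides, up to a nonzero scalar factor, with the $C^1_2$-motion constraint $D(\bp(v_i),\bp(v_j))\cdot(\bq(v_i)-\bq(v_j))=0$. Once this is shown for each edge $v_iv_j$, both directions of the equivalence follow immediately.

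The first step is to compute the cross product. With $\bp^{\uparrow}(v_i)=(x_i,y_i,1)^\top$ and $\bp^{\uparrow}(v_j)=(x_j,y_j,1)^\top$, a direct calculation gives
\[
\bp^{\uparrow}(v_i)\times \bp^{\uparrow}(v_j) \;=\; \bigl(\,y_i-y_j,\; -(x_i-x_j),\; x_iy_j-y_ix_j\,\bigr)^\top .
\]
Let $u$ denote this column vector. The outer product $uu^\top$ is then a symmetric $3\times 3$ matrix whose $(1,1), (2,2), (1,2)$ entries equal $(y_i-y_j)^2$, $(x_i-x_j)^2$, and $-(x_i-x_j)(y_i-y_j)$ respectively; the remaining entries involve $x_iy_j-y_ix_j$ and will drop out of the trace below.

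The next step is to exploit the special block structure of the matrices produced by (\ref{eq:lifting}). Setting $\alpha=q_1(v_i)-q_1(v_j)$, $\beta=q_2(v_i)-q_2(v_j)$, $\gamma=q_3(v_i)-q_3(v_j)$, the matrix $M:=\bQ(v_i)-\bQ(v_j)$ has its entire third row and third column equal to zero. Using the identity ${\rm Trace}(AM)=\sum_{k,\ell}A_{k\ell}M_{k\ell}$ for symmetric $A,M\in\mathbb{S}^{3\times 3}$, only the top-left $2\times 2$ block of $uu^\top$ contributes to ${\rm Trace}(uu^\top M)$. A short computation then yields
\[
{\rm Trace}(uu^\top M) \;=\; 2\bigl[(x_i-x_j)^2\alpha \,+\, (x_i-x_j)(y_i-y_j)\beta \,+\, (y_i-y_j)^2\gamma\bigr],
\]
which is exactly $2\,D(\bp(v_i),\bp(v_j))\cdot(\bq(v_i)-\bq(v_j))$.

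Since the factor of $2$ does not affect vanishing, the constraint (\ref{eq:projective}) for the edge $v_iv_j$ holds if and only if the $C^1_2$-motion constraint for $v_iv_j$ holds. Taking the conjunction over all $v_iv_j\in E$ gives the claimed equivalence. There is no genuine obstacle in this argument; the only thing to be careful about is tracking the signs in the cross product and in the off-diagonal entries of the matrix defined by (\ref{eq:lifting}) so that the middle term comes out with the correct sign $(x_i-x_j)(y_i-y_j)\beta$ rather than its negative.
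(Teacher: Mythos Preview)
Your proof is correct and follows essentially the same approach as the paper: both compute the cross product $\bp^{\uparrow}(v_i)\times\bp^{\uparrow}(v_j)$, form the outer product, observe that only the top-left $2\times 2$ block of $uu^\top$ pairs nontrivially with $\bQ(v_i)-\bQ(v_j)$, and verify that the trace equals $2\,D(\bp(v_i),\bp(v_j))\cdot(\bq(v_i)-\bq(v_j))$. Your write-up simply makes explicit the details the paper leaves as ``straightforward to check.''
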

\begin{proof}
Denote $\bp(v_i)=(x_i ,y_i )^\top$ for all $v_i\in V$. Then
\[
(\bp^{\uparrow}(v_i)\times \bp^{\uparrow}(v_j))(\bp^{\uparrow}(v_i)\times \bp^{\uparrow}(v_j))^{\top}
=\begin{pmatrix}
(y_i-y_j)^2 & -(x_i-x_j)(y_i-y_j) & \ast \\
-(x_i-x_j)(y_i-y_j) & (x_i-x_j)^2 & \ast \\
\ast & \ast & \ast
\end{pmatrix}.
\]
It is now straightforward to check that  
$${\rm Trace}\left((\bp^{\uparrow}(v_i)\times \bp^{\uparrow}(v_i))(\bp^{\uparrow}(v_i)\times \bp^{\uparrow}(v_i))^{\top} (\bQ(v_i)-\bQ(v_j))\right)=2D(\bp(v_i), \bp(v_j))\cdot (\bq(v_i)-\bq(v_j))$$
for all $v_i,v_j\in V$.
\end{proof}

Lemma~\ref{lem:projective1} gives a linear isomorphism between the $C_2^1$-motions $\bq$ of $(G,\bp)$ and the projective $C_2^1$-motions $\bQ$ of $(G,\bp^{\uparrow})$
with the property  that the right column and bottom row of $\bQ$ are both zero.

\subsection{\boldmath Projective $C_2^1$-rigidity}
We show that every framework $(G,\tilde \bp)$ in $\R^3$ has at least $3|V|+6$ linearly independent projective $C_2^1$-motions.
We use this to define projective $C_2^1$-rigidity and then show that a given framework $(G,\bp)$ in $\R^2$ is $C_2^1$-rigid if and only if the corresponding framework
$(G,\bp^\uparrow)$ in $\R^3$ is projective $C_2^1$-rigid.

%
We defined six linearly independent (trivial) $C_2^1$-motions $\bq_i^*$ for an arbitrary 2-dimensional framework in (\ref{eq:rigid_motions1}) and (\ref{eq:rigid_motions2}).
We may apply Lemma \ref{lem:projective1} to each of these to obtain the following 
six linearly independent  projective $C_2^1$-motions for any framework $(G,\tilde \bp)$ in $\R^3$.
%
\begin{equation}\label{eq:projective_trivial}
\begin{gathered}
\bQ_1^*(v_i)=\begin{pmatrix}
0 & 0 & 0 \\
0 & 2 & 0 \\
0 & 0 & 0 
\end{pmatrix}, 
\bQ_2^*(v_i)=\begin{pmatrix}
0 & -1 & 0 \\
-1 & 0 & 0 \\
0 & 0 & 0 
\end{pmatrix}, 
\bQ_3^*(v_i)=\begin{pmatrix}
2 & 0 & 0 \\
0 & 0 & 0 \\
0 & 0 & 0 
\end{pmatrix}, \\
\bQ_4^*(v_i)=\begin{pmatrix}
0 & x_i/z_i & 0 \\
x_i/z_i & 2 y_i/z_i & 0 \\
0 & 0 & 0 
\end{pmatrix}, 
\bQ_5^*(v_i)=\begin{pmatrix}
2x_i/z_i & y_i/z_i & 0 \\
y_i/z_i & 0 & 0 \\
0 & 0 & 0 
\end{pmatrix}, \\
\bQ_6^*(v_i)=\begin{pmatrix}
2x_i^2/z_i^2 & 2x_iy_i/z_i^2 & 0 \\
2x_iy_i/z_i^2 & 2y_i^2/z_i^2 & 0 \\
0 & 0 & 0 
\end{pmatrix}.
\end{gathered}
\end{equation}
It is straightforward to check that each $\bQ_k^*$, $1\leq k\leq 6$, satisfies (\ref{eq:projective}) for every pair of vertices $v_i, v_j\in V$, and hence is a trivial projective $C_2^1$-motion of $(G,\tilde{\bp})$.
We may define a further $3|V|$ linearly independent projective $C_2^1$-motions for any $(G,\tilde \bp)$, three for each vertex, as follows.
For each $v_i\in V$, let $\bQ_{i,1}^*, \bQ_{i,2}^*, \bQ_{i,3}^*: V\rightarrow \mathbb{S}^{3\times 3}$ by: 
\begin{equation}\label{eq:trivial_vertex}
\begin{gathered}
\bQ_{i,1}^*(v_i)= \begin{pmatrix} 0 & x_i & 0 \\ x_i & 2y_i & z_i \\ 0 & z_i & 0 \end{pmatrix}, 
\text{ and } \bQ_{i,1}^*(v_j)=\begin{pmatrix} 0 & 0 & 0 \\ 0 & 0 & 0 \\ 0 & 0 & 0 \end{pmatrix} \text{ for all } v_j\neq v_i; \\
\bQ_{i,2}^*(v_i)= \begin{pmatrix} 2x_i & y_i & z_i \\ y_i & 0 & 0 \\ z_i & 0 & 0 \end{pmatrix}, \text{ and } 
\bQ_{i,2}^*(v_j)=\begin{pmatrix} 0 & 0 & 0 \\ 0 & 0 & 0 \\ 0 & 0 & 0 \end{pmatrix} \text{ for all } v_j\neq v_i; \\
\bQ_{i,3}^*(v_i)= \begin{pmatrix} -x_i^2 & -x_iy_i & 0 \\ -x_iy_i & -y_i^2 & 0 \\ 0 & 0 & z_i^2 \end{pmatrix}, \text{ and } 
\bQ_{i,3}^*(v_j)=\begin{pmatrix} 0 & 0 & 0 \\ 0 & 0 & 0 \\ 0 & 0 & 0 \end{pmatrix} \text{ for all  } v_j\neq v_i. 
\end{gathered}
\end{equation}
It is straightforward to check that each  $\bQ_{i,j}^*$ satisfies (\ref{eq:projective}) for all pairs of vertices $v_i, v_j\in V$, and hence is a trivial motion of $(G,\tilde{\bp})$.
We will refer to the space of projective $C_2^1$-motions generated by these $3|V|+6$ motions as the {\em space of trivial motions}, and say that a projective $C_2^1$-motion of $(G,\tilde \bp)$ is {\em non-trivial} if it does not belong to this space.
We define $(G,\tilde{\bp})$ to be {\em projectively $C_2^1$-rigid} if every projective $C_2^1$-motion of $(G,\tilde{\bp})$ is a trivial motion, or equivalently, if its space of projective $C_2^1$-motions has dimension $3|V|+6$.

The following lemma follows from Lemma~\ref{lem:projective1} (and the formulae for $\bQ_{i,j}^*$ given above).
\begin{lemma}\label{lem:projective2}
Let $(G,\bp)$ be a two-dimensional framework, and let $(G,\bp^{\uparrow})$ be as defined in Lemma~\ref{lem:projective1}.
Then $(G,\bp)$ is $C_2^1$-rigid if and only if $(G,\bp^{\uparrow})$ is projectively $C_2^1$-rigid.
\end{lemma}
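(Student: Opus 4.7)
The plan is to prove the equivalence by establishing the dimension identity
\[
\dim Z_{\rm proj}(G,\bp^\uparrow) \;=\; 3|V| + \dim Z(G,\bp),
\]
from which projective $C_2^1$-rigidity of $(G,\bp^\uparrow)$ (i.e.\ $\dim Z_{\rm proj}(G,\bp^\uparrow)=3|V|+6$) immediately corresponds to $C_2^1$-rigidity of $(G,\bp)$ (i.e.\ $\dim Z(G,\bp)=6$). To this end, I would introduce the subspace
\[
Z^\sharp \;:=\; \{\,\bQ\in Z_{\rm proj}(G,\bp^\uparrow) \,:\, \bQ(v_i)_{k,3}=0 \text{ for all } v_i\in V,\,k\in\{1,2,3\}\,\}
\]
of projective motions whose bottom row and right column vanish at every vertex. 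By Lemma~\ref{lem:projective1}, the assignment (\ref{eq:lifting}) gives a linear bijection $\iota:Z(G,\bp)\to Z^\sharp$ (it is injective by construction, and surjective because any $\bQ\in Z^\sharp$ has all its information encoded in its three non-trivial entries at each vertex, which correspond precisely to a vector $\bq(v_i)\in\R^3$). In particular $\dim Z^\sharp = \dim Z(G,\bp)$.

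The main step is to prove
\[
Z_{\rm proj}(G,\bp^\uparrow) \;=\; \bigl\langle \bQ_{i,j}^* : v_i\in V,\,j\in\{1,2,3\}\bigr\rangle \,\oplus\, Z^\sharp,
\]
where the first summand has dimension $3|V|$. Inspecting the formulae in (\ref{eq:trivial_vertex}), the motions $\bQ_{i,1}^*,\bQ_{i,2}^*,\bQ_{i,3}^*$ are supported only at $v_i$, and their $(2,3)$-, $(1,3)$- and $(3,3)$-entries at $v_i$ form the diagonal pattern $z_i,\,z_i,\,z_i^2$ with zeros off the diagonal. Since $z_i\ne 0$, for an arbitrary $\bQ\in Z_{\rm proj}(G,\bp^\uparrow)$ the choice
\[
a_{i,1} \;=\; \bQ(v_i)_{2,3}/z_i,\qquad a_{i,2} \;=\; \bQ(v_i)_{1,3}/z_i,\qquad a_{i,3} \;=\; \bQ(v_i)_{3,3}/z_i^2
\]
is the unique one making $\bQ - \sum_{i,j} a_{i,j}\bQ_{i,j}^*$ lie in $Z^\sharp$; this proves both that the sum spans $Z_{\rm proj}(G,\bp^\uparrow)$ and that it is direct. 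From the dimension identity it follows that
\[
(G,\bp) \text{ is } C_2^1\text{-rigid}
\;\Longleftrightarrow\; \dim Z(G,\bp)=6
\;\Longleftrightarrow\; \dim Z_{\rm proj}(G,\bp^\uparrow)=3|V|+6
\;\Longleftrightarrow\; (G,\bp^\uparrow) \text{ is projectively } C_2^1\text{-rigid},
\]
where the last equivalence also requires noting that the six motions $\bQ_1^*,\ldots,\bQ_6^*$ in (\ref{eq:projective_trivial}) all lie in $Z^\sharp$ and correspond under $\iota^{-1}$ to the six linearly independent trivial $C_2^1$-motions $\bq_1^*,\ldots,\bq_6^*$ of $(G,\bp)$ from (\ref{eq:rigid_motions1})--(\ref{eq:rigid_motions2}); hence the full set of $3|V|+6$ generators of the trivial projective motion space is indeed linearly independent, justifying the dimension in the definition of projective rigidity.

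There is no significant obstacle in this proof; the only routine verification needed is that each $\bQ_{i,j}^*$ really satisfies (\ref{eq:projective}) for every pair $v_k,v_\ell$, which the paper asserts is straightforward (for $\{k,\ell\}\not\ni i$ both matrices $\bQ_{i,j}^*(v_k)$ and $\bQ_{i,j}^*(v_\ell)$ vanish, and when $k=i$ one checks directly that the rank-one matrix $(\bp^\uparrow(v_i)\times \bp^\uparrow(v_\ell))(\bp^\uparrow(v_i)\times \bp^\uparrow(v_\ell))^\top$ pairs to zero with $\bQ_{i,j}^*(v_i)$, using that $\bp^\uparrow(v_i)\times\bp^\uparrow(v_\ell)$ is orthogonal to $\bp^\uparrow(v_i)$). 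Aside from this bookkeeping, the argument is essentially a linear-algebraic reduction by means of the explicit trivial motions manufactured in (\ref{eq:trivial_vertex}).
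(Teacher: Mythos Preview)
Your proof is correct and is precisely the elaboration the paper has in mind: its one-line justification (``follows from Lemma~\ref{lem:projective1} and the formulae for $\bQ_{i,j}^*$'') unpacks exactly into your direct-sum decomposition $Z_{\rm proj}(G,\bp^\uparrow)=\langle \bQ_{i,j}^*\rangle\oplus Z^\sharp$ together with the identification $Z^\sharp\cong Z(G,\bp)$ from Lemma~\ref{lem:projective1}. One small refinement: rather than passing through dimensions (which tacitly assumes $\dim Z_0(G,\bp)=6$, i.e.\ that $\bp(V)$ affinely spans $\R^2$), you can argue directly that the trivial projective motion space equals $\langle \bQ_{i,j}^*\rangle\oplus \iota(Z_0(G,\bp))$, whence projective $C_2^1$-rigidity is equivalent to $Z^\sharp=\iota(Z_0(G,\bp))$, i.e.\ to $Z(G,\bp)=Z_0(G,\bp)$; this version covers degenerate configurations as well.
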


\subsection{Projective invariance} \label{subsec:projective_motion}
In this subsection we show that 
the projective $C_2^1$-rigidity of a framework $(G,\tilde \bp)$ is invariant under a pointwise scaling and a non-singular linear transformation of $\mathbb{R}^3$.
These two facts combined with Lemma~\ref{lem:projective2} 
will imply the projective invariance of $C_2^1$-rigidity for 2-dimensional frameworks. 
 
We first show that the projective $C_2^1$-rigidity of $(G,\tilde \bp)$ is invariant by the scaling of each point.
Choose a non-zero scalar
$\lambda_i\in \R$ for each $v_i\in V$, and let $\lambda \tilde{\bp}$ be the point configuration in $\mathbb{R}^3$ defined by $(\lambda \tilde{\bp})(v_i)=\lambda_i \tilde{\bp}(v_i)$ for each $v_i\in V$.
Then $\bQ:V\rightarrow \mathbb{S}^{3\times 3}$ is a (non-trivial) motion of $(G,\tilde{\bp})$ if and only if 
$\bQ$ is a (non-trivial) motion of $(G,\lambda \tilde{\bp})$ since
\begin{align*}
&{\rm Trace}\left(((\lambda\tilde{\bp})(v_i)\times (\lambda\tilde{\bp})(v_j))((\lambda\tilde{\bp})(v_i)\times (\lambda\tilde{\bp})(v_j))^{\top} (\bQ(v_i)-\bQ(v_j))\right) \\
&=\lambda_i^2\lambda_j^2{\rm Trace}\left((\tilde{\bp}(v_i)\times \tilde{\bp}(v_j))(\tilde{\bp}(v_i)\times \tilde{\bp}(v_j))^{\top} (\bQ(v_i)-\bQ(v_j))\right).
\end{align*}
Thus projective $C_2^1$-rigidity is invariant by the scaling of each point.

To see the invariance under non-singular linear transformations, choose any non-singular matrix $A\in \mathbb{R}^{3\times 3}$ and let $C_A$ be its cofactor matrix.
It is well-known that $(Ax)\times (Ay)=C_A(x\times y)$ for any $x,y\in \mathbb{R}^3$.
Moreover, $C_A$ is non-singular if and only if $A$ is non-singular.
Given $\bQ:V\to  \mathbb{S}^{3\times 3}$
we let $\bQ_A:V\rightarrow \mathbb{S}^{3\times 3}$ by $\bQ_A(v_i)=C_A^{-\top} \bQ(v_i) C_A^{-1}$, where $C_A^{-\top}=(C_A^{-1})^{\top}$.
Then, for any $v_i, v_j\in V$,
\begin{align*} 
&{\rm Trace}\left((A\tilde{\bp}(v_i)\times A\tilde{\bp}(v_j))(A\tilde{\bp}(v_i)\times A\tilde{\bp}(v_j))^{\top} (\bQ_A(v_i)-\bQ_A(v_j))\right) \\
&= {\rm Trace}\left(C_A(\tilde{\bp}(v_i)\times \tilde{\bp}(v_j))(\tilde{\bp}(v_i)\times \tilde{\bp}(v_j))^{\top} C_A^{\top} C_A^{-\top}(\bQ(v_i)-\bQ(v_j)) C_A^{-1}\right) \\
&= {\rm Trace}\left(C_A\left((\tilde{\bp}(v_i)\times \tilde{\bp}(v_j))(\tilde{\bp}(v_i)\times \tilde{\bp}(v_j))^{\top} (\bQ(v_i)-\bQ(v_j)) \right) C_A^{-1} \right)\\
&= {\rm Trace}\left((\tilde{\bp}(v_i)\times \tilde{\bp}(v_j))(\tilde{\bp}(v_i)\times \tilde{\bp}(v_j))^{\top} (\bQ(v_i)-\bQ(v_j)) \right).
\end{align*} 
Thus $\bQ$ is a (non-trivial) projective $C_2^1$-motion of $(G, \tilde{\bp})$ if and only if $\bQ_A$ is a (non-trivial) projective $C_2^1$-motion of $(G, A\tilde{\bp})$.

Hence projective $C_2^1$-rigidity is invariant under both pointwise scaling and  non-singular linear transformations. This implies that the $C_2^1$-rigidity of 2-dimensional frameworks is projectively invariant.
 
\subsection{Proof of Lemma~\ref{lem:projective_bad}}


\newtheorem*{tlemma}{Lemma~\ref{lem:projective_bad}}

\begin{tlemma}
Let $(G,\bp)$ be a generic framework, $v_0$ be a vertex of degree five with $N_G(v_0)=\{v_1,\dots, v_5\}$, 
and $(G, \bp')$ be a projective image of $(G,\bp)$ such that 
$\bp'(v_1)=(1, 0),\ \bp'(v_2)=(0, 0),\ \bp'(v_3)=(0,1), \text{and } \bp'(v_4)=(1,1)$.
If $(G, \bp')$ has a bad motion at $v_0$, then $(G, \bp)$ has a bad motion at $v_0$.
\end{tlemma}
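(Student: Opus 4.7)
The plan is to transfer the bad motion of $(G,\bp')$ back to a bad motion of $(G,\bp)$ using the projective invariance framework of Appendix~\ref{sec:projective}. Let $f:\R^2\to\R^2$ be the projective transformation with $\bp'=f\circ\bp$, and let $M_f$ be the associated non-singular $3\times 3$ matrix, so that $\bp'^{\uparrow}(v_i)$ agrees with $M_f\,\bp^{\uparrow}(v_i)$ up to pointwise scaling. Since $\bp$ is generic and $f$ is determined by $\bp(v_1),\dots,\bp(v_4)$, the entries of $M_f$ and $M_f^{-1}$ are well-defined rational functions in the coordinates of $\bp(v_1),\dots,\bp(v_4)$.

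First, I would lift the bad motion $\bq'$ of $(G,\bp')$ to a projective $C_2^1$-motion $\bQ'$ of $(G,\bp'^{\uparrow})$ via (\ref{eq:lifting}). By the invariance of projective $C_2^1$-motions under pointwise scaling and under non-singular linear transformations (Section~\ref{subsec:projective_motion}), the map $\bQ(v_i):=C_{M_f}^{\top}\,\bQ'(v_i)\,C_{M_f}$ is a projective $C_2^1$-motion of $(G,\bp^{\uparrow})$. This $\bQ$ need not be of the form~(\ref{eq:lifting}); however, at each vertex $v_i$ I would subtract an appropriate combination $\sum_{k=1}^{3}\alpha_{i,k}\bQ^*_{i,k}$ of the trivial motions from~(\ref{eq:trivial_vertex}). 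Because $z_i=1$ for $\bp^{\uparrow}(v_i)$ and $\bQ^*_{i,1},\bQ^*_{i,2},\bQ^*_{i,3}$ contribute to the $(2,3)$, $(1,3)$ and $(3,3)$ entries at $v_i$ respectively and only at $v_i$, the scalars $\alpha_{i,k}$ are uniquely determined and kill those three entries. The resulting $\bar\bQ$ lies in the image of (\ref{eq:lifting}) and so corresponds via Lemma~\ref{lem:projective1} to a $C_2^1$-motion $\bq$ of $(G,\bp)$.

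Next I would verify that $\bq$ is bad at $v_0$. Tracing through the construction, each entry of $\bq(v_i)$ is a rational function in the coordinates of $\bp|_{\hat N_G(v_0)}$ with rational coefficients: $\bq'(v_i)=b'(v_i)(\bp')$ is polynomial in $\bp'$; the substitution $\bp'=f(\bp)$ is rational in the twelve coordinates of $\bp|_{\hat N_G(v_0)}$; and the conjugation by $C_{M_f}$ together with the subtracted trivial motions involve only these coordinates. Rescaling $\bq$ by a common denominator $Q(\bp)\neq 0$ (a polynomial in these coordinates, non-zero at generic $\bp$) yields a motion whose values at $\hat N_G(v_0)$ are given by polynomials $b(v_i)\in\rat[X_0,Y_0,\dots,X_5,Y_5]^3$. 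For the star property, note that (\ref{eq:projective}) holds for a pair of vertices under $\bar\bQ$ iff it holds under $\bQ$ (since trivial motions satisfy (\ref{eq:projective}) on every pair) iff it holds under $\bQ'$ (by the invariance results of Section~\ref{subsec:projective_motion} applied to the pair, which does not require $v_iv_j$ to be an edge of $G$). By Lemma~\ref{lem:projective1} this chain of equivalences translates to $D(\bp(v_i),\bp(v_j))\cdot(\bq(v_i)-\bq(v_j))=0$ iff $D(\bp'(v_i),\bp'(v_j))\cdot(\bq'(v_i)-\bq'(v_j))=0$. Hence the pairs $v_iv_j\in K(N_G(v_0))$ satisfying the vanishing condition for $\bq$ and for $\bq'$ coincide, and so form a star. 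The genericity of $\bp$ promotes these evaluation identities (and non-identities) to the corresponding symbolic statements in $\rat[X_0,Y_0,\dots,X_5,Y_5]$, so $b$ satisfies (\ref{eq:bad}).

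The main technical obstacle I anticipate is tracking the polynomial structure through this chain of transformations; specifically, ensuring that after clearing denominators the map $b$ really does lie in $\rat[X_0,Y_0,\dots,X_5,Y_5]^3$ rather than in some larger polynomial ring. This works precisely because $M_f$ depends only on $\bp(v_1),\dots,\bp(v_4)\subseteq\bp|_{\hat N_G(v_0)}$ and the trivial motions $\bQ^*_{i,k}$ modified at vertex $v_i$ involve only $\bp(v_i)$, so no coordinates outside $\hat N_G(v_0)$ ever enter the construction.
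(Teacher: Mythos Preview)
Your proposal is correct and follows essentially the same route as the paper's proof: lift $\bq'$ to a projective $C_2^1$-motion via (\ref{eq:lifting}), push it through the inverse projective transformation using the invariance results of Section~\ref{subsec:projective_motion}, subtract appropriate trivial motions $\bQ^*_{i,k}$ to land back in the image of (\ref{eq:lifting}), and then verify that the star condition and polynomial structure survive because the transformation matrix depends only on $\bp(v_1),\dots,\bp(v_4)$. The only cosmetic difference is that the paper works with $A=M_f^{-1}$ and separates the linear transformation and pointwise-scaling steps, whereas you fold them together and parameterise by $M_f$; your conjugation formula $C_{M_f}^{\top}\bQ'(v_i)C_{M_f}$ is equivalent to the paper's $C_A^{-\top}\bQ'(v_i)C_A^{-1}$ since $C_{M^{-1}}=(C_M)^{-1}$.
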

\begin{proof}
Since $\bp$ is generic,  the set of coordinates of the other vertices of $(G,\bp')$ is algebraically independent over $\rat$.
Suppose that $(G,\bp')$ has a bad motion $\bq'$ at $v_0$, i.e., there is a map $b':\hat N_G(v_0)\rightarrow \rat[X_0,\dots, Y_5]^3$ satisfying (\ref{eq:bad}) and such that $b'_{i}(\bp')=\bq'(v_i)$ for all $v_i\in \hat N_G(v_0)$, where $b'_{i}(\bp')$ denotes the evaluation of $b'(v_i)$ at $p'(\hat N_G(v_0))$.
Suppose also that the inverse of the projective map from $(G,\bp)$ to $(G,\bp')$ is represented by the matrix $A\in \mathbb{R}^{3\times 3}$. Note that each entry of $A$ can be expressed  as a rational function of $\bp(v_1),\dots, \bp(v_4)$ over $\mathbb{Q}$.
We consider the following procedure for converting the motion $\bq'$ to a motion $\bq$ of $(G,\bp)$:
\begin{align*}
\text{$C_2^1$-motion $\bq'$ of $(G,\bp')$} &\overset{\raise0.2ex\hbox{\textcircled{\scriptsize{1}}}}{\longrightarrow} \text{projective $C_2^1$-motion $\bQ'$ of $(G,(\bp')^{\uparrow})$} \\
&\overset{\raise0.2ex\hbox{\textcircled{\scriptsize{2}}}}{\longrightarrow} \text{projective $C_2^1$-motion $\bQ'_A$ of $(G,A(\bp')^{\uparrow})$} \\
&\overset{\raise0.2ex\hbox{\textcircled{\scriptsize{3}}}}{\longrightarrow} \text{projective $C_2^1$-motion $\bQ'_A$ of $(G,\bp^{\uparrow})$} \\
&\overset{\raise0.2ex\hbox{\textcircled{\scriptsize{4}}}}{\longrightarrow} \text{projective $C_2^1$-motion $\bQ$ of $(G,\bp^{\uparrow})$} \\
&\overset{\raise0.2ex\hbox{\textcircled{\scriptsize{5}}}}{\longrightarrow} \text{$C_2^1$-motion $\bq$ of $(G,\bp)$}.
\end{align*}
An explanation of each step in this procedure is given below.
\begin{itemize}
\item[\raise0.2ex\hbox{\textcircled{\scriptsize{1}}}] We construct the projective $C_2^1$-motion $\bQ'$ of $(G,(\bp')^{\uparrow})$ from $\bq'$ as in (\ref{eq:lifting}).
\item[\raise0.2ex\hbox{\textcircled{\scriptsize{2}}}] $\bQ'_A$ is a projective $C_2^1$-motion of $(G,A(\bp')^{\uparrow})$
as explained in Subsection~\ref{subsec:projective_motion}.
\item[\raise0.2ex\hbox{\textcircled{\scriptsize{3}}}] Since $A$ represents the projective transformation from $(G,\bp')$ to $(G,\bp)$, we can scale each point of $(G,A(\bp')^{\uparrow})$ to obtain $(G, \bp^{\uparrow})$. As explained in Subsection~\ref{subsec:projective_motion}, $\bQ'_A$ remains a projective $C_2^1$-motion after this pointwise scaling.
\item[\raise0.2ex\hbox{\textcircled{\scriptsize{4}}}] We eliminate non-zero entries in the right column and the bottom row in $\bQ'_A(v_i)$ by adding (scaled) trivial motions $\bQ_{i,1}^*, \bQ_{i,2}^*, \bQ_{i,3}^*$ for each $v_i\in V$.
This is always possible since $\bQ_{i,1}^*, \bQ_{i,2}^*, \bQ_{i,3}^*$ are of the form, 
$
\begin{pmatrix}
* & * & 0 \\ * & * & 1 \\ 0 & 1 & 0
\end{pmatrix},
\begin{pmatrix}
* & * & 1 \\ * & * & 0 \\ 1 & 0 & 0
\end{pmatrix},
\begin{pmatrix}
* & * & 0 \\ * & * & 0 \\ 0 & 0 & 1
\end{pmatrix}
$
respectively, 
by (\ref{eq:trivial_vertex}). (Note that $z_i=1$ in $\bp^{\uparrow}(v_i)$.)
Since we only add  trivial motions, the resulting $\bQ:V\rightarrow \mathbb{S}^{3\times 3}$ is still a projective $C_2^1$-motion of $(G,\bp^{\uparrow})$.
\item[\raise0.2ex\hbox{\textcircled{\scriptsize{5}}}] We construct the $C_2^1$-motion $\bq$ of $(G,\bp)$ from $\bQ$ as in  (\ref{eq:lifting}).
\end{itemize}
We can compute each $\bq(v_i)$ from $\bq'(v_i)$ by following the above procedure.
For each $v_i\in \hat N_G(v_0)$, the procedure can be performed at a symbolic level starting from $\bq'(v_i)=b_{i}'(\bp')$,
and  returning a map $b:\hat N_G(v_0)\rightarrow \rat(X_0,Y_0,X_1,\dots, Y_5)^3$ with $\bq(v_i)=b_{i}(\bp)$ for each $v_i\in N_G(v_0)$.
By scaling appropriately, we may suppose that $b(v_i)$ is a polynomial map for each $v_i\in N_G(v_0)$. 
Then $\bq$ is a $b$-motion of $(G,\bp)$.
In addition, for all $v_i, v_j\in V$, 
$D(\bp'(v_i),\bp'(v_j))\cdot (b_{i}'(\bp')-b_{j}'(\bp'))=0$ if and only if 
$D(\bp(v_i),\bp(v_j))\cdot (b_{i}(\bp)-b_{j}(\bp))=0$.
This can be verified by checking the corresponding formulae in each step of the above procedure.
Since $p$ is generic, this implies that $b$ satisfies (\ref{eq:bad}), and hence $\bq$ is a bad motion of $(G,\bp)$ at $v_0$.
\end{proof}

\section*{Acknowledgments} 
This work was supported by JST CREST Grant Number JPMJCR14D2, JSPS KAKENHI Grant Number 18K11155 and EPSRC overseas travel grant EP/T030461/1.

\bibliographystyle{amsplain}


\begin{dajauthors}
\begin{authorinfo}[katie]
Katie Clinch\\Department of Electrical and Electronic Engineering, University of Melbourne\\ Parkville, Victoria, 3010, Australia.\\ 
 katie\imagedot{}clinch\imageat{}unimelb\imagedot{}edu\imagedot{}au
\end{authorinfo}
\begin{authorinfo}[bill]
Bill Jackson\\
School
of Mathematical Sciences, Queen Mary University of London\\
Mile End Road, London E1 4NS, England.\\
b\imagedot{}jackson\imageat{}qmul\imagedot{}ac\imagedot{}uk
\end{authorinfo}
\begin{authorinfo}[shin]
Shin-ichi Tanigawa\\
Department of Mathematical Informatics\\ 
Graduate School of Information Science and Technology, University of Tokyo\\ 
7-3-1 Hongo, Bunkyo-ku, 113-8656,  Tokyo, Japan. \\ 
tanigawa\imageat{}mist\imagedot{}i\imagedot{}u-tokyo\imagedot{}jp
  \end{authorinfo}
\end{dajauthors}

\end{document}